\documentclass[10pt,a4paper]{article}
\usepackage{amsmath,amsfonts,amssymb,amsthm}
\usepackage{paralist}

\topmargin       -11.0mm
\oddsidemargin      0mm
\evensidemargin     0mm
\textheight     239.5mm
\textwidth      167.0mm
\columnsep        4.1mm
\parindent        1.0em
\headsep          6.3mm
\headheight        12pt
\lineskip           1pt
\normallineskip     1pt

\newenvironment{mabstract}
{\begin{quote}\small {\bfseries Abstract.}}{\end{quote}\par}

\newenvironment{mkeywords}
{\begin{quote}\small {\bfseries Keywords.}}{\end{quote}\par}

\newenvironment{msubjclass}
{\begin{quote}\small {\bfseries AMS Subject Classification.}}{\end{quote}\par}

\newtheorem{theorem}{Theorem}[section]

\newcounter{appendixcounter}

\newtheorem{theoremappendix}{Theorem}[appendixcounter]

\newtheorem{lemma}{Lemma}[section]

\newtheorem{assumption}{Assumption}[section]

\newtheorem{assumptionabc}{Assumption}[assumption]

\newtheorem{assumptionappendix}{Assumption}[appendixcounter]

\newtheorem{proposition}{Proposition}[section]

\newtheorem{remark}{Remark}[section]

\newtheorem*{vremark}{Remark}

\newtheorem*{rproof}{\bfseries Proof}
\newenvironment{sproof}
{\begin{rproof}\rm}{\qed \end{rproof}}

\newtheorem{bproof}{}

\newtheorem{bcase}{}[section]


\makeatletter
\renewcommand\@seccntformat[1]{{\csname thesection\endcsname}.\hspace{0.5em}}
\makeatother

\begin{document}

\title{Asymptotic Bias of Stochastic Gradient Search} 

\author{Vladislav B. Tadi\'{c}
\thanks{Department of Mathematics, University of Bristol,
Bristol, United Kingdom 
(email: v.b.tadic@bristol.ac.uk). }
\and 
Arnaud Doucet\thanks{Department of Statistics, 
University of Oxford, Oxford, United Kingdom
(doucet@stats.ox.ac.uk). } }
\date{}

\maketitle

\begin{mabstract}
The asymptotic behavior of the stochastic gradient algorithm 
with a biased gradient estimator is analyzed. 
Relying on arguments based on 
the dynamic system theory (chain-recurrence) and the differential geometry 
(Yomdin theorem and Lojasiewicz inequality), 
tight bounds on the asymptotic bias of the iterates 
generated by such an algorithm are derived. 
The obtained results hold under mild conditions 
and cover a broad class of high-dimensional nonlinear algorithms. 
Using these results, 
the asymptotic properties of the policy-gradient (reinforcement) learning 
and adaptive population Monte Carlo sampling are studied. 
Relying on the same results, 
the asymptotic behavior of the recursive maximum split-likelihood estimation 
in hidden Markov models is analyzed, too. 
\end{mabstract}

\begin{mkeywords}
Stochastic gradient search, biased gradient estimation, 
chain-recurrence, Yomdin theorem, Lojasiewicz inequalities, 
reinforcement learning, adaptive Monte Carlo sampling, system identification. 
\end{mkeywords}

\begin{msubjclass}
Primary 62L20; Secondary 90C15, 93E12, 93E35. 
\end{msubjclass}

\section{Introduction}\label{section0}

Many problems in automatic control, system identification, signal processing, 
machine learning, operations research and statistics can be posed as 
a stochastic optimization problem, 
i.e., as a minimization (or maximization) of an unknown objective function 
whose values are available only through noisy observations. 
Such a problem can efficiently be solved by stochastic gradient search
(also known as the stochastic gradient algorithm). 
Stochastic gradient search is a procedure of the stochastic approximation type
which iteratively approximates the minima of the objective function 
using a statistical or Monte Carlo estimator of the gradient (of the objective function). 
Often, the estimator is biased, since the consistent gradient estimation is usually 
computationally expensive or not available at all. 
As a result of the biased gradient estimation, 
the stochastic gradient search is biased, too, 
i.e., the corresponding algorithm does not converge to the minima, but to their vicinity. 
In order to interpret the results produced by such an algorithm 
and to tune the algorithm's parameters (e.g., to achieve a better bias/variance balance 
and a better convergence rate), 
the knowledge about the asymptotic behavior and the asymptotic bias of the algorithm iterates
is crucially needed. 

Despite its practical and theoretical importance, 
the asymptotic behavior of the stochastic gradient search with biased gradient estimation 
(also referred to as the biased stochastic gradient search) 
has not attracted much attention in the literature on stochastic optimization and 
stochastic approximation. 
To the best of the present authors' knowledge, 
the asymptotic properties of the biased stochastic gradient search 
(and the biased stochastic approximation) 
have only been analyzed in \cite{borkar}, \cite{chen1}, \cite{chen2} and \cite{chen3}. 
Although the results of \cite{borkar}, \cite{chen1}, \cite{chen2}, \cite{chen3} 
provide a good insight into the asymptotic behavior of the biased gradient search, 
they hold under restrictive conditions which are very hard to verify 
for complex stochastic gradient algorithms. 
Moreover, unless the objective function is of a simple form (e.g., convex or polynomial), 
none of \cite{borkar}, \cite{chen1}, \cite{chen2}, \cite{chen3} offers 
explicit bounds on the asymptotic bias of the algorithm iterates. 

In this paper, we study the asymptotic behavior of the biased gradient search. 
Using arguments based on the dynamic system theory (chain-recurrence)
and the differential geometry 
(Yomdin theorem and Lojasiewicz inequalities), 
we prove that the algorithm iterates converge to a vicinity of the set of minima. 
Relying on the same arguments, 
we also derive relatively tight bounds on the radius of the vicinity, 
i.e., on the asymptotic bias of the algorithm iterates. 
The obtained results hold under mild and easily verifiable conditions 
and cover a broad class of complex stochastic gradient algorithms. 
In this paper, we show how the obtained results can be applied to the asymptotic analysis of 
policy-gradient (reinforcement) learning and adaptive population Monte Carlo sampling.  
We also demonstrate how the obtained results can be used to assess the asymptotic bias of 
the recursive maximum split-likelihood estimation in hidden Markov models. 

The paper is organized as follows. 
The main results are presented in Section \ref{section1}, 
where the stochastic gradient search with additive noise is analyzed. 
In Section \ref{section2}, the asymptotic bias of the stochastic gradient search with 
Markovian dynamics is studied. 
Sections \ref{section3} -- \ref{section4} provide examples of the results of 
Sections \ref{section1} and \ref{section2}. 
In Section \ref{section3}, the policy-gradient (reinforcement) learning is considered, 
while the adaptive population Monte Carlo sampling is analyzed in Section \ref{section5}. 
Section \ref{section4} is devoted to the recursive maximum split-likelihood 
estimation in hidden Markov models. 
The results of Sections \ref{section1} -- \ref{section4} are proved in 
Sections \ref{section1.a*} -- \ref{section4*}. 

\section{Main Results}\label{section1}

In this section, the asymptotic behavior of the following algorithm is analyzed: 
\begin{align} \label{1.1}
	\theta_{n+1}
	=
	\theta_{n}
	-
	\alpha_{n} (\nabla f(\theta_{n} ) + \xi_{n} ), 
	\;\;\; 
	n\geq 0. 
\end{align}
Here, $f:\mathbb{R}^{d_{\theta} } \rightarrow \mathbb{R}$ 
is a differentiable function, 
while 
$\{\alpha_{n} \}_{n\geq 0}$ is a sequence of positive real numbers. 
$\theta_{0}$ is an $\mathbb{R}^{d_{\theta} }$-valued random variable 
defined on a probability space $(\Omega, {\cal F}, P )$, 
while 
$\{\xi_{n} \}_{n\geq 0}$ is an $\mathbb{R}^{d_{\theta} }$-valued 
stochastic process defined on 
the same probability space. 
To allow more generality, we assume that for each $n \geq 0$, 
$\xi_{n}$ is a random function of $\theta_{0},\dots,\theta_{n}$. 
In the area of stochastic optimization, 
recursion (\ref{1.1}) is known as a stochastic gradient search
(or stochastic gradient algorithm). 
The recursion minimizes function $f(\cdot )$, 
which is usually referred to as the objective function. 
Term $\nabla f(\theta_{n} ) + \xi_{n}$ is interpreted as a gradient estimator 
(i.e., an estimator of $\nabla f(\theta_{n} )$), 
while $\xi_{n}$ represents the estimator's noise (or error). 
For further details, see \cite{pflug}, \cite{spall} and 
references given therein. 

Throughout the paper, the following notation is used. 
$\|\cdot \|$ and $d(\cdot,\cdot )$ stand for the Euclidean norm 
and the distance induced by the Euclidean norm (respectively). 
For $t\in (0,\infty )$ and $n\geq 0$, 
$a(n,t)$ is the integer defined as 
\begin{align*}
	a(n,t)=\max\left\{k\geq n: \sum_{i=n}^{k-1} \alpha_{i} \leq t \right\}. 
\end{align*}
${\cal S}$ and $f({\cal S} )$ are the sets of stationary points and critical values of $f(\cdot )$, i.e., 
\begin{align}\label{1.21}
	{\cal S}=\{\theta \in \mathbb{R}^{d_{\theta } }:\nabla f(\theta ) = 0 \}, 
	\;\;\;\;\; 
	f({\cal S} )=\{f(\theta ): \theta \in S \}. 
\end{align}	
For $\theta\in\mathbb{R}^{d_{\theta } }$, 
$\pi(\cdot\; ;\theta )$ is the solution to the ODE 
$d\theta/dt=-\nabla f(\theta )$ satisfying $\pi(0;\theta )=\theta$. 
${\cal R}$ denotes the set of chain-recurrent points of this ODE, 
i.e., $\theta\in {\cal R}$ if and only if 
for any $\delta, t \in (0,\infty )$, 
there exist an integer $N\geq 1$, 
real numbers $t_{1},\dots,t_{N}\in [t,\infty )$ and 
vectors $\vartheta_{1},\dots,\vartheta_{N}\in\mathbb{R}^{d_{\theta } }$
(each of which can depend on $\theta$, $\delta$, $t$)
such that 
\begin{align}\label{1.11}
	\|\vartheta_{1}-\theta \|\leq\delta, 
	\;\;\;\;\; 
	\|\pi(t_{N};\vartheta_{N} ) - \theta \|\leq\delta, 
	\;\;\;\;\; 
	\|\vartheta_{k+1} - \pi(t_{k}; \vartheta_{k} ) \|\leq\delta 
\end{align}
for $1\leq k<N$.

Elements of ${\cal R}$ can be considered as limits to slightly perturbed solutions to 
the ODE $d\theta/dt=-\nabla f(\theta)$. 
As the piecewise linear interpolation of sequence $\{\theta_{n} \}_{n\geq 0}$
falls into the category of such solutions, 
the concept of chain-recurrence is tightly connected to the asymptotic behavior 
of stochastic gradient search. In \cite{benaim1}, \cite{benaim2}, it has been shown that for unbiased gradient estimates, 
all limit points of $\{\theta_{n} \}_{n\geq 0}$ belong to ${\cal R}$ and that each element of  ${\cal R}$ can potentially be 
a limit point of $\{\theta_{n} \}_{n\geq 0}$ with a non-zero probability. 

If $f(\cdot )$ is Lipschitz continuously differentiable, it can be established that ${\cal S}\subseteq{\cal R}$. 
If additionally $f({\cal S})$ is of a zero Lebesgue measure
(which holds when $f({\cal S})$ is discrete or 
when $f(\cdot )$ is $d_{\theta}$-times continuously differentiable), 
then ${\cal S}={\cal R}$. However, if $f(\cdot)$ is only Lipschitz continuously differentiable, 
then it is possible to have ${\cal R}\setminus{\cal S}\neq\emptyset$
(see \cite[Section 4]{hurley}). 
Hence, in general, a limit point of $\{\theta_{n} \}_{n\geq 0}$ is in ${\cal R}$
but not necessarily in ${\cal S}$. For more details on chain-recurrence, see 
\cite{benaim1}, \cite{benaim2}, \cite{borkar} and references therein. 
Given these results, it will prove useful to involve both ${\cal R}$ and ${\cal S}$ in
the asymptotic analysis of biased stochastic gradient search.

The algorithm (\ref{1.1}) is analyzed under the following assumptions:  

\begin{assumption} \label{a1.1}
$\lim_{n\rightarrow \infty } \alpha_{n} = 0$ 
and 
$\sum_{n=0}^{\infty } \alpha_{n} = \infty$. 
\end{assumption}

\begin{assumption} \label{a1.2}
$\{\xi_{n} \}_{n\geq 0}$ admits the decomposition $\xi_{n}=\zeta_{n}+\eta_{n}$ for each $n\geq 0$, 
where  
$\{\zeta_{n} \}_{n\geq 0}$ and 
$\{\eta_{n} \}_{n\geq 0}$ 
are
$\mathbb{R}^{d_{\theta} }$-valued stochastic processes 
(defined on $(\Omega,{\cal F},P)$) 
satisfying 
\begin{align}
	& \label{a1.2.1}
	\lim_{n\rightarrow \infty } 
	\max_{n\leq k < a(n,t) }
	\left\|\sum_{i=n}^{k} \alpha_{i} \zeta_{i} \right\|
	=0, 
	\;\;\;\;\; 
	\limsup_{n\rightarrow \infty } \|\eta_{n} \|
	<\infty
\end{align}
almost surely on $\{\sup_{n\geq 0}\|\theta_{n} \|<\infty \}$
for any $t\in (0,\infty )$.
\end{assumption}

\addtocounter{assumption}{1}

\begin{assumptionabc} \label{a1.3.a}
$\nabla f(\cdot )$ is locally Lipschitz continuous on $\mathbb{R}^{d_{\theta } }$. 
\end{assumptionabc}

\begin{assumptionabc} \label{a1.3.b}
$f(\cdot )$ is $p$-times differentiable on $\mathbb{R}^{d_{\theta } }$, 
where $p>d_{\theta }$. 
\end{assumptionabc}

\begin{assumptionabc} \label{a1.3.c}
$f(\cdot )$ is real-analytic on $\mathbb{R}^{d_{\theta } }$. 
\end{assumptionabc}

\begin{remark}\label{remark1.5}
Due to Assumption \ref{a1.1}, $a(n,t)$ is well-defined, finite and satisfies 
\begin{align}
	t
	\geq 
	\sum_{i=n}^{a(n,t)-1} \alpha_{i}
	=
	\sum_{i=n}^{a(n,t)} \alpha_{i} 
	-
	\alpha_{a(n,t)} 
	\geq 
	t - \alpha_{a(n,t)} 
\end{align}
for all $t\in(0,\infty)$, $n\geq 0$. 
Consequently, Assumption \ref{a1.1} yields 
\begin{align}\label{1.1501}
	\lim_{n\rightarrow\infty} \sum_{i=n}^{a(n,t)-1} \alpha_{i} 
	=
	\lim_{n\rightarrow\infty} \sum_{i=n}^{a(n,t)} \alpha_{i} 
	=
	t
\end{align}
for each $t\in(0,\infty )$. 
\end{remark}

Assumption \ref{a1.1} corresponds to the step-size sequence 
$\{\alpha_{n} \}_{n\geq 0}$ and is commonly used in the asymptotic analysis of 
stochastic gradient and stochastic approximation algorithms. 
In this or similar form, it is an ingredient of practically any asymptotic analysis of 
stochastic gradient search and stochastic approximation. 
Assumption \ref{a1.1} is satisfied if $\alpha_{n}=n^{-a}$ for $n\geq 1$, where $a\in(0,1]$. 

Assumption \ref{a1.2} is a noise condition. 
It can be interpreted as a decomposition of the gradient estimator's noise 
$\{\xi_{n} \}_{n\geq 0}$ 
into a zero-mean sequence $\{\zeta_{n} \}_{n\geq 0}$
(which is averaged out by step-sizes $\{\alpha_{n} \}_{n\geq 0}$) 
and the estimator's bias $\{\eta_{n} \}_{n\geq 0}$. 
Assumption \ref{a1.2} is satisfied if 
$\{\zeta_{n} \}_{n\geq 0}$ is a martingale-difference or mixingale 
sequence, 
and if $\{\eta_{n} \}_{n\geq 0}$ are continuous functions of $\{\theta_{n} \}_{n\geq 0}$. 
It also holds for gradient search with Markovian dynamics (see Section \ref{section2}). 
If the gradient estimator is unbiased  
(i.e., $\lim_{n\rightarrow \infty } \eta_{n} = 0$ almost surely), 
Assumption \ref{a1.2} reduces to the well-known
Kushner-Clark condition, the weakest noise assumption under which the almost sure 
convergence of (\ref{1.1}) can be demonstrated. 

Assumptions \ref{a1.3.a}, \ref{a1.3.b} and \ref{a1.3.c} 
are related to the objective function $f(\cdot )$ and its analytical properties. 
Assumption \ref{a1.3.a} is involved in practically any asymptotic result 
for stochastic gradient search 
(as well as in many other asymptotic and non-asymptotic results for stochastic and 
deterministic optimization). 
Although much more restrictive than Assumption \ref{a1.3.a}, 
Assumptions \ref{a1.3.b} and \ref{a1.3.c} hold for a number of algorithms 
routinely used in engineering, statistics, machine learning and operations research. 
In Sections \ref{section3} -- \ref{section4}, 
Assumptions \ref{a1.3.b} and \ref{a1.3.c} are shown for policy-gradient (reinforcement) learning, 
adaptive population Monte Carlo sampling 
and recursive maximum split-likelihood estimation in hidden Markov models. 
In \cite{tadic5}, Assumption \ref{a1.3.c} (which is a special case of Assumption \ref{a1.3.b}) 
has been demonstrated for recursive maximum (full) likelihood estimation in hidden Markov models. 
In \cite{tadic4}, the same assumption has also been demonstrated 
for supervised and temporal-difference learning, 
online principal component analysis, 
Monte Carlo optimization of controlled Markov chains
and recursive parameter estimation in linear stochastic systems. 
In \cite{tadic6}, we show Assumptions \ref{a1.3.b} and \ref{a1.3.c} 
for sequential Monte Carlo methods for the parameter estimation in non-linear non-Gaussian 
state-space models. 
It is also worth mentioning that the objective functions associated 
with online principal and independent component analysis 
(as well as with many other adaptive signal processing algorithms)
are often polynomial or rational, and hence, smooth and analytic, too
(see e.g., \cite{cichocki&amari} and references cited therein). 

As opposed to Assumption \ref{a1.3.a}, 
Assumptions \ref{a1.3.b} and \ref{a1.3.c} allow some sophisticated results
from the differential geometry to be applied to 
the asymptotic analysis of stochastic gradient search. 
More specifically, Yomdin theorem (qualitative version of Morse-Sard theorem; 
see \cite{yomdin} and Proposition \ref{proposition1.1} in Section \ref{section1.bc*})
can be applied to functions satisfying Assumption \ref{a1.3.b}, 
while Lojasiewicz inequalities 
(see \cite{lojasiewicz1}, \cite{lojasiewicz2};  
see also \cite{bierstone&milman}, \cite{kurdyka} and Proposition \ref{proposition1.2} in Section \ref{section1.bc*})
hold for functions fulfilling Assumption \ref{a1.3.c}. 
Using Yomdin theorem and Lojasiewicz inequalities, 
a more precise characterization of the asymptotic bias of the stochastic gradient search
can be obtained
(see Parts (ii) and (iii) of Theorem \ref{theorem1.1}). 

In order to state the main results of this section, we need some further notation. 
$\eta$ is 
the asymptotic magnitude of the gradient estimator's bias 
$\{\eta_{n} \}_{n\geq 0}$, i.e., 
\begin{align}\label{1.101}
	\eta=\limsup_{n\rightarrow \infty } \|\eta_{n} \|. 
\end{align}
For a compact set $Q\subset \mathbb{R}^{d_{\theta } }$, 
$\Lambda_{Q}$ denotes the event 
\begin{align}\label{1.103}
	\Lambda_{Q} 
	=
	\liminf_{n\rightarrow \infty } 
	\{\theta_{n} \in Q \} 
	=
	\bigcup_{n=0}^{\infty } \bigcap_{k=n}^{\infty } 
	\{\theta_{k} \in Q \}. 
\end{align}
With this notation,  
our main result on the asymptotic bias of the recursion (\ref{1.1}) can be stated as follows.  

\begin{theorem} \label{theorem1.1}
Suppose that Assumptions \ref{a1.1} and \ref{a1.2} hold. 
Let $Q\subset\mathbb{R}^{d_{\theta } }$ be any compact set. 
Then, the following is true: 
\begin{compactenum}[(i)]
\item
If $f(\cdot )$ satisfies Assumption \ref{a1.3.a}, 
there exists a (deterministic) non-decreasing function 
$\psi_{Q}:[0,\infty )\rightarrow[0,\infty)$ 
(independent of  $\eta$ and depending only on $f(\cdot )$)
such that 
$\lim_{t\rightarrow 0} \psi_{Q}(t) = \psi_{Q}(0) = 0$ 
and 
\begin{align}\label{t1.1.1*}
	\limsup_{n\rightarrow\infty } d(\theta_{n}, {\cal R}  )
	\leq 
	\psi_{Q}(\eta )
\end{align}
almost surely on $\Lambda_{Q}$. 
\item
If $f(\cdot )$ satisfies Assumption \ref{a1.3.b}, 
there exists a real number $K_{Q}\in (0,\infty )$
(independent of $\eta$ and depending only on $f(\cdot )$)
such that 
\begin{align}\label{t1.1.3*}
	\limsup_{n\rightarrow\infty } \|\nabla f(\theta_{n} ) \| 
	\leq 
	K_{Q} \eta^{q/2}, 
	\;\;\;\;\; 
	\limsup_{n\rightarrow\infty } f(\theta_{n} ) 
	-
	\liminf_{n\rightarrow\infty } f(\theta_{n} ) 
	\leq 
	K_{Q} \eta^{q}
\end{align}
almost surely on $\Lambda_{Q}$, 
where $q=(p-d_{\theta} )/(p-1)$. 
\item
If $f(\cdot )$ satisfies Assumption \ref{a1.3.c}, 
there exist real numbers $r_{Q}\in (0,1)$, $L_{Q}\in (0,\infty )$
(independent of $\eta$ and depending only on $f(\cdot )$)
such that 
\begin{align}\label{t1.1.5*}
	\limsup_{n\rightarrow\infty } \|\nabla f(\theta_{n} ) \| 
	\leq 
	L_{Q} \eta^{1/2}, 
	\;\;\;\;\; 
	\limsup_{n\rightarrow\infty } d(f(\theta_{n} ), f({\cal S} ) 
	\leq 
	L_{Q} \eta, 
	\;\;\;\;\; 
	\limsup_{n\rightarrow\infty } d(\theta_{n}, {\cal S} )
	\leq 
	L_{Q} \eta^{r_{Q} }
\end{align}
almost surely on $\Lambda_{Q}$. 
\end{compactenum}
\end{theorem}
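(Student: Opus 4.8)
The plan is to combine the classical ODE method for stochastic approximation with the chain-recurrence characterization and the quantitative smoothness results (Yomdin, Łojasiewicz). Throughout, fix a realization in $\Lambda_Q$, so that $\sup_{n\ge0}\|\theta_n\|<\infty$ and Assumption \ref{a1.2} gives us $\limsup_n\|\eta_n\|=\eta$ together with the asymptotic vanishing of the weighted noise sums $\sum_{i=n}^k\alpha_i\zeta_i$ over windows of length $a(n,t)$. The first step is to show that the piecewise-linear interpolation of $\{\theta_n\}$ is, asymptotically, a $\delta$-pseudotrajectory of the ODE $d\theta/dt=-\nabla f(\theta)$, where $\delta$ is controlled by $\eta$. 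Concretely, using local Lipschitz continuity of $\nabla f$ on the compact set $Q$ together with \eqref{1.1501} and \eqref{a1.2.1}, a Gronwall-type estimate on each interval $[n,a(n,t)]$ shows that for any $t\in(0,\infty)$,
\begin{align*}
	\limsup_{n\to\infty}\max_{n\le k\le a(n,t)}\bigl\|\theta_k-\pi\bigl(\textstyle\sum_{i=n}^{k-1}\alpha_i;\theta_n\bigr)\bigr\|
	\le
	C_Q(t)\,\eta
\end{align*}
for a constant $C_Q(t)$ depending only on $f$, $Q$ and $t$ (roughly $C_Q(t)\asymp t\,e^{L_Q t}$ with $L_Q$ a Lipschitz constant of $\nabla f$ on a compact enlargement of $Q$).

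From this uniform-on-windows bound, part (i) follows essentially by definition of chain-recurrence: any limit point $\bar\theta$ of $\{\theta_n\}$ can, by concatenating such approximate ODE segments, be joined to itself by $(\delta,t)$-chains with $\delta\le c\,C_Q(t)\eta$ for every $t$, up to an additive error that tends to $0$ as $n\to\infty$; hence $d(\bar\theta,{\cal R})$ is bounded by a function $\psi_Q(\eta)$ built from the moduli $C_Q(t)$ that vanishes as $\eta\to0$. The monotone, deterministic function $\psi_Q$ is obtained by taking an appropriate infimum/diagonalization over $t$. This is the conceptually cleanest part and should be short once the pseudotrajectory estimate is in hand.

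Parts (ii) and (iii) sharpen (i) by feeding the pseudotrajectory estimate into the differential-geometric inputs. For (ii), the idea is that along a genuine gradient flow $f$ decreases at rate $\|\nabla f\|^2$, so over a window of flow-length $t$ one has $f(\theta_n)-f(\theta_{a(n,t)})\ge \int_0^t\|\nabla f(\pi(s;\theta_n))\|^2\,ds - O(\eta)$; since $f$ is bounded on $Q$ and its oscillation along the iterates must be controlled, this forces $\|\nabla f(\theta_n)\|$ to be small infinitely often. Yomdin's theorem (Proposition \ref{proposition1.1}) quantifies how the set where $\|\nabla f\|\le\varepsilon$ sits near critical values: it bounds the $d_\theta$-dimensional "reach" of near-critical levels, giving the exponent $q=(p-d_\theta)/(p-1)$ relating $\|\nabla f(\theta_n)\|$ to $\eta$ and then relating the $f$-oscillation to $\eta^q$. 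For (iii) one replaces Yomdin by the Łojasiewicz gradient inequality (Proposition \ref{proposition1.2}): near ${\cal S}$, $\|\nabla f(\theta)\|\ge c\,|f(\theta)-f(\theta^\ast)|^{1-r}$ locally, which upgrades "small gradient" to "close in value to $f({\cal S})$" with a linear rate in $\eta$, and then "close to ${\cal S}$" with the Łojasiewicz exponent $r_Q$; a covering argument over $Q$ produces global constants $r_Q,L_Q$.

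The main obstacle is the bookkeeping in the second step of parts (ii)–(iii): one must convert the \emph{local}, pointwise geometric inequalities (valid near individual critical points, with point-dependent constants) into \emph{uniform} statements over the compact set $Q$, while simultaneously carrying the $O(\eta)$ error from the pseudotrajectory approximation through the (possibly small) Łojasiewicz exponents without the constants blowing up. Handling the interplay between the window length $t$ (which one wants large to see $f$ decrease, but which inflates $C_Q(t)$ exponentially) and the target accuracy is the delicate quantitative point; the resolution is to choose $t$ as a suitable slowly-growing function of $1/\eta$ and absorb the resulting logarithmic factors into the exponents, which is why the final bounds feature $\eta^{q/2}$, $\eta^q$, and $\eta^{r_Q}$ rather than $\eta$ itself.
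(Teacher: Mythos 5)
Your plan for Part (i) is essentially the paper's: interpret the iterates as trajectories of a perturbation of $d\theta/dt=-\nabla f(\theta)$ of size $O(\eta)$ and convert ``approximately chain-recurrent'' into ``close to ${\cal R}$.'' The one step you assert without argument is exactly the nontrivial one: why the set of points that are chain-recurrent up to error $\gamma$ collapses onto ${\cal R}$ as $\gamma\to 0$ (equivalently, why the function $\phi_{Q}(\gamma)=\sup\{d(\theta,{\cal R}):\theta\ \gamma\text{-chain-recurrent in }Q\}$ satisfies $\lim_{\gamma\to 0}\phi_{Q}(\gamma)=0$). This is an upper-semicontinuity statement for chain-recurrent sets of the differential inclusion $d\theta/dt\in\{-\nabla f(\theta)+\vartheta:\|\vartheta\|\le\gamma\}$; the paper proves it in Lemma \ref{lemma1.a.2} by invoking the Bena\"{\i}m--Hofbauer--Sorin perturbation theorem, and then applies the differential-inclusion version of the limit-set theorem rather than a bare Gronwall/pseudotrajectory estimate. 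Your ``infimum/diagonalization over $t$'' does not by itself deliver this; you need to cite or prove the perturbation result.

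For Parts (ii)--(iii) there is a genuine gap: you have the right ingredients but not the argument that combines them, and your account of where the exponents come from is incorrect. Yomdin's theorem (Proposition \ref{proposition1.1}) bounds the one-dimensional Lebesgue measure of the set of near-critical \emph{values} $A_{Q,\varepsilon}=\{f(\theta):\theta\in Q,\ \|\nabla f(\theta)\|\le\varepsilon\}$ by $M_{Q}\varepsilon^{q}$; it does not directly relate $\|\nabla f(\theta_{n})\|$ to $\eta$, and ``$\|\nabla f(\theta_{n})\|$ small infinitely often'' is far weaker than the claimed $\limsup_{n}\|\nabla f(\theta_{n})\|\le K_{Q}\eta^{q/2}$. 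The paper's mechanism is a two-sided comparison of the asymptotic oscillation $\Delta=\limsup_{n}f(\theta_{n})-\liminf_{n}f(\theta_{n})$: (a) $\Delta\le C\eta^{q}$, proved by contradiction --- if $\Delta$ exceeded the measure of $A_{Q,\gamma}$ with $\gamma\asymp\eta$, one could pick a regular level $c\notin A_{Q,\gamma}$ inside the oscillation range, and every upcrossing of level $c$ would force a definite \emph{decrease} of $f$ over a window of fixed length $\tau$ (because $\|\nabla f\|>\gamma>2\eta$ at the crossing dominates the noise and the second-order terms in (\ref{1.1*})), contradicting infinitely many upcrossings (Lemma \ref{lemma1.2}); and (b) $\Delta\ge c\,\phi^{2}$ whenever $\phi=\limsup_{n}\|\nabla f(\theta_{n})\|>2\eta$, obtained by the same descent inequality applied at times where $\|\nabla f(\theta_{n})\|$ is near $\phi$ (Lemma \ref{lemma1.3}). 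Combining (a) and (b) gives $\phi\lesssim\eta^{q/2}$; under analyticity the same scheme runs with exponent $s=1$ and the \L ojasiewicz inequalities of Proposition \ref{proposition1.2} then convert the gradient bound into the bounds on $d(f(\theta_{n}),f({\cal S}))$ and $d(\theta_{n},{\cal S})$. In particular the window length is a fixed constant $\tau$ depending on $Q$ (and polynomially on $\gamma$), not a slowly growing function of $1/\eta$, and no logarithmic factors are absorbed into the exponents: $q$ comes from Yomdin, $q/2$ from balancing $\phi^{2}\lesssim\eta^{q}$, and $r_{Q}$ from the \L ojasiewicz exponent. Without steps (a) and (b) your sketch does not yield the stated rates.
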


Theorem \ref{theorem1.1} is proved in Sections \ref{section1.a*} and \ref{section1.bc*}, 
while its global version is provided in Appendix \ref{appendix2}.  

\begin{vremark}
If Assumption \ref{a1.3.b} (or Assumption \ref{a1.3.c}) is satisfied, 
then ${\cal S}={\cal R}$. 
Hence, under Assumption \ref{a1.3.b}, 
(\ref{t1.1.1*}) still holds if ${\cal R}$ is replaced with ${\cal S}$. 
\end{vremark}

\begin{remark}\label{remark1.1}
Function $\psi_{Q}(\cdot )$ depends on $f(\cdot )$ in the following two ways. 
First, $\psi_{Q}(\cdot )$ depends on $f(\cdot )$ 
through the chain-recurrent set ${\cal R}$ and its geometric properties. 
In addition to this, $\psi_{Q}(\cdot )$ depends on $f(\cdot )$ 
through upper bounds of $\|\nabla f(\cdot ) \|$
and Lipschitz constants of $\nabla f(\cdot )$. 
An explicit construction of $\psi_{Q}(\cdot )$ is provided in the proof of 
Part (i) of Theorem \ref{theorem1.1} (Section \ref{section1.a*}). 
\end{remark}

\begin{remark}\label{remark1.2}
As $\psi_{Q}(\cdot )$, constants $K_{Q}$ and $L_{Q}$ depend on $f(\cdot )$ 
through upper bounds of $\|\nabla f(\cdot ) \|$
and Lipschitz constants of 
$\nabla f(\cdot )$. 
$K_{Q}$ and $L_{Q}$ also depend on $f(\cdot )$ through the Yomdin and Lojasiewicz constants 
(quantities $M_{Q}$, $M_{1,Q}$, $M_{2,Q}$ specified in 
Propositions \ref{proposition1.1}, \ref{proposition1.2}). 
Explicit formulas for $K_{Q}$ and $L_{Q}$ are included in the proof of 
Parts (ii) and (iii) of Theorem \ref{theorem1.1}
(Section \ref{section1.bc*}). 
\end{remark}

According to the literature on stochastic optimization and stochastic approximation, 
stochastic gradient search with unbiased gradient estimates (the case when $\eta=0$) exhibits 
the following asymptotic behavior. 
Under mild conditions, sequences $\{\theta_{n} \}_{n\geq 0}$ and 
$\{f(\theta_{n} ) \}_{n\geq 0}$ converge to ${\cal R}$ and $f({\cal R})$ (respectively), 
i.e, 
\begin{align}\label{1.501}
	\lim_{n\rightarrow\infty} d(\theta_{n}, {\cal R} ) =0, 
	\;\;\;\;\; 
	\lim_{n\rightarrow\infty} d(f(\theta_{n}), f({\cal R}) ) = 0
\end{align}
almost surely on $\{\sup_{n\geq 0} \|\theta_{n} \|<\infty \}$ 
(see \cite[Proposition 4.1, Theorem 5.7]{benaim2} which hold under
Assumptions \ref{a1.1}, \ref{a1.2}, \ref{a1.3.a}). 
Under more restrictive conditions, 
sequences $\{\theta_{n} \}_{n\geq 0}$ and $\{ f(\theta_{n} ) \}_{n\geq 0}$ converge
to $\cal S$ and a point in $f(\cal S)$ (respectively), 
i.e., 
\begin{align}\label{1.503}
	\lim_{n\rightarrow \infty } d(\theta_{n}, {\cal S} ) = 0, 
	\;\;\;\;\; 
	\lim_{n\rightarrow \infty } \nabla f(\theta_{n} ) = 0, 
	\;\;\;\;\; 
	\lim_{n\rightarrow \infty } d(f(\theta_{n} ), f({\cal S} ) ) = 0, 
	\;\;\;\;\; 
	\limsup_{n\rightarrow \infty } f(\theta_{n} ) = 
\liminf_{n\rightarrow \infty } f(\theta_{n} ) 
\end{align}
almost surely on $\{\sup_{n\geq 0} \|\theta_{n} \|<\infty \}$ 
(see \cite[Corollary 6.7]{benaim2} 
which holds under Assumptions \ref{a1.1}, \ref{a1.2}, \ref{a1.3.b}). 
The same asymptotic behavior occurs when Assumptions \ref{a1.1}, \ref{a1.3.a} hold and 
$\{\xi_{n} \}_{n\geq 0}$ is a martingale-difference sequence 
(see \cite[Proposition 1]{bertsekas&tsitsiklis2}).
When the gradient estimator is biased (the case where $\eta > 0$), 
this is not true any more. 
Now, the quantities 
\begin{align}\label{1.5}
	\limsup_{n\rightarrow \infty } d(\theta_{n}, {\cal S} ), \;\;\;\;\; 
	\limsup_{n\rightarrow \infty } \|\nabla f(\theta_{n} ) \|, \;\;\;\;\;  
	\limsup_{n\rightarrow \infty } d(f(\theta_{n} ), f({\cal S} ) ), \;\;\;\;\;  
	\limsup_{n\rightarrow \infty } f(\theta_{n} ) - \liminf_{n\rightarrow \infty } f(\theta_{n} )
\end{align}
are strictly positive and depend on $\eta$
(it is reasonable to expect these quantities 
to decrease in $\eta$ and to tend to zero as $\eta\rightarrow 0$). 
Hence, the quantities (\ref{1.5}) and their dependence on $\eta$
can be considered as a sensible characterization of 
the asymptotic bias of 
the gradient search with biased gradient estimation
(i.e., these quantities describe how biased stochastic gradient search 
deviates from the nominal behavior). 
In the case of algorithm (\ref{1.1}), such a characterization is provided by
Theorem \ref{theorem1.1}.  
The theorem includes tight, explicit 
bounds on the quantities (\ref{1.5}) in the terms 
of the gradient estimator's bias $\eta$ 
and analytical properties of $f(\cdot )$. 

The results of Theorem \ref{theorem1.1}  are of a local nature.  
They hold only on the event where algorithm (\ref{1.1}) is stable
(i.e., where sequence $\{\theta_{n} \}_{n\geq 0}$ belongs to a compact set $Q$). 
Stating results on the asymptotic bias of 
stochastic gradient search in such a local form is quite sensible due to the following 
reasons. 
The stability of stochastic gradient search is based on 
well-understood arguments which are rather different from 
the arguments used here to analyze the asymptotic bias. 
Moreover and more importantly, 
as demonstrated in Appendix \ref{appendix2}, 
it is relatively easy to get a global version of 
Theorem \ref{theorem1.1}  
by combining the theorem with 
the methods for verifying or ensuring the stability
(e.g., with the results of \cite{borkar&meyn} and \cite{chen3}). 
It is also worth mentioning that local asymptotic results are quite common  
in the areas of stochastic optimization and stochastic approximation 
(e.g., most of the results of \cite[Part II]{benveniste}, 
similarly as Theorem \ref{theorem1.1}, 
hold only on set $\Lambda_{Q}$). 

Gradient algorithms with biased gradient estimation are extensively used 
in  
system identification 
\cite{andrieu&doucet&singh&tadic}, 
\cite{doucet&defreitas&gordon}, 
\cite{doucet&tadic}, 
\cite{kantas&doucet&singh}, 
\cite{ljung}, 
discrete-event system optimization 
\cite{cassandras&lafortune}, \cite{henderson&meyn&tadic}, \cite{rubinstein1}, \cite{rubinstein2}, 
machine learning 
\cite{baxter&bartlett}, 
\cite{bertsekas&tsitsiklis1}, 
\cite{cao}, 
\cite{konda&tsitsiklis}, 
\cite{powell}, 
and statistics
\cite{andrieu&doucet&tadic}, 
\cite{cappe&moulines&ryden}, 
\cite{doucet&defreitas&gordon}, 
\cite{poyiadjis&doucet&singh}
\cite{ryden2}. 
To interpret results obtained by such an algorithm 
and to tune the algorithm parameters 
(e.g., to achieve better bias/variance balance and convergence rate), 
it is crucially important to 
understand the asymptotic properties 
of the biased stochastic gradient search. 
Despite its importance, 
the asymptotic behavior of the stochastic gradient search 
with biased gradient estimation 
has not received much attention in the literature on 
stochastic optimization and stochastic approximation. 
To the best of the present authors' knowledge, 
the asymptotic properties of the biased stochastic gradient search and 
biased stochastic approximation have been studied only in 
\cite[Section 5.3]{borkar}, \cite{chen1}, \cite{chen2}, \cite[Section 2.7]{chen3}. 
Although these results 
provide a good insight into 
the asymptotic behavior of the biased gradient search, 
they are based on restrictive conditions. 
More specifically, 
the results of 
\cite[Section 5.3]{borkar}, \cite{chen1}, \cite{chen2}, \cite[Section 2.7]{chen3}
hold only if $f(\cdot )$ is unimodal 
or if $\{\theta_{n} \}_{n\geq 0}$ belongs to the domain of an asymptotically stable 
attractor of $d\theta/dt = - \nabla f(\theta )$. 
In addition to this, 
the results of \cite[Section 5.3]{borkar}, \cite{chen1}, \cite{chen2}, \cite[Section 2.7]{chen3}
do not provide any explicit bound 
on the asymptotic bias of the stochastic gradient search 
unless $f(\cdot )$ is of a simple form (e.g., convex or polynomial).  
Unfortunately, in the case of complex stochastic gradient algorithms
(such as those studied in Sections \ref{section3} -- \ref{section4}), 
$f(\cdot )$ is usually multimodal 
with lot of unisolated local extrema and saddle points. 
For such algorithms, not only it is hard to verify 
the assumptions adopted in 
\cite[Section 5.3]{borkar}, \cite{chen1}, \cite{chen2}, \cite[Section 2.7]{chen3}, 
but these assumptions are likely not to hold at all. 

Relying on the chain-recurrence, 
Yomdin theorem 
and Lojasiewicz inequalities, 
Theorem \ref{theorem1.1} 
overcomes the described difficulties.  
The theorem allows the objective function $f(\cdot )$
to be multimodal (with manifolds of unisolated extrema and saddle points) 
and does not require $d\theta/dt = - \nabla f(\theta )$ 
to have an asymptotically stable attractor 
which is infinitely often visited by $\{\theta_{n} \}_{n\geq 0}$. 
In addition to this, 
Theorem \ref{theorem1.1} 
provides relatively tight explicit bounds 
on the asymptotic bias of algorithm (\ref{1.1}). 
Furthermore, 
as demonstrated in Sections \ref{section3} -- \ref{section4} and \cite{tadic6}, 
the theorem covers a broad class 
of stochastic gradient algorithms used in machine learning, 
Monte Carlo sampling and system identification.

\section{Stochastic Gradient Search with Markovian Dynamics}\label{section2}

In order to illustrate the results of Section \ref{section1} and 
to set up a framework for the analysis carried out in Sections 
\ref{section3} -- \ref{section4}, 
we apply Theorem \ref{theorem1.1} 
to stochastic gradient algorithms with Markovian dynamics. 
These algorithms are defined by the following difference equation: 
\begin{align} \label{2.1}
	\theta_{n+1}
	=
	\theta_{n} 
	-
	\alpha_{n} (F(\theta_{n}, Z_{n+1} ) + \eta_{n} ), 
	\;\;\; 
	n\geq 0. 
\end{align}
In this recursion, 
$F: \mathbb{R}^{d_{\theta} } \times \mathbb{R}^{d_{z} } \rightarrow \mathbb{R}^{d_{\theta } }$ is 
a Borel-measurable function, 
while $\{\alpha_{n} \}_{n\geq 0}$ is a sequence of positive real numbers. 
$\theta_{0}$ is an $\mathbb{R}^{d_{\theta } }$-valued random variable defined on 
a probability space $(\Omega, {\cal F}, P)$. 
$\{Z_{n} \}_{n\geq 0}$ is an $\mathbb{R}^{d_{z} }$-valued stochastic process 
defined on $(\Omega,{\cal F}, P)$, 
while $\{\eta_{n} \}_{n\geq 0}$ is an $\mathbb{R}^{d_{\theta} }$-valued stochastic process 
defined on the same probability space. 
$\{Z_{n} \}_{n\geq 0}$ is a Markov process controlled by 
$\{\theta_{n} \}_{n\geq 0}$, i.e., 
there exists a family of transition probability kernels 
$\{\Pi_{\theta }(\cdot,\cdot ) \}_{\theta \in \mathbb{R}^{d_{\theta } } }$
defined on $\mathbb{R}^{d_{z} }$
such that 
\begin{align}\label{2.3}
	P(Z_{n+1} \in B|\theta_{0},Z_{0},\dots,\theta_{n},Z_{n} ) 
	=
	\Pi_{\theta_{n} }(Z_{n}, B )
\end{align}
almost surely for any Borel-measurable set $B \subseteq \mathbb{R}^{d_{z} }$ and $n\geq 0$. 
$\{\eta_{n} \}_{n\geq 0}$ are random function of $\{\theta_{n} \}_{n\geq 0}$, 
i.e., $\eta_{n}$ is a random function of 
$\theta_{0},\dots,\theta_{n}$ for each $n\geq 0$. 
In the context of stochastic gradient search, 
$F(\theta_{n},Z_{n+1} ) + \eta_{n}$ represents a gradient estimator 
(i.e., an estimator of 
$\nabla f(\theta_{n} )$). 

The algorithm (\ref{2.1}) is analyzed under the following assumptions. 

\begin{assumption} \label{a2.1}
$\limsup_{n\rightarrow \infty } |\alpha_{n+1}^{-1} - \alpha_{n}^{-1} | < \infty$,    
$\sum_{n=0}^{\infty } \alpha_{n} = \infty$
and  
$\sum_{n=0}^{\infty } \alpha_{n}^{2} < \infty$. 
\end{assumption}

\begin{assumption} \label{a2.2}
There exist a differentiable function 
$f: \mathbb{R}^{d_{\theta } } \rightarrow \mathbb{R}$ 
and 
a Borel-measurable function 
$\tilde{F}: \mathbb{R}^{d_{\theta} } \times \mathbb{R}^{d_{z} } \rightarrow 
\mathbb{R}^{d_{\theta } }$
such that 
$\nabla f(\cdot )$ is locally Lipschitz continuous and 
\begin{align}\label{a2.2.1*}
	F(\theta, z ) 
	-
	\nabla f(\theta )
	=
	\tilde{F}(\theta, z ) 
	-
	(\Pi\tilde{F} )(\theta, z )
\end{align}
for each $\theta\in \mathbb{R}^{d_{\theta } }$, 
$z \in \mathbb{R}^{d_{z } }$, 
where 
$(\Pi\tilde{F} )(\theta, z ) 
= \int \tilde{F}(\theta, z' ) \Pi_{\theta }(z, dz' )$. 
\end{assumption}

\begin{assumption} \label{a2.3}
For any compact set $Q\subset \mathbb{R}^{d_{\theta } }$, 
there exists a Borel-measurable function 
$\varphi_{Q}: \mathbb{R}^{d_{z} } \rightarrow [1,\infty )$ such that 
\begin{align*}
	&
	\max\{
	\|F(\theta,z ) \|, \|\tilde{F}(\theta,z ) \|, \|(\Pi \tilde{F} )(\theta,z ) \|
	\}
	\leq 
	\varphi_{Q}(z ), 
	\\
	&
	\|(\Pi \tilde{F} )(\theta',z ) - (\Pi \tilde{F} )(\theta'',z ) \|
	\leq 
	\varphi_{Q}(z) \|\theta' - \theta'' \| 
\end{align*}
for all 
$\theta, \theta', \theta'' \in Q$, $z \in \mathbb{R}^{d_{z} }$.  
Moreover, 
\begin{align*} 
	\sup_{n\geq 0}
	E\left(
	\varphi_{Q}^{2}(Z_{n+1} ) 
	I_{\{\tau_{Q} > n \} }
	|\theta_{0}=\theta, Z_{0}=z 
	\right)
	< 
	\infty
\end{align*}
for all $\theta \in \mathbb{R}^{d_{\theta } }$, $z \in \mathbb{R}^{d_{z} }$, 
where $\tau_{Q}$ is the stopping time defined by 
$
	\tau_{Q} = 
	\inf\left(\{n\geq 0: \theta_{n} \not\in Q \}\cup\{\infty \} \right) 
$.
\end{assumption}

\begin{assumption} \label{a2.4}
$
	\limsup_{n\rightarrow \infty } 
	\|\eta_{n} \| 
	< 
	\infty 
$ 
almost surely on $\{\sup_{n\geq 0} \|\theta_{n} \| < \infty \}$. 
\end{assumption}

Let ${\cal S}$ and $f({\cal S} )$ have the same meaning as in (\ref{1.21})
($f(\cdot )$ is now specified in Assumption \ref{a2.2}), 
while ${\cal R}$ is the set of chain-recurrent points of 
the ODE $d\theta/dt=-\nabla f(\theta )$ (for details on chain-recurrence, 
see Section \ref{section1}). 
Moreover, let $\eta$ have the same meaning as in (\ref{1.101}).  
Then, our results on the asymptotic behavior of the recursion 
(\ref{2.1}) read as follows. 

\begin{theorem}\label{theorem2.1}
Suppose that Assumptions \ref{a2.1} -- \ref{a2.4} hold. 
Let $Q\subset\mathbb{R}^{d_{\theta} }$ be any compact set. 
Then, the following is true: 
\begin{compactenum}[(i)]
\item
If $f(\cdot )$ (specified in Assumption \ref{a2.2}) satisfies Assumption \ref{a1.3.a}, 
Part (i) of Theorem \ref{theorem1.1} holds. 
\item
If $f(\cdot )$ (specified in Assumption \ref{a2.2}) satisfies Assumption \ref{a1.3.b}, 
Part (ii) of Theorem \ref{theorem1.1} holds. 
\item
If $f(\cdot )$ (specified in Assumption \ref{a2.2}) satisfies Assumption \ref{a1.3.c}, 
Part (iii) of Theorem \ref{theorem1.1} holds. 
\end{compactenum}
\end{theorem}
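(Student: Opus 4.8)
The plan is to reduce the Markovian recursion (\ref{2.1}) to the additive-noise recursion (\ref{1.1}) and then invoke Theorem \ref{theorem1.1}. By the Poisson-equation identity (\ref{a2.2.1*}) of Assumption \ref{a2.2}, (\ref{2.1}) can be rewritten as $\theta_{n+1}=\theta_n-\alpha_n(\nabla f(\theta_n)+\xi_n)$ with $\xi_n=\zeta_n+\eta_n$, where
\begin{align*}
	\zeta_n=\tilde{F}(\theta_n,Z_{n+1})-(\Pi\tilde{F})(\theta_n,Z_n)+\big[(\Pi\tilde{F})(\theta_n,Z_n)-(\Pi\tilde{F})(\theta_n,Z_{n+1})\big]
\end{align*}
and $\eta_n$ is the bias term of (\ref{2.1}). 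Assumption \ref{a2.1} trivially implies Assumption \ref{a1.1} (note $\sum_n\alpha_n^2<\infty$ forces $\alpha_n\to0$), and Assumption \ref{a2.4} is exactly the second half of (\ref{a1.2.1}). Hence the whole argument hinges on verifying the averaging condition
\begin{align*}
	\lim_{n\to\infty}\ \max_{n\leq k<a(n,t)}\Big\|\sum_{i=n}^{k}\alpha_i\zeta_i\Big\|=0
\end{align*}
almost surely on $\{\sup_{n\geq0}\|\theta_n\|<\infty\}$ for every $t\in(0,\infty)$; once this holds, Assumption \ref{a1.2} is in force and Parts (i)--(iii) of the claim follow verbatim from the corresponding parts of Theorem \ref{theorem1.1} (with the same $\psi_Q$, $K_Q$, $q$, $r_Q$, $L_Q$). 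Since $\{\sup_n\|\theta_n\|<\infty\}=\bigcup_m\Lambda_{\{\|\theta\|\leq m\}}$ up to a $P$-null set, it is enough to establish the averaging property on each $\Lambda_Q$, $Q$ compact; so one fixes $Q$, works on $\Lambda_Q$, and localizes using the stopping time $\tau_Q$ of Assumption \ref{a2.3}.

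To prove the averaging property I would split $\alpha_i\zeta_i$ into a martingale-difference part and a remainder handled by summation by parts. With $\mathcal{F}_n=\sigma\{\theta_0,Z_0,\dots,\theta_n,Z_n\}$, property (\ref{2.3}) makes $\varepsilon_{i+1}:=\tilde{F}(\theta_i,Z_{i+1})-(\Pi\tilde{F})(\theta_i,Z_i)$ an $\{\mathcal{F}_n\}$-martingale-difference sequence, and the uniform second-moment bound of Assumption \ref{a2.3} together with $\sum_i\alpha_i^2<\infty$ shows that $\sum_i\alpha_i\varepsilon_{i+1}I_{\{\tau_Q>i\}}$ is an $L^2$-bounded martingale, hence a.s.\ convergent, so its increments over windows $[n,a(n,t))$ tend to $0$. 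For the remaining term I telescope
\begin{align*}
	\sum_{i=n}^{k}\alpha_i\big[(\Pi\tilde{F})(\theta_i,Z_i)-(\Pi\tilde{F})(\theta_i,Z_{i+1})\big]
	&=\sum_{i=n}^{k}\alpha_i\big[(\Pi\tilde{F})(\theta_i,Z_i)-(\Pi\tilde{F})(\theta_{i+1},Z_{i+1})\big]\\
	&\quad+\sum_{i=n}^{k}\alpha_i\big[(\Pi\tilde{F})(\theta_{i+1},Z_{i+1})-(\Pi\tilde{F})(\theta_i,Z_{i+1})\big].
\end{align*}
Abel summation turns the first sum into $\alpha_n(\Pi\tilde{F})(\theta_n,Z_n)-\alpha_k(\Pi\tilde{F})(\theta_{k+1},Z_{k+1})+\sum_{i=n+1}^{k}(\alpha_i-\alpha_{i-1})(\Pi\tilde{F})(\theta_i,Z_i)$; on $\Lambda_Q$ each kernel value is dominated by $\varphi_Q(Z_i)$, $\alpha_n\to0$, and $|\alpha_i-\alpha_{i-1}|=O(\alpha_i^2)$ by the first condition in Assumption \ref{a2.1}, while $\sum_i\alpha_i^2\varphi_Q^2(Z_{i+1})I_{\{\tau_Q>i\}}<\infty$ a.s.\ by Assumptions \ref{a2.1} and \ref{a2.3}, so the window increments of this piece vanish. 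The second sum is bounded, via the Lipschitz estimate of Assumption \ref{a2.3} and $\|\theta_{i+1}-\theta_i\|\leq\alpha_i(\varphi_Q(Z_{i+1})+\|\eta_i\|)$, by $\sum_i\alpha_i^2\varphi_Q(Z_{i+1})(\varphi_Q(Z_{i+1})+\|\eta_i\|)$, which is a.s.\ finite on $\Lambda_Q$ by Assumptions \ref{a2.3} and \ref{a2.4}; hence its window increments vanish as well. Collecting the three pieces gives the averaging property.

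The main obstacle is the localization bookkeeping. Assumption \ref{a2.3} only controls moments while the iterates remain in $Q$ (through $\tau_Q$), whereas $\Lambda_Q$ permits finitely many earlier excursions out of $Q$; one therefore runs the martingale and Abel-summation estimates on the events $\bigcap_{k\geq m}\{\theta_k\in Q\}$, $m\geq1$, and lets $m\to\infty$. This is legitimate because every error term is a \emph{tail} quantity --- an increment over a window $[n,a(n,t))$ with $n\to\infty$, or a tail of an a.s.\ convergent series --- and is thus insensitive to the initial segment of the trajectory. Everything else is a routine transcription of Theorem \ref{theorem1.1}.
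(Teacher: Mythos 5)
Your proposal is correct and follows essentially the same route as the paper: the identical Poisson-equation decomposition of $\zeta_n$ into a martingale-difference part (handled by an $L^{2}$/Doob argument using Assumptions \ref{a2.1}, \ref{a2.3}), a Lipschitz-in-$\theta$ increment of $\Pi\tilde F$, and an Abel-summed telescoping part, after which Assumption \ref{a1.2} holds and Theorem \ref{theorem1.1} is invoked verbatim. The one cosmetic difference is the localization: the paper works on $\bigcap_{n\geq 0}\{\theta_{n}\in Q\}$ and lets $Q$ grow (so $\tau_{Q}=\infty$ there and the moment bound of Assumption \ref{a2.3} applies directly), which is slightly cleaner than intersecting the tail events $\bigcap_{k\geq m}\{\theta_{k}\in Q\}$ for a fixed $Q$, because that moment bound is anchored at time $0$ through $\tau_{Q}$.
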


Theorem \ref{theorem2.1} is proved in Section \ref{section2*}, 
while its global version is provided in Appendix \ref{appendix3}.  

Assumption \ref{a2.1} is related to the sequence  
$\{\alpha_{n} \}_{n\geq 0}$. 
It is satisfied if 
$\alpha_{n} = 1/n^{a}$ for $n\geq 1$, 
where $a \in (1/2,1]$ is a constant. 
Assumptions \ref{a2.2} and \ref{a2.3} 
correspond to   
the stochastic process 
$\{Z_{n} \}_{n\geq 0}$
and are standard for the asymptotic analysis of 
stochastic approximation algorithms with Markovian dynamics. 
Basically, Assumptions \ref{a2.2} and \ref{a2.3} require 
the Poisson equation associated with algorithm (\ref{2.1}) 
to have a solution which is Lipschitz continuous in $\theta$. 
They hold if the following is satisfied: 
(i) $\Pi_{\theta}(\cdot,\cdot)$ is geometrically ergodic for each $\theta\in\mathbb{R}^{d_{\theta}}$, 
(ii) the convergence rate of $\Pi_{\theta}^{n}(\cdot,\cdot)$ is locally uniform in $\theta$, 
and 
(iii) $\Pi_{\theta}(\cdot,\cdot)$ is locally Lipschitz continuous in $\theta$ on $\mathbb{R}^{d_{\theta}}$ 
(for further details see, \cite[Chapter II.2]{benveniste}, 
\cite[Chapter 17]{meyn&tweedie} and references cited therein). 
Assumptions \ref{a2.2} and \ref{a2.3} have been introduced by 
M{\'e}tivier and Priouret in \cite{metivier&priouret1} 
(see also \cite[Part II]{benveniste}), 
and later generalized by Kushner and his co-workers 
(see \cite{kushner&yin} and references cited therein). 
However, none of these results cover the scenario where biased gradient estimates are used. Theorem \ref{theorem2.1} fills this gap 
in the literature on stochastic optimization and stochastic approximation.

Regarding Theorem \ref{theorem2.1}, 
the following note is in order.  
As already mentioned in the beginning of the section, 
the purpose of the theorem 
is illustrating the results of Section \ref{section1} 
and providing a framework for studying the examples 
presented in the next few sections. 
Since these examples perfectly fit into the framework 
developed by Metivier and Priouret, 
more general assumptions and settings of 
\cite{kushner&yin} are not considered here 
in order to keep the exposition as concise as possible. 

\section{Example 1: Reinforcement Learning}\label{section3} 

In this section, Theorems \ref{theorem1.1} and \ref{theorem2.1} are applied 
to the asymptotic analysis of policy-gradient search for average-cost Markov decision problems. 
Policy-gradient search is one of the most important classes of 
reinforcement learning  
(for further details, see e.g., \cite{bertsekas&tsitsiklis1}, \cite{powell}). 

In order to define controlled Markov chains with parametrized randomized control 
and to formulate the corresponding average-cost decision problems,  
we use the following notation. 
$d_{\theta }\geq 1$, $N_{x} > 1$, $N_{y} > 1$ are integers, 
while ${\cal X}$, ${\cal Y}$ are the sets 
\begin{align*}
	{\cal X} = \{1,\dots, N_{x} \}, 
	\;\;\;\;\; 
	{\cal Y} = \{1,\dots, N_{y} \}. 
\end{align*}
$\phi(x,y)$ is a non-negative (real-valued) function of $(x,y) \in {\cal X} \times {\cal Y}$. 
$p(x'|x,y)$ and $q_{\theta}(y|x)$ are non-negative (real-valued) functions of 
$(\theta,x,x',y) \in \mathbb{R}^{d_{\theta} }\times{\cal X}\times{\cal X}\times{\cal Y}$
with the following properties: 
$q_{\theta}(y|x)$ is differentiable in $\theta$ for each $\theta\in\mathbb{R}^{d_{\theta} }$, 
$x\in{\cal X}$, $y\in{\cal Y}$, 
and
\begin{align*}
	\sum_{x'\in{\cal X} } p(x'|x,y) = 1, 
	\;\;\;\;\; 
	\sum_{y'\in{\cal Y} } q_{\theta}(y'|x) = 1
\end{align*}
for the same $\theta$, $x$, $y$. 
For $\theta\in \mathbb{R}^{d_{\theta } }$, 
$\{(X_{n}^{\theta }, Y_{n}^{\theta } ) \}_{n\geq 0}$
is an ${\cal X} \times {\cal Y}$-valued Markov chain 
which is defined on a (canonical) probability space 
$(\Omega, {\cal F}, P_{\theta } )$
and which admits  
\begin{align*}
	P_{\theta }(X_{n+1}^{\theta } = x', Y_{n+1}^{\theta } = y'
	|X_{n}^{\theta } = x, Y_{n}^{\theta } = y )
	=
	q_{\theta }(y'|x') p(x'|x,y)
\end{align*}
for each $x,x' \in {\cal X}$, $y,y' \in {\cal Y}$. 
$f(\cdot )$ is a function defined by 
\begin{align}\label{3.501}
	f(\theta )
	=
	\lim_{n\rightarrow \infty }
	E_{\theta }\left(\frac{1}{n} \sum_{i=1}^{n} \phi(X_{i}^{\theta }, Y_{i}^{\theta } ) \right)
\end{align}
for $\theta \in \mathbb{R}^{d_{\theta } }$. 
With this notation, 
an average-cost Markov decision problem with parameterized randomized control 
can be defined as the minimization of $f(\cdot )$. 
In the literature on reinforcement learning and operations research, 
$\{X_{n}^{\theta } \}_{n\geq 0}$ are referred to as a controlled Markov chain, 
while $\{Y_{n}^{\theta } \}_{n\geq 0}$ are called control actions. 
$p(x'|x,y)$ is referred to as the (chain) transition probability, 
while 
$q_{\theta }(y|x)$
is called the (control) action probability. 
$\theta$ is a parameter indexing the action probability. 
For further details on Markov decision processes, 
see \cite{bertsekas&tsitsiklis1}, \cite{powell}, and references cited therein. 

Since $f(\cdot )$ and its gradient rarely admit a close-form expression, 
$f(\cdot )$ is minimized using methods based on stochastic gradient search 
and Monte Carlo gradient estimation.   
Such a method can be derived as follows. 
Let 
\begin{align*}
	s_{\theta }(x,y)
	=
	\frac{\nabla_{\theta } q_{\theta }(y|x) }{q_{\theta }(y|x) } 
\end{align*}
for $\theta\in \mathbb{R}^{d_{\theta } }$, $x\in {\cal X}$, $y\in {\cal Y}$. 
If $\{ (X_{n}^{\theta }, Y_{n}^{\theta } ) \}_{n\geq 0}$ is geometrically ergodic, 
we have
\begin{align*}
	\nabla f(\theta )
	= &
	\lim_{n\rightarrow \infty } 
	E_{\theta }\left(
	\phi(X_{n}^{\theta }, Y_{n}^{\theta } )
	\sum_{i=0}^{n-1} s_{\theta}(X_{n-i}^{\theta }, Y_{n-i}^{\theta }) 
	\right) 
\end{align*}
(see the proof of Lemma \ref{lemma3.2} and in particular (\ref{l3.2.1'''})). 
Hence, 
quantity 
\begin{align*}
	\phi(X_{n}^{\theta }, Y_{n}^{\theta } )
	\sum_{i=0}^{n-1} 
	s_{\theta }(X_{n-i}^{\theta }, Y_{n-i}^{\theta } )
\end{align*}
is an asymptotically consistent estimator of $\nabla f(\theta )$. 
To reduce its variance (which is usually very large for $n\gg 1$), 
term $s_{\theta }(X_{n-i}^{\theta }, Y_{n-i}^{\theta } )$ is `discounted'
by $\lambda^{i}$, 
where $\lambda \in [0,1)$ is a constant referred to as the discounting factor. 
This leads to the following gradient estimator: 
\begin{align}\label{3.1'}
	\phi(X_{n}^{\theta }, Y_{n}^{\theta } )
	\sum_{i=0}^{n-1} 
	\lambda^{i} 
	s_{\theta }(X_{n-i}^{\theta }, Y_{n-i}^{\theta } ). 
\end{align}
Gradient estimator (\ref{3.1'}) is biased and its bias is of the order $O(1-\lambda)$
when $\lambda\rightarrow 1$
(see Lemma \ref{lemma3.2}). 
Combining gradient search with estimator (\ref{3.1'}), we get the policy-gradient algorithm proposed in 
\cite{baxter&bartlett}. 
This algorithm is defined by the following difference equations:  
\begin{align}
	&\label{3.1}
	W_{n+1} 
	= 
	\lambda W_{n} + s_{\theta_{n} }(X_{n+1}, Y_{n+1} ), 
	\nonumber\\
	&
	\theta_{n+1} 
	=
	\theta_{n} 
	-
	\alpha_{n} \phi(X_{n+1}, Y_{n+1} ) 
	W_{n+1}, 
	\;\;\; n\geq 0. 
\end{align}
In the recursion (\ref{3.1}), 
$\{\alpha_{n} \}_{n\geq 0}$ is a sequence of positive reals, 
while  
$\theta_{0}, W_{0} \in \mathbb{R}^{d_{\theta } }$ are any (deterministic) vectors. 
$\{X_{n} \}_{n\geq 1}$ and $\{Y_{n} \}_{n\geq 1}$ 
are ${\cal X}$ and ${\cal Y}$ valued stochastic processes (respectively) 
generated 
through the following Monte Carlo simulations: 
\begin{align}\label{3.301}
	X_{n+1} &|\theta_{n}, X_{n}, Y_{n}, \dots, \theta_{0}, X_{0}, Y_{0} 
	\sim
	p(\cdot|X_{n},Y_{n} ), 
	\nonumber\\
	Y_{n+1} &|X_{n+1}, \theta_{n}, X_{n}, Y_{n}, \dots, \theta_{0}, X_{0}, Y_{0} 
	\sim
	q_{\theta_{n} }(\cdot|X_{n+1} ), 
	\;\;\;\;\; n\geq 0, 
\end{align}
where $X_{0}\in{\cal X}$, $Y_{0}\in{\cal Y}$ are deterministic quantities.\footnote 
{In (\ref{3.301}), $X_{n+1}$ is simulated from 
$p(\cdot|X_{n}, Y_{n} )$ independently of 
$\theta_{n}, \theta_{n-1}, X_{n-1}, Y_{n-1}, \dots, \theta_{0}, X_{0}, Y_{0}$, 
while $Y_{n+1}$ is simulated from $q_{\theta_{n} }(\cdot|X_{n+1} )$ independently of 
$X_{n}, Y_{n}, \theta_{n-1}, X_{n-1}, Y_{n-1}, \dots, \theta_{0}, X_{0}, Y_{0}$. }
Hence, $\{ (X_{n}, Y_{n} ) \}_{n\geq 1}$ satisfies 
\begin{align*}
	P(X_{n+1} = x, Y_{n+1} = y 
	|\theta_{n}, X_{n}, Y_{n}, \dots, \theta_{0}, X_{0}, Y_{0} )
	=
	q_{\theta_{n} }(y|x) p(x|X_{n}, Y_{n} )
\end{align*}
for all $x\in {\cal X}$, $y\in {\cal Y}$, $n\geq 1$. 

Algorithm (\ref{3.1}) is analyzed under the following assumptions. 

\begin{assumption} \label{a3.1}
For all $\theta\in\mathbb{R}^{d_{\theta } }$, $\{X_{n}^{\theta } \}_{n\geq 0}$ is irreducible and aperiodic. 
\end{assumption}

\begin{assumption} \label{a3.2}
For all $\theta\in\mathbb{R}^{d_{\theta } }$, $x\in{\cal X}$, $y\in{\cal Y}$, 
$s_{\theta}(x,y)$ is well-defined (and finite). 
Moreover, for each $x\in{\cal X}$, $y\in{\cal Y}$, 
$s_{\theta}(x,y)$ is locally Lipschitz continuous in $\theta$ 
on $\mathbb{R}^{d_{\theta } }$. 
\end{assumption}

\addtocounter{assumption}{1}

\begin{assumptionabc} \label{a3.3.a}
For each $x\in{\cal X}$, $y\in{\cal Y}$, 
$q_{\theta}(y|x)$ is $p$-times differentiable in $\theta$ 
on $\mathbb{R}^{d_{\theta } }$, 
where $p>d_{\theta }$. 
\end{assumptionabc}

\begin{assumptionabc} \label{a3.3.b}
For each $x\in{\cal X}$, $y\in{\cal Y}$, 
$q_{\theta}(y|x)$ is real-analytic in $\theta$ 
on $\mathbb{R}^{d_{\theta } }$. 
\end{assumptionabc}

Assumption \ref{a3.1} is related to the stability of 
the controlled Markov chain $\{X_{n}^{\theta } \}_{n\geq 0}$. 
In this or similar form, it is often involved in the asymptotic analysis of reinforcement learning algorithms
(see e.g., \cite{bertsekas&tsitsiklis1}, \cite{powell}). 
Assumptions \ref{a3.2}, \ref{a3.3.a} and \ref{a3.3.b} correspond to the parameterization of 
the action probabilities $q_{\theta }(y|x)$. 
They are satisfied for many commonly used parameterizations 
(such as natural, exponential and trigonometric).  

Let ${\cal S}$ and $f({\cal S} )$ have the same meaning as in (\ref{1.21})
($f(\cdot )$ is now defined in (\ref{3.501})), 
while ${\cal R}$ is the set of chain-recurrent points of 
the ODE $d\theta/dt=-\nabla f(\theta )$ (for details on chain-recurrence, 
see Section \ref{section1}). 
Moreover, for a compact set $Q\subset\mathbb{R}^{d_{\theta} }$, 
let $\Lambda_{Q}$ have the same meaning as in (\ref{1.103}). 
Then, our results on the asymptotic behavior of the recursion (\ref{3.1}) read as follows. 

\begin{theorem}\label{theorem3.1}
Suppose that Assumptions \ref{a2.1}, \ref{a3.1} and \ref{a3.2} hold. 
Let $Q\subset\mathbb{R}^{d_{\theta } }$ be any compact set. 
Then, the following is true: 
\begin{compactenum}[(i)]
\item
There exists a (deterministic) non-decreasing function 
$\psi_{Q}:[0,\infty )\rightarrow[0,\infty)$ 
(independent of  $\lambda$ and depending only on $\phi(x,y)$, $p(x'|x,y)$, $q_{\theta}(y|x)$)
such that 
$\lim_{t\rightarrow 0} \psi_{Q}(t) = \psi_{Q}(0) = 0$ 
and 
\begin{align*}
	\limsup_{n\rightarrow\infty } d(\theta_{n}, {\cal R} )
	\leq 
	\psi_{Q}(1-\lambda)
\end{align*}
almost surely on $\Lambda_{Q}$. 
\item
If (in addition to Assumptions \ref{a2.1}, \ref{a3.1} and \ref{a3.2}) 
Assumption \ref{a3.3.a} is satisfied, 
there exists a real number $K_{Q}\in (0,\infty )$
(independent of $\lambda$ and depending only on $\phi(x,y)$, $p(x'|x,y)$, $q_{\theta}(y|x)$)
such that 
\begin{align*}
	&
	\limsup_{n\rightarrow\infty } \|\nabla f(\theta_{n} ) \| 
	\leq 
	K_{Q} (1-\lambda)^{q/2}, 
	\;\;\;\;\; 
	\limsup_{n\rightarrow\infty } f(\theta_{n} ) 
	-
	\liminf_{n\rightarrow\infty } f(\theta_{n} ) 
	\leq 
	K_{Q} (1-\lambda)^{q}
\end{align*}
almost surely on $\Lambda_{Q}$, 
where $q=(p-d_{\theta} )/(p-1)$. 
\item
If (in addition to Assumptions \ref{a2.1}, \ref{a3.1} and \ref{a3.2})
Assumption \ref{a3.3.b} is satisfied, 
there exist real numbers $r_{Q}\in (0,1)$, $L_{Q}\in (0,\infty )$
(independent of $\lambda$ and depending only on $\phi(x,y)$, $p(x'|x,y)$, $q_{\theta}(y|x)$)
such that 
\begin{align*}
	&
	\limsup_{n\rightarrow\infty } \|\nabla f(\theta_{n} ) \| 
	\leq 
	L_{Q} (1\!-\!\lambda)^{1/2},  
	\;\;\; 
	\limsup_{n\rightarrow\infty } d(f(\theta_{n} ), f({\cal S} ) ) 
	\leq 
	L_{Q} (1\!-\!\lambda), 
	\;\;\; 
	\limsup_{n\rightarrow\infty } d(\theta_{n}, {\cal S} )
	\leq 
	L_{Q} (1\!-\!\lambda)^{r_{Q} }
\end{align*}
almost surely on $\Lambda_{Q}$. 
\end{compactenum}
\end{theorem}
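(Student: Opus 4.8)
The plan is to derive Theorem \ref{theorem3.1} as a corollary of Theorem \ref{theorem2.1} by exhibiting the policy-gradient recursion (\ref{3.1}) as an instance of the Markovian stochastic gradient recursion (\ref{2.1}), with the discounting error $O(1-\lambda)$ playing the role of the bias term $\{\eta_{n}\}_{n\geq 0}$. First I would introduce the natural augmented state $Z_{n+1}=(X_{n+1},Y_{n+1},W_{n+1})$ (or, since $W_{n+1}$ is itself a deterministic function of $\theta_{n}$ and the $\lambda$-weighted history of $s$-values, an equivalent finite-memory encoding) and check that under Assumptions \ref{a3.1}, \ref{a3.2} the process $\{Z_{n}\}_{n\geq 0}$ is a $\theta$-controlled Markov chain in the sense of (\ref{2.3}): irreducibility and aperiodicity of $\{X_{n}^{\theta}\}$ on the finite set ${\cal X}$, together with $q_{\theta}(\cdot|\cdot)$ being a genuine transition law, give geometric ergodicity of $\{(X_{n}^{\theta},Y_{n}^{\theta})\}$ with a rate that is locally uniform in $\theta$ (finiteness of ${\cal X}\times{\cal Y}$ makes all the relevant constants continuous in $\theta$, hence locally bounded). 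The contraction factor $\lambda<1$ in the $W$-recursion makes the $W$-component mix as well. This is where Lemma \ref{lemma3.2} (already invoked in the text, with its key identity (\ref{l3.2.1'''})) enters: it provides the decomposition showing that the expected value of $\phi(X_{n+1},Y_{n+1})W_{n+1}$ equals $\nabla f(\theta)$ up to an error of order $O(1-\lambda)$, uniformly over $\theta$ in a compact set.

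Second, I would set $F(\theta,Z_{n+1})+\eta_{n}:=\phi(X_{n+1},Y_{n+1})W_{n+1}$, splitting it so that $\eta_{n}$ absorbs exactly the $O(1-\lambda)$ discrepancy between the stationary mean of $\phi W$ and $\nabla f(\theta)$, while $F(\theta,z)-\nabla f(\theta)$ is a mean-zero (under $\Pi_{\theta}$) fluctuation admitting a Poisson-equation solution $\tilde F$ as required by Assumption \ref{a2.2}. Verifying Assumption \ref{a2.2} amounts to solving the Poisson equation for the finite-state controlled chain and checking local Lipschitz dependence on $\theta$; since $q_{\theta}(y|x)$ and $s_{\theta}(x,y)$ are locally Lipschitz in $\theta$ (Assumptions \ref{a3.2}), and on a finite state space the resolvent/fundamental matrix depends smoothly on the transition probabilities, $\tilde F(\theta,\cdot)$ and $(\Pi\tilde F)(\theta,\cdot)$ inherit local Lipschitz continuity in $\theta$. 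Assumption \ref{a2.3} then follows by taking $\varphi_{Q}(z)$ to be an affine function of $|\phi|$, $\|s\|$ and $\|W\|$ restricted to $Q$; the second-moment bound uniform in $n$ on the event $\{\tau_{Q}>n\}$ holds because, with $\theta_{n}\in Q$, the weights $W_{n}$ stay bounded in $L^{2}$ (geometric series with ratio $\lambda$, bounded $s$-values on the finite set ${\cal X}\times{\cal Y}$). Assumption \ref{a2.4} is immediate: $\|\eta_{n}\|\leq C_{Q}(1-\lambda)$ deterministically on $\Lambda_{Q}$, so $\limsup_{n}\|\eta_{n}\|<\infty$ and in fact $\eta=\limsup_{n}\|\eta_{n}\|\leq C_{Q}(1-\lambda)$. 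Assumption \ref{a2.1} is a hypothesis of the theorem.

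With all hypotheses of Theorem \ref{theorem2.1} in force, I would invoke its three parts directly. Part (i) of Theorem \ref{theorem2.1} (hence Part (i) of Theorem \ref{theorem1.1}) yields a function $\tilde\psi_{Q}$ with $\limsup_{n}d(\theta_{n},{\cal R})\leq\tilde\psi_{Q}(\eta)\leq\tilde\psi_{Q}(C_{Q}(1-\lambda))$ on $\Lambda_{Q}$; setting $\psi_{Q}(t):=\tilde\psi_{Q}(C_{Q}t)$ gives the claimed (i), and $\psi_{Q}$ depends only on $f(\cdot)$—i.e.\ on $\phi$, $p$, $q_{\theta}$—and not on $\lambda$, since the constant $C_{Q}$ from Lemma \ref{lemma3.2} depends only on $\phi,p,q_{\theta}$ and $Q$. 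For (ii), Assumption \ref{a3.3.a} transfers to Assumption \ref{a1.3.b} for $f$ via (\ref{3.501}): $f(\theta)$ is a stationary average of $\phi$ against the invariant law of the finite chain, which is a rational—hence $C^{\infty}$ wherever defined, and $p$-times differentiable—function of the entries $q_{\theta}(y|x)$, so $p$-fold differentiability of $q_{\theta}$ in $\theta$ gives the same for $f$; then Part (ii) of Theorem \ref{theorem2.1} gives the $\eta^{q/2}$ and $\eta^{q}$ bounds, and substituting $\eta\leq C_{Q}(1-\lambda)$ and enlarging the constant $K_{Q}$ produces the stated $(1-\lambda)^{q/2}$, $(1-\lambda)^{q}$ bounds. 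For (iii), Assumption \ref{a3.3.b} similarly gives real-analyticity of $f$ (a rational function of real-analytic entries, with nonvanishing denominator by ergodicity), hence Assumption \ref{a1.3.c}, and Part (iii) of Theorem \ref{theorem2.1} combined with $\eta\leq C_{Q}(1-\lambda)$ yields the three bounds with $r_{Q}\in(0,1)$.

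The main obstacle I anticipate is not the formal reduction but the bias estimate itself—establishing rigorously, uniformly over $\theta\in Q$, that the discounted estimator (\ref{3.1'}) has expectation within $O(1-\lambda)$ of $\nabla f(\theta)$, together with the Poisson-equation regularity (Assumption \ref{a2.2}) for the augmented chain including the $W$-coordinate. Both require careful bookkeeping of the geometric mixing of $\{(X_{n}^{\theta},Y_{n}^{\theta})\}$ against the truncation/discounting in $W$, and controlling how the relevant constants depend on $\theta$; this is exactly the content that must be packaged into Lemma \ref{lemma3.2} and the verification of Assumptions \ref{a2.2}--\ref{a2.3}. Once that analytic core is in place, Theorems \ref{theorem1.1} and \ref{theorem2.1} do the rest mechanically.
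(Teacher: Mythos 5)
Your proposal is correct and follows essentially the same route as the paper: augment the state with the eligibility trace $W_{n}$, split $\phi(X_{n+1},Y_{n+1})W_{n+1}$ into a mean-zero Markovian fluctuation (handled via the Poisson equation, using the locally uniform geometric ergodicity of the finite controlled chain) plus a deterministic bias $\eta(\theta_{n})$ of size $O(1-\lambda)$ uniformly on compacts, transfer the smoothness/analyticity of $q_{\theta}$ to $f$ through the rational dependence of the invariant law on the transition matrix, and then invoke Theorem \ref{theorem2.1}. The analytic core you flag as the main obstacle is exactly what the paper packages into Lemmas \ref{lemma3.1}--\ref{lemma3.3}.
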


Theorem \ref{theorem3.1} is proved in Section \ref{section3*}. 

\begin{vremark}
Function $\psi_{Q}(\cdot )$ depends on $\phi(x,y)$, $p(x'|x,y)$, $q_{\theta }(y|x)$ through
function $f(\cdot )$ (defined in (\ref{3.501})) and its properties
(see Remark \ref{remark1.1} for details). 
Function $\psi_{Q}(\cdot )$ also depends on $p(x'|x,y)$, $q_{\theta }(y|x)$ 
through the ergodicity properties of $\{ (X_{n}^{\theta }, Y_{n}^{\theta} ) \}_{n\geq 0}$
(see Lemma \ref{lemma3.1}). 
In addition to this, $\psi_{Q}(\cdot )$ depends on $\phi(x,y)$, $q_{\theta }(y|x)$ through
upper bounds of $|\phi(x,y)|$, $\|s_{\theta}(x,y) \|$. 
Further details can be found in the proofs of Lemmas \ref{lemma3.1}, \ref{lemma3.2} 
and Theorem \ref{theorem3.1} (Section \ref{section3*}). 
\end{vremark}

\begin{vremark}
As $\psi_{Q}(\cdot )$, constants $K_{Q}$ and $L_{Q}$ depend on 
$\phi(x,y)$, $p(x'|x,y)$, $q_{\theta }(y|x)$ through
function $f(\cdot )$ (defined in (\ref{3.501})) and its properties
(see Remark \ref{remark1.2} for details). 
$K_{Q}$ and $L_{Q}$ also depend on 
$\phi(x,y)$, $p(x'|x,y)$, $q_{\theta }(y|x)$ through 
the ergodicity properties of $\{ (X_{n}^{\theta }, Y_{n}^{\theta} ) \}_{n\geq 0}$. 
In addition to this, $K_{Q}$ and $L_{Q}$ depend on 
$\phi(x,y)$, $p(x'|x,y)$, $q_{\theta }(y|x)$ through 
upper bounds of $|\phi(x,y)|$, $\|s_{\theta}(x,y) \|$. 
For further details, see the proofs of Lemmas \ref{lemma3.1}, \ref{lemma3.2} 
and Theorem \ref{theorem3.1} (Section \ref{section3*}).
\end{vremark}

Although gradient search with  
`discounted' gradient estimation (\ref{3.1'}) is widely used 
in reinforcement learning 
(besides policy-gradient search, temporal-difference and actor-critic learning
also rely on the same approach), 
the available literature does not give a quite satisfactory answer 
to the problem of its asymptotic behavior. 
To the best of the present authors' knowledge, 
the existing results do not offer even the guarantee 
that the asymptotic bias of recursion (\ref{3.1}) goes to zero 
as $\lambda\rightarrow 1$
(i.e., that 
$\{\theta_{n} \}_{n\geq 0}$ converges to a vicinity of 
${\cal S}$ whose radius tends to zero as $\lambda\rightarrow 1$).\footnote{
Paper \cite{konda&tsitsiklis} can be considered as the strongest result 
on the asymptotic behavior of reinforcement learning with 
`discounted' gradient estimation. 
However, \cite{konda&tsitsiklis} only claims that a subsequence of 
$\{\theta_{n} \}_{n\geq 0}$ converges to a vicinity to ${\cal S}$
whose radius goes to zero as $\lambda\rightarrow 1$. 
} 
The main difficulty stems from the fact that 
reinforcement learning algorithms are so complex 
that the existing asymptotic results for biased stochastic gradient search 
and biased stochastic approximation \cite[Section 5.3]{borkar}, \cite{chen1}, \cite{chen2}, 
\cite[Section 2.7]{chen3}
cannot be applied. 
Relying on the results presented in Sections \ref{section1} and \ref{section2}, 
Theorem \ref{theorem3.1} overcomes these difficulties. 
Under mild and easily verifiable conditions, 
Theorem \ref{theorem3.1} guarantees 
that the asymptotic bias of algorithm (\ref{3.1}) converges 
to zero as $\lambda\rightarrow 1$ (Part (i)). 
Theorem \ref{theorem3.1} also provides relatively tight polynomial bounds on the rate
at which the bias goes to zero (Parts (ii), (iii)). 
In addition to this, Theorem \ref{theorem3.1} can be extended to other reinforcement learning algorithms 
such as temporal-difference and actor-critic learning. 

\section{Example 2: Adaptive Monte Carlo Sampling}\label{section5}

In this section, Theorems \ref{theorem1.1} and \ref{theorem2.1} are used to analyze the
asymptotic behavior of adaptive population Monte Carlo methods. 

In order to describe the population Monte Carlo methods 
and explain how their performance can adaptively be improved,
we use the following notation. 
$d_{\theta}\geq 1$, $d_{x}\geq 1$, $N>1$ are integers.
$\Theta\subseteq\mathbb{R}^{d_{\theta } }$ is an open set, 
while ${\cal X}\subseteq\mathbb{R}^{d_{x} }$ is a Borel-set. 
$p(x)$ is a probability density on ${\cal X}$, 
while $q(x)$ is a non-negative function proportional to $p(\cdot )$
(i.e., $p(x)\geq 0$, $q(x)\geq 0$, $p(x)=q(x)/\int_{\cal X} q(x') dx'$ for all $x\in{\cal X}$). 
$p_{\theta }(x'|x)$ is a non-negative (real-valued) function of 
$(\theta,x,x')\in \Theta\times{\cal X}\times{\cal X}$ which satisfies 
$\int_{\cal X} p_{\theta }(x'|x) dx' = 1$ for all $\theta\in\Theta$, $x,\in{\cal X}$
(notice that $p_{\theta }(\cdot|x)$ is a transition density on ${\cal X}$). 
$w_{\theta }(x,\tilde{x})$ is the function defined by 
\begin{align*}
	w_{\theta }(x,\tilde{x})
	=
	\frac{q(\tilde{x} ) }{p_{\theta }(\tilde{x}|x) }
\end{align*}
for $\theta\in\Theta$, $x,\tilde{x}\in{\cal X}$. 
$\tilde{r}_{N,\theta }(\cdot|x_{1:N} )$ is the transition density on ${\cal X}^{N}$ defined as
\begin{align*}
	\tilde{r}_{N,\theta }(\tilde{x}_{1:N}|x_{1:N} )
	=
	\prod_{i=1}^{N} p_{\theta }(\tilde{x}_{i}|x_{i} )
\end{align*}
for $\theta\in\Theta$, $x_{1:N}=(x_{1},\dots,x_{N} )\in {\cal X}^{N}$, 
$\tilde{x}_{1:N}=(\tilde{x}_{1},\dots,\tilde{x}_{N} )\in {\cal X}^{N}$. 
$R_{N,\theta }(\cdot|x_{1:N}, \tilde{x}_{1:N} )$ is the probability measure on ${\cal X}^{N}$ defined by 
\begin{align*}
	R_{N,\theta }(B|x_{1:N}, \tilde{x}_{1:N} )
	=
	\int_{\cal X}\cdots\int_{\cal X} 
	I_{B}(x'_{1}, \dots, x'_{N} ) 
	\prod_{i=1}^{N}
	\left(
	\frac{\sum_{j=1}^{N} w_{\theta }(x_{j},\tilde{x}_{j} ) \delta_{\tilde{x}_{j} }(dx'_{i} ) }
	{\sum_{j=1}^{N} w_{\theta }(x_{j},\tilde{x}_{j} ) }
	\right)
\end{align*}
for Borel-set $B \subseteq {\cal X}^{N}$ 
and $\theta\in\Theta$, $x_{1:N}=(x_{1},\dots,x_{N} )\in {\cal X}^{N}$, 
$\tilde{x}_{1:N}=(\tilde{x}_{1},\dots,\tilde{x}_{N} )\in {\cal X}^{N}$ 
($\delta_{\tilde{x} }(\cdot )$ represents the Dirac measure centered at $\tilde{x}$). 

Population Monte Carlo method is a method for simulating samples from 
$p(x)$ in a situation when only $q(x)$ is available 
(i.e., when $p(x)$ is known up to a normalizing constant). 
Population Monte Carlo method generates sequences of random 
variables 
$\{(X_{n}^{\theta }(1), \dots, X_{n}^{\theta }(N) ) \}_{n\geq 0}$, 
$\{\tilde{X}_{n}^{\theta }(1), \dots, \tilde{X}_{n}^{\theta }(N) ) \}_{n\geq 0}$
(defined on a canonical probability space $(\Omega,{\cal F}, P_{\theta } )$)
using the following sampling importance-resampling scheme: 
\begin{align}
	\label{5.101}
	\tilde{X}_{n+1}^{\theta }
	&|
	X_{n}^{\theta }, \tilde{X}_{n}^{\theta }, \dots, X_{0}^{\theta }, \tilde{X}_{0}^{\theta }
	\sim
	\tilde{r}_{N,\theta }(\cdot|X_{n}^{\theta } )
	\\
	\label{5.103}
	X_{n+1}^{\theta }
	&|
	\tilde{X}_{n+1}^{\theta }, 
	X_{n}^{\theta }, \tilde{X}_{n}^{\theta }, \dots, X_{0}^{\theta }, \tilde{X}_{0}^{\theta }
	\sim
	R_{N,\theta }(\cdot|X_{n}^{\theta }, \tilde{X}_{n+1}^{\theta } ), 
	\;\;\;\;\; n\geq 0, 
\end{align}
where $X_{n}^{\theta } = (X_{n}^{\theta }(1),\dots,X_{n}^{\theta }(N) )$, 
$\tilde{X}_{n}^{\theta } = (\tilde{X}_{n}^{\theta }(1),\dots,\tilde{X}_{n}^{\theta }(N) )$, 
while $X_{0}^{\theta }, \tilde{X}_{0}^{\theta } \in {\cal X}^{N}$ are any deterministic vectors.\footnote
{At the sampling step (\ref{5.101}), for each $1\leq i\leq N$, 
$\tilde{X}_{n+1}^{\theta }(i)$ is sampled from $p_{\theta }(\cdot|X_{n}^{\theta }(i) )$. 
At the resampling step (\ref{5.103}), for each $1\leq i\leq N$, 
random integer $I_{n}^{\theta }(i)$ is sampled proportionally from 
$\big(w_{\theta }(X_{n}^{\theta }(1), \tilde{X}_{n}^{\theta }(1) ), \cdots, 
w_{\theta }(X_{n}^{\theta }(N), \tilde{X}_{n}^{\theta }(N) ) \big)$
and then, random number $X_{n+1}^{\theta }(i)$ is selected according to 
$X_{n+1}^{\theta }(i)=\tilde{X}_{n+1}^{\theta }(I_{n+1}^{\theta }(i) )$. }
For further details on the population Monte Carlo method, see e.g., \cite{cappe1}, 
\cite{robert&casella} and references cited therein. 

In order to improve the performance of the population Monte Carlo method, 
parameter $\theta$ is selected so as the Kullback-Leibler distance between 
$p(x')p(x)$ and $p(x'|x)p(x)$ is minimum.  
Hence, $\theta$ minimizes 
\begin{align*}
	\int_{\cal X} \int_{\cal X} \log\left(\frac{p(x') }{p_{\theta }(x'|x) } \right) p(x')p(x) dx'dx
\end{align*}
on $\Theta$. 
It is straightforward to show that $\theta$ also minimizes 
\begin{align}\label{5.701}
	f(\theta )
	=
	-
	\int_{\cal X} \int_{\cal X} \log\left(p_{\theta }(x'|x) \right) p(x')p(x) dx'dx 
\end{align}
on $\Theta$. 
As $f(\cdot )$ and its gradient are not available analytically, 
$f(\cdot )$ is minimized using stochastic gradient search 
and Monte Carlo gradient estimation 
(or their variants such as online EM algorithm). 
$\nabla f(\cdot )$ can be estimated by the quantity
\begin{align}\label{5.105}
	-\frac{1}{N}
	\sum_{i=1}^{N} s_{\theta }(X_{n}^{\theta }(i), X_{n+1}^{\theta }(i) ), 
\end{align}
where $s_{\theta }(x,x')$ is the function defined by 
\begin{align}
	s_{\theta }(x,x')
	=
	\frac{\nabla_{\theta } p_{\theta }(x'|x) }{p_{\theta }(x'|x) }
\end{align}
for $\theta\in\Theta$, $x,x'\in{\cal X}$. 
Estimator (\ref{5.105}) is biased, and its bias is of the order $O(1/N)$ when $N\rightarrow\infty$
(see Lemma \ref{lemma5.2}). 
Combining gradient search with estimator (\ref{5.105}), we get an adaptive population Monte Carlo method. 
This method is defined by the following difference equation: 
\begin{align}\label{5.1}
	\theta_{n+1}
	=
	\theta_{n}
	+
	\frac{\alpha_{n} }{N}
	\sum_{i=1}^{N} s_{\theta_{n} }(X_{n}(i), X_{n+1}(i) ), 
	\;\;\;\;\; n\geq 0. 
\end{align}
In the recursion (\ref{5.1}), 
$\{\alpha_{n} \}_{n\geq 0}$ is a sequence of positive reals, 
while  
$\theta_{0}\in\Theta$ is any (deterministic) vector. 
$\left\{\left( X_{n}(1),\dots,X_{n}(N) \right) \right\}_{n\geq 1}$ is 
an ${\cal X}^{N}$-valued stochastic process generated 
through the following Monte Carlo simulations: 
\begin{align}\label{5.3}
	\tilde{X}_{n+1}
	&|
	X_{n}, \tilde{X}_{n}, \dots, X_{0}, \tilde{X}_{0}
	\sim
	\tilde{r}_{N,\theta_{n} }(\cdot|X_{n} )
	\\
	X_{n+1}
	&|
	\tilde{X}_{n+1}, 
	X_{n}, \tilde{X}_{n}, \dots, X_{0}, \tilde{X}_{0}
	\sim
	R_{N,\theta_{n} }(\cdot|X_{n}, \tilde{X}_{n+1} ), 
	\;\;\;\;\; n\geq 0, 
\end{align}
where $X_{n} = (X_{n}(1),\dots,X_{n}(N) )$, 
$\tilde{X}_{n} = (\tilde{X}_{n}(1),\dots,\tilde{X}_{n}(N) )$, 
while $X_{0}, \tilde{X}_{0} \in {\cal X}^{N}$ are any deterministic vectors. 
For more details on adaptive population Monte Carlo methods, see e.g., \cite{cappe2}, \cite{douc}. 

\begin{vremark}
Recursion (\ref{5.3}) usually includes a projection (or truncation) scheme 
which keeps $\{\theta_{n} \}_{n\geq 0}$ within $\Theta$
(see \cite{ljung}). 
For the sake of exposition, this aspect of (\ref{5.3}) is not studied here. 
Instead, similarly as in \cite{benveniste} and \cite{ljung}, 
the asymptotic results are stated in a local form. 
\end{vremark}

Algorithm (\ref{5.1}) is analyzed under the following assumptions. 

\begin{assumption} \label{a5.1}
${\cal X}$ is compact. 
\end{assumption}

\begin{assumption} \label{a5.2}
$p(x)>0$ for all $x\in{\cal X}$. Moreover, $p(x)$ is continuous for each $x\in{\cal X}$. 
\end{assumption}

\begin{assumption} \label{a5.3}
$p_{\theta}(x'|x)>0$ for all $\theta\in\Theta$, $x,x'\in{\cal X}$. 
Moreover, 
$\nabla_{\theta } p_{\theta}(x'|x)$ is locally Lipschitz continuous in 
$(\theta,x,x')$ on $\Theta\times{\cal X}\times{\cal X}$. 
\end{assumption}

\addtocounter{assumption}{1}

\begin{assumptionabc} \label{a5.4.a}
For each $x,x'\in{\cal X}$, 
$p_{\theta}(x'|x)$ is $p$-times differentiable in $\theta$ 
on $\Theta$, 
where $p>d_{\theta }$. 
Moreover, the $p$-th order derivatives (in $\theta$) of $p_{\theta }(x'|x)$
are continuous in $(\theta,x,x')$ on $\Theta\times{\cal X}\times{\cal X}$. 
\end{assumptionabc}

\begin{assumptionabc} \label{a5.4.b}
For each $x,x'\in{\cal X}$, 
$p_{\theta}(x'|x)$ is real-analytic in $\theta$ 
on $\Theta$. 
Moreover, 
$p_{\theta }(x'|x)$ has 
a (complex-valued) continuation 
$\hat{p}_{\eta}(x'|x)$ with the following properties: 
\begin{compactenum}[(i)]
\item
$\hat{p}_{\eta}(x'|x)$ maps $(\eta,x,x')\in \mathbb{C}^{d_{\theta } } \times {\cal X} \times {\cal X}$ 
to $\mathbb{C}$. 
\item
$\hat{p}_{\theta}(x'|x) = p_{\theta}(x'|x)$
for all $\theta\in\Theta$, $x,x'\in {\cal X}$. 
\item
For any $\theta\in\Theta$, 
there exists a real number
$\delta_{\theta } \in (0,1)$ such that 
$\hat{p}_{\eta}(x'|x)$ is 
analytic in $\eta$ 
and 
continuous in $(\eta,x,x')$
for any 
$\eta\in\mathbb{C}^{d_{\theta } }$, $x,x'\in {\cal X}$ 
satisfying $\|\eta-\theta\|\leq\delta_{\theta }$. 
\end{compactenum}
\end{assumptionabc}

Assumptions \ref{a5.1} and \ref{a5.2} correspond to the target density $p(\cdot )$, 
while Assumptions \ref{a5.3}, \ref{a5.4.a} and \ref{a5.4.b} are related to the instrumental 
density $p_{\theta}(\cdot|\cdot )$. 
These assumptions are rather restrictive from the theoretical perspective, 
since they require $p(\cdot )$ and $p_{\theta}(\cdot|\cdot)$ to be compactly supported. 
We rely on such restrictive conditions for the sake of exposition. 
However, the results presented here can easily be extended to the case 
where ${\cal X}$ is unbounded 
and where the fourth moments of $p(\cdot )$ and $p_{\theta}(\cdot|\cdot )$ are finite. 

Let ${\cal S}$ and $f({\cal S} )$ have the same meaning as in (\ref{1.21})
($f(\cdot )$ is now defined in (\ref{5.701})), 
while ${\cal R}$ is the set of chain-recurrent points of 
the ODE $d\theta/dt=-\nabla f(\theta )$ (for details on chain-recurrence, 
see Section \ref{section1}). 
Moreover, for a compact set $Q\subset\mathbb{R}^{d_{\theta} }$, 
let $\Lambda_{Q}$ have the same meaning as in (\ref{1.103}). 
Then, our results on the asymptotic behavior of algorithm (\ref{5.1}) read as follows. 

\begin{theorem}\label{theorem5.1}
Suppose that Assumptions \ref{a2.1}, \ref{a5.1} and \ref{a5.2} hold. 
Let $Q\subset\Theta$ be any compact set. 
Then, the following is true: 
\begin{compactenum}[(i)]
\item
There exists a (deterministic) non-decreasing function 
$\psi_{Q}:[0,\infty )\rightarrow[0,\infty)$ 
(independent of  $N$ and depending only on 
$p_{\theta }(x'|x)$, $q(x)$)
such that 
$\lim_{t\rightarrow 0} \psi_{Q}(t) = \psi_{Q}(0) = 0$ 
and  
\begin{align*}
	\limsup_{n\rightarrow\infty } d(\theta_{n}, {\cal R} )
	\leq 
	\psi_{Q}\left(\frac{1}{N} \right)
\end{align*}
almost surely on $\Lambda_{Q}$. 
\item
If (in addition to Assumptions \ref{a2.1}, \ref{a5.1} and \ref{a5.2})
Assumption \ref{a5.4.a} is satisfied, 
there exists a real number $K_{Q}\in (0,\infty )$
(independent of $N$ and depending only on 
$p_{\theta }(x'|x)$, $q(x)$)
such that 
\begin{align*}
	&
	\limsup_{n\rightarrow\infty } \|\nabla f(\theta_{n} ) \| 
	\leq 
	\frac{K_{Q} }{N^{q/2} }, 
	\;\;\;\;\; 
	\limsup_{n\rightarrow\infty } f(\theta_{n} ) 
	-
	\liminf_{n\rightarrow\infty } f(\theta_{n} ) 
	\leq 
	\frac{K_{Q} }{N^{q} }
\end{align*}
almost surely on $\Lambda_{Q}$, 
where $q=(p-d_{\theta} )/(p-1)$. 
\item
If (in addition to Assumptions \ref{a2.1}, \ref{a5.1} and \ref{a5.2})
Assumption \ref{a5.4.b} is satisfied, 
there exist real numbers $r_{Q}\in (0,1)$, $L_{Q}\in (0,\infty )$
(independent of $N$ and depending only on 
$p_{\theta }(x'|x)$, $q(x)$)
such that 
\begin{align*}
	&
	\limsup_{n\rightarrow\infty } \|\nabla f(\theta_{n} ) \| 
	\leq 
	\frac{L_{Q} }{N^{1/2} }, 
	\;\;\;\;\; 
	\limsup_{n\rightarrow\infty } d(f(\theta_{n} ), f({\cal S} ) ) 
	\leq 
	\frac{L_{Q} }{N}, 
	\;\;\;\;\; 
	\limsup_{n\rightarrow\infty } d(\theta_{n}, {\cal S} )
	\leq 
	\frac{L_{Q} }{N^{r_{Q} } }
\end{align*}
almost surely on $\Lambda_{Q}$. 
\end{compactenum}
\end{theorem}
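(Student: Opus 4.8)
The plan is to recognize algorithm (\ref{5.1}) as a special case of the Markovian recursion (\ref{2.1}) and then apply Theorem \ref{theorem2.1}; the quantitative heart of the argument is the identification of the bias term and the bound that it is $O(1/N)$. First I would take as controlled Markov state the pair $Z_{n+1}=(X_{n},X_{n+1})\in{\cal X}^{N}\times{\cal X}^{N}$, which is a Markov chain controlled by $\{\theta_{n}\}_{n\ge 0}$ with transition kernels $\Pi_{\theta}$ determined by the sampling--importance--resampling recursion (\ref{5.3}), and set $G(\theta,(x,\tilde x))=-\frac1N\sum_{i=1}^{N}s_{\theta}(x(i),\tilde x(i))$, i.e.\ the gradient estimator (\ref{5.105}); then (\ref{5.1}) reads $\theta_{n+1}=\theta_{n}-\alpha_{n}G(\theta_{n},Z_{n+1})$. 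Fixing a compact $Q\subset\Theta$ and letting $\mu_{\theta}^{N}$ be the invariant measure of $\Pi_{\theta}$ for $\theta\in Q$, put $\eta(\theta)=\int G(\theta,z)\,\mu_{\theta}^{N}(dz)-\nabla f(\theta)$, $F(\theta,z)=G(\theta,z)-\eta(\theta)$ and $\eta_{n}=\eta(\theta_{n})$; then (\ref{5.1}) has exactly the form (\ref{2.1}) with $\int F(\theta,z)\,\mu_{\theta}^{N}(dz)=\nabla f(\theta)$, so it remains to verify Assumptions \ref{a2.2}--\ref{a2.4}, to check the analytic hypotheses on $f(\cdot)$, and to bound $\eta=\limsup_{n}\|\eta_{n}\|$.

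Next I would establish uniform ergodicity of the controlled chain (Lemma \ref{lemma5.1}): since ${\cal X}$ is compact (Assumption \ref{a5.1}) and $p_{\theta}(x'|x)>0$ is continuous with locally Lipschitz $\theta$-gradient (Assumptions \ref{a5.2}, \ref{a5.3}), the kernels $\Pi_{\theta}$ satisfy a Doeblin minorization that is uniform for $\theta$ in compacts, their mixing rate is locally uniform in $\theta$, and $\theta\mapsto\Pi_{\theta}$ is locally Lipschitz, while $\|G(\theta,\cdot)\|$ and $\|s_{\theta}\|$ are bounded on $Q\times{\cal X}\times{\cal X}$. Hence the Poisson equation $F-\nabla f=\tilde F-\Pi\tilde F$ admits the bounded, $\theta$-Lipschitz solution $\tilde F(\theta,z)=\sum_{k\ge 0}\big(\Pi_{\theta}^{k}(F(\theta,\cdot)-\nabla f(\theta))\big)(z)$, which yields Assumptions \ref{a2.2} and \ref{a2.3} (Assumption \ref{a2.1} is among the hypotheses). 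Then I would prove the bias estimate (Lemma \ref{lemma5.2}): by exchangeability of the particles, $\int G(\theta,z)\,\mu_{\theta}^{N}(dz)=-E[s_{\theta}(X(1),X'(1))]$ under the stationary joint law of one particle's consecutive positions, and in the idealized $N=\infty$ regime this law is the product $p(x)p(x')$ — because the self-normalized resampling step then reproduces $p(\cdot)$ exactly — so the limit equals $-\int_{\cal X}\int_{\cal X}s_{\theta}(x,x')p(x')p(x)\,dx'dx=\nabla f(\theta)$; for finite $N$ the $O(1/N)$ bias of self-normalized importance sampling together with the $O(1/N)$ departure of $\mu_{\theta}^{N}$ from the product of its marginals, propagated through the geometric ergodicity of Lemma \ref{lemma5.1}, gives $\|\eta(\theta)\|\le C_{Q}/N$ with $C_{Q}$ depending only on $p_{\theta}$, $q$ (and $Q$). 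In particular $\limsup_{n}\|\eta_{n}\|\le C_{Q}/N$ on $\Lambda_{Q}$, which is Assumption \ref{a2.4}.

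Finally the analytic properties transfer from $p_{\theta}$ to $f$. Differentiating under the integral sign in (\ref{5.701}) is justified because ${\cal X}$ is compact, $p$ is continuous and $p_{\theta}(x'|x)$ is bounded away from $0$ on compact $\theta$-sets; this shows that Assumption \ref{a5.3} gives Assumption \ref{a1.3.a} and that Assumption \ref{a5.4.a} gives Assumption \ref{a1.3.b} with the same $p$ (hence $q=(p-d_{\theta})/(p-1)$). For Assumption \ref{a1.3.c} I would use the complex continuation $\hat p_{\eta}(x'|x)$ of Assumption \ref{a5.4.b}: near any $\theta\in\Theta$ one has $\hat p_{\eta}\neq 0$ by continuity and positivity at real arguments, so a branch of $\log\hat p_{\eta}$ is well-defined and the map $\eta\mapsto-\int_{\cal X}\int_{\cal X}\log\hat p_{\eta}(x'|x)\,p(x')p(x)\,dx'dx$ is a holomorphic extension of $f$ (holomorphy of the integral following from Morera's theorem and Fubini, using the locally uniform analyticity and continuity of $\hat p_{\eta}$), whence $f$ is real-analytic. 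Applying Theorem \ref{theorem2.1} with $\eta\le C_{Q}/N$ and absorbing $C_{Q}$ into the constants — replacing $\psi_{Q}$ by $t\mapsto\psi_{Q}(C_{Q}t)$ and $K_{Q}$, $L_{Q}$ by suitable powers of $C_{Q}$ times themselves — yields the three parts of Theorem \ref{theorem5.1} on $\Lambda_{Q}$; taking $Q\subset\Theta$ (rather than $Q\subset\mathbb{R}^{d_{\theta}}$) is what keeps the statement local, in lieu of the omitted projection device. The main obstacle is Lemma \ref{lemma5.2}: showing that the finite-population resampling error perturbs the stationary expectation of the gradient estimator by exactly $O(1/N)$, uniformly over $\theta$ in a compact subset of $\Theta$, requires combining a sharp (bias-level) control of self-normalized importance weights with the uniform geometric ergodicity of the controlled chain so as to carry the one-step error into a stationary-regime error of the same order.
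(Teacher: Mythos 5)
Your reduction of (\ref{5.1}) to the Markovian framework (\ref{2.1}) --- the choice of state $Z_{n+1}=(X_{n},X_{n+1})$, the definitions of $G$, $\eta(\theta)=\int G\,d\pi_{\theta}-\nabla f(\theta)$ and $F=G-\eta$, the Poisson-equation construction of $\tilde F$ from uniform geometric ergodicity, and the transfer of smoothness and analyticity from $p_{\theta}(x'|x)$ to $f$ via dominated convergence and the complex continuation $\hat p_{\eta}$ --- all match the paper's proof. The gap is in the one step you yourself flag as the main obstacle: the bound $\|\eta(\theta)\|\le C_{Q}/N$ of Lemma \ref{lemma5.2}(ii). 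Your proposed mechanism is to show that the stationary law of one particle's consecutive positions (the single-particle marginal of your $\mu_{\theta}^{N}$, the paper's $\pi_{\theta}$) is within $O(1/N)$ of the product $p(x)p(x')$, and to combine this propagation-of-chaos statement with the $O(1/N)$ bias of self-normalized importance sampling. Neither ingredient is established: geometric ergodicity of the $2N$-dimensional pair chain (Lemma \ref{lemma5.1}) gives no quantitative control of the distance between a marginal of the invariant measure and a product measure, and such a chaos bound, uniform in $\theta\in Q$, is a nontrivial result that you would have to prove from scratch; ``propagating the one-step error through geometric ergodicity'' does not substitute for it.

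The paper avoids this entirely by a conditional argument that uses no structural information about $\pi_{\theta}$ beyond its existence. Writing $\bar W_{n,j}^{\theta}=\bar w_{\theta}(X_{n}^{\theta}(j),\tilde X_{n+1}^{\theta}(j))$ with $\bar w_{\theta}(x,x')=p(x')/p_{\theta}(x'|x)$, two identities hold exactly for \emph{every} conditioning configuration: $E_{\theta}[\bar W_{n,j}^{\theta}-1\mid X_{n}^{\theta}]=0$, and $\int t_{\theta}(x')\bar w_{\theta}(x,x')p_{\theta}(x'|x)\,dx'=\int t_{\theta}(x')p(x')\,dx'=-\nabla f(\theta)$, where $t_{\theta}(x)=\int s_{\theta}(x,x'')p(x'')\,dx''$. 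Expanding the self-normalizing denominator as $1/(1+t)=1-t+t^{2}/(1+t)$ and exploiting conditional independence across particles, one obtains $\|E_{\theta}[G(\theta,Z_{n+2}^{\theta})\mid Z_{n}^{\theta}=z]-\nabla f(\theta)\|\le C_{Q}/N$ uniformly in $z$ and $\theta\in Q$; integrating against $\pi_{\theta}$ then yields the bias bound immediately, with no reference to the shape of the invariant measure. To complete your proof you should replace the propagation-of-chaos step by this two-step conditional expansion (or else actually prove the chaos estimate, which is considerably more work than the theorem requires).
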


Theorem \ref{theorem5.1} is proved in Section \ref{section5*}. 

\begin{vremark}
Function $\psi_{Q}(\cdot )$ depends on $p_{\theta }(x'|x)$, $q(x)$ through
function $f(\cdot )$ (defined in (\ref{5.701})) and its properties
(see Remark \ref{remark1.1} for details). 
Function $\psi_{Q}(\cdot )$ also depends on $p_{\theta }(x'|x)$, $q(x)$ 
through lower bounds of $p_{\theta }(x'|x)$, $q(x)$
and upper bounds of $p_{\theta }(x'|x)$, $q(x)$, $\|\nabla_{\theta } p_{\theta }(x'|x) \|$. 
Further details can be found in the proofs of Lemma \ref{lemma5.2} (Part (ii))
and Theorem \ref{theorem5.1} (Section \ref{section5*}). 
\end{vremark}

\begin{vremark}
As $\psi_{Q}(\cdot )$, constants $K_{Q}$ and $L_{Q}$ depend on 
$\phi(x,y)$, $p(x'|x,y)$, $q_{\theta }(y|x)$ through
function $f(\cdot )$ (defined in (\ref{5.701})) and its properties
(see Remark \ref{remark1.2} for details). 
$K_{Q}$ and $L_{Q}$ also depend on 
$p_{\theta }(x'|x)$, $q(x)$ 
through lower bounds of $p_{\theta }(x'|x)$, $q(x)$
and upper bounds of $p_{\theta }(x'|x)$, $q(x)$, $\|\nabla_{\theta } p_{\theta }(x'|x) \|$. 
For further details, see the proofs of Lemma \ref{lemma5.2} (Part (ii))
and Theorem \ref{theorem5.1} (Section \ref{section5*}).
\end{vremark}

Population Monte Carlo methods have been proposed and studied in \cite{cappe1}, 
while their adaptive versions have been developed and analyzed in \cite{cappe2}, \cite{douc}. 
Although based on the same principle as (\ref{5.3}) 
(minimization of function $f(\cdot )$), 
the adaptive methods considered in \cite{cappe2}, \cite{douc} 
compute optimal values of $\theta$ using iterative techniques 
(slightly) different from stochastic gradient search
(i.e., using EM algorithm). 
Unfortunately, unless $f(\cdot )$ is convex, 
\cite{cappe2}, \cite{douc} do not offer much information on the asymptotic behavior of 
$\{\theta_{n} \}_{n\geq 0}$.\footnote
{The results of \cite{cappe2}, \cite{douc} are focused only on the case
where $p_{\theta}(\cdot|\cdot)$ is a mixture of transition kernels parameterized by the mixture weights. } 
The purpose of Theorem \ref{theorem5.1} (besides illustrating Theorems \ref{theorem1.1}, \ref{theorem2.1}) 
is to fill this gap in the literature on population Monte Carlo methods. 

\section{Example 3: Identification of Hidden Markov Models}\label{section4}

In this section, Theorems \ref{theorem1.1} and \ref{theorem2.1}
are applied to the asymptotic analysis of 
recursive maximum split-likelihood methods for the identification of 
hidden Markov models. 

In order to define hidden Markov models and to formulate the problem of their identification, 
we use the following notation. 
$N_{x}>1$, $N_{y}>1$ are integers, 
while ${\cal X}$, ${\cal Y}$ are the sets 
\begin{align*}
	{\cal X} = \{1,\dots,N_{x} \}, 
	\;\;\;\;\; 
	{\cal Y} = \{1,\dots,N_{y} \}. 
\end{align*}
$p(x'|x)$ and $q(y|x)$ are non-negative (real-valued) functions of 
$(x,x',y) \in {\cal X} \times {\cal X} \times {\cal Y}$
satisfying  
\begin{align*}
	\sum_{x' \in {\cal X} } p(x'|x) = 1, 
	\;\;\;\;\; 
	\sum_{y \in {\cal Y} } q(y|x) = 1
\end{align*}
for each $x\in {\cal X}$. 
$\{ (X_{n}, Y_{n} ) \}_{n\geq 0}$ is an ${\cal X} \times {\cal Y}$-valued 
Markov chain 
which is defined on a (canonical) probability space 
$(\Omega, {\cal F}, P )$ and which admits  
\begin{align*}
	P(X_{n+1} = x', Y_{n+1} = y' |X_{n} = x, Y_{n} = y )
	=
	q(y'|x') p(x'|x)
\end{align*}
for all $x,x' \in {\cal X}$, $y,y' \in {\cal Y}$. 
On the other side, 
$d_{\theta } \geq 1$ is an integer, while $\Theta \subseteq R^{d_{\theta } }$ is an open set. 
$\pi_{\theta }(x)$,
$p_{\theta }(x'|x)$ and $q_{\theta }(y|x)$ are non-negative functions of
$(\theta,x,x',y) \in \Theta\times{\cal X}\times{\cal X}\times{\cal Y}$
with the following properties: 
They are differentiable in $\theta$ 
for each $\theta\in \Theta$, $x,x' \in {\cal X}$, $y \in {\cal Y}$
and satisfy 
\begin{align*}
	\sum_{x'\in {\cal X} } \pi_{\theta }(x') = 1, 
	\;\;\;\;\;
	\sum_{x' \in {\cal X} } p_{\theta }(x'|x) = 1,  
	\;\;\;\;\; 
	\sum_{y \in {\cal Y} } q_{\theta }(y|x) = 1
\end{align*}
for the same $\theta$, $x$. 
For $\theta\in\Theta$, 
$\{ (X_{n}^{\theta }, Y_{n}^{\theta } ) \}_{n\geq 0}$
is an ${\cal X} \times {\cal Y}$-valued Markov chain 
which is defined on a (canonical) probability space $(\Omega, {\cal F}, P_{\theta } )$
and which admits  
\begin{align*}
	&
	P_{\theta }(X_{0}^{\theta } = x, Y_{0}^{\theta } = y ) 
	=
	q_{\theta }(y|x) \pi_{\theta }(x), 
	\\
	&
	P_{\theta }(X_{n+1}^{\theta } = x', Y_{n+1}^{\theta } = y'
	|X_{n}^{\theta } = x, Y_{n}^{\theta } = y )
	= 
	q_{\theta }(y'|x') p_{\theta }(x'|x)
\end{align*}
for all $x,x' \in {\cal X}$, $y,y' \in {\cal Y}$, $n\geq 0$. 
$\phi_{N,\theta }(y_{1:N} )$ is the function defined as  
\begin{align*}
	\phi_{N,\theta }(y_{1:N} )
	= &
	-
	\frac{1}{N}
	\log\left(
	\sum_{x_{0},\dots,x_{N} \in {\cal X} } 
	\left(
	\prod_{i=1}^{N}
	\left(
	q_{\theta}(y_{i}|x_{i} ) p_{\theta }(x_{i} |x_{i-1} ) 
	\right)
	\right)
	\pi_{\theta }(x_{0} ) 
	\right), 
\end{align*}
for $\theta\in\Theta$, $y_{1:N} = (y_{1}, \dots, y_{N} ) \in {\cal Y}^{N}$, $N\geq 1$. 
$f_{N}(\cdot )$ and $f(\cdot )$ are the functions defined by 
\begin{align}\label{4.701}
	&
	f_{N}(\theta ) 
	= 
	\lim_{n\rightarrow \infty } 
	E(\phi_{N,\theta }(Y_{nN+1:(n+1)N } ) ), 
	\;\;\;\;\; 
	f(\theta ) 
	=
	\lim_{N\rightarrow\infty }
	E(\phi_{N,\theta }(Y_{1:N} ) )
\end{align}
for $\theta\in\Theta$, $N\geq 1$ 
(here, $Y_{nN+1:(n+1)N}$ stands for $(Y_{nN+1}, \dots, Y_{(n+1)N} )$). 
Then, it is straightforward to show that $f(\cdot )$ is 
the negative (asymptotic) log-likelihood associated with $\{Y_{n} \}_{n\geq 0}$. 

In the statistics and engineering literature, 
$\{ (X_{n}, Y_{n} ) \}_{n\geq 0}$  
(as well as 
$\{ (X_{n}^{\theta }, Y_{n}^{\theta } ) \}_{n\geq 0}$)
is known as a hidden Markov model, 
while 
$X_{n}$ and $Y_{n}$ are 
the model's (unobservable) state and (observable) output at discrete-time $n$. 
Using the notation introduced in this section, 
the identification of $\{ (X_{n}, Y_{n} ) \}_{n\geq 0}$
can be stated as follows: 
Given a realization of the output sequence $\{Y_{n} \}_{n\geq 0}$, estimate 
$\{p(x'|x) \}_{x,x' \in {\cal X} }$ and
$\{q(y|x) \}_{x \in {\cal X}, y \in {\cal Y} }$. 
If the identification is based on the maximum likelihood principle
and the parameterized model 
$\{ (X_{n}^{\theta}, Y_{n}^{\theta} ) \}_{n\geq 0}$, 
the estimation reduces to the minimization of $f(\cdot )$
over $\Theta$. 
In this context, 
$\{ (X_{n}^{\theta }, Y_{n}^{\theta } ) \}_{n\geq 0}$
can be considered as a candidate model for 
the unknown system $\{ (X_{n}, Y_{n} ) \}_{n\geq 0}$. 
For more details on hidden Markov models and their identification see 
\cite{cappe&moulines&ryden} and references cited therein. 

As the negative log-likelihood $f(\cdot )$ and its gradient are rarely 
available analytically, 
$f(\cdot )$ is usually minimized by stochastic gradient search. 
Unfortunately, the consistent estimation of $\nabla f(\cdot )$
is computationally expensive (even for moderately large $N_{x}$, $N_{y}$), 
since it relies on the optimal filter and the optimal filter derivatives 
(see e.g., \cite{cappe&moulines&ryden}). 
To reduce the computational complexity, 
a number of approaches based on approximate maximum likelihood (also known 
as pseudo-likelihood) has been proposed. 
Among them, 
the maximum split-likelihood method 
\cite{ryden1}, \cite{ryden2} has attracted a considerable attention in 
the literature. 
This approach is based on the following fact. 
If $\{X_{n} \}_{n\geq 0}$ is geometrically ergodic and 
if the optimal filter for the candidate model 
$\{ (X_{n}^{\theta }, Y_{n}^{\theta } ) \}_{n\geq 0}$ is stable, 
then 
\begin{align*}
	\nabla f(\theta ) 
	=
	\lim_{N\rightarrow\infty } \nabla f_{N}(\theta )
	=
	\lim_{N\rightarrow \infty } \lim_{n\rightarrow \infty } 
	E\left(
	\nabla_{\theta } \phi_{N,\theta }(Y_{nN+1:(n+1)N} )
	\right)
\end{align*}
(see Lemma \ref{lemma4.1}). 
Hence, 
$\nabla_{\theta }\phi_{N,\theta }(Y_{nN+1:(n+1)N} )$
is a reasonably good estimator of $\nabla f(\theta )$ when $n,N$ are large. 
This estimator is biased and the bias is of the order $O(1/N)$ 
when $N\rightarrow\infty$ (see Lemma \ref{lemma4.1}). 
Combining gradient search with the estimator $\nabla_{\theta }\phi_{N,\theta }(Y_{nN+1:(n+1)N} )$, 
we get the recursive maximum split-likelihood algorithm: 
\begin{align}\label{4.1}	
	\theta_{n+1}
	=
	\theta_{n} 
	-
	\alpha_{n} 
	\psi_{N, \theta_{n} }(Y_{nN+1:(n+1)N} ), 
	\;\;\; n\geq 0. 
\end{align}
Here, $\{\alpha_{n} \}_{n\geq 0}$ is a sequence of positive real numbers, 
$N\geq 1$ is a fixed integer, 
and $\psi_{N,\theta }(\cdot ) = \nabla_{\theta } \phi_{N, \theta }(\cdot )$. 

\begin{vremark}
Since ${\cal X}$ is a finite set, 
$\psi_{N, \theta_{n} }(Y_{nN+1:(n+1)N} )$ can be computed exactly. 
When ${\cal X}$ has infinitely many elements, 
$\psi_{N,\theta_{n} }(Y_{nN+1:(n+1)N} )$ can accurately be approximated  
using Monte Carlo methods 
(for the developments of this kind see \cite{andrieu&doucet&tadic}). 
To avoid unnecessary technical details, 
we consider only the case when ${\cal X}$ is finite. 
\end{vremark}

\begin{vremark}
As (\ref{5.3}), recursion (\ref{4.1}) usually includes a projection (or truncation) scheme 
which keeps $\{\theta_{n} \}_{n\geq 0}$ within $\Theta$
(see \cite{ljung}). 
For the sake of exposition, this aspect of (\ref{4.1}) is not studied here. 
Instead, similarly as in \cite{benveniste} and \cite{ljung}, 
the asymptotic results are stated in a local form. 
\end{vremark}

Algorithm (\ref{4.1}) is analyzed under the following assumptions. 

\begin{assumption}\label{a4.1} 
$\{X_{n} \}_{n\geq 0}$ is geometrically ergodic. 
\end{assumption}

\begin{assumption}\label{a4.2}
$p_{\theta}(x'|x) > 0$ and 
$q_{\theta}(y|x) > 0$ 
for all $\theta\in\Theta$, $x,x'\in {\cal X}$, $y\in {\cal Y}$. 
\end{assumption}

\begin{assumption}\label{a4.3} 
For each $x,x'\in {\cal X}$, $y\in {\cal Y}$, 
$\nabla_{\theta} p_{\theta }(x'|x)$, 
$\nabla_{\theta} q_{\theta }(y|x)$ 
and $\nabla_{\theta } \pi_{\theta }(x)$
are locally Lipschitz continuous in $\theta$ on $\Theta$. 
\end{assumption}

\addtocounter{assumption}{1}

\begin{assumptionabc} \label{a4.4.a}
For all $x,x'\in{\cal X}$, $y\in{\cal Y}$, 
$p_{\theta}(x'|x)$ and $q_{\theta}(y|x)$ are $p$-times differentiable in $\theta$ 
on $\Theta$, 
where $p>d_{\theta }$. 
\end{assumptionabc}

\begin{assumptionabc} \label{a4.4.b}
For all $x,x'\in{\cal X}$, $y\in{\cal Y}$, 
$p_{\theta}(x'|x)$ and $q_{\theta}(y|x)$ are real-analytic in $\theta$ 
on $\Theta$. 
\end{assumptionabc}

Assumption \ref{a4.1} is related to the stability 
of the unknown system $\{ (X_{n}, Y_{n} ) \}_{n\geq 0}$, 
while Assumption \ref{a4.2} corresponds to 
the stability of the optimal filter associated with the candidate model 
$\{ (X_{n}^{\theta }, Y_{n}^{\theta } ) \}_{n\geq 0}$. 
In this or similar form, Assumptions \ref{a4.1} and \ref{a4.2} are involved in any asymptotic analysis 
of identification methods for hidden Markov models
(see e.g. \cite{cappe&moulines&ryden}, \cite{tadic5} and references cited therein). 
Assumptions \ref{a4.3}, \ref{a4.4.a} and \ref{a4.4.b} 
correspond to the parameterization of the 
candidate model $\{ (X_{n}^{\theta }, Y_{n}^{\theta } ) \}_{n\geq 0}$
and often hold in practice. 
For some commonly used parameterizations 
(such as natural, trigonometric and exponential), 
$p_{\theta }(x'|x)$, 
$q_{\theta }(y|x)$ and 
$\pi_{\theta }(x)$
are not only Lipschitz continuously differentiable in $\theta$, 
but also real-analytic
(see \cite{tadic5} for further details). 

Let ${\cal S}$ and $f({\cal S} )$ have the same meaning as in (\ref{1.21})
($f(\cdot )$ is now defined in (\ref{4.701})), 
while ${\cal R}$ is the set of chain-recurrent points of 
the ODE $d\theta/dt=-\nabla f(\theta )$ (for details on chain-recurrence, 
see Section \ref{section1}). 
Moreover, for a compact set $Q\subset\mathbb{R}^{d_{\theta} }$, 
let $\Lambda_{Q}$ have the same meaning as in (\ref{1.103}). 
Our results on the asymptotic behavior of algorithm (\ref{4.1}) read as follows. 

\begin{theorem}\label{theorem4.1}
Suppose that Assumptions \ref{a2.1} and \ref{a4.1} -- \ref{a4.3} hold. 
Let $Q\subset\Theta$ be any compact set. 
Then, the following is true: 
\begin{compactenum}[(i)]
\item
There exists a (deterministic) non-decreasing function 
$\psi_{Q}:[0,\infty )\rightarrow[0,\infty)$ 
(independent of  $N$ and depending only on 
$p_{\theta }(x'|x)$, $q_{\theta}(y|x)$, $\pi_{\theta }(x)$, $\{X_{n} \}_{n\geq 0}$)
such that 
$\lim_{t\rightarrow 0} \psi_{Q}(t) = \psi_{Q}(0) = 0$ 
and  
\begin{align*}
	\limsup_{n\rightarrow\infty } d(\theta_{n}, {\cal R} )
	\leq 
	\psi_{Q}\left(\frac{1}{N} \right)
\end{align*}
almost surely on $\Lambda_{Q}$. 
\item
If (in addition to Assumptions \ref{a2.1} and \ref{a4.1} -- \ref{a4.3})
Assumption \ref{a4.4.a} is satisfied, 
there exists a real number $K_{Q}\in (0,\infty )$
(independent of $N$ and depending only on 
$p_{\theta }(x'|x)$, $q_{\theta}(y|x)$, $\pi_{\theta }(x)$, $\{X_{n} \}_{n\geq 0}$)
such that 
\begin{align*}
	&
	\limsup_{n\rightarrow\infty } \|\nabla f(\theta_{n} ) \| 
	\leq 
	\frac{K_{Q} }{N^{q/2} }, 
	\;\;\;\;\; 
	\limsup_{n\rightarrow\infty } f(\theta_{n} ) 
	-
	\liminf_{n\rightarrow\infty } f(\theta_{n} ) 
	\leq 
	\frac{K_{Q} }{N^{q} }
\end{align*}
almost surely on $\Lambda_{Q}$, 
where $q=(p-d_{\theta} )/(p-1)$. 
\item
If (in addition to Assumptions \ref{a2.1} and \ref{a4.1} -- \ref{a4.3})
Assumption \ref{a4.4.b} is satisfied, 
there exist real numbers $r_{Q}\in (0,1)$, $L_{Q}\in (0,\infty )$
(independent of $N$ and depending only on 
$p_{\theta }(x'|x)$, $q_{\theta}(y|x)$, $\pi_{\theta }(x)$, $\{X_{n} \}_{n\geq 0}$)
such that 
\begin{align*}
	&
	\limsup_{n\rightarrow\infty } \|\nabla f(\theta_{n} ) \| 
	\leq 
	\frac{L_{Q} }{N^{1/2} }, 
	\;\;\;\;\; 
	\limsup_{n\rightarrow\infty } d(f(\theta_{n} ), f({\cal S} ) ) 
	\leq 
	\frac{L_{Q} }{N}, 
	\;\;\;\;\; 
	\limsup_{n\rightarrow\infty } d(\theta_{n}, {\cal S} )
	\leq 
	\frac{L_{Q} }{N^{r_{Q} } }
\end{align*}
almost surely on $\Lambda_{Q}$. 
\end{compactenum}
\end{theorem}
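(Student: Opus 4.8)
The plan is to deduce Theorem \ref{theorem4.1} from Theorem \ref{theorem2.1} by recasting recursion (\ref{4.1}) in the form (\ref{2.1}). First I would introduce the block chain $Z_{n} = (X_{(n-1)N}, Y_{(n-1)N+1:nN}, X_{nN})$ (with an appropriate convention at the left end), which is a time-homogeneous Markov chain on the finite set ${\cal X}\times{\cal Y}^{N}\times{\cal X}$ whose transition law is governed entirely by the true system $\{(X_{n},Y_{n})\}_{n\geq 0}$ and does not depend on $\theta$ (so here $\Pi_{\theta}$ from (\ref{2.3}) is $\theta$-independent). Since ${\cal X}$ is finite and $\{X_{n}\}_{n\geq 0}$ is geometrically ergodic (Assumption \ref{a4.1}), this block chain is geometrically ergodic with a uniform geometric mixing rate; let $\mu$ be its invariant measure. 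Writing $\psi_{N,\theta}$ as a function of the middle component of $Z_{n+1}$, I would set $\eta_{n} = \nabla f_{N}(\theta_{n}) - \nabla f(\theta_{n})$ and $F(\theta, Z_{n+1}) = \psi_{N,\theta}(Y_{nN+1:(n+1)N}) - \nabla f_{N}(\theta) + \nabla f(\theta)$, so that (\ref{4.1}) becomes $\theta_{n+1} = \theta_{n} - \alpha_{n}(F(\theta_{n}, Z_{n+1}) + \eta_{n})$ and, crucially, $\int F(\theta, z)\,\mu(dz) = \nabla f(\theta)$, which is exactly what the mean-field identity (\ref{a2.2.1*}) demands.

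Next I would verify Assumptions \ref{a2.1}--\ref{a2.4} for this reformulation. Assumption \ref{a2.1} is assumed directly. For Assumption \ref{a2.2}, since the block chain is autonomous and geometrically ergodic, the Poisson equation $\tilde{F}(\theta, \cdot) - (\Pi\tilde{F})(\theta, \cdot) = F(\theta, \cdot) - \nabla f(\theta)$ is solved by the absolutely convergent series $\tilde{F}(\theta, z) = \sum_{k\geq 0}(\Pi^{k}(F(\theta, \cdot) - \nabla f(\theta)))(z)$; local Lipschitz continuity in $\theta$ of $\tilde{F}(\cdot, z)$, of $(\Pi\tilde{F})(\cdot, z)$ and of $\nabla f(\cdot)$ follows from the local Lipschitz continuity in $\theta$ of $\psi_{N,\theta}(y_{1:N})$, which in turn follows from Assumptions \ref{a4.2} and \ref{a4.3} (finite sums and products of $C^{1,1}$ functions with denominators bounded away from zero on compacts). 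Assumption \ref{a2.3} holds with $\varphi_{Q}$ constant on $Q$, since everything lives on finite sets and all relevant quantities are bounded, and Lipschitz in $\theta$, uniformly on compacts. Assumption \ref{a2.4} reduces to $\limsup_{n}\|\nabla f_{N}(\theta_{n}) - \nabla f(\theta_{n})\| < \infty$, which is immediate on $\{\sup_{n}\|\theta_{n}\| < \infty\}$.

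The heart of the argument is then Lemma \ref{lemma4.1}, which furnishes both the identity $\nabla f(\theta) = \lim_{N}\nabla f_{N}(\theta)$ and the quantitative bound $\|\nabla f_{N}(\theta) - \nabla f(\theta)\| = O(1/N)$ locally uniformly in $\theta$; combined with the previous paragraph this yields $\eta = \limsup_{n}\|\eta_{n}\| \leq C_{Q}/N$ on $\Lambda_{Q}$ for a constant $C_{Q}$ depending only on the model. I would also need to transfer the analytical hypotheses from the parameterization to $f(\cdot)$: under Assumption \ref{a4.3}, $\nabla f(\cdot)$ is locally Lipschitz, so Assumption \ref{a1.3.a} holds; under Assumption \ref{a4.4.a}, $f(\cdot)$ is $p$-times differentiable, so Assumption \ref{a1.3.b} holds; and under Assumption \ref{a4.4.b}, $f(\cdot)$ is real-analytic, so Assumption \ref{a1.3.c} holds. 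Each of these rests on the corresponding property of the finitely many functions $p_{\theta}(x'|x)$, $q_{\theta}(y|x)$, $\pi_{\theta}(x)$ together with locally uniform convergence (of $f_{N}$ and its derivatives, via the geometric stability of the optimal filter for the candidate model). The transfer of real-analyticity to the limit $f$ is the main technical obstacle: it requires extending $\phi_{N,\theta}$ to a complex neighborhood of $\Theta$, establishing uniform boundedness and convergence of the complex extensions on such a neighborhood, and invoking a normal-families / Weierstrass argument — this is the step where I expect to lean most heavily on the filter-stability machinery of \cite{tadic5}.

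Finally, with Assumptions \ref{a2.1}--\ref{a2.4} and the relevant case of Assumption \ref{a1.3.a}/\ref{a1.3.b}/\ref{a1.3.c} in place, Theorem \ref{theorem2.1} applies and, since the function $\psi_{Q}(\cdot)$ and the constants $K_{Q}$, $L_{Q}$ produced there depend only on $f(\cdot)$ and on the ergodicity properties of the chain (hence only on $p_{\theta}$, $q_{\theta}$, $\pi_{\theta}$, $\{X_{n}\}_{n\geq 0}$) and are independent of $\eta$, substituting the bound $\eta \leq C_{Q}/N$ into (\ref{t1.1.1*}), (\ref{t1.1.3*}), (\ref{t1.1.5*}) and renaming constants delivers Parts (i)--(iii) of Theorem \ref{theorem4.1}, with all constants independent of $N$. \qed
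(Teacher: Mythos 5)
Your proposal is correct and follows essentially the same route as the paper: recast (\ref{4.1}) as a Markovian recursion of the form (\ref{2.1}) with a $\theta$-independent block chain, take $\eta_{n}=\nabla f_{N}(\theta_{n})-\nabla f(\theta_{n})$ and $F(\theta,z)=\psi_{N,\theta}(y_{1:N})-\eta(\theta)$, verify Assumptions \ref{a2.2}--\ref{a2.4} via the Poisson-equation series for the geometrically ergodic finite-state chain, invoke Lemma \ref{lemma4.1} for the $O(1/N)$ bias bound and the smoothness/analyticity of $f(\cdot)$, and conclude by Theorem \ref{theorem2.1}. The only cosmetic difference is that the paper transfers real-analyticity by writing $f(\theta)=h(P_{\theta},Q_{\theta})$ with $h$ real-analytic by \cite[Theorem 1]{tadic5} and composing, rather than redoing the complex-extension argument you sketch, but both rest on the same machinery.
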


Theorem \ref{theorem4.1} is proved in Section \ref{section4*}. 

\begin{vremark}
Function $\psi_{Q}(\cdot )$ depends on 
$p_{\theta }(x'|x,y)$, $q_{\theta }(y|x)$, $\pi_{\theta}(x)$, $\{X_{n} \}_{n\geq 0}$ 
through function $f(\cdot )$ (defined in (\ref{4.701})) and its properties
(see Remark \ref{remark1.1} for details). 
Function $\psi_{Q}(\cdot )$ also depends on 
$p_{\theta }(x'|x,y)$, $q_{\theta }(y|x)$, $\pi_{\theta}(x)$, $\{X_{n} \}_{n\geq 0}$ 
through the ergodicity properties of the optimal filter 
(see (\ref{l4.1.5})). 
In addition to this, $\psi_{Q}(\cdot )$ depends on 
$p_{\theta }(x'|x,y)$, $q_{\theta }(y|x)$
through upper and lower bounds of $p_{\theta }(x'|x,y)$, $q_{\theta }(y|x)$
and Lipschitz constants of 
$p_{\theta }(x'|x,y)$, $q_{\theta }(y|x)$, 
$\nabla_{\theta } p_{\theta }(x'|x,y)$, $\nabla_{\theta } q_{\theta }(y|x)$
(see (\ref{l4.1.901}) -- (\ref{l4.1.905})). 
Further details can be found in the proofs of Lemma \ref{lemma4.1} 
and Theorem \ref{theorem4.1} (Section \ref{section4*}). 
\end{vremark}

\begin{vremark}
As $\psi_{Q}(\cdot )$, constants $K_{Q}$ and $L_{Q}$ depend on 
$p_{\theta }(x'|x,y)$, $q_{\theta }(y|x)$, $\pi_{\theta}(x)$, $\{X_{n} \}_{n\geq 0}$ 
through function $f(\cdot )$ (defined in (\ref{4.701})) and its properties
(see Remark \ref{remark1.2} for details). 
$K_{Q}$ and $L_{Q}$ also depend on 
$p_{\theta }(x'|x,y)$, $q_{\theta }(y|x)$, $\pi_{\theta}(x)$, $\{X_{n} \}_{n\geq 0}$ 
through the ergodicity properties of the optimal filter. 
In addition to this, $K_{Q}$ and $L_{Q}$ depend on 
$p_{\theta }(x'|x,y)$, $q_{\theta }(y|x)$
through upper and lower bounds of $p_{\theta }(x'|x,y)$, $q_{\theta }(y|x)$
and Lipschitz constants of 
$p_{\theta }(x'|x,y)$, $q_{\theta }(y|x)$, 
$\nabla_{\theta } p_{\theta }(x'|x,y)$, $\nabla_{\theta } q_{\theta }(y|x)$. 
For further details, see the proofs of Lemma \ref{lemma4.1} 
and Theorem \ref{theorem4.1} (Section \ref{section4*}).
\end{vremark}

The recursive maximum split-likelihood method (\ref{4.1}) 
has been proposed and thoroughly analyzed in \cite{ryden1}, \cite{ryden2}. 
Although the results of \cite{ryden2} provide a good insight into 
its asymptotic behavior, 
they do not offer any information about the asymptotic bias
(i.e., bounds on quantities (\ref{1.5})). 
The main difficulty is the same as in the case of policy-gradient search: 
The recursive maximum split-likelihood (\ref{4.1}) is so complex 
(even for moderately large $N_{x}$, $N_{y}$)
that the existing results on the biased stochastic gradient search 
and the biased stochastic approximation 
\cite[Section 5.3]{borkar}, \cite{chen1}, \cite{chen2}, \cite[Section 2.7]{chen3} 
cannot be applied. 
As opposed to the results of \cite{ryden2}, 
under mild and easily verifiable conditions, 
Theorem \ref{theorem4.1} provides (relatively) tight upper bounds on 
the asymptotic bias of (\ref{4.1}) in terms of $N$. 
It is worth mentioning that Theorem \ref{theorem4.1} can be extended to more general models  
and more sophisticated identification algorithms 
(such as those based on MCMC and SMC sampling; see \cite{tadic6}).

\section{Proof of Part (i) of Theorem \ref{theorem1.1}}\label{section1.a*}

In this section, we rely on the following notation. 
For a set $A\subseteq\mathbb{R}^{d_{\theta } }$ and $\varepsilon\in(0,\infty )$, 
let $V_{\varepsilon}(A)$ be the $\varepsilon$-vicinity of $A$, i.e., 
$V_{\varepsilon}(A) = \{\theta\in\mathbb{R}^{d_{\theta } }: d(\theta, A ) \leq \varepsilon \}$. 
For $\theta\in{\mathbb R}^{d_{\theta}}$ and $\gamma\in[0,\infty)$, let $F_{\gamma}(\theta)$ be the set
defined by 
\begin{align*}
	F_{\gamma}(\theta)
	=
	\left\{
	-\nabla f(\theta)+\vartheta: \vartheta\in\mathbb{R}^{d_{\theta}}, \|\vartheta\|\leq\gamma 
	\right\} 
\end{align*}
(notice that $F_{\gamma}(\theta)$ is a set-valued function of $\theta$). 
For $\gamma\in[0,\infty)$, let $\Phi_{\gamma}$ be the family of solutions to 
the differential inclusion $d\theta/dt\in F_{\gamma}(\theta)$, 
i.e., $\Phi_{\gamma}$ is the collection of absolutely continuous functions 
$\varphi:[0,\infty)\rightarrow\mathbb{R}^{d_{\theta}}$ satisfying 
$d\varphi(t)/dt\in F_{\gamma}(\varphi(t))$ almost everywhere (in $t$) on $[0,\infty)$. 
For a compact set $Q\subset\mathbb{R}^{d_{\theta}}$ and $\gamma\in[0,\infty)$, 
let ${\cal H}_{Q,\gamma}$ be the largest invariant set of the differential inclusion 
$d\theta/dt\in F_{\gamma}(\theta)$ contained in $Q$, 
i.e., ${\cal H}_{Q,\gamma}$ is the largest set ${\cal H}$ with the following property: 
For any $\theta\in{\cal H}$, 
there exists a solution $\varphi\in\Phi_{\gamma}$ such that $\varphi(0)=\theta$
and $\varphi(t)\in{\cal H}$ for all $t\in[0,\infty)$. 
For a compact set $Q\subset{\mathbb R}^{d_{\theta}}$ and $\gamma\in[0,\infty)$, 
let ${\cal R}_{Q,\gamma}$ be the set of chain-recurrent points of 
the differential inclusion $d\theta/dt\in F_{\gamma}(\theta)$ contained in $Q$, 
i.e., $\theta\in{\cal R}_{Q,\gamma}$ if and only if for any $\delta,t\in(0,\infty)$, 
there exist an integer $N\geq 1$, real numbers $t_{1},\dots,t_{N}\in[t,\infty)$ 
and solutions $\varphi_{1},\dots,\varphi_{N}\in\Phi_{\gamma}$
(each of which can depend on $\theta,\delta,t$) such that 
$\varphi_{k}(0)\in{\cal H}_{Q,\gamma}$ for $1\leq k\leq N$ and 
\begin{align*}
	\|\varphi_{1}(0)-\theta \|\leq \delta, 
	\;\;\;\;\; 
	\|\varphi_{N}(t_{N})-\theta \|\leq \delta, 
	\;\;\;\;\; 
	\|\varphi_{k}(t_{k})-\varphi_{k+1}(0) \|\leq \delta
\end{align*}
for $1\leq k< N$. 
For more details on differential inclusions and their solutions, invariant sets and chain-recurrent points, 
see \cite{aubin&celina} and references cited therein. 

\begin{lemma}\label{lemma1.a.2}
Suppose that Assumption \ref{a1.3.a} holds. 
Then, given a compact set $Q\subset\mathbb{R}^{d_{\theta}}$, 
there exists a non-decreasing function $\phi_{Q}:[0,\infty)\rightarrow[0,\infty)$
such that 
$\lim_{\gamma\rightarrow 0} \phi_{Q}(\gamma)=\phi_{Q}(0)=0$ and 
${\cal R}_{Q,\gamma}\subseteq V_{\phi_{Q}(\gamma)}({\cal R})$ for all $\gamma\in[0,\infty)$. 
\end{lemma}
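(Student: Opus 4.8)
The plan is to define $\phi_{Q}(\gamma):=\sup\{\,d(\theta,{\cal R}):\theta\in{\cal R}_{Q,\gamma'}\text{ for some }\gamma'\in[0,\gamma]\,\}$ (with $\sup\emptyset:=0$); this is non-decreasing by construction and finite since ${\cal R}_{Q,\gamma'}\subseteq Q$ and ${\cal R}\neq\emptyset$ (e.g.\ ${\cal S}\subseteq{\cal R}$). Here $\phi_{Q}(0)=0$ amounts to ${\cal R}_{Q,0}\subseteq{\cal R}$, which is immediate because for $\gamma=0$ the inclusion $d\theta/dt\in F_{0}(\theta)$ is exactly the gradient flow $d\theta/dt=-\nabla f(\theta)$. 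So the whole content is $\lim_{\gamma\to 0}\phi_{Q}(\gamma)=0$, which I would prove by contradiction: if it fails, then unwinding the definition of $\phi_{Q}$ yields $\gamma_{k}\downarrow 0$, $\varepsilon_{0}>0$ and points $\theta_{k}\in{\cal R}_{Q,\gamma_{k}}$ with $d(\theta_{k},{\cal R})\ge\varepsilon_{0}$; passing to a subsequence, $\theta_{k}\to\theta_{\ast}\in Q$ with $d(\theta_{\ast},{\cal R})\ge\varepsilon_{0}$, and it suffices to derive the contradiction $\theta_{\ast}\in{\cal R}$.

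To show $\theta_{\ast}\in{\cal R}$, fix $\delta,t\in(0,\infty)$; I must produce a periodic $(\delta,t)$-chain of the gradient flow through $\theta_{\ast}$ in the sense of (\ref{1.11}). Put $Q'=V_{1}(Q)$ and let $L\in(0,\infty)$ be a Lipschitz constant of $\nabla f(\cdot)$ on $Q'$. For each $k$, the membership $\theta_{k}\in{\cal R}_{Q,\gamma_{k}}$ supplies a periodic $(\delta/3,t)$-chain of the inclusion through $\theta_{k}$ whose segments $\varphi_{1},\dots,\varphi_{N}$ may be taken to remain in the invariant set ${\cal H}_{Q,\gamma_{k}}\subseteq Q$; subdividing each $\varphi_{j}$ on $[0,t_{j}]$ into consecutive arcs of length in $[t,2t]$ joined by zero jumps, one may further assume every segment time lies in $[t,2t]$. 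For an arc $\psi$ starting at $\vartheta\in Q$, a Gronwall comparison of $\psi$ with the flow trajectory $\pi(\cdot;\vartheta)$ — bootstrapped so that both remain in $Q'$ as long as their gap stays below $1$, which holds on $[0,2t]$ once $k$ is large — yields
\begin{equation*}
	\sup_{0\le s\le 2t}\bigl\|\psi(s)-\pi(s;\vartheta)\bigr\|
	\;\le\;
	\frac{\gamma_{k}}{L}\bigl(e^{2Lt}-1\bigr),
\end{equation*}
and the right-hand side tends to $0$ as $k\to\infty$ (with $t$ fixed). Choosing $k$ large enough that this bound is $<\delta/3$ and $\|\theta_{k}-\theta_{\ast}\|<\delta/3$, and replacing each arc by the flow trajectory issued from its starting point, turns the inclusion-chain through $\theta_{k}$ into a periodic chain of the gradient flow through $\theta_{\ast}$ with all segment times in $[t,2t]\subseteq[t,\infty)$ and every jump at most $\delta/3+\delta/3<\delta$. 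Thus $\theta_{\ast}$ admits a periodic $(\delta,t)$-chain of the flow; since $\delta,t$ were arbitrary, $\theta_{\ast}\in{\cal R}$, completing the argument.

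The step I expect to be the main obstacle is the confinement invoked above: the definition of ${\cal R}_{Q,\gamma}$ only constrains the \emph{starting points} of the chain segments to lie in ${\cal H}_{Q,\gamma}$, whereas the Gronwall comparison (together with the bootstrap that keeps the flow trajectories inside the fixed compact set $Q'$ over the fixed time $2t$) needs the whole segments to stay in $Q$ — and this is genuinely needed, since under Assumption \ref{a1.3.a} alone the gradient flow need not be complete and trajectories issued from $Q$ can escape to infinity in finite time. Making this precise amounts to identifying ${\cal R}_{Q,\gamma}$ with the chain-recurrent set of the inclusion $d\theta/dt\in F_{\gamma}(\theta)$ restricted to the invariant set ${\cal H}_{Q,\gamma}$, for which chains built from solutions remaining in ${\cal H}_{Q,\gamma}\subseteq Q$ are available by definition; this is exactly where ${\cal H}_{Q,\gamma}$ earns its place in the construction. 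The remaining ingredients — the subdivision into bounded arcs, the Gronwall estimate with its bootstrap, and the subsequential extraction of $\theta_{\ast}$ — are routine.
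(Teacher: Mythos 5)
Your construction of $\phi_{Q}$ coincides with the paper's (the paper derives monotonicity from ${\cal R}_{Q,\gamma}\subseteq{\cal R}_{Q,\delta}$ for $\gamma\leq\delta$ rather than building it into the definition, but that is cosmetic). Where you genuinely diverge is on the only substantive point, $\lim_{\gamma\rightarrow 0}\phi_{Q}(\gamma)=0$: the paper obtains this in one line by invoking \cite[Theorem 3.1]{benaim4}, which directly yields ${\cal R}_{Q,\gamma}\subseteq V_{\varepsilon}({\cal R})$ for all sufficiently small $\gamma$, whereas you attempt a self-contained argument by contradiction, extracting a limit point $\theta_{\ast}$ of approximate chain-recurrent points and upgrading its perturbed chains to exact chains of the gradient flow via a Gronwall comparison. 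That is a legitimate and more elementary route in principle --- it is essentially how upper semicontinuity of chain-recurrent sets under perturbation is proved --- but as written it is not complete.

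The gap is exactly the one you flag and then defer: confinement of the chain segments. The definition of ${\cal R}_{Q,\gamma}$ in Section \ref{section1.a*} constrains only the starting points $\varphi_{k}(0)$ to lie in ${\cal H}_{Q,\gamma}$, and ${\cal H}_{Q,\gamma}$ is defined by the existence of \emph{some} solution remaining inside it, not by forward invariance of \emph{all} solutions; so nothing in the definitions forces the arcs $\varphi_{k}|_{[0,t_{k}]}$ to stay in $Q$, or in any fixed compact set, over the horizon $[0,2t]$. Without that, the Lipschitz constant $L$ on $Q'=V_{1}(Q)$ is unavailable, the bootstrap in your Gronwall estimate cannot get started, and under Assumption \ref{a1.3.a} alone the flow trajectories $\pi(\cdot\,;\vartheta)$ you substitute for the arcs need not even exist on $[0,2t]$. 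Your proposed repair --- identifying ${\cal R}_{Q,\gamma}$ with the chain-recurrent set of the inclusion restricted to ${\cal H}_{Q,\gamma}$, ``for which confined chains are available by definition'' --- is itself a nontrivial assertion about set-valued dynamics (it does not follow from the paper's definitions) and would require a proof of roughly the same depth as the statement you are establishing. This is precisely the technical content the paper outsources to \cite{benaim4}; you need either to cite such a perturbation result, as the paper does, or to prove the confinement/replacement lemma explicitly before the Gronwall step can be carried out.
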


\begin{proof}
Let $Q\subset{\mathbb R}^{d_{\theta}}$ be any compact set. 
Moreover, let $\phi_{Q}:[0,\infty)\rightarrow[0,\infty)$ be the function defined 
by $\phi_{Q}(0)=0$ and 
\begin{align*}
	\phi_{Q}(\gamma)
	=
	\sup\left( \left\{ d(\theta,{\cal R}): \theta\in{\cal R}_{Q,\gamma} \right\} \cup \{0\} \right)
\end{align*}
for $\gamma\in (0,\infty)$. 
Then, it is easy to show that $\phi_{Q}(\cdot)$ is well-defined and satisfies 
${\cal R}_{Q,\gamma}\subseteq V_{\phi_{Q}(\gamma)}({\cal R})$ for all $\gamma\in[0,\infty)$. 
It is also easy to check that 
$F_{\gamma}(\theta)\subseteq F_{\delta}(\theta)$ for all $\theta\in\mathbb{R}^{d_{\theta}}$, 
$\gamma,\delta\in[0,\infty)$ satisfying $\gamma\leq\delta$. 
Consequently, $\Phi_{\gamma}\subseteq\Phi_{\delta}$, 
${\cal H}_{Q,\gamma}\subseteq{\cal H}_{Q,\delta}$, 
${\cal R}_{Q,\gamma}\subseteq{\cal R}_{Q,\delta}$
for all $\gamma,\delta\in[0,\infty)$ satisfying $\gamma\leq\delta$. 
Thus, $\phi_{Q}(\cdot)$ is non-decreasing.\footnote
{Notice that 
$\{d(\theta,{\cal R}): \theta\in{\cal R}_{Q,\gamma} \} \subseteq 
\{d(\theta,{\cal R}): \theta\in{\cal R}_{Q,\delta} \}$ whenever $\gamma\leq\delta$. } 
Moreover, \cite[Theorem 3.1]{benaim4} implies that given $\varepsilon\in(0,\infty)$, 
there exists a real number $\gamma_{Q}(\varepsilon)\in(0,\infty)$ such that 
${\cal R}_{Q,\gamma}\subseteq V_{\varepsilon}({\cal R})$ 
for all $\gamma\in[0,\gamma_{Q}(\varepsilon) )$. 
Therefore, $\phi_{Q}(\gamma)\leq\varepsilon$ 
for all $\varepsilon\in(0,\infty)$, $\gamma\in[0,\gamma_{Q}(\varepsilon) )$.\footnote
{Notice that $d(\theta,{\cal R})\leq\varepsilon$ 
whenever $\theta\in{\cal R}_{Q,\gamma}$, $\gamma\in[0,\gamma_{Q}(\varepsilon) )$. }
Consequently, $\lim_{\gamma\rightarrow 0}\phi_{Q}(\gamma)=\phi_{Q}(0)=0$. 
\end{proof}

\begin{vproof}{Part (i) of Theorem \ref{theorem1.1}}
{Let $Q\subset\mathbb{R}^{d_{\theta}}$ be any compact set and let 
$\psi_{Q}:[0,\infty)\rightarrow[0,\infty)$ be the function defined by
$\psi_{Q}(t)=\phi_{Q}(2t)$ for $t\in[0,\infty)$ 
($\phi_{Q}(\cdot)$ is specified in the statement of Lemma \ref{lemma1.a.2}). 
Then, due to Lemma \ref{lemma1.a.2}, 
$\psi_{Q}(\cdot)$ is non-decreasing and 
$\lim_{t\rightarrow 0} \psi_{Q}(t) = \psi_{Q}(0) = 0$. 
Moreover, owing to Assumption \ref{a1.2}, 
there exists an event $N_{Q}\in{\cal F}$
such that the following holds: 
$P(N_{Q})=0$ and (\ref{a1.2.1}) is satisfied on $\Lambda_{Q}\setminus N_{Q}$
for all $t\in(0,\infty)$. 
Let $\omega$ be an arbitrary sample in $\Lambda_{Q}\setminus N_{Q}$. 
To prove Part (i) of Theorem \ref{theorem1.1}, 
it is sufficient to show (\ref{t1.1.1*}) for $\omega$. 
Notice that all formulas that follow in the proof correspond to $\omega$. 

If $\eta=0$, then \cite[Proposition 4.1, Theorem 5.7]{benaim2} imply
that all limit points of $\{\theta_{n} \}_{n\geq 0}$ are included in ${\cal R}$. 
Hence, (\ref{t1.1.1*}) holds when $\eta=0$. 

Now, suppose $\eta>0$. Then, there exists $n_{0}\geq 0$ (depending on $\omega$)
such that $\theta_{n}\in Q$, $\|\eta_{n}\|\leq 2\eta$ for $n\geq n_{0}$. 
Therefore, 
\begin{align*}
	\frac{\theta_{n+1}-\theta_{n} }{\alpha_{n} } + \zeta_{n}
	=
	-\left(\nabla f(\theta_{n} ) + \eta_{n} \right)
	\in 
	F_{2\eta}(\theta_{n})
\end{align*}
for $n\geq n_{0}$. 
Consequently, \cite[Proposition 1.3, Theorem 3.6]{benaim3} imply that all limit points of 
$\{\theta_{n} \}_{n\geq 0}$ are contained in ${\cal R}_{Q,2\eta}$. 
Combining this with Lemma \ref{lemma1.a.2}, we conclude that the limit points of $\{\theta_{n} \}_{n\geq 0}$
are included in $V_{\phi_{Q}(2\eta)}({\cal R}) = V_{\psi_{Q}(\eta)}({\cal R})$. 
Thus, (\ref{t1.1.1*}) holds when $\eta>0$. 
}
\end{vproof}

\section{Proof of Parts (ii), (iii) of Theorem \ref{theorem1.1}}\label{section1.bc*}

In this section, the following notation is used. 
$\phi$ is the random variable defined by  
\begin{align*}
	\phi=\limsup_{n\rightarrow \infty } \|\nabla f(\theta_{n} ) \|. 
\end{align*}	
For $t\in (0,\infty )$ and $n\geq 0$, 
$\phi_{1,n}(t), \phi_{2,n}(t), \phi_{n}(t)$ are the random quantities 
defined as 
\begin{align*}	
	&
	\phi_{1,n}(t)
	=
	- (\nabla f(\theta_{n} ) )^{T} 
	\sum_{i=n}^{a(n,t)-1} \alpha_{i} 
	\left(\nabla f(\theta_{i} ) - \nabla f(\theta_{n} ) \right), 
	\\
	&
	\phi_{2,n}(t)
	=
	\int_{0}^{1} 
	\left(\nabla f(\theta_{n} + s (\theta_{a(n,t) } - \theta_{n} ) ) - \nabla f(\theta_{n} ) \right)^{T}
	(\theta_{a(n,t) } - \theta_{n} ) ds, 
	\\
	&
	\phi_{n}(t) = \phi_{1,n}(t) + \phi_{2,n}(t).
\end{align*}
Then, it is straightforward to demonstrate 
\begin{align} \label{1.1*}
	f(\theta_{a(n,t) } ) - f(\theta_{n} )
	= &
	-
	\|\nabla f(\theta_{n} ) \|^{2} 
	\sum_{i=n}^{a(n,t)-1} \alpha_{i} 
	-
	(\nabla f(\theta_{n} ) )^{T} 
	\sum_{i=n}^{a(n,t)-1} \alpha_{i} \xi_{i}
	+
	\phi_{n}(t)	
	\nonumber\\
	\leq &
	-
	\|\nabla f(\theta_{n} ) \|
	\left(
	\|\nabla f(\theta_{n} ) \|
	\sum_{i=n}^{a(n,t)-1} \alpha_{i} 
	-
	\left\|
	\sum_{i=n}^{a(n,t)-1} \alpha_{i} \xi_{i}
	\right\|
	\right)
	+
	|\phi_{n}(t)|
\end{align}
for $t\in(0,\infty )$, $n\geq 0$. 

In this section, the following notation is also relied on. 
The Lebesgue measure is denoted by $m(\cdot )$. 
For a compact set $Q\subset\mathbb{R}^{d_{\theta } }$ and $\varepsilon\in(0,\infty )$, 
$A_{Q,\varepsilon }$ is the set defined by 
\begin{align}\label{1.3*}
	A_{Q,\varepsilon } 
	=
	\{f(\theta ): \theta\in Q, \|\nabla f(\theta ) \|\leq\varepsilon \}. 
\end{align}

In order to treat Assumptions \ref{a1.3.b}, \ref{a1.3.c} in a unified way 
and to 
provide a unified proof of Parts (ii), (iii) of Theorem \ref{theorem1.1}, 
we introduce the following assumption. 

\begin{assumption}\label{a1.4}
There exists a real number $s\in (0,1]$ 
and for any compact set $Q\subset\mathbb{R}^{d_{\theta } }$, 
there exists a real number $M_{Q}\in[1,\infty )$ such that 
$m(A_{Q,\varepsilon } ) \leq M_{Q}\varepsilon^{s}$ for all $\varepsilon\in(0,\infty )$.
\end{assumption}

\begin{proposition}\label{proposition1.1}
Suppose that Assumption \ref{a1.3.b} holds. 
Let $Q\subset\mathbb{R}^{d_{\theta } }$ be any compact set.  
Then, there exists a real number $M_{Q}\in [1,\infty )$
(depending only on $f(\cdot )$) such that 
$m(A_{Q,\varepsilon } ) \leq M_{Q}\varepsilon^{q}$
for all $\varepsilon\in(0,\infty )$
($q$ is specified in the statement of Theorem \ref{theorem1.1}). 
\end{proposition}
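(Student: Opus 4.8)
The plan is to reduce the assertion to a statement about a single function on a cube and then invoke Yomdin's quantitative version of the Morse--Sard theorem (\cite{yomdin}), which produces exactly the exponent $q=(p-d_{\theta})/(p-1)$.

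First I would dispose of a few harmless reductions. Since $p>d_{\theta}\geq 1$ we have $p\geq 2$, so $f$ is $C^{1}$ and $\nabla f$ is continuous, hence bounded on every compact set; thus if $\varepsilon>\sup_{\theta\in Q}\|\nabla f(\theta)\|$ then $A_{Q,\varepsilon}=f(Q)$ is a fixed bounded interval, and it is enough to prove the bound for $\varepsilon$ in a bounded range (the remaining $\varepsilon$ being absorbed into $M_{Q}$, recalling $M_{Q}\geq 1$ is permitted). Next, cover $Q$ by finitely many translates of a fixed small closed cube, all contained in some compact neighbourhood $Q'$ of $Q$; from $A_{Q,\varepsilon}\subseteq\bigcup_{j}A_{C_{j},\varepsilon}$ and subadditivity of $m$, it suffices to bound $m(A_{C,\varepsilon})$ by a constant times $\varepsilon^{q}$ for one such cube $C\subseteq Q'$. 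Finally, after an affine change of coordinates $\phi$ sending $[0,1]^{d_{\theta}}$ onto $C$, I would work with $g:=f\circ\phi$ on the unit cube; this only rescales $\varepsilon$ by a factor depending on $\phi$ (hence on $Q$, not on $\varepsilon$), and the derivatives of $g$ of orders $1,\dots,p$ on $[0,1]^{d_{\theta}}$ are bounded in terms of those of $f$ on $Q'$.

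The core is then Yomdin's theorem: if $g$ is $p$-times differentiable on $[0,1]^{d_{\theta}}$ with $p>d_{\theta}$, the set of near-critical values $\{g(\theta):\theta\in[0,1]^{d_{\theta}},\ \|\nabla g(\theta)\|\leq\varepsilon\}$ has Lebesgue measure at most $c\,\varepsilon^{(p-d_{\theta})/(p-1)}$, with $c$ depending only on upper bounds for the derivatives of $g$ of orders $1,\dots,p$. Applying this to the $g$ above and transporting $c$ back through the affine map and the finite cover gives a finite $M_{Q}$ depending only on $f$ (through sup-bounds of its derivatives up to order $p$ on $Q'$) with $m(A_{Q,\varepsilon})\leq M_{Q}\varepsilon^{q}$; replacing $M_{Q}$ by $\max\{M_{Q},1\}$ yields $M_{Q}\in[1,\infty)$. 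It is worth recalling the underlying mechanism: covering a near-critical piece by subcubes of side $h$, a subcube containing a near-critical point $\theta_{0}$ has image of length at most $\|\nabla g(\theta_{0})\|\,h+O(h^{2})\leq\varepsilon h+O(h^{2})$ by Taylor expansion at $\theta_{0}$, and summing over at most $h^{-d_{\theta}}$ subcubes and optimising in $h$ already yields the exponent $1=q$ when $d_{\theta}=1$.

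The main obstacle is precisely the case $d_{\theta}\geq 2$: the naive per-subcube estimate degenerates, because the Taylor coefficients of order $\geq 2$ need not be small, so one must exploit that the subset of near-critical points at which the higher-order derivatives are large is itself small. This refined covering count is the inductive content of Yomdin's theorem (induction on the order of differentiation, equivalently on dimension), which in the write-up I would quote from \cite{yomdin} rather than reprove. A secondary, purely technical point is to confirm that ``$p$-times differentiable'' on $\mathbb{R}^{d_{\theta}}$ (Assumption \ref{a1.3.b}) supplies the derivative bounds on the compact set $Q'$ required by the cited form of Yomdin's estimate; this is settled either by the precise statement in \cite{yomdin} or by a short preliminary remark about $f$ restricted to $Q'$.
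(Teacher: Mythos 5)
Your proposal is correct and follows the same route as the paper: the paper's entire proof is a one-line citation of Yomdin's theorem (\cite[Theorem 1.2]{yomdin}), of which the proposition is a particular case. Your additional reductions (to a cube, affine rescaling, and derivative bounds on a compact neighbourhood) are sound bookkeeping that the paper leaves implicit in the phrase ``particular case.''
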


\begin{proof}
The proposition is a particular case of 
Yomdin theorem \cite[Theorem 1.2]{yomdin}. 
\end{proof}

\begin{proposition} \label{proposition1.2}
Suppose that Assumption \ref{a1.3.c} holds. 
Let $Q\subset\mathbb{R}^{d_{\theta } }$ be any compact set. 
Then, the following is true: 
\begin{compactenum}[(i)]
\item
There exists a real number $M_{Q}\in[1,\infty )$
(depending only on $f(\cdot )$) such that $m(A_{Q,\varepsilon } )\leq M_{Q}\varepsilon$
for all $\varepsilon\in(0,\infty )$. 
\item
There exist real numbers 
$r_{Q}\in (0,1)$, $M_{1,Q}, M_{2,Q} \in[1,\infty )$ (depending only on $f(\cdot )$) 
such that 
\begin{align}\label{p1.2.1*}
	d(\theta, {\cal S} ) 
	\leq 
	M_{1,Q} \|\nabla f(\theta ) \|^{r_{Q} },
	\;\;\;\;\; 
	d(f(\theta ) ), f({\cal S} ) ) 
	\leq 
	M_{2,Q} \|\nabla f(\theta ) \|
\end{align}
for all $\theta\in Q$
(${\cal S}$ and $f({\cal S} )$ are specified in (\ref{1.21})). 
\end{compactenum}
\end{proposition}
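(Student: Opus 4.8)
The plan is to obtain both parts from the classical Łojasiewicz inequalities \cite{lojasiewicz1,lojasiewicz2} (see also \cite{bierstone&milman,kurdyka}) applied to the real-analytic function $f(\cdot)$, patched together by a compactness argument. Two local facts will be used at each $a\in{\cal S}$. First, the Łojasiewicz \emph{gradient} inequality: there are an open ball $U_{a}\ni a$, a number $\rho_{a}\in[1/2,1)$ and a constant $c_{a}\in(0,\infty)$ with $\|\nabla f(\theta)\|\ge c_{a}|f(\theta)-f(a)|^{\rho_{a}}$ on $U_{a}$. Second, the Łojasiewicz inequality for the zero set of the real-analytic map $\theta\mapsto\|\nabla f(\theta)\|^{2}$ (which is exactly ${\cal S}$): after shrinking $U_{a}$, there are $\alpha_{a}\in[1,\infty)$ and $c_{a}'\in(0,\infty)$ with $\|\nabla f(\theta)\|^{2}\ge c_{a}'\,d(\theta,{\cal S})^{\alpha_{a}}$ on $U_{a}$, i.e.\ $d(\theta,{\cal S})\le(c_{a}')^{-1/\alpha_{a}}\|\nabla f(\theta)\|^{2/\alpha_{a}}$. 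Since $\nabla f(\cdot)$ is locally Lipschitz and vanishes on ${\cal S}$, applying Lipschitz continuity between $\theta$ and the point of ${\cal S}$ nearest to it gives $\|\nabla f(\theta)\|\le L\,d(\theta,{\cal S})$ near ${\cal S}$; comparing with the previous display forces $\alpha_{a}\ge2$, so $r_{a}:=2/\alpha_{a}\in(0,1]$.

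Fix a compact $Q$ and put $Q'=V_{1}(Q)$. If ${\cal S}\cap Q'=\emptyset$ then $\|\nabla f(\cdot)\|$ is bounded away from $0$ on $Q$ and both parts hold trivially (with large enough constants and any $r_{Q}\in(0,1)$), so assume ${\cal S}\cap Q'\neq\emptyset$; being compact it is covered by finitely many balls $U_{a_{1}},\dots,U_{a_{m}}$, with Lebesgue number $\ell\in(0,1)$. Let $B=\sup_{Q'}\|\nabla f\|$ and $\beta=\sup_{Q'}|f|$. I split $Q$ into $\{d(\theta,{\cal S})\ge\ell\}$ and $\{d(\theta,{\cal S})<\ell\}$. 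On the first (compact, disjoint from ${\cal S}$) set, $\|\nabla f(\theta)\|\ge\delta>0$, while $d(\theta,{\cal S})$ and $d(f(\theta),f({\cal S}))\le2\beta$ are bounded, so the asserted estimates hold there with constants of order $1/\delta$ and $1/\delta^{r_{Q}}$. On the second set, the point of ${\cal S}$ nearest $\theta$ lies within $\ell<1$ of $\theta\in Q$, hence in $Q'$, so $\theta$ is within $\ell$ of ${\cal S}\cap Q'$ and therefore $\theta\in U_{a_{i}}$ for some $i$; the two local inequalities at $a_{i}$ then give $d(f(\theta),f({\cal S}))\le|f(\theta)-f(a_{i})|\le(c_{a_{i}}^{-1}\|\nabla f(\theta)\|)^{1/\rho_{a_{i}}}$ and $d(\theta,{\cal S})\le(c_{a_{i}}')^{-1/\alpha_{a_{i}}}\|\nabla f(\theta)\|^{r_{a_{i}}}$, using $f(a_{i})\in f({\cal S})$.

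The remaining work is to homogenize the exponents over the finite cover: since $\|\nabla f\|\le B$ on $Q'$, one has $\|\nabla f(\theta)\|^{1/\rho_{a_{i}}}\le B^{1/\rho_{a_{i}}-1}\|\nabla f(\theta)\|$ (because $1/\rho_{a_{i}}>1$) and $\|\nabla f(\theta)\|^{r_{a_{i}}}\le B^{r_{a_{i}}-r}\|\nabla f(\theta)\|^{r}$ for every $r\le r_{a_{i}}$; fixing $r_{Q}\in(0,1)$ below $\min_{i}r_{a_{i}}$ and taking $M_{1,Q},M_{2,Q}$ to be the maxima of the resulting constants over the cover and the two regions (and at least $1$) proves (ii). For (i): if $\theta\in Q$ with $\|\nabla f(\theta)\|\le\varepsilon$, then either $\varepsilon$ exceeds $\min(\delta,\beta)$, in which case $m(A_{Q,\varepsilon})\le2\beta$ is at most a constant times $\varepsilon$, or $\theta$ lies in the near set, so $\theta\in U_{a_{i}}$ and $|f(\theta)-f(a_{i})|\le C_{i}\varepsilon$; hence $A_{Q,\varepsilon}\subseteq\bigcup_{i=1}^{m}[f(a_{i})-C_{i}\varepsilon,f(a_{i})+C_{i}\varepsilon]$ and $m(A_{Q,\varepsilon})\le2\varepsilon\sum_{i}C_{i}$, so (i) holds with $M_{Q}$ the larger of the two bounds. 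The main obstacle, as is typical for Łojasiewicz-type arguments, is converting purely local, point-dependent data into \emph{uniform} constants and a single exponent $r_{Q}\in(0,1)$: this is exactly where the reduction $f(a_{i})\in f({\cal S})$ (replacing the center-dependent reference value by $d(f(\theta),f({\cal S}))$), the boundedness $\|\nabla f\|\le B$ on $Q'$ (to lower exponents, and in particular to exclude $r_{Q}=1$), and the separate treatment of the region away from ${\cal S}$ and of the degenerate case ${\cal S}\cap Q'=\emptyset$ must all be combined.
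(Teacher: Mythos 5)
Your proof is correct and rests on the same two pillars as the paper's: the Lojasiewicz distance (ordinary) inequality for the bound on $d(\theta,{\cal S})$, the Lojasiewicz gradient inequality for the bound on $d(f(\theta),f({\cal S}))$, homogenization of exponents via the boundedness of $\|\nabla f(\cdot)\|$ on a compact set, and a covering of $A_{Q,\varepsilon}$ by finitely many intervals of length $O(\varepsilon)$ for part (i). The genuine divergence is in how the gradient inequality is localized and, consequently, how finiteness of the relevant critical values is secured. The paper invokes Kurdyka's form, localized in the \emph{value} $a\in f(Q)$ (valid for all $\theta\in Q$ with $|f(\theta)-a|\leq\delta_{Q,a}$); this forces it to prove, by a separate contradiction argument, that $f({\cal S}\cap Q)$ is a finite set $\{a_{1},\dots,a_{N}\}$, after which $Q$ is split according to whether $f(\theta)$ lies near some $a_{i}$ with $\|\nabla f(\theta)\|<1$. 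You instead use the classical form localized at critical \emph{points} $a\in{\cal S}$, extract a finite subcover of ${\cal S}\cap V_{1}(Q)$, and split $Q$ by spatial distance to ${\cal S}$; the finiteness of the reference values $f(a_{1}),\dots,f(a_{m})$ is then automatic from the subcover (you never need that $f({\cal S}\cap Q)$ itself is finite, and finitely many short intervals is all part (i) requires). Your route costs the Lebesgue-number argument guaranteeing that every $\theta\in Q$ near ${\cal S}$ lands in one of the chosen balls; the paper's costs the finiteness lemma. Two minor remarks: your digression showing $\alpha_{a}\geq 2$ is unnecessary, since one may always enlarge the Lojasiewicz exponent after shrinking $U_{a}$ so that $d(\theta,{\cal S})\leq 1$ there, and in any case you only need some $r_{Q}\in(0,1)$ below $\min_{i}r_{a_{i}}$; and the paper gets the first inequality of (ii) in one stroke from the compact-set form of the distance inequality rather than patching local versions, which is purely cosmetic.
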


\begin{proof}
Let $Q\subset\mathbb{R}^{d_{\theta } }$ be any compact set. 
Owing to Lojasiewicz (ordinary) inequality 
(see \cite[Theorem 6.4, Remark 6.5]{bierstone&milman}), 
there exist real numbers $r_{Q}\in (0,1)$, $M_{1,Q} \in[1,\infty )$
such the first inequality in (\ref{p1.2.1*}) holds for all $\theta\in {\cal S}$. 
On the other side, due to Lojasiewicz gradient inequality 
(see \cite[Theorem \L I, Page 775]{kurdyka}), we have the following:  
For any $a\in f(Q)=\{f(\theta ): \theta\in Q\}$, there exist real numbers 
$\delta_{Q,a}\in (0,1)$, $\nu_{Q,a}\in(1,2]$, $N_{Q,a}\in[1,\infty )$ such that 
\begin{align}\label{p1.2.1}
	|f(\theta ) - a |
	\leq 
	N_{Q,a} \|\nabla f(\theta ) \|^{\nu_{Q,a} }
\end{align}
for all $\theta\in Q$ satisfying 
$|f(\theta ) - a |\leq\delta_{Q,a}$. 

Now, we show by contradiction that 
$f({\cal S}\cap Q )=\{f(\theta ): \theta\in {\cal S}\cap Q \}$ has finitely many elements. 
Suppose the opposite. 
Then, there exists a sequence $\{\vartheta_{n} \}_{n\geq 0}$ in ${\cal S}\cap Q$
such that $\{f(\vartheta_{n} ) \}_{n\geq 0}$ contains infinitely many different elements. 
Since ${\cal S}\cap Q$ is compact, 
$\{\vartheta_{n} \}_{n\geq 0}$ has a convergent subsequence 
$\{\tilde{\vartheta}_{n} \}_{n\geq 0}$
such that $\{f(\tilde{\vartheta}_{n} ) \}_{n\geq 0}$ also contains infinitely many different elements. 
Let $\vartheta=\lim_{n\rightarrow\infty } \tilde{\vartheta}_{n}$,
$a=f(\vartheta )$. 
As $\delta_{Q,a}>0$, there exists an integer $n_{0}\geq 0$
such that $|f(\tilde{\vartheta}_{n} ) - a |\leq \delta_{Q,a}$
for $n\geq n_{0}$. 
Since $\nabla f(\tilde{\vartheta}_{n} ) = 0$ for $n\geq 0$, 
(\ref{p1.2.1}) implies $f(\tilde{\vartheta}_{n} )=a$ for $n\geq n_{0}$. 
However, this is impossible, 
since $\{f(\tilde{\vartheta}_{n} ) \}_{n\geq 0}$ has infinitely many different elements. 

Let $a_{1},\dots,a_{N}$ be the elements of $f({\cal S}\cap Q)$, 
while 
$\tilde{C}_{1,Q} = \max_{1\leq i\leq N } N_{Q,a_{i} }$. 
For $1\leq i\leq N$, let 
\begin{align*}
	B_{Q,i}
	=
	\left\{\theta\in Q: 
	\|\nabla f(\theta ) \|<1, 
	f(\theta )\in (a_{i}-\delta_{Q,a_{i} },a_{i}+\delta_{Q,a_{i} } ) 
	\right\}, 
\end{align*}
while $B_{Q} = \bigcup_{i=1}^{N} B_{Q,i}$, 
$\varepsilon_{Q} = \inf\{\|\nabla f(\theta ) \|: \theta\in Q\setminus B_{Q} \}$. 
As $B_{Q}$ is open and ${\cal S}\cap Q \subset B_{Q}$, 
we have $\varepsilon_{Q}>0$. 

Let $\tilde{C}_{2,Q}\in[1,\infty )$ be an upper bound of $|f(\cdot )|$ on $Q$, 
while $M_{2,Q}=2\max\{\tilde{C}_{1,Q}, \tilde{C}_{2,Q}/\varepsilon_{Q} \}$. 
Then, if $\theta\in B_{Q}$, we have 
\begin{align*}
	d(f(\theta ), f({\cal S} ) ) 
	=
	\min_{1\leq i\leq N} 
	|f(\theta ) - a_{i} |
	\leq 
	\max_{1\leq i\leq N} 
	N_{Q,a_{i} } \|\nabla f(\theta ) \|^{\nu_{Q,a_{i} } }
	\leq 
	M_{2,Q} \|\nabla f(\theta ) \|
\end{align*}
(notice that $\|\nabla f(\theta ) \|<1$, $\nu_{Q,a_{i} }>1$). 
On the other side, 
if $\theta\in Q\setminus B_{Q}$, we get 
\begin{align*}
	d(f(\theta ), f({\cal S} ) ) 
	=
	\min_{1\leq i\leq N} 
	|f(\theta ) - a_{i} |
	\leq 
	2\tilde{C}_{2,Q} 
	\leq 
	2\tilde{C}_{2,Q}\varepsilon_{Q}^{-1} \|\nabla f(\theta ) \|
	\leq 
	M_{2,Q} \|\nabla f(\theta ) \|
\end{align*}
(notice that $\|\nabla f(\theta ) \|\geq \varepsilon_{Q}$). 
Hence, the second inequality in (\ref{p1.2.1*}) holds for all $\theta\in Q$. 

Let $M_{Q}=2M_{2,Q}N$. 
Owing to the second inequality in (\ref{p1.2.1*}), we have 
\begin{align*}
	A_{Q,\varepsilon } 
	\subseteq
	\bigcup_{i=1}^{N} 
	[f(a_{i} ) - M_{2,Q}\varepsilon, f(a_{i} ) + M_{2,Q}\varepsilon ]
\end{align*}
for each $\varepsilon\in(0,\infty )$. 
Consequently, 
$m(A_{Q,\varepsilon } ) \leq 2M_{2,Q}N\varepsilon = M_{Q}\varepsilon$
for all $\varepsilon\in(0,\infty )$. 
\end{proof}

\begin{lemma} \label{lemma1.1}
Let Assumptions \ref{a1.1} and \ref{a1.2} hold. 
Then, there exists 
an event $N_{0} \in {\cal F}$ such that 
$P(N_{0} ) = 0$ and 
\begin{align}
	& \label{l1.1.1*}
	\limsup_{n\rightarrow \infty } 
	\max_{n\leq k < a(n,t) } 
	\left\|
	\sum_{i=n}^{k} \alpha_{i} \xi_{i} 
	\right\|
	\leq 
	\eta t, 
	\\ \label{l1.1.3*}
	&
	\lim_{n\rightarrow \infty } |f(\theta_{n+1} ) - f(\theta_{n} ) | 
	=
	0
\end{align}
on $\{\sup_{n\geq 0} \|\theta_{n}\| < \infty \} \setminus N_{0}$
for all $t\in (0,\infty )$. 
Moreover, 
given a compact set 
$Q\subset \mathbb{R}^{d_{\theta } }$, there exists a real number  
$C_{1,Q} \in [1,\infty )$ 
(independent of $\eta$ and depending only on $f(\cdot )$)
such that 
\begin{align}	
	& \label{l1.1.5*}
	\limsup_{n\rightarrow \infty } 
	\max_{n\leq k \leq a(n,t) } 
	|f(\theta_{k} ) - f(\theta_{n} ) |	
	\leq 
	C_{1,Q} t(\phi+\eta), 	
	\\
	& \label{l1.1.7*} 
	\limsup_{n\rightarrow \infty } 
	|\phi_{n}(t) |	
	\leq 
	C_{1,Q} 
	t^{2}(\phi + \eta )^{2} 
\end{align}
on $\Lambda_{Q}\setminus N_{0}$
for all $t \in (0,\infty )$. 
\end{lemma}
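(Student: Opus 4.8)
The plan is to establish all four assertions sample-path-wise, after first fixing a single $P$-null set $N_0$ that serves every $t\in(0,\infty)$ simultaneously. Since $a(n,t')\le a(n,t)$ whenever $t'\le t$, the event on which (\ref{a1.2.1}) fails on $\{\sup_{n\ge0}\|\theta_n\|<\infty\}$ is non-increasing in $t$; hence taking the union over $t\in\{1,2,\dots\}$ of these events, together with the $P$-null set on which $\limsup_n\|\eta_n\|=\infty$ while $\sup_{n\ge0}\|\theta_n\|<\infty$ (null by Assumption \ref{a1.2}), yields a $P$-null $N_0\in{\cal F}$ such that off $N_0$ and on $\{\sup_{n\ge0}\|\theta_n\|<\infty\}$, relation (\ref{a1.2.1}) holds for every $t\in(0,\infty)$ and $\eta=\limsup_n\|\eta_n\|<\infty$. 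From here I argue pointwise on this event, and for (\ref{l1.1.5*})--(\ref{l1.1.7*}) on $\Lambda_Q\setminus N_0$ (noting $\Lambda_Q\subseteq\{\sup_{n\ge0}\|\theta_n\|<\infty\}$).

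For (\ref{l1.1.1*}) I would split $\xi_i=\zeta_i+\eta_i$: the $\{\zeta_i\}$-contribution tends to $0$ by (\ref{a1.2.1}), while for $\{\eta_i\}$, given $\varepsilon>0$ pick $n_\varepsilon$ with $\|\eta_i\|\le\eta+\varepsilon$ for $i\ge n_\varepsilon$; then for $n\ge n_\varepsilon$ and $n\le k<a(n,t)$, Remark \ref{remark1.5} gives $\|\sum_{i=n}^k\alpha_i\eta_i\|\le(\eta+\varepsilon)\sum_{i=n}^{a(n,t)-1}\alpha_i\le(\eta+\varepsilon)t$; adding the two bounds and letting $\varepsilon\downarrow0$ proves (\ref{l1.1.1*}). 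For (\ref{l1.1.3*}) I would use $\theta_{n+1}-\theta_n=-\alpha_n(\nabla f(\theta_n)+\zeta_n+\eta_n)$: on $\{\sup_{n\ge0}\|\theta_n\|<\infty\}$ the sequence $\{\theta_n\}$ lies in a compact set $B$, so $\{\nabla f(\theta_n)\}$ is bounded; with $\alpha_n\to0$ (Assumption \ref{a1.1}), $\alpha_n\|\eta_n\|\to0$ ($\|\eta_n\|$ bounded), and $\alpha_n\zeta_n\to0$ (the $k=n$ instance of (\ref{a1.2.1})), this gives $\theta_{n+1}-\theta_n\to0$; then the identity $f(\theta_{n+1})-f(\theta_n)=\int_0^1\nabla f(\theta_n+s(\theta_{n+1}-\theta_n))^T(\theta_{n+1}-\theta_n)\,ds$ yields $|f(\theta_{n+1})-f(\theta_n)|\le(\sup_{B'}\|\nabla f\|)\|\theta_{n+1}-\theta_n\|\to0$ for a suitable compact $B'\supseteq B$.

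For the $\Lambda_Q$-part, fix the closed convex hull $\tilde Q$ of $Q$ (compact, convex, depending only on $Q$), put $C_Q=\sup_{\tilde Q}\|\nabla f\|$ and let $L_Q<\infty$ be a Lipschitz constant of $\nabla f$ on $\tilde Q$ (finite by Assumption \ref{a1.3.a} and compactness of $\tilde Q$), and set $C_{1,Q}=\max\{1,C_Q,\frac{3}{2}L_Q\}$, which depends only on $f$ and $Q$. On $\Lambda_Q$, $\theta_n\in Q$ for all large $n$, so $\phi\le C_Q<\infty$. The crucial intermediate step is
\begin{align*}
	\limsup_{n\rightarrow\infty}\;\max_{n\le i\le a(n,t)}\|\theta_i-\theta_n\|\;\le\;(\phi+\eta)t,
\end{align*}
obtained from $\theta_i-\theta_n=-\sum_{j=n}^{i-1}\alpha_j\nabla f(\theta_j)-\sum_{j=n}^{i-1}\alpha_j\xi_j$ by bounding the first sum by $(\phi+\varepsilon)\sum_{j=n}^{a(n,t)-1}\alpha_j\le(\phi+\varepsilon)t$ for large $n$ and the second by the left side of (\ref{l1.1.1*}), then letting $\varepsilon\downarrow0$. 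Since for large $n$ every $\theta_k$ with $n\le k\le a(n,t)$ and every segment point $\theta_n+s(\theta_k-\theta_n)$ lies in $\tilde Q$, (\ref{l1.1.5*}) follows from $|f(\theta_k)-f(\theta_n)|\le C_Q\|\theta_k-\theta_n\|$. For (\ref{l1.1.7*}) I would estimate $\phi_{1,n}(t)$ and $\phi_{2,n}(t)$ separately via $L_Q$-Lipschitzness on $\tilde Q$: $|\phi_{1,n}(t)|\le\|\nabla f(\theta_n)\|L_Q\bigl(\max_{n\le i\le a(n,t)}\|\theta_i-\theta_n\|\bigr)\sum_{i=n}^{a(n,t)-1}\alpha_i$, so $\limsup_n|\phi_{1,n}(t)|\le L_Q(\phi+\eta)^2t^2$, while $|\phi_{2,n}(t)|\le\int_0^1 L_Q s\|\theta_{a(n,t)}-\theta_n\|^2\,ds=\frac{1}{2}L_Q\|\theta_{a(n,t)}-\theta_n\|^2$, so $\limsup_n|\phi_{2,n}(t)|\le\frac{1}{2}L_Q(\phi+\eta)^2t^2$; adding gives (\ref{l1.1.7*}).

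The routine ingredients are the mean-value estimates and the bound $\sum_{i=n}^{a(n,t)-1}\alpha_i\le t$ of Remark \ref{remark1.5}. The point that needs care is the uniformity of the constants: $C_{1,Q}$ must not depend on $t$ or on $\eta$, which forces all line segments involved (joining points $\theta_n,\theta_k$ that eventually lie in $Q$) to be confined to the fixed compact convex set $\tilde Q$ rather than to a $t$- or $\eta$-dependent enlargement of $Q$; and the single null set $N_0$ must be engineered to serve every $t$ at once, which works precisely because $a(n,t)$ is monotone in $t$. The only genuinely probabilistic inputs are that $\sum\alpha_i\zeta_i$ is asymptotically negligible and $\limsup_n\|\eta_n\|=\eta$, both supplied by Assumption \ref{a1.2}.
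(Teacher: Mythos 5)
Your proposal is correct and follows essentially the same route as the paper: the same decomposition $\xi_i=\zeta_i+\eta_i$ for (\ref{l1.1.1*}), the same key intermediate bound $\limsup_n\max_{n\le k\le a(n,t)}\|\theta_k-\theta_n\|\le t(\phi+\eta)$ obtained via an $\varepsilon$-argument, and the same term-by-term estimates of $\phi_{1,n}(t)$ and $\phi_{2,n}(t)$ through a Lipschitz constant of $\nabla f$ on a compact set containing the iterates. The only (harmless) divergences are that you prove (\ref{l1.1.3*}) directly from $\theta_{n+1}-\theta_n\to0$ rather than, as the paper does, by bounding it by the left side of (\ref{l1.1.5*}) and letting $t\to0$, and that you are more explicit than the paper about taking the constants on the convex hull of $Q$ and about assembling the single null set $N_0$ via monotonicity of $a(n,t)$ in $t$.
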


\begin{proof}
In the same way as in the proof of Lemma \ref{lemma1.a.2}, 
it can be shown that there exists $N_{0}\in{\cal F}$ such that the following holds: 
(i) $P(N_{0} ) = 0$, and  
(ii) (\ref{a1.2.1}), (\ref{l1.1.1*}) are satisfied on 
$\{\sup_{n\geq 0} \|\theta_{n}\| < \infty \}\setminus N_{0}^{c}$ 
for all $t\in(0,\infty )$. 
%

Let $Q\subset \mathbb{R}^{d_{\theta } }$ be any compact set, 
while $\tilde{C}_{Q}\in[1,\infty )$ stands 
for a Lipschitz constant of $f(\cdot )$, $\nabla f(\cdot )$ on $Q$. 
Moreover, let 
$C_{1,Q} = 2 \tilde{C}_{Q}$, 
while $\omega$ is an arbitrary sample from 
$\Lambda_{Q} \setminus N_{0}$. 
In order to prove the lemma, it is sufficient to show that 
(\ref{l1.1.3*}) -- (\ref{l1.1.7*}) hold for $\omega$ and any $t\in (0,\infty )$. 
Notice that all formulas which follow in the proof correspond to $\omega$. 

Let $\varepsilon\in(0,\infty )$ be any real number. 
Then, there exists $n_{0}\geq 0$ (depending on $\omega$, $\varepsilon$)
such that $\theta_{n}\in Q$, $\|\nabla f(\theta_{n} ) \|\leq \phi+\varepsilon$
for $n\geq n_{0}$
(notice that these relations hold for all but finitely many $n$).
Therefore, 
\begin{align*}
	\|\theta_{k} - \theta_{n} \|
	\leq 
	\sum_{i=n}^{k-1} \alpha_{i} \|\nabla f(\theta_{i} ) \| 
	+
	\left\|
	\sum_{i=n}^{k-1} \alpha_{i} \xi_{i} 
	\right\|
	\leq 
	t(\phi+\varepsilon ) 
	+
	\max_{n\leq j<a(n,t) }
	\left\|
	\sum_{i=n}^{j} \alpha_{i} \xi_{i} 
	\right\|
\end{align*}
for $n_{0}\leq n\leq k\leq a(n,t)$, $t\in(0,\infty )$. 
Combining this with (\ref{l1.1.1*}), we get 
\begin{align*}
	\limsup_{n\rightarrow\infty} 
	\max_{n\leq k\leq a(n,t)}
	\|\theta_{k} - \theta_{n} \|
	\leq 
	t(\phi+\eta+\varepsilon)
\end{align*}
for $t\in(0,\infty)$. 
Then, 
the limit process $\varepsilon\rightarrow 0$ yields
\begin{align*}
	\limsup_{n\rightarrow\infty} 
	\max_{n\leq k\leq a(n,t)}
	\|\theta_{k} - \theta_{n} \|
	\leq 
	t(\phi+\eta)
\end{align*}
for $t\in(0,\infty)$
(notice that $\varepsilon\in(0,\infty)$ is any real number). 
As  
\begin{align*}
	|f(\theta_{k} ) - f(\theta_{n} ) |
	\leq 
	\tilde{C}_{Q} \|\theta_{k} - \theta_{n} \|
\end{align*}
for $k\geq n\geq n_{0}$ (notice that $\theta_{n}\in Q$ for $n\geq n_{0}$),  
we have 
\begin{align*}
	\limsup_{n\rightarrow\infty }
	\max_{n\leq k\leq a(n,t) } 
	|f(\theta_{k} ) - f(\theta_{n} ) |
	\leq 
	\tilde{C}_{Q}t(\phi+\eta)
	\leq 
	C_{1,Q} t(\phi+\eta)
\end{align*}
for $t\in(0,\infty )$. 
Since 
\begin{align*}
	|f(\theta_{n+1} ) - f(\theta_{n} ) |
	\leq 
	\max_{n\leq k\leq a(n,t) } 
	|f(\theta_{k} ) - f(\theta_{n} ) |
\end{align*}
for $t\in(0,\infty )$ and sufficiently large $n$
(notice that $a(n,t)\geq n+1$ for sufficiently large $n$), 
we conclude 
\begin{align*}
	\limsup_{n\rightarrow\infty }
	|f(\theta_{n+1} ) - f(\theta_{n} ) |
	\leq 
	\tilde{C}_{Q}t(\phi+\eta)
\end{align*}
for $t\in(0,\infty )$. 
Then, the limit process $t\rightarrow 0$ implies (\ref{l1.1.3*}). 
On the other side, we have 
\begin{align*}
	&
	|\phi_{1,n}(t) |
	\leq 
	\tilde{C}_{Q} \|\nabla f(\theta_{n} ) \| 
	\sum_{i=n}^{a(n,t)-1} \alpha_{i} \|\theta_{i} - \theta_{n} \| 
	\leq 
	\tilde{C}_{Q}t \|\nabla f(\theta_{n} ) \| 
	\max_{n\leq k\leq a(n,t) }
	\|\theta_{k} - \theta_{n} \|, 
	\\
	&
	|\phi_{2,n}(t)|
	\leq 
	\tilde{C}_{Q} \|\theta_{a(n,t)} - \theta_{n} \|^{2} 
	\leq 
	\tilde{C}_{Q} 
	\max_{n\leq k\leq a(n,t) }
	\|\theta_{k} - \theta_{n} \|^{2}
\end{align*}
for $n\geq n_{0}$, $t\in(0,\infty )$. 
Therefore, 
\begin{align*}
	&
	\limsup_{n\rightarrow\infty } 
	|\phi_{1,n}(t) |
	\leq 
	\tilde{C}_{Q}t^{2} \phi(\phi+\eta), 
	\;\;\;\;\; 
	\limsup_{n\rightarrow\infty } 
	|\phi_{2,n}(t) |
	\leq 
	\tilde{C}_{Q} t^{2} (\phi+\eta)^{2}
\end{align*}
for $t\in(0,\infty )$. 
Hence, 
\begin{align*}
	\limsup_{n\rightarrow\infty } 
	|\phi_{n}(t) |
	\leq 
	2\tilde{C}_{Q} t^{2} (\phi+\eta)^{2}
	=
	C_{1,Q} t^{2} (\phi+\eta)^{2}
\end{align*}
for $t\in(0,\infty )$. 
\end{proof}

\begin{lemma} \label{lemma1.2}
Let Assumptions \ref{a1.1}, \ref{a1.2} and \ref{a1.4} hold. 
Then, given a compact set $Q\subset \mathbb{R}^{d_{\theta } }$, there exists 
a real number $C_{2,Q} \in [1,\infty )$
(independent of $\eta$ and depending only on $f(\cdot )$)
such that 
\begin{align} \label{l1.2.1*}
	\limsup_{n\rightarrow \infty } 
	f(\theta_{n} ) 
	- 
	\liminf_{n\rightarrow \infty } 
	f(\theta_{n} )
	\leq 
	C_{2,Q} \eta^{s}
\end{align}
on $\Lambda_{Q} \setminus N_{0}$
($s$ is specified in Assumption \ref{a1.4}).
\end{lemma}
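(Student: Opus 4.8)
The plan is to bound the oscillation $\overline{f}-\underline{f}$, where $\underline{f}=\liminf_{n\to\infty}f(\theta_{n})$ and $\overline{f}=\limsup_{n\to\infty}f(\theta_{n})$, by showing that the whole interval of limit points of $\{f(\theta_{n})\}_{n\geq 0}$ is trapped inside the near-critical value set $A_{Q,\varepsilon}$ with $\varepsilon$ a fixed multiple of $\eta$, and then reading off the conclusion from Assumption \ref{a1.4}. I work on a fixed sample $\omega\in\Lambda_{Q}\setminus N_{0}$ with $N_{0}$ as in Lemma \ref{lemma1.1}; then $\underline{f}$, $\overline{f}$ and $\phi=\limsup_{n\to\infty}\|\nabla f(\theta_{n})\|$ are finite, since $\theta_{n}\in Q$ eventually. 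If $\phi+\eta=0$, then $\nabla f(\theta_{n})\to 0$ and $\eta_{n}\to 0$, and the convergence of $\{f(\theta_{n})\}_{n\geq 0}$ (hence (\ref{l1.2.1*})) is classical, so I assume $\phi+\eta>0$. Fix any $\varepsilon>3\eta$. Since $A_{Q,\varepsilon}$ is compact and has measure at most $M_{Q}\varepsilon^{s}$ by Assumption \ref{a1.4}, it suffices to prove $(\underline{f},\overline{f})\subseteq A_{Q,\varepsilon}$: this gives $\overline{f}-\underline{f}\leq m(A_{Q,\varepsilon})\leq M_{Q}\varepsilon^{s}$, and letting $\varepsilon\downarrow 3\eta$ yields $\overline{f}-\underline{f}\leq M_{Q}(3\eta)^{s}\leq 3M_{Q}\eta^{s}$, so that $C_{2,Q}=3M_{Q}$ works (note $3^{s}\leq 3$ as $s\leq 1$, and $M_{Q}\geq 1$).

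Assume, for contradiction, that $(\underline{f},\overline{f})\not\subseteq A_{Q,\varepsilon}$. As $A_{Q,\varepsilon}$ is closed, there are $c\in(\underline{f},\overline{f})$ and $\rho>0$ with $(c-\rho,c+\rho)\subseteq(\underline{f},\overline{f})\setminus A_{Q,\varepsilon}$. Since $c\pm\rho/2\in(\underline{f},\overline{f})$, the sequence $f(\theta_{n})$ drops below $c-\rho/2$ and rises above $c+\rho/2$ infinitely often, while $f(\theta_{n+1})-f(\theta_{n})\to 0$ by (\ref{l1.1.3*}). Hence, for arbitrarily large indices, there exist $n<n'$ with $f(\theta_{n})\leq c-\rho/2$, $f(\theta_{n'})\geq c+\rho/2$, and $f(\theta_{k})\in(c-\rho,c+\rho)$ for all $k\in[n,n']$ (take $n'$ the first time, after the last time $n$ that $f\leq c-\rho/2$, at which $f$ reaches $c+\rho/2$, and use the smallness of consecutive increments to control the overshoot at both ends). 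Because $(c-\rho,c+\rho)\cap A_{Q,\varepsilon}=\emptyset$ and $\theta_{k}\in Q$ for large $k$, this forces $\|\nabla f(\theta_{k})\|>\varepsilon$ for every $k\in[n,n']$.

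Now fix $t>0$ small, the precise requirement being $t\leq\varepsilon^{2}/(C_{Q}C_{1,Q}(\phi+\eta)^{2})$ for a suitable universal constant $C_{Q}$, and consider such an excursion with $n$ large. Define the window chain $m_{0}=n$, $m_{j+1}=a(m_{j},t)$, stopped at the first $J$ with $a(m_{J},t)>n'$, so that $n\leq m_{0}<\cdots<m_{J}\leq n'<a(m_{J},t)$ and $\|\nabla f(\theta_{m_{j}})\|>\varepsilon$ for all $j$. Combining the identity (\ref{1.1*}) with $\sum_{i=m_{j}}^{a(m_{j},t)-1}\alpha_{i}\to t$ (Remark \ref{remark1.5}), with (\ref{l1.1.1*}) (so $\|\sum_{i=m_{j}}^{a(m_{j},t)-1}\alpha_{i}\xi_{i}\|\leq\eta t+o(1)$) and with (\ref{l1.1.7*}) (so $|\phi_{m_{j}}(t)|\leq C_{1,Q}t^{2}(\phi+\eta)^{2}+o(1)$), and using $\|\nabla f(\theta_{m_{j}})\|>\varepsilon>3\eta$, I obtain that for all sufficiently large $n$ and each $0\leq j<J$,
\[
f(\theta_{m_{j+1}})-f(\theta_{m_{j}})\leq -\,c_{0}\varepsilon^{2}t+C_{1,Q}t^{2}(\phi+\eta)^{2}+o(1)\leq -\,\tfrac{1}{2}c_{0}\varepsilon^{2}t
\]
for a universal $c_{0}>0$, by the choice of $t$. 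Summing over $j$ gives $f(\theta_{m_{J}})\leq f(\theta_{n})$; since $n'\in[m_{J},a(m_{J},t)]$, (\ref{l1.1.5*}) yields $f(\theta_{n'})\leq f(\theta_{m_{J}})+C_{1,Q}t(\phi+\eta)+o(1)\leq f(\theta_{n})+C_{1,Q}t(\phi+\eta)+o(1)$. But $f(\theta_{n'})-f(\theta_{n})\geq\rho$, so letting $n\to\infty$ for the fixed $t$ gives $\rho\leq C_{1,Q}t(\phi+\eta)$; as this holds for every admissible $t$, letting $t\downarrow 0$ forces $\rho\leq 0$, a contradiction. Hence $(\underline{f},\overline{f})\subseteq A_{Q,\varepsilon}$ and the lemma follows.

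The main obstacle is the interplay between the auxiliary window length $t$ and the error terms: the one-window decrement is genuinely negative only for $t$ below a sample-dependent threshold of order $\varepsilon^{2}/(C_{1,Q}(\phi+\eta)^{2})$, yet a small $t$ also shrinks each decrement, so the contradiction cannot come from accumulated descent alone but must be squeezed out of the confinement of $f(\theta_{k})$ to the thin band over the entire excursion together with the final limit $t\downarrow 0$. Since the estimates (\ref{l1.1.1*})--(\ref{l1.1.7*}) hold only past an index depending on $t$, the limits must be taken in the order ``first $n\to\infty$ for fixed $t$, then $t\downarrow 0$''; a secondary point is to verify that the final constant $C_{2,Q}$ is deterministic and independent of $\eta$, even though $t$ was chosen in terms of the $\omega$-dependent quantity $\phi$.
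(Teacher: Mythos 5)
Your proof is correct, but it reaches the contradiction by a genuinely different mechanism than the paper's. The paper (Section \ref{section1.bc*}) picks a single regular value $c\notin A_{Q,\gamma}$ with $\gamma=2(\varepsilon+\eta)$ inside the oscillation gap, looks at the \emph{last} down-crossing $m_{k}$ of the level $c$ before each up-crossing $n_{k}$, and derives the contradiction from a \emph{single} window of a fixed length $\tau$ (chosen once, in terms of $\gamma$, $\eta$ and the a priori bound $\tilde{C}_{Q}$ on $\|\nabla f\|$): since $f(\theta_{m_{k}})\rightarrow c$ and $c$ is a regular value, $\liminf_{k}\|\nabla f(\theta_{m_{k}})\|\geq\gamma$, so (\ref{1.1*}) forces $\limsup_{k}\bigl(f(\theta_{a(m_{k},\tau)})-f(\theta_{m_{k}})\bigr)<0$, contradicting $f(\theta_{a(m_{k},\tau)})\geq c\geq f(\theta_{m_{k}})$. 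You instead excise a whole regular band $(c-\rho,c+\rho)$ disjoint from $A_{Q,\varepsilon}$, chain windows across the entire excursion through the band, and squeeze the contradiction out of $\rho\leq C_{1,Q}t(\phi+\eta)$ followed by $t\downarrow 0$. Your route buys the stronger intermediate statement that $(\liminf_{n}f(\theta_{n}),\limsup_{n}f(\theta_{n}))\subseteq A_{Q,\varepsilon}$ for every $\varepsilon>3\eta$ (and a marginally better constant, $3M_{Q}$ versus the paper's $4M_{Q}$), and it avoids the subsequence/compactness step by arranging $\|\nabla f(\theta_{k})\|>\varepsilon$ at \emph{every} iterate of the excursion; the price is the two-stage limit ``first $n\rightarrow\infty$ for fixed $t$, then $t\downarrow 0$'', which you correctly flag, together with the need to keep the Lemma \ref{lemma1.1} error terms uniform over all $J$ windows of one excursion (this is fine, since every window start $m_{j}$ exceeds the single threshold attached to $t$). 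Two cosmetic remarks: the case $\phi+\eta=0$ is not quite ``classical'' without Assumption \ref{a1.4} (for merely Lipschitz-continuously differentiable $f$, $\nabla f(\theta_{n})\rightarrow 0$ does not imply convergence of $f(\theta_{n})$), but it is immediate from your own setup, since $\phi<\varepsilon$ forces $f(\theta_{k})\in A_{Q,\varepsilon}$ for all large $k$ and hence traps the whole interval of limit values in $A_{Q,\varepsilon}$; and your $C_{2,Q}=3M_{Q}$ is indeed deterministic and independent of $\eta$, because the sample-dependent choices of $t$ and of the excursions enter only the contradiction, not the final bound $M_{Q}\varepsilon^{s}$.
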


\begin{proof}
Let $Q\subset \mathbb{R}^{d_{\theta } }$ be any compact set, 
while $\tilde{C}_{Q}$ stands for an upper bound of $\|\nabla f(\cdot )\|$ on $Q$. 
Moreover, let $C_{2,Q}=4 M_{Q}$.
In order to avoid considering separately the cases $\eta=0$ and $\eta>0$, 
we show  
\begin{align} \label{l1.2.1}
	\limsup_{n\rightarrow \infty } 
	f(\theta_{n} ) 
	- 
	\liminf_{n\rightarrow \infty } 
	f(\theta_{n} )
	\leq 
	C_{2,Q} (\varepsilon + \eta )^{s}
\end{align}
on $\Lambda_{Q}\setminus N_{0}$ for all $\varepsilon \in (0,\infty )$. Then,
(\ref{l1.2.1*}) follows directly from (\ref{l1.2.1}) by letting $\varepsilon \rightarrow 0$. 

Inequality (\ref{l1.2.1}) is proved by contradiction: 
Suppose that there exist a sample 
$\omega \in \Lambda_{Q} \setminus N_{0}$ and a real number $\varepsilon \in (0,\infty )$
such that (\ref{l1.2.1}) does not hold for them. 
Notice that all formulas which follow in the proof correspond to $\omega$. 

Let 
$\gamma=2(\varepsilon + \eta )$, 
$\delta=M_{Q} \gamma^{s}$, 
while 
\begin{align*}
	\mu=\delta/(C_{1,Q} (\tilde{C}_{Q} + \eta ) ), 
	\;\;\;
	\nu=\gamma^{2}/(4 C_{1,Q} (\tilde{C}_{Q} + \eta )^{2} ), 
	\;\;\;
	\tau=\min\{\mu,\nu/2\}. 
\end{align*}
Since $\{\theta_{n} \}_{n\geq 0}$ is bounded and 
(\ref{l1.2.1}) is not satisfied, 
there exist real numbers 
$a,b \in \mathbb{R}$
(depending on $\omega,\varepsilon$)
such that 
$b-a > 2\delta$ 
and such that inequalities 
$f(\theta_{n} ) < a$, $f(\theta_{k} ) > b$ hold for infinitely many 
$n,k\geq 0$
(notice that $C_{2,Q}(\varepsilon+\eta)^{s}\geq 2\delta$). 
As $m(A_{Q,\gamma } ) \leq M_{Q} \gamma^{s} = \delta$, 
there exists a real number 
$c$ such that $c\not\in A_{Q,\gamma }$ and $a<c<b-\delta$
(otherwise, 
$(a, b-\delta ) \subset A_{Q,\varepsilon }$, 
which is impossible as 
$(b-\delta) - a > \delta$). 

Let $n_{0}=0$, while 
\begin{align*}
	&
	l_{k}
	=
	\min\{n\geq n_{k-1}: f(\theta_{n} ) \leq c \}, 
	\;\;\;\;\;	
	n_{k}
	=
	\min\{n\geq l_{k}: f(\theta_{n} ) \geq b \}, 
	\;\;\;\;\; 
	m_{k} 
	=
	\max\{n\leq n_{k}: f(\theta_{n} ) \leq c \}
\end{align*}
for $k\geq 1$. 
It can easily be deduced that 
sequences 
$\{l_{k} \}_{k\geq 1}$, $\{m_{k} \}_{k\geq 1}$, $\{n_{k} \}_{k\geq 1}$
are well-defined
and satisfy 
$l_{k} < m_{k} < n_{k} < l_{k+1}$ 
and 
\begin{align}
	& \label{l1.2.3}
	f(\theta_{m_{k} } ) \leq c < f(\theta_{m_{k}+1} ), 
	\;\;\;\;\; 
	f(\theta_{n_{k} } ) - f(\theta_{m_{k} } )
	\geq 
	b-c, 
	\;\;\;\;\; 
	\min_{m_{k}<n\leq n_{k} } f(\theta_{n} ) > c
\end{align}
for $k\geq 1$. 
On the other side, Lemma \ref{lemma1.1} implies 
\begin{align}
	& \label{l1.2.9}
	\lim_{k\rightarrow \infty } 
	|f(\theta_{m_{k}+1}) - f(\theta_{m_{k} } ) |
	=
	0,
	\\
	& \label{l1.2.21}
	\limsup_{k\rightarrow \infty } 
	\max_{m_{k} \leq j \leq a(m_{k},\tau) }
	|f(\theta_{j} ) - f(\theta_{m_{k} } ) |
	\leq 
	C_{1,Q} \tau (\tilde{C}_{Q} + \eta )	
	\leq 
	\delta
	< b-c
\end{align}
(to get (\ref{l1.2.21}), notice that 
$\theta_{n} \in Q$ for all but finitely many $n$ and that 
$\phi \leq \tilde{C}_{Q}$). 
Owing to (\ref{l1.2.21}) and the second inequality in (\ref{l1.2.3}), 
there exists $k_{0} \geq 1$ such that 
$a(m_{k},\tau) \leq n_{k}$ for $k\geq k_{0}$.\footnote
{If $a(m_{k},\tau) > n_{k}$ for infinitely many $k$, 
then (\ref{l1.2.21}) yields 
\begin{align*}
	\liminf_{k\rightarrow \infty } (f(\theta_{n_{k} } ) - f(\theta_{m_{k} } ) ) 
	\leq \delta < b - c. 
\end{align*}
However, this contradicts the second inequality in (\ref{l1.2.3}).
}
Then, the last inequality in (\ref{l1.2.3}) implies   
$f(\theta_{a(m_{k},\tau)} ) \geq c$ for $k\geq k_{0}$, 
while  
$\lim_{k\rightarrow \infty } f(\theta_{m_{k} } ) = c$
follows from (\ref{l1.2.9}) and the first inequality in (\ref{l1.2.3}). 
Since $\|\nabla f(\theta ) \| > \gamma$ for any $\theta \in Q$
satisfying $f(\theta )=c$
(due to the way $c$ is selected), 
we have 
$\liminf_{k\rightarrow \infty } \|\nabla f(\theta_{m_{k} } ) \| \geq \gamma$. 
Consequently, 
Lemma \ref{lemma1.1} and (\ref{1.1501}) yield 
\begin{align*}
	\liminf_{k\rightarrow\infty } 
	\left(
	\|\nabla f(\theta_{m_{k} } ) \| 
	\sum_{i=m_{k} }^{a(m_{k},\tau ) -1} \alpha_{i} 
	-
	\left\|
	\sum_{i=m_{k} }^{a(m_{k},\tau ) - 1} \alpha_{i} \xi_{i} 
	\right\|
	\right)
	\geq 
	\tau (\gamma-\eta)
	\geq 
	\tau\gamma/2 > 0
\end{align*}
(notice that $\eta<\gamma/2$). Therefore, 
\begin{align*}
	\liminf_{k\rightarrow\infty } 
	\|\nabla f(\theta_{m_{k} } ) \| 
	\left(
	\|\nabla f(\theta_{m_{k} } ) \| 
	\sum_{i=m_{k} }^{a(m_{k},\tau ) -1} \alpha_{i} 
	-
	\left\|
	\sum_{i=m_{k} }^{a(m_{k},\tau ) - 1} \alpha_{i} \xi_{i} 
	\right\|
	\right)
	\geq 
	\tau\gamma^{2}/2. 
\end{align*}
Combining this with Lemma \ref{lemma1.1} and (\ref{1.1*}), we get 
\begin{align*}
	\limsup_{k\rightarrow\infty } 
	( f(\theta_{a(m_{k},\tau) } ) - f(\theta_{m_{k} } ) )
	\leq &
	-
	\liminf_{k\rightarrow\infty } 
	\|\nabla f(\theta_{m_{k} } ) \| 
	\left(
	\|\nabla f(\theta_{m_{k} } ) \| 
	\sum_{i=m_{k} }^{a(m_{k},\tau ) -1} \alpha_{i} 
	-
	\left\|
	\sum_{i=m_{k} }^{a(m_{k},\tau ) - 1} \alpha_{i} \xi_{i} 
	\right\|
	\right)
	\\
	&
	+
	\limsup_{k\rightarrow\infty } |\phi_{m_{k} }(\tau ) |
	\\
	\leq &
	-\tau\gamma^{2}/2 + C_{1,Q}\tau^{2}(\phi+\eta)^{2}
	< 0
\end{align*}
(notice that $\phi\leq\tilde{C}_{Q}$, $C_{1,Q}\tau(\tilde{C}_{Q}+\eta)^{2}\leq\gamma^{2}/4$). 
However, this is not possible, as 
$f(\theta_{a(m_{k},\tau ) } ) \geq c \geq f(\theta_{m_{k} } )$ for each $k\geq k_{0}$. 
Hence, (\ref{l1.2.1}) is true. 
\end{proof}

\begin{lemma} \label{lemma1.3}
Let Assumptions \ref{a1.1} and \ref{a1.2} hold. 
Then, given a compact set $Q\subset \mathbb{R}^{d_{\theta } }$, there exists 
a real number $C_{3,Q} \in (0,1)$
(independent of $\eta$ and depending only on $f(\cdot )$)
such that 
\begin{align} \label{l3.2.1*}
	\limsup_{n\rightarrow \infty } 
	f(\theta_{n} ) 
	- 
	\liminf_{n\rightarrow \infty } 
	f(\theta_{n} )
	\geq 
	C_{3,Q} \phi^{2}
\end{align}
on 
$(\Lambda_{Q} \setminus N_{0} )
\cap \{\phi > 2\eta \}$.
\end{lemma}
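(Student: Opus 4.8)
The plan is to run the one-step decrease inequality (\ref{1.1*}) along a subsequence on which $\|\nabla f(\theta_{n})\|$ approaches $\phi$, and to show that on $\{\phi>2\eta\}$ the drop of $f$ over a short ODE-time window is so large that the oscillation $\limsup_{n}f(\theta_{n})-\liminf_{n}f(\theta_{n})$ must be of order $\phi^{2}$. Fix a compact set $Q\subset\mathbb{R}^{d_{\theta}}$ and a sample $\omega\in(\Lambda_{Q}\setminus N_{0})\cap\{\phi>2\eta\}$; all quantities below refer to this $\omega$. On $\Lambda_{Q}$ the iterates eventually lie in $Q$, so $\phi=\limsup_{n}\|\nabla f(\theta_{n})\|$ is finite, and $\phi>2\eta\geq0$ forces $\phi>0$; consequently $\phi-\eta>\phi/2$ and $\phi+\eta<3\phi/2$, so there is no need to treat $\eta=0$ separately. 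Let $t\in(0,\infty)$ be a deterministic constant, depending only on $C_{1,Q}$ (hence only on $f$), to be fixed at the end. Choose $m_{k}\to\infty$ with $\|\nabla f(\theta_{m_{k}})\|\to\phi$, and, passing to a further subsequence, with $f(\theta_{m_{k}})\to L$ for some $L\in[\liminf_{n}f(\theta_{n}),\limsup_{n}f(\theta_{n})]$ (possible since $\{f(\theta_{m_{k}})\}$ is bounded).

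I would then let $k\to\infty$ in (\ref{1.1*}) taken with $n=m_{k}$. By (\ref{1.1501}), $\sum_{i=m_{k}}^{a(m_{k},t)-1}\alpha_{i}\to t$; by (\ref{l1.1.1*}), applied with the index $a(n,t)-1$ (which lies in $[n,a(n,t))$ once $\alpha_{n}\leq t$, i.e.\ for all large $n$), $\limsup_{k}\|\sum_{i=m_{k}}^{a(m_{k},t)-1}\alpha_{i}\xi_{i}\|\leq\eta t$; and by (\ref{l1.1.7*}), $\limsup_{k}|\phi_{m_{k}}(t)|\leq C_{1,Q}t^{2}(\phi+\eta)^{2}$. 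Substituting these into (\ref{1.1*}) and using $\|\nabla f(\theta_{m_{k}})\|\to\phi>0$ and $\phi-\eta>0$ to pass from the $\limsup$ of the product $\|\nabla f(\theta_{m_{k}})\|(\|\nabla f(\theta_{m_{k}})\|\sum_{i}\alpha_{i}-\|\sum_{i}\alpha_{i}\xi_{i}\|)$ to the product of the limits, one obtains
\begin{align*}
	\limsup_{k\rightarrow\infty}\left(f(\theta_{a(m_{k},t)})-f(\theta_{m_{k}})\right)
	\leq
	-\phi t(\phi-\eta)+C_{1,Q}t^{2}(\phi+\eta)^{2}.
\end{align*}
Since $f(\theta_{m_{k}})\to L$ and $a(m_{k},t)\to\infty$, this yields $\liminf_{n}f(\theta_{n})\leq\limsup_{k}f(\theta_{a(m_{k},t)})\leq L-\phi t(\phi-\eta)+C_{1,Q}t^{2}(\phi+\eta)^{2}$, and, combined with $\limsup_{n}f(\theta_{n})\geq L$,
\begin{align*}
	\limsup_{n\rightarrow\infty}f(\theta_{n})-\liminf_{n\rightarrow\infty}f(\theta_{n})
	\geq
	\phi t(\phi-\eta)-C_{1,Q}t^{2}(\phi+\eta)^{2}.
\end{align*}

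It then remains only to choose $t$ optimally. On $\{\phi>2\eta\}$ the right-hand side is bounded below by $\phi^{2}t(\tfrac{1}{2}-\tfrac{9}{4}C_{1,Q}t)$, and the choice $t=1/(9C_{1,Q})$ makes this equal to $\phi^{2}t/4=\phi^{2}/(36C_{1,Q})$; thus (\ref{l3.2.1*}) holds with $C_{3,Q}=1/(36C_{1,Q})$, which lies in $(0,1)$ because $C_{1,Q}\geq1$, and both $t$ and $C_{3,Q}$ depend only on $f(\cdot)$, not on $\eta$. The one genuinely delicate step is the limit passage along $\{m_{k}\}$ inside that product: one must combine $\sum_{i}\alpha_{i}\to t$, $\|\nabla f(\theta_{m_{k}})\|\to\phi$ and the one-sided bound $\limsup_{k}\|\sum_{i}\alpha_{i}\xi_{i}\|\leq\eta t$ --- crucially using $\phi>0$ and $\phi-\eta>0$ so that the bracketed factor is eventually positive --- to reach the displayed estimate; everything else is routine bookkeeping with Lemma \ref{lemma1.1} and (\ref{1.1501}).
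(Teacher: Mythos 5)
Your proof is correct and follows essentially the same route as the paper: both select a subsequence along which $\|\nabla f(\theta_{n})\|$ realizes $\phi$, apply the descent inequality (\ref{1.1*}) over a window of ODE-time $\tau\propto 1/C_{1,Q}$ together with Lemma \ref{lemma1.1} and (\ref{1.1501}), and use $\phi>2\eta$ to make the drift term dominate the remainder, yielding a drop of order $\phi^{2}$. The only differences are cosmetic: you extract an extra subsequence on which $f(\theta_{m_{k}})$ converges (the paper instead manipulates $\limsup$/$\liminf$ directly), and your constant $1/(36C_{1,Q})$ replaces the paper's $1/(64C_{1,Q})$.
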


\begin{proof}
Let $Q\subset \mathbb{R}^{d_{\theta } }$ be any compact set, 
while
$C_{3,Q} = 1/(64 C_{1,Q} )$ and 
$\tau=1/(16 C_{1,Q} )$. 
Moreover, 
let $\omega$ be an arbitrary sample from 
$(\Lambda_{Q}\setminus N_{0} ) \cap \{\phi > 2\eta \}$. 
In order to prove the lemma's assertion, 
it is sufficient to show that (\ref{l3.2.1*}) holds for $\omega$. 
Notice that all formulas which follow in the proof correspond to $\omega$. 
 
Let $n_{0}=0$ and 
\begin{align*}
	n_{k}
	=
	\min\{n>n_{k-1}: \|\nabla f(\theta_{n} ) \| \geq \phi - 1/k \}
\end{align*}
for $k\geq 1$. 
Obviously, 
sequence $\{n_{k} \}_{k\geq 0}$ is well-defined 
and satisfies 
$\lim_{k\rightarrow \infty } \|\nabla f(\theta_{n_{k} } ) \| = \phi$. 
Then, Lemma \ref{lemma1.1} and (\ref{1.1501}) yield 
\begin{align*}
	\liminf_{k\rightarrow\infty } 
	\|\nabla f(\theta_{n_{k} } ) \| 
	\left(
	\|\nabla f(\theta_{n_{k} } ) \| 
	\sum_{i=n_{k} }^{a(n_{k},\tau ) - 1 } \alpha_{i} 
	-
	\left\|
	\sum_{i=n_{k} }^{a(n_{k},\tau )- 1} \alpha_{i} \xi_{i}
	\right\|
	\right)
	\geq 
	\tau\phi(\phi-\eta)
	\geq
	\tau\phi^{2}/2
	>0. 
\end{align*}
Combining this with Lemma \ref{lemma1.1} and (\ref{1.1*}), we get 
\begin{align*} 
	\limsup_{k\rightarrow \infty } 
	(f(\theta_{a(n_{k},\tau ) } ) - f(\theta_{n_{k} } ) )
	\leq &
	-
	\liminf_{k\rightarrow\infty} 
	\|\nabla f(\theta_{n_{k} } ) \| 
	\left(
	\|\nabla f(\theta_{n_{k} } ) \| 
	\sum_{i=n_{k} }^{a(n_{k},\tau ) - 1 } \alpha_{i} 
	-
	\left\|
	\sum_{i=n_{k} }^{a(n_{k},\tau )- 1} \alpha_{i} \xi_{i}
	\right\|
	\right)
	\nonumber\\
	&
	+
	\limsup_{k\rightarrow\infty} |\phi_{n_{k} }(\tau ) |
	\nonumber\\
	\leq &
	-\tau\phi^{2}/2 + C_{1,Q} \tau^{2} (\phi+\eta)^{2}
	\leq 
	-C_{3,Q} \phi^{2}
\end{align*}
(notice that $\eta<\phi$).
Consequently, 
\begin{align*}
	\limsup_{n\rightarrow \infty } 
	f(\theta_{n} ) 
	- 
	\liminf_{n\rightarrow \infty } 
	f(\theta_{n} )
	\geq 
	-
	\limsup_{k\rightarrow \infty } 
	(f(\theta_{a(n_{k},\tau ) } ) - f(\theta_{n_{k} } ) )
	\geq 
	C_{3,Q} \phi^{2}. 
\end{align*}
Hence, (\ref{l3.2.1*}) is true. 
\end{proof}

\begin{proposition}\label{proposition1.3}
Suppose that Assumptions \ref{a1.1}, \ref{a1.2} and \ref{a1.4} hold. 
Let $Q\subset\mathbb{R}^{d_{\theta } }$ be any compact set. 
Then, there exists a real number $K_{Q}\in[1,\infty )$
(independent of $\eta$ and depending only on $f(\cdot )$) 
such that 
\begin{align}\label{p1.3.1*}
	\limsup_{n\rightarrow\infty } \|\nabla f(\theta_{n} ) \|
	\leq 
	K_{Q} \eta^{s/2}, 
	\;\;\;\;\; 
	\limsup_{n\rightarrow\infty } f(\theta_{n} ) 
	-
	\liminf_{n\rightarrow\infty } f(\theta_{n} ) 
	\leq 
	K_{Q}\eta^{s}
\end{align}
on $\Lambda_{Q}\setminus N_{0}$. 
\end{proposition}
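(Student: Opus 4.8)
The plan is to obtain both inequalities in (\ref{p1.3.1*}) by combining Lemmas \ref{lemma1.2} and \ref{lemma1.3}, after a routine case analysis on the size of $\phi=\limsup_{n\rightarrow\infty}\|\nabla f(\theta_{n})\|$ relative to $\eta$. The second inequality in (\ref{p1.3.1*}) is essentially Lemma \ref{lemma1.2} verbatim: it already gives $\limsup_{n\rightarrow\infty} f(\theta_{n})-\liminf_{n\rightarrow\infty} f(\theta_{n})\le C_{2,Q}\eta^{s}$ on $\Lambda_{Q}\setminus N_{0}$, so for the second bound it suffices to take $K_{Q}\ge C_{2,Q}$.

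For the gradient bound, fix a compact set $Q$ and let $\tilde{C}_{Q}\in[1,\infty)$ be an upper bound of $\|\nabla f(\cdot)\|$ on $Q$; then on $\Lambda_{Q}$ one automatically has $\phi\le\tilde{C}_{Q}$, since $\theta_{n}\in Q$ for all but finitely many $n$. I would split into two cases. If $\phi\le 2\eta$, then for $\eta\le 1$ we get $\phi\le 2\eta=2\eta^{1-s/2}\eta^{s/2}\le 2\eta^{s/2}$ (using $s/2\le 1/2$, so $\eta^{1-s/2}\le 1$), while for $\eta>1$ we get $\phi\le\tilde{C}_{Q}\le\tilde{C}_{Q}\eta^{s/2}$; in either subcase $\phi\le\max\{2,\tilde{C}_{Q}\}\,\eta^{s/2}$. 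If instead $\phi>2\eta$, then Lemma \ref{lemma1.3} applies on $(\Lambda_{Q}\setminus N_{0})\cap\{\phi>2\eta\}$ and yields $\limsup_{n\rightarrow\infty} f(\theta_{n})-\liminf_{n\rightarrow\infty} f(\theta_{n})\ge C_{3,Q}\phi^{2}$; combining this with the upper bound $\limsup_{n\rightarrow\infty} f(\theta_{n})-\liminf_{n\rightarrow\infty} f(\theta_{n})\le C_{2,Q}\eta^{s}$ from Lemma \ref{lemma1.2} gives $C_{3,Q}\phi^{2}\le C_{2,Q}\eta^{s}$, hence $\phi\le (C_{2,Q}/C_{3,Q})^{1/2}\eta^{s/2}$.

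Setting $K_{Q}=\max\{1,2,\tilde{C}_{Q},C_{2,Q},(C_{2,Q}/C_{3,Q})^{1/2}\}$ then makes both inequalities in (\ref{p1.3.1*}) hold on $\Lambda_{Q}\setminus N_{0}$, and this constant depends only on $f(\cdot)$ — through the upper bound $\tilde{C}_{Q}$ and the constants supplied by Lemmas \ref{lemma1.2} and \ref{lemma1.3} — and not on $\eta$, as required. The only point needing a little care is the case $\phi\le 2\eta$ with $\eta$ not small, where one uses the a priori bound $\phi\le\tilde{C}_{Q}$ on $\Lambda_{Q}$ to absorb $\phi$ into $K_{Q}\eta^{s/2}$; no genuine obstacle arises here, since all the substantive analytic work has already been carried out in the preceding lemmas (Lemma \ref{lemma1.2} via Assumption \ref{a1.4}, and Lemma \ref{lemma1.3} via the block estimates of Lemma \ref{lemma1.1}).
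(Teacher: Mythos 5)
Your proposal is correct and follows essentially the same route as the paper: the second bound is Lemma \ref{lemma1.2} directly, and the gradient bound comes from combining Lemmas \ref{lemma1.2} and \ref{lemma1.3} on $\{\phi>2\eta\}$ with the trivial estimates $\phi\leq 2\eta$ (for $\eta\leq 1$) and $\phi\leq\tilde{C}_{Q}$ (for $\eta>1$) on the complementary event. Your explicit inclusion of $(C_{2,Q}/C_{3,Q})^{1/2}$ in the definition of $K_{Q}$ is in fact slightly more careful than the paper's choice.
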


\begin{proof}
Let $Q\subset \mathbb{R}^{d_{\theta } }$ be any compact set, 
while $\tilde{C}_{Q}\in [1,\infty )$ stands for an upper bound of 
$\|\nabla f(\cdot ) \|$ on $Q$. 
Moreover, let 
$K_{Q} = \max\{2,\tilde{C}_{Q},C_{2,Q} \}$. 
Obviously, 
it is sufficient to show 
$\phi\leq K_{Q} \eta^{s/2}$ 
on $\Lambda_{Q}\setminus N_{0}$
(notice that the second inequality in (\ref{p1.3.1*}) 
is a direct consequence of Lemma \ref{lemma1.2}). 

Owing to Lemmas \ref{lemma1.2}, \ref{lemma1.3}, we have 
$C_{3,Q} \phi^{2} \leq C_{2,Q} \eta^{s}$
on 
$(\Lambda_{Q}\setminus N_{0} )\cap \{\phi > 2\eta \}$. 
Therefore, 
$\phi \leq (C_{2,Q}/C_{3,Q} )^{1/2} \eta^{s/2}
\leq K_{Q} \eta^{s/2}$
on 
$(\Lambda_{Q}\setminus N_{0} )\cap \{\phi > 2\eta \}$. 
On the other side, 
$\phi \leq 2\eta \leq K_{Q} \eta^{s/2}$
on 
$(\Lambda_{Q}\setminus N_{0} )\cap \{\phi \leq 2\eta, \eta\leq 1 \}$
(notice that $s/2 < 1$), 
while 
$\phi\leq \tilde{C}_{Q} \leq K_{Q} \eta^{s/2}$ on 
$(\Lambda_{Q}\setminus N_{0} )\cap \{\phi \leq 2\eta, \eta>1 \}$. 
Thus, 
$\phi \leq K_{Q} \eta^{s/2}$ indeed holds on 
$\Lambda_{Q}\setminus N_{0}$.
\end{proof}

\begin{vproof}{Parts (ii), (iii) of Theorem \ref{theorem1.1}}
{Part (ii) of the theorem directly follows from 
Propositions \ref{proposition1.1}, \ref{proposition1.3}, 
while  Part (iii) is a direct consequence of
Propositions \ref{proposition1.2}, \ref{proposition1.3} 
}
\end{vproof}

\section{Proof of Theorem \ref{theorem2.1}}\label{section2*}

The following notation is used in this section. 
For $\theta \in \mathbb{R}^{d_{\theta } }$, 
$z \in \mathbb{R}^{d_{z} }$, 
$E_{\theta, z }(\cdot )$ denotes 
the conditional expectation given 
$\theta_{0}=\theta$, $Z_{0}=z$. 
For $n\geq 1$, 
$\zeta_{n}$, $\xi_{n}$ are the random variables defined by 
\begin{align}\label{2.1*}
	\zeta_{n} 
	=
	F(\theta_{n}, Z_{n+1} ) - \nabla f(\theta_{n} ), 
	\;\;\;\;\; 
	\xi_{n} = \zeta_{n} + \eta_{n}, 
\end{align}
while 
$\zeta_{1,n}$, $\zeta_{2,n}$, $\zeta_{3,n}$
are random variables defined as 
\begin{align*} 
	&
	\zeta_{1,n} 
	=
	\tilde{F}(\theta_{n}, Z_{n+1} ) - (\Pi\tilde{F} )(\theta_{n}, Z_{n} ), 
	\;\;\;\: 
	\zeta_{2,n}
	=
	(\Pi\tilde{F} )(\theta_{n}, Z_{n} ) - (\Pi\tilde{F} )(\theta_{n-1}, Z_{n} ), 
	\;\;\;\:
	\zeta_{3,n}
	=
	-(\Pi\tilde{F} )(\theta_{n}, Z_{n+1} ).    
\end{align*} 
Then, it is straightforward to verify that 
algorithm (\ref{2.1}) admits the form (\ref{1.1}). 
Moreover,  
using Assumption \ref{a2.2}, it is easy to show 
\begin{align} \label{2.3*}
	\sum_{i=n}^{k} 
	\alpha_{i} \zeta_{i} 
	= &
	\sum_{i=n}^{k} 
	\alpha_{i} \zeta_{1,i} 
	+
	\sum_{i=n}^{k} 
	\alpha_{i} \zeta_{2,i} 
	+
	\sum_{i=n}^{k} 
	(\alpha_{i} - \alpha_{i+1} ) \zeta_{3,i} 
	+
	\alpha_{k+1} \zeta_{3,k} 
	-
	\alpha_{n} \zeta_{3,n-1} 
\end{align}
for $1\leq n \leq k$. 

\begin{vproof}{Theorem \ref{theorem2.1}}
{Let $Q\subset \mathbb{R}^{d_{\theta } }$ be any compact set and $\tilde{\Lambda}_{Q}$ be the event 
defined by $\tilde{\Lambda}_{Q}=\bigcap_{n=0}^{\infty} \{\theta_{n}\in Q \}$.  
Then, owing to Assumptions \ref{a2.1} and \ref{a2.3}, we have 
\begin{align}\label{t2.1.901}
	E_{\theta,z}\left(
	\sum_{n=0}^{\infty} (\alpha_{n}^{2} + \alpha_{n+1}^{2} ) 
	\varphi_{Q}^{2}(Z_{n+1} ) I_{ \{\tau_{Q}>n \} }
	\right)
	<
	\infty, 
	\;\;\;\;\; 
	E_{\theta,z}\left(
	\sum_{n=0}^{\infty} |\alpha_{n} - \alpha_{n+1} |
	\varphi_{Q}^{2}(Z_{n+1} ) I_{ \{\tau_{Q}>n \} }
	\right)
	<
	\infty
\end{align}
for all $\theta\in\mathbb{R}^{d_{\theta}}$, $z\in\mathbb{R}^{d_{z}}$. 

Let ${\cal F}_{n}=\sigma\{\theta_{0},Z_{0},\dots,\theta_{n},Z_{n} \}$ for $n\geq 0$. 
Since $\{\tau_{Q}>n\}\in{\cal F}_{n}$ for $n\geq 0$, 
Assumption \ref{a2.2} implies 
\begin{align*}
	E_{\theta,z}
	\left(
	\zeta_{1,n} 
	I_{ \{\tau_{Q}>n \} }
	|
	{\cal F}_{n} 
	\right)
	=
	\left(
	E_{\theta,z}
	(
	\tilde{F}(\theta_{n}, Z_{n+1} )
	|
	{\cal F}_{n} 
	)
	-
	(\Pi\tilde{F} )(\theta_{n},Z_{n} )
	\right) 
	I_{ \{\tau_{Q}>n \} }
	=
	0
\end{align*}
almost surely for each $\theta\in\mathbb{R}^{d_{\theta } }$, $z\in\mathbb{R}^{d_{z} }$, $n\geq 0$. 
Assumption \ref{a2.3} also yields 
\begin{align*}
	\|\zeta_{1,n} \| I_{ \{\tau_{Q}>n \} }
	\leq 
	\varphi_{Q}(Z_{n} ) I_{ \{\tau_{Q}>n-1 \} } + \varphi_{Q}(Z_{n+1} ) I_{ \{\tau_{Q}>n \} }  
\end{align*}
for $n\geq 0$. 
Combining this with (\ref{t2.1.901}), we get 
\begin{align*}
	E_{\theta,z}\left(
	\sum_{n=0}^{\infty } \alpha_{n}^{2} \|\zeta_{1,n} \|^{2} I_{ \{\tau_{Q}>n \} }
	\right)
	\leq
	2E_{\theta,z}\left(
	\sum_{n=0}^{\infty } (\alpha_{n}^{2} + \alpha_{n+1}^{2} ) \varphi_{Q}^{2}(Z_{n+1} ) I_{ \{\tau_{Q}>n \} }
	\right)
	<\infty
\end{align*}
for all $\theta\in\mathbb{R}^{d_{\theta } }$, $z\in\mathbb{R}^{d_{z} }$. 
Then, using Doob theorem, we conclude that 
$\sum_{n=0}^{\infty } \alpha_{n} \zeta_{1,n} I_{ \{\tau_{Q}>n \} }$
converges almost surely. 
As $\tilde{\Lambda}_{Q}\subseteq\{\tau_{Q}>n\}$ for $n\geq 0$, 
$\sum_{n=0}^{\infty } \alpha_{n}\zeta_{1,n}$ converges almost surely on $\tilde{\Lambda}_{Q}$.\footnote
{Notice that 
$\sum_{n=0}^{\infty } \alpha_{n} \zeta_{1,n} I_{ \{\tau_{Q}>n \} } = 
\sum_{n=0}^{\infty } \alpha_{n} \zeta_{1,n}$ on $\tilde{\Lambda}_{Q}$. } 

Due to Assumption \ref{a2.3}, we have 
\begin{align*}
	\|\zeta_{2,n} \| I_{\tilde{\Lambda}_{Q} }
	\leq &
	\varphi_{Q}(Z_{n} ) \|\theta_{n} - \theta_{n-1} \| I_{\tilde{\Lambda}_{Q} } 
	\\
	\leq &
	\alpha_{n-1} \varphi_{Q}(Z_{n} ) 
	(\|F(\theta_{n-1},Z_{n} ) \| + \|\eta_{n-1} \| ) I_{\tilde{\Lambda}_{Q} } 
	\\
	\leq &
	\alpha_{n-1} \varphi_{Q}(Z_{n} ) 
	(\varphi_{Q}(Z_{n} ) + \|\eta_{n-1} \| ) I_{\tilde{\Lambda}_{Q} }
	\\
	\leq &
	2\alpha_{n-1} ( \varphi_{Q}^{2}(Z_{n} ) + \|\eta_{n-1} \|^{2} ) I_{\tilde{\Lambda}_{Q} }
\end{align*}
for $n\geq 1$ (notice that $\varphi_{Q}(z)\geq 1$ for any $z\in\mathbb{R}^{d_{z} }$).  
Thus, 
\begin{align*}
	\sum_{n=1}^{j} \alpha_{n} \|\zeta_{2,n} \| I_{\tilde{\Lambda}_{Q} }
	\leq &
	2 \sum_{n=0}^{\infty } 
	\alpha_{n} \alpha_{n+1} 
	\left( \varphi_{Q}^{2}(Z_{n+1} ) + \|\eta_{n+1} \|^{2} \right) 
	I_{\tilde{\Lambda}_{Q} }
	\\
	\leq &
	\sum_{n=0}^{\infty } 
	(\alpha_{n}^{2} + \alpha_{n+1}^{2} ) \varphi_{Q}^{2}(Z_{n+1} ) 
	I_{ \{\tau_{Q}>n \} }
	+
	\sup_{n\geq 0} \|\eta_{n} \|^{2} I_{\tilde{\Lambda}_{Q} }
	\sum_{n=0}^{\infty } (\alpha_{n}^{2} +  \alpha_{n+1}^{2} )  
\end{align*}
(notice that $2\alpha_{n}\alpha_{n+1}\leq\alpha_{n}^{2}+\alpha_{n+1}^{2}$). 
Then, Assumption \ref{a2.4} and (\ref{t2.1.901}) imply that 
$\sum_{n=1}^{\infty } \alpha_{n} \zeta_{2,n}$ converges almost surely on $\tilde{\Lambda}_{Q}$. 

Owing to Assumption \ref{a2.3}, we have 
\begin{align*}
	&
	\|\zeta_{3,n} \| I_{\tilde{\Lambda}_{Q} } 
	\leq 
	\varphi_{Q}(Z_{n+1} ) I_{\tilde{\Lambda}_{Q} }
	\leq 
	\varphi_{Q}^{2}(Z_{n+1} ) I_{ \{\tau_{Q}>n \} }
\end{align*}
for $n\geq 0$. 
Hence, 
\begin{align*}
	&
	\sum_{n=0}^{\infty} \alpha_{n+1}^{2} \|\zeta_{3,n}\|^{2} I_{\tilde{\Lambda}_{Q} }
	\leq 
	\sum_{n=0}^{\infty} \alpha_{n+1}^{2} \varphi_{Q}^{2}(Z_{n+1} ) I_{ \{\tau_{Q}>n \} }, 
	\\
	&
	\sum_{n=0}^{\infty} |\alpha_{n} - \alpha_{n+1} | \: \|\zeta_{3,n} \| I_{\tilde{\Lambda}_{Q} }
	\leq 
	\sum_{n=0}^{\infty} |\alpha_{n} - \alpha_{n+1} | \varphi_{Q}^{2}(Z_{n+1} ) I_{ \{\tau_{Q}>n \} }. 
\end{align*}
Combining this with (\ref{t2.1.901}), we conclude 
$\lim_{n\rightarrow\infty} \alpha_{n+1}\zeta_{3,n}=0$
almost surely on $\tilde{\Lambda}_{Q}$. 
We also deduce that
$\sum_{n=0}^{\infty } (\alpha_{n} - \alpha_{n+1} ) \zeta_{3,n}$
converges almost surely on $\tilde{\Lambda}_{Q}$. 
Since $\sum_{n=0}^{\infty } \alpha_{n}\zeta_{1,n}$, 
$\sum_{n=1}^{\infty } \alpha_{n}\zeta_{2,n}$ converge almost surely on $\tilde{\Lambda}_{Q}$, 
(\ref{2.3*}) implies that 
$\sum_{n=0}^{\infty } \alpha_{n}\zeta_{n}$ also converges almost surely on $\tilde{\Lambda}_{Q}$. 
As $Q$ is any compact set in $\mathbb{R}^{d_{\theta}}$, 
$\sum_{n=0}^{\infty} \alpha_{n}\zeta_{n}$ converges almost surely on 
$\{\sup_{n\geq 0}\|\theta_{n}\|<\infty \}$. 
Consequently, Assumption \ref{a2.4} yields that 
$\{\xi_{n} \}_{n\geq 0}$ defined in (\ref{2.1*}) satisfies Assumption \ref{a1.2}. 
Then, the theorem's assertion directly follows from Theorem \ref{theorem1.1}. 
}
\end{vproof}

\section{Proof of Theorem \ref{theorem3.1}}\label{section3*}

In this section, we use the following notation. 
$\phi(v)$, $s_{\theta }(v)$ are the functions defined by 
\begin{align*}
	\phi(v)=\phi(x,y), 
	\;\;\;\;\; 
	s_{\theta }(v) = s_{\theta }(x,y)
\end{align*}
for $\theta\in\mathbb{R}^{d_{\theta } }$, $v = (x,y) \in {\cal X} \times {\cal Y}$. 
For $\theta\in\mathbb{R}^{d_{\theta} }$, 
$\{V_{n}^{\theta} \}_{n\geq 0}$, 
$\{W_{n}^{\theta} \}_{n\geq 0}$ and 
$\{Z_{n}^{\theta} \}_{n\geq 0}$ are stochastic processes defined by 
\begin{align*}
	V_{n}^{\theta} = (X_{n}^{\theta}, Y_{n}^{\theta} ), 
	\;\;\;\;\; 
	W_{n+1}^{\theta} = \lambda W_{n}^{\theta} + s_{\theta}(V_{n}^{\theta} ), 
	\;\;\;\;\; 
	Z_{n}^{\theta} = (V_{n}^{\theta}, W_{n}^{\theta} )
\end{align*} 
for $n\geq 0$, where $W_{0}^{\theta}\in\mathbb{R}^{d_{\theta} }$
is a (deterministic) vector
(notice that $\{V_{n}^{\theta} \}_{n\geq 0}$, $\{Z_{n}^{\theta} \}_{n\geq 0}$ 
are Markov chains). 
Moreover, for $\theta\in\mathbb{R}^{d_{\theta} }$, 
$r_{\theta }(\cdot|\cdot )$ and $\nu_{\theta }(\cdot )$
are the transition kernel and invariant probability 
of $\{V_{n}^{\theta } \}_{n\geq 0}$,\footnote
{Under Assumption \ref{a3.1}, 
$\nu_{\theta}(\cdot )$ exists and is unique (the details are provided in Lemma \ref{lemma3.1}). 
The transition $r_{\theta}(\cdot|\cdot )$ can be defined by 
$r_{\theta }(v'|v) = q_{\theta }(y'|x') p(x'|x,y)$ 
for $v = (x,y) \in {\cal X} \times {\cal Y}$, 
$v' = (x',y') \in {\cal X} \times {\cal Y}$. } 
while $\Pi_{\theta }(\cdot,\cdot )$ is the transition kernel
of $\{Z_{n}^{\theta} \}_{n\geq 0}$.\footnote 
{$\Pi_{\theta }(\cdot,\cdot )$ can be defined by 
$\Pi_{\theta }(z,\{v'\}\times B ) = I_{B}(\lambda w + s_{\theta }(v') ) r_{\theta }(v'|v)$ for 
$z=(v,w) \in ({\cal X} \times {\cal Y} ) \times \mathbb{R}^{d_{\theta } }$
and a Borel-measurable set $B\subseteq \mathbb{R}^{d_{\theta } }$. }
For $\theta\in\mathbb{R}^{d_{\theta } }$, $n\geq 0$, 
$r_{\theta }^{n}(\cdot|\cdot )$ is the $n$-th transition probability of 
$\{V_{n}^{\theta } \}_{n\geq 0}$, 
while 
\begin{align*}
	\tilde{r}_{\theta}^{n}(v'|v)
	=
	r_{\theta }^{n}(v'|v)
	-
	\nu_{\theta }(v')
\end{align*}
for $\theta\in\mathbb{R}^{d_{\theta } }$, 
$v, v' \in {\cal X} \times {\cal Y}$, $n\geq 0$. 
Additionally, the functions $\eta(\cdot )$, $F(\cdot,\cdot )$ are defined by 
\begin{align*}
	&
	\eta(\theta )
	=
	\sum_{n=0}^{\infty } 
	\sum_{v,v'\in {\cal X} \times {\cal Y} } 
	\lambda^{n} \phi(v') 
	\tilde{r}_{\theta}^{n}(v'|v) 
	s_{\theta }(v) 
	\nu_{\theta }(v)
	-
	\nabla f(\theta ), 
	\;\;\;\;\; 
	F(\theta, z ) 
	=
	\phi(v) w - \eta(\theta )
\end{align*}
for $\theta\in R^{d_{\theta } }$, 
$z=(v,w) \in ({\cal X} \times {\cal Y} ) \times \mathbb{R}^{d_{\theta } }$.\footnote
{Under Assumptions \ref{a3.1}, \ref{a3.2}, $f(\cdot )$ is differentiable 
(the details are provided in Lemma \ref{lemma3.2}). } 
$\{Z_{n} \}_{n\geq 0}$, $\{\eta_{n} \}_{n\geq }$ are the stochastic processes defined as 
\begin{align*}
	Z_{n}=(X_{n},Y_{n},W_{n} ), 
	\;\;\;\;\; 
	\eta_{n}=\eta(\theta_{n} )
\end{align*}
for $n\geq 0$. 
Then, it is straightforward to show that the 
algorithm (\ref{3.1}) is of the same form 
as the recursion studied in Section \ref{section2}
(i.e., $\{\theta_{n} \}_{n\geq 0}$, $\{\eta_{n} \}_{n\geq 0}$, 
$F(\cdot,\cdot )$, $\Pi_{\theta }(\cdot,\cdot )$
defined in Section \ref{section3} and here 
admit (\ref{2.1}), (\ref{2.3})).

We will use the following additional notation.  
$N_{v}$ is the integer defined by $N_{v} = N_{x}N_{y}$, 
while $e\in\mathbb{R}^{N_{v} }$ is the vector whose all components are one. 
For $v\in{\cal X}\times{\cal Y}$, 
$e(v)\in\mathbb{R}^{N_{v} }$ is the vector representation of $I_{v}(\cdot )$,
while $\phi\in\mathbb{R}^{N_{v} }$ is the vector representation of $\phi(\cdot)$.\footnote{
For $v=(x,y)\in{\cal X}\times{\cal Y}$, 
element $i$ of $e(v)$ is one if $i=(x-1)N_{y}+y$ and zero otherwise. 
For the same $v$, $\phi(v)$ is element $(x-1)N_{y}+y$ of $\phi$. } 
For $\theta\in\mathbb{R}^{d_{\theta} }$, 
$R_{\theta}\in\mathbb{R}^{N_{v}\times N_{v} }$ and $\nu_{\theta}\in\mathbb{R}^{N_{v}}$ are the transition matrix 
and the invariant probability vector of 
$\{V_{n}^{\theta} \}_{n\geq 0}$,\footnote{
For $v=(x,y)\in{\cal X}\times{\cal Y}$, $v'=(x',y')\in{\cal X}\times{\cal Y}$, 
$r_{\theta}(v'|v)$ is entry $((x-1)N_{y}+y,(x'-1)N_{y}+y')$ of $R_{\theta}$, 
while $\nu_{\theta}(v)$ is element $(x-1)N_{y}+y$ of $\nu_{\theta}$. 
} 
while $\tilde{R}_{\theta} = R_{\theta}-e\nu_{\theta}^{T}$. 
For $\theta\in\mathbb{R}^{d_{\theta} }$, $1\leq j\leq d_{\theta}$, 
$s_{\theta,j}(\cdot )$ is the $j$-th component of $s_{\theta}(\cdot )$, 
while $S_{\theta,j}\in\mathbb{R}^{N_{v}\times N_{v} }$
is the diagonal matrix representation of $s_{\theta,j}(\cdot )$.\footnote{
For $v=(x,y)\in{\cal X}\times{\cal Y}$, $s_{\theta,j}(v)$ is entry 
$((x-1)N_{y}+y,(x-1)N_{y}+y)$ of $S_{\theta,j}$. 
The off-diagonal elements of $S_{\theta,j}$ are zero. }

\begin{lemma}\label{lemma3.1}
Suppose that Assumptions \ref{a3.1} and \ref{a3.2} hold. 
Let $Q\subset\mathbb{R}^{d_{\theta } }$ be any compact set. 
Then, the following is true: 
\begin{compactenum}[(i)]
\item
$\{V_{n}^{\theta } \}_{n\geq 0}$ is geometrically ergodic for each 
$\theta\in \mathbb{R}^{d_{\theta } }$. 
Moreover, there exist real numbers $\varepsilon_{Q}\in(0,1)$, $C_{1,Q}\in[1,\infty )$ 
(independent of $\lambda$) such 
$\|\tilde{R}_{\theta}^{n} \| \leq C_{1,Q}\varepsilon_{Q}^{n}$ 
for all $\theta\in Q$, $n\geq 0$. 
\item
There exists a real number 
$C_{2,Q} \in [1,\infty )$ (independent of $\lambda$) such that 
\begin{align}
	&\label{l3.1.3*}
	\max\{\|\nu_{\theta'} - \nu_{\theta''} \|, \|R_{\theta'}^{n} - R_{\theta''}^{n} \| \}
	\leq 
	C_{2,Q} \|\theta' - \theta'' \|, 
	\\
	&\label{l3.1.5*}
	\|\tilde{R}_{\theta'}^{n} - \tilde{R}_{\theta''}^{n} \|
	\leq 
	C_{2,Q} \varepsilon_{Q}^{n} \|\theta' - \theta'' \|
\end{align}
for all $\theta', \theta'' \in Q$, $n\geq 0$. 
\item
$\nu_{\theta }$ is differentiable on $\mathbb{R}^{d_{\theta} }$. 
Moreover,  
$\nabla_{\theta } \nu_{\theta }$ is locally Lipschitz continuous 
on $\mathbb{R}^{d_{\theta } }$. 
\item
If Assumption \ref{a3.3.a} is satisfied, 
$\nu_{\theta }$ is $p$ times differentiable on $\mathbb{R}^{d_{\theta } }$. 
\item
If Assumption \ref{a3.3.b} is satisfied, 
$\nu_{\theta }$ is real-analytic on $\mathbb{R}^{d_{\theta } }$. 
\end{compactenum}
\end{lemma}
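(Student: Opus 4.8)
The plan is to reduce all five assertions to properties of the finite stochastic matrix $R_{\theta}$ governing $\{V_{n}^{\theta}\}_{n\geq 0}$, exploiting that $R_{\theta}$ depends on $\theta$ only through $q_{\theta}(\cdot|\cdot)$ and so inherits its regularity from Assumptions \ref{a3.2}, \ref{a3.3.a}, \ref{a3.3.b}. I first note that Assumption \ref{a3.2} forces $q_{\theta}(y|x)>0$ for all $\theta,x,y$: if $q_{\theta_{0}}(y|x)=0$ for some $\theta_{0}$, then, by non-negativity, $\theta_{0}$ is a global minimum of $\theta\mapsto q_{\theta}(y|x)$, so $\nabla_{\theta}q_{\theta_{0}}(y|x)=0$ and $s_{\theta_{0}}(x,y)$ would be the indeterminate $0/0$. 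Hence on any compact $Q$ the entries $q_{\theta}(y'|x')$ of $R_{\theta}$ are continuous and bounded away from $0$, and, combined with Assumption \ref{a3.1}, $R_{\theta}$ is a primitive (irreducible, aperiodic) stochastic matrix for every $\theta$.

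For Part (i), primitivity yields, for each $\theta$, a unique invariant probability $\nu_{\theta}$ with $R_{\theta}^{n}\to e\nu_{\theta}^{T}$, that is, geometric ergodicity of $\{V_{n}^{\theta}\}_{n\geq 0}$; since $V_{n}^{\theta}$, $R_{\theta}$, $\nu_{\theta}$ do not involve $\lambda$, neither will the constants. For uniformity over $\theta\in Q$ I would use that there is a single integer $m_{0}$ (by Wielandt's bound, or by a finite-subcover argument together with the fact that $A^{m}>0$ implies $A^{m+1}>0$ for a stochastic matrix $A$) with $R_{\theta}^{m_{0}}>0$ entrywise for all $\theta$; by continuity and compactness $\delta_{Q}:=\inf_{\theta\in Q}\min_{i,j}(R_{\theta}^{m_{0}})_{ij}>0$; a standard Dobrushin-coefficient estimate then gives $\|\tilde{R}_{\theta}^{n}\|\leq C_{1,Q}\varepsilon_{Q}^{n}$ for all $\theta\in Q$, $n\geq 0$, with $\varepsilon_{Q}\in(0,1)$, $C_{1,Q}\geq 1$ depending only on $Q$. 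I would take $\varepsilon_{Q}$ slightly larger than the genuine contraction rate so that it also absorbs polynomial-in-$n$ factors appearing later.

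For Part (ii) I would propagate Part (i) through the fundamental matrix. Since $q_{\theta}$ is $C^{1}$ with $\nabla_{\theta}q_{\theta}=s_{\theta}q_{\theta}$ bounded on $Q$ ($s_{\theta}$ being locally Lipschitz, hence bounded on $Q$), the map $\theta\mapsto R_{\theta}$ is Lipschitz on $Q$. From $R_{\theta}(e\nu_{\theta}^{T})=(e\nu_{\theta}^{T})R_{\theta}=(e\nu_{\theta}^{T})^{2}=e\nu_{\theta}^{T}$ one checks $(R_{\theta}-e\nu_{\theta}^{T})^{n}=R_{\theta}^{n}-e\nu_{\theta}^{T}$ for $n\geq 1$ (so $\tilde{R}_{\theta}^{n}$ is this matrix power) and $I-\tilde{R}_{\theta}=I-R_{\theta}+e\nu_{\theta}^{T}$, so the fundamental matrix $Z_{\theta}:=(I-\tilde{R}_{\theta})^{-1}=\sum_{n\geq 0}(\tilde{R}_{\theta})^{n}$ is well defined and $\|Z_{\theta}\|\leq C_{1,Q}/(1-\varepsilon_{Q})$ on $Q$ by Part (i). The perturbation identity $\nu_{\theta'}^{T}-\nu_{\theta''}^{T}=\nu_{\theta'}^{T}(R_{\theta'}-R_{\theta''})Z_{\theta''}$ then gives the Lipschitz bound on $\nu_{\theta}$; the telescoping expansion $(\tilde{R}_{\theta'})^{n}-(\tilde{R}_{\theta''})^{n}=\sum_{k=0}^{n-1}(\tilde{R}_{\theta'})^{k}(\tilde{R}_{\theta'}-\tilde{R}_{\theta''})(\tilde{R}_{\theta''})^{n-1-k}$ together with Part (i) and $\|\tilde{R}_{\theta'}-\tilde{R}_{\theta''}\|\leq\|R_{\theta'}-R_{\theta''}\|+\|\nu_{\theta'}-\nu_{\theta''}\|$ gives (\ref{l3.1.5*}), the factor $n$ from the sum being absorbed into $\varepsilon_{Q}$; finally $R_{\theta}^{n}=\tilde{R}_{\theta}^{n}+e\nu_{\theta}^{T}$ gives the bound on $\|R_{\theta'}^{n}-R_{\theta''}^{n}\|$ in (\ref{l3.1.3*}).

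For Parts (iii)--(v) the key is the representation $\nu_{\theta}^{T}=\tfrac{1}{N_{v}}e^{T}A_{\theta}^{-1}$ with $A_{\theta}:=I-R_{\theta}+\tfrac{1}{N_{v}}ee^{T}$, which is valid because $A_{\theta}$ is invertible for every $\theta$: if $A_{\theta}v=0$ then $\nu_{\theta}^{T}A_{\theta}v=\tfrac{1}{N_{v}}e^{T}v=0$, hence $(I-R_{\theta})v=0$, hence $v\in\mathrm{span}(e)$ by irreducibility, hence $v=0$. Since matrix inversion is real-analytic on the open set of invertible matrices, $\theta\mapsto\nu_{\theta}$ inherits exactly the regularity of $\theta\mapsto R_{\theta}$, which is that of $\theta\mapsto q_{\theta}(y|x)$: it is $C^{1}$ with locally Lipschitz derivative under Assumption \ref{a3.2} (then $\nabla_{\theta}R_{\theta}$, with entries $\nabla_{\theta}q_{\theta}(y'|x')\,p(x'|x,y)$ and $\nabla_{\theta}q_{\theta}=s_{\theta}q_{\theta}$ a product of locally Lipschitz functions, is locally Lipschitz), giving Part (iii); it is $p$-times differentiable under Assumption \ref{a3.3.a}, since the composition of the $p$-times differentiable map $\theta\mapsto A_{\theta}$ with the real-analytic inversion map is again $p$-times differentiable, giving Part (iv); and it is real-analytic under Assumption \ref{a3.3.b}, giving Part (v). The main obstacle is Part (i): securing uniformity of the ergodic constants over the compact $\theta$-set in a form compatible with the telescoping estimates of Part (ii); once $\|\tilde{R}_{\theta}^{n}\|\leq C_{1,Q}\varepsilon_{Q}^{n}$ is in hand, the remaining parts are perturbation theory for finite Markov chains plus the smoothness of matrix inversion.
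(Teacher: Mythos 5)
Your proposal is correct and follows the same overall strategy as the paper: uniform geometric ergodicity of $\{V_{n}^{\theta}\}_{n\geq 0}$ over the compact set via a minorization/compactness argument, a telescoping expansion of $\tilde{R}_{\theta'}^{n}-\tilde{R}_{\theta''}^{n}$ for Part (ii) with the polynomial factor $n$ absorbed by slackening $\varepsilon_{Q}$, and a representation of $\nu_{\theta}$ as the image of $R_{\theta}$ under a real-analytic matrix-inversion map for Parts (iii)--(v). The differences are at the level of bookkeeping: for the Lipschitz continuity of $\nu_{\theta}$ you use the fundamental-matrix perturbation identity $\nu_{\theta'}^{T}-\nu_{\theta''}^{T}=\nu_{\theta'}^{T}(R_{\theta'}-R_{\theta''})Z_{\theta''}$, and for the regularity you invert $A_{\theta}=I-R_{\theta}+N_{v}^{-1}ee^{T}$, whereas the paper writes $\nu_{\theta}=(G(R_{\theta}))^{-1}g$ with $G(A)$ obtained from $I-A^{T}$ by replacing its last row with $e^{T}$; these are interchangeable. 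The one step you should spell out is the claim that $R_{\theta}$ is primitive: irreducibility and aperiodicity of the marginal chain $\{X_{n}^{\theta}\}_{n\geq 0}$ do not transfer to the pair chain $\{(X_{n}^{\theta},Y_{n}^{\theta})\}_{n\geq 0}$ for free, because the one-step support of the pair chain from $(x,y)$ is constrained by the particular $y$, not by the $y$-average defining the marginal transition $\sum_{y}q_{\theta}(y|x)p(x'|x,y)$. The claim is true once you note (as you do) that $q_{\theta}(y|x)>0$ everywhere, so that after one step the pair chain has access to every $y_{1}$ at its current $x_{1}$ and thereafter its $x$-component explores exactly the marginal chain's paths; the paper sidesteps this by instead writing $r_{\theta}^{n+1}(v'|v)-\nu_{\theta}(v')=\sum_{x''}q_{\theta}(y'|x')(p_{\theta}^{n}(x'|x'')-\mu_{\theta}(x'))p(x''|x,y)$ and importing the convergence rate of the $X$-chain directly. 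With that lifting argument made explicit, your proof is complete.
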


\begin{proof}
(i) 
For $\theta\in\mathbb{R}^{d_{\theta } }$, $n\geq 0$, 
let $p_{\theta}^{n}(\cdot|\cdot )$ and $\mu_{\theta }(\cdot )$ be the $n$-th transition probability 
and the invariant probability of $\{X_{n}^{\theta } \}_{n\geq 0}$. 
Moreover, for $\theta\in\mathbb{R}^{d_{\theta}}$, $v=(x,y)\in{\cal X}\times{\cal Y}$, 
let $\tilde{\nu}_{\theta}(v)=q_{\theta}(y|x)\mu_{\theta}(x)$. 
Then, it is straightforward to verify 
\begin{align*}
	r_{\theta}^{n+1}(v'|v) - \tilde{\nu}_{\theta}(v')
	=
	\sum_{x''\in{\cal X} } 
	q_{\theta}(y'|x') (p_{\theta}^{n}(x'|x'') - \mu_{\theta}(x') ) p(x''|x,y)
\end{align*}
for $\theta\in\mathbb{R}^{d_{\theta} }$, 
$v=(x,y)\in{\cal X}\times{\cal Y}$, $v'=(x',y')\in{\cal X}\times{\cal Y}$, 
$n\geq 0$. 
Therefore, 
\begin{align*}
	|r_{\theta}^{n+1}(v'|v) - \tilde{\nu}_{\theta}(v') |
	\leq 
	\sum_{x''\in{\cal X} } 
	q_{\theta}(y'|x') |p_{\theta}^{n}(x'|x'') - \mu_{\theta}(x') | p(x''|x,y)
	\leq 
	N_{x} \max_{x''\in{\cal X} } 
	|p_{\theta}^{n}(x'|x'')-\mu_{\theta}(x')|
\end{align*}
for all $\theta\in \mathbb{R}^{d_{\theta}}$, 
$v=(x,y)\in{\cal X}\times{\cal Y}$, $v'=(x',y')\in{\cal X}\times{\cal Y}$, 
$n\geq 0$. 
Combining this with Assumption \ref{a3.1}, we conclude that 
$\{V_{n}^{\theta} \}_{n\geq 0}$ is geometrically ergodic for each $\theta\in\mathbb{R}^{d_{\theta}}$. 
We also conclude that $\tilde{\nu}_{\theta}(\cdot)$ is the invariant probability of $\{V_{n}^{\theta} \}_{n\geq 0}$
for each $\theta\in\mathbb{R}^{d_{\theta}}$, 
i.e., $\nu_{\theta}(v)=\tilde{\nu}_{\theta}(v)=q_{\theta}(y|x)\mu_{\theta}(x)$
for $\theta\in\mathbb{R}^{d_{\theta}}$, $v=(x,y)\in{\cal X}\times{\cal Y}$. 

For $\theta\in\mathbb{R}^{d_{\theta}}$, let 
$\rho_{\theta }=\min_{v\in{\cal V} }\nu_{\theta }(x)/3$. 
Then, we have 
$0<\rho_{\theta }\leq 1/(3N_{v} )$, 
$\rho_{\theta }\leq\nu_{\theta }(v)/3$ for all 
$\theta\in\mathbb{R}^{d_{\theta } }$, $v\in{\cal V}$. 
Moreover, for any $\theta\in\mathbb{R}^{d_{\theta } }$, 
there exists an integer $n_{\theta }\geq 0$ such that 
$|r_{\theta }^{n}(v'|v)-\nu_{\theta }(v')|\leq\rho_{\theta }$ 
for each $v,v'\in{\cal V}$, $n\geq n_{\theta }$. 
Hence, 
$r_{\theta }^{n}(v'|v)\geq\nu_{\theta }(v')-\rho_{\theta }\geq 2\rho_{\theta }$ 
for all $\theta\in\mathbb{R}^{d_{\theta } }$, $v,v'\in{\cal V}$, $n\geq n_{\theta }$. 
Additionally, Assumption \ref{a3.2} implies that for each $v,v'\in{\cal V}$, $n\geq 0$, 
$r_{\theta }^{n}(v'|v)$ is locally Lipschitz continuous in $\theta$ on $\mathbb{R}^{d_{\theta } }$.\footnote
{Notice that, due to Assumption \ref{a3.2},  
$q_{\theta }(y|x)$ is locally Lipschitz continuous in $\theta$ for each $x\in{\cal X}$, $y\in{\cal Y}$ and that
 $r_{\theta }^{n}(\cdot|\cdot)$ is a polynomial function of $p(\cdot|\cdot,\cdot )$, 
$q_{\theta}(\cdot|\cdot )$. } 
Consequently, for any $\theta\in\mathbb{R}^{d_{\theta } }$, 
there exists a real number $\delta_{\theta }\in (0,1)$ such that 
$|r_{\vartheta }^{n_{\theta } }(v'|v) - r_{\theta }^{n_{\theta } }(v'|v) |\leq \rho_{\theta }$
for all $\vartheta\in\mathbb{R}^{d_{\theta } }$, $v,v'\in{\cal V}$
satisfying $\|\vartheta-\theta\|\leq\delta_{\theta }$. 
Thus, 
$r_{\vartheta }^{n_{\theta } }(v'|v)\geq r_{\theta }^{n_{\theta } }(v'|v) - \rho_{\theta }\geq\rho_{\theta}$
for each $\vartheta\in\mathbb{R}^{d_{\theta } }$, $v,v'\in{\cal V}$
satisfying $\|\vartheta-\theta\|\leq\delta_{\theta }$. 
Since 
\begin{align*}
	r_{\vartheta }^{n}(v'|v)
	=
	\sum_{v''\in{\cal V} } 
	r_{\vartheta }^{n_{\theta } }(v'|v'') r_{\vartheta }^{n-n_{\theta } }(v''|v) 
	\geq 
	\rho_{\theta } 
	\sum_{v''\in{\cal V} } 
	r_{\vartheta }^{n-n_{\theta } }(v''|v) 
	=
	\rho_{\theta }
\end{align*}
for any $\vartheta\in\mathbb{R}^{d_{\theta } }$, $v,v'\in{\cal V}$, $n\geq n_{\theta}$
satisfying $\|\vartheta-\theta\|\leq\delta_{\theta }$, 
we conclude $r_{\vartheta }^{n}(v'|v)\geq\rho_{\theta }$ 
for the same $\vartheta$, $v,v'$, $n$. 

Let $B_{\theta } = \{\vartheta\in\mathbb{R}^{d_{\theta } }: \|\vartheta-\theta\|<\delta_{\theta } \}$
for $\theta\in\mathbb{R}^{d_{\theta } }$. 
As $\{B_{\theta } \}_{\theta\in Q}$ is an open covering of $Q$, 
there exists a finite set $\tilde{Q}\subseteq Q$ such that 
$\bigcup_{\theta\in\tilde{Q} } B_{\theta } \supset Q$. 
Let $\tilde{n}_{Q}=\max_{\theta\in\tilde{Q} } n_{\theta }$, 
$\tilde{\rho}_{Q}=\min_{\theta\in\tilde{Q} } \rho_{\theta }$,  
$\tilde{\varepsilon}_{Q} = (1-\tilde{\rho}_{Q} )^{1/\tilde{n}_{Q}}$. 
Since each element of $Q$ is also an element of one of $\{ B_{\theta } \}_{\theta\in\tilde{Q} }$, 
we have $r_{\theta}^{n}(v'|v)\geq\tilde{\rho}_{Q}$ 
for all $\theta\in Q$, $v,v'\in{\cal V}$, $n\geq\tilde{n}_{Q}$.\footnote
{If $\theta\in B_{\vartheta }$ and $\vartheta\in\tilde{Q}$, 
then $n_{\vartheta}\leq \tilde{n}_{Q}$ 
and $r_{\theta }^{n}(v'|v)\geq\rho_{\vartheta}\geq\tilde{\rho}_{Q}$ for $n\geq n_{\vartheta }$. } 
Then, standard results of Markov chain theory (see e.g., \cite[Theorem 16.0.2]{meyn&tweedie})
imply 
\begin{align*}
	|r_{\theta}^{n}(v'|v) - \nu_{\theta}(v') |
	\leq 
	(1-\tilde{\rho}_{Q}N_{v} )^{n/\tilde{n}_{Q} }
	\leq 
	\tilde{\varepsilon}_{Q}^{n}
\end{align*}
for all $\theta\in Q$, $v,v'\in{\cal V}$, $n\geq 0$. 

Let $\varepsilon_{Q}=\tilde{\varepsilon}_{Q}^{1/2}$, 
$C_{1,Q}=N_{v}$. 
Then, we have 
\begin{align}\label{l3.1.51}
	\|\tilde{R}_{\theta}^{n} \|
	\leq 
	N_{v} \max_{v,v'\in{\cal X}\times{\cal Y} } |\tilde{r}_{\theta}^{n}(v'|v) |
	\leq 
	N_{v}\tilde{\varepsilon}_{Q}^{n}
	=
	C_{1,Q} \varepsilon_{Q}^{2n}
\end{align}
for all $\theta\in Q$, $n\geq 0$. 

(ii) 
Let $g$ be the $N_{v}$-th standard unit vector in $\mathbb{R}^{N_{v}}$
(i.e., the first $N_{v}-1$ elements of $g$ are zero, while the last element of $g$ is one) and, for $A\in\mathbb{R}^{N_{v}\times N_{v} }$, 
let $G(A)$ be the $N_{v}\times N_{v}$ matrix obtained when the last row of 
$I-A^{T}$ is replaced by $e^{T}$
(here, $I$ is the $N_{v}\times N_{v}$ unit matrix). 
Additionally, 
let ${\cal Q}_{0}^{N_{v}\times N_{v} } = \{A\in \mathbb{R}^{N_{v}\times N_{v} }: \text{det}(G(A))\neq 0 \}$ and, for $A\in {\cal Q}_{0}^{N_{v}\times N_{v} }$, 
let $h(A)=(G(A))^{-1}g$. 
Then, it is easy to conclude that ${\cal Q}_{0}^{N_{v}\times N_{v} }$ is an open set 
(notice that $\text{det}(G(A))$ is a polynomial function of the entries of $A$). 
It is also easy to deduce that $h(\cdot)$ is well-defined and real-analytic on ${\cal Q}_{0}^{N_{v}\times N_{v} }$
(notice that due to the Cramer's rule, 
all elements of $h(A)$ are rational functions of the entries of $A$). 

Let ${\cal P}_{0}^{N_{v}\times N_{v} }$ be the set of $N_{v}\times N_{v}$
geometrically ergodic stochastic matrices.
Then, each $P\in{\cal P}_{0}^{N_{v}\times N_{v} }$ has a unique invariant probability vector. 
Moreover, the invariant probability vector of $P\in{\cal P}_{0}^{N_{v}\times N_{v} }$ 
is the unique solution to the linear system of equations $G(P)x=g$, where $x\in\mathbb{R}^{N_{v}}$ is the unknown. 
Hence, $\text{det}(G(P))\neq 0$ for each $P\in{\cal P}_{0}^{N_{v}\times N_{v} }$ so ${\cal P}_{0}^{N_{v}\times N_{v} }\subset{\cal Q}_{0}^{N_{v}\times N_{v} }$.

Owing to (i), $R_{\theta}\in{\cal P}_{0}^{N_{v}\times N_{v} }$ for each $\theta\in\mathbb{R}^{d_{\theta}}$. 
Thus, $\nu_{\theta}=h(R_{\theta})$ for all $\theta\in\mathbb{R}^{d_{\theta}}$. 
Moreover, due to Assumption \ref{a3.2}, $R_{\theta}$ is locally Lipschitz continuous on 
$\mathbb{R}^{d_{\theta}}$.\footnote{
Notice that $r_{\theta}(v'|v)= q_{\theta}(y'|x')p(x'|x,y)$
for $v=(x,y)$, $v'=(x',y')$ and that 
$q_{\theta}(y|x)$ is locally Lipschitz continuous in $\theta$. }  
Since $h(\cdot )$ is real-analytic on ${\cal Q}_{0}^{N_{v}\times N_{v} }$
and ${\cal P}_{0}^{N_{v}\times N_{v} }\subset{\cal Q}_{0}^{N_{v}\times N_{v} }$, 
$\nu_{\theta}$ is locally Lipschitz continuous on $\mathbb{R}^{d_{\theta}}$. 

Let $\tilde{C}_{1,Q}\in[1,\infty )$ be a Lipschitz constant of $R_{\theta }$, $\nu_{\theta }$ on $Q$, 
while $\tilde{C}_{2,Q}\in[1,\infty )$ is an upper bound of the sequence 
$\{n\varepsilon_{Q}^{n} \}_{n\geq 1}$. 
Let $C_{2,Q}=3\varepsilon_{Q}^{-1}C_{1,Q}^{2}\tilde{C}_{1,Q}\tilde{C}_{2,Q}$. 
It is straightforward to verify 
\begin{align*}
	\tilde{R}_{\theta'}^{n+1} 
	-
	\tilde{R}_{\theta''}^{n+1} 
	=&
	\sum_{i=0}^{n} 
	\tilde{R}_{\theta'}^{i} 
	(R_{\theta'} - R_{\theta''} - e(\nu_{\theta'} - \nu_{\theta''} )^{T} )
	\tilde{R}_{\theta''}^{n-i} 
\end{align*}
for $\theta',\theta'' \in \mathbb{R}^{d_{\theta } }$, $n\geq 0$. 
Combining this with (\ref{l3.1.51}), we get 
\begin{align*}
	\|\tilde{R}_{\theta'}^{n+1} 
	-
	\tilde{R}_{\theta''}^{n+1} \|
	\leq &
	\sum_{i=0}^{n} 
	\|\tilde{R}_{\theta'}^{i} \| \|\tilde{R}_{\theta''}^{n-i} \|
	\left(
	\|R_{\theta'} - R_{\theta''} \| + \|\nu_{\theta'} - \nu_{\theta''} \| 
	\right)
	\nonumber\\
	\leq &
	2 C_{1,Q}^{2} \tilde{C}_{1,Q} 
	(n+1) \varepsilon_{Q}^{2n} \|\theta' - \theta'' \|
	\nonumber\\
	\leq &
	C_{2,Q}\varepsilon_{Q}^{n} \|\theta' - \theta'' \|
\end{align*}
for each $\theta',\theta'' \in Q$, $n\geq 0$. 
Therefore, 
\begin{align*}
	\|R_{\theta'}^{n} - R_{\theta''}^{n} \|
	\leq 
	\|\tilde{R}_{\theta'}^{n} - \tilde{R}_{\theta''}^{n} \|
	+
	\|\nu_{\theta'} - \nu_{\theta''} \|
	\leq 
	\tilde{C}_{1,Q} (2C_{1,Q}^{2}\tilde{C}_{2,Q}n\varepsilon_{Q}^{n-1}  + 1 ) \|\theta' - \theta'' \|
	\leq 
	C_{2,Q} \|\theta' - \theta'' \|
\end{align*}
for all $\theta',\theta'' \in Q$, $n\geq 0$
(notice that $\tilde{R}_{\theta}^{k} = R_{\theta}^{k} - e\nu_{\theta}^{T}$). 

(iii)  
Due to (i), $R_{\theta}\in{\cal P}_{0}^{N_{v}\times N_{v} }$ for each $\theta\in\mathbb{R}^{d_{\theta}}$. 
Hence, $\nu_{\theta}=h(R_{\theta})$ for all $\theta\in\mathbb{R}^{d_{\theta}}$. 
Moreover, owing to Assumption \ref{a3.2}, 
$R_{\theta}$ is differentiable on $\mathbb{R}^{d_{\theta} }$ and its first-order derivatives are locally Lipschitz continuous on the same space.\footnote
{Notice that 
$\nabla_{\theta } r_{\theta }(v'|v)=
s_{\theta}(x',y') q_{\theta}(y'|x') p(x'|x,y)$ for $v=(x,y)$, $v'=(x',y')$. 
}
As $h(\cdot )$ is real-analytic on ${\cal Q}_{0}^{N_{v}\times N_{v} }$
and ${\cal P}_{0}^{N_{v}\times N_{v} }\subset{\cal Q}_{0}^{N_{v}\times N_{v} }$, 
$\nu_{\theta}$ is differentiable on $\mathbb{R}^{d_{\theta} }$. 
The same arguments also imply that  
$\nabla_{\theta}\nu_{\theta}$ is locally Lipschitz continuous on $\mathbb{R}^{d_{\theta} }$. 

(iv), (v) 
If Assumption \ref{a3.3.a} is satisfied, 
then $R_{\theta }$ is $p$ times differentiable on $\mathbb{R}^{d_{\theta} }$, 
and consequently, $\nu_{\theta}$ is $p$ times differentiable on $\mathbb{R}^{d_{\theta} }$, too.\footnote
{Notice that $R_{\theta}\in{\cal P}_{0}^{N_{v}\times N_{v} }\subset{\cal Q}_{0}^{N_{v}\times N_{v} }$, 
$\nu_{\theta}=h(R_{\theta})$ for all $\theta\in\mathbb{R}^{d_{\theta}}$.
Notice also that $h(\cdot)$ is real-analytic on ${\cal Q}_{0}^{N_{v}\times N_{v} }$. } 
Similarly, if Assumption \ref{a3.3.b} is satisfied, 
then $R_{\theta }$ is real-analytic on $\mathbb{R}^{d_{\theta} }$, 
and therefore, $\nu_{\theta }$ is also real-analytic on $\mathbb{R}^{d_{\theta} }$. 
\end{proof}

\begin{lemma}\label{lemma3.2}
Suppose that Assumptions \ref{a3.1} and \ref{a3.2} hold. 
Let $Q\subset\mathbb{R}^{d_{\theta } }$ be any compact set. 
Then, the following is true: 
\begin{compactenum}[(i)]
\item
$f(\cdot )$ is differentiable and $\nabla f(\cdot )$ is locally Lipschitz continuous. 
\item
There exists a real number 
$C_{3,Q} \in [1,\infty )$ (independent of $\lambda$) such that 
$\|\eta(\theta ) \|\leq C_{3,Q} (1-\lambda )$
for all $\theta \in Q$. 
\item
If Assumption \ref{a3.3.a} is satisfied, 
$f(\cdot )$ is $p$ times differentiable. 
\item
If Assumption \ref{a3.3.b} is satisfied, 
$f(\cdot )$ is real-analytic. 
\end{compactenum}
\end{lemma}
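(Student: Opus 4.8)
The plan is to reduce everything to the spectral/ergodic structure of the controlled chain $\{V_{n}^{\theta}\}_{n\geq 0}$ already supplied by Lemma~\ref{lemma3.1}, working throughout with the matrix objects $R_{\theta}$, $\nu_{\theta}$, $\tilde{R}_{\theta}=R_{\theta}-e\nu_{\theta}^{T}$, $S_{\theta,j}$ and the deviation kernels $\tilde{r}_{\theta}^{n}$ introduced above. First I would record two elementary identities. From $\sum_{y\in{\cal Y}}q_{\theta}(y|x)=1$ one gets $\sum_{y}\nabla_{\theta}q_{\theta}(y|x)=0$, i.e.\ the score identity $\nu_{\theta}^{T}S_{\theta,j}e=0$ (equivalently $E_{\nu_{\theta}}[s_{\theta,j}]=0$) for each $j$. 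From $\nu_{\theta}^{T}R_{\theta}=\nu_{\theta}^{T}$ and $\nabla_{\theta_{j}}r_{\theta}(v'|v)=s_{\theta,j}(v')r_{\theta}(v'|v)$, i.e.\ $\nabla_{\theta_{j}}R_{\theta}=R_{\theta}S_{\theta,j}$, differentiation gives $(\nabla_{\theta_{j}}\nu_{\theta})^{T}(I-R_{\theta})=\nu_{\theta}^{T}S_{\theta,j}$, while $\nu_{\theta}^{T}e=1$ gives $(\nabla_{\theta_{j}}\nu_{\theta})^{T}e=0$.

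For Part~(i): by Lemma~\ref{lemma3.1}(i) the chain $\{V_{n}^{\theta}\}_{n\geq 0}$ is geometrically ergodic, so the Cesàro limit in (\ref{3.501}) exists and $f(\theta)=\nu_{\theta}^{T}\phi=\sum_{v}\phi(v)\nu_{\theta}(v)$. Since $\phi$ is a fixed vector and, by Lemma~\ref{lemma3.1}(iii), $\nu_{\theta}$ is differentiable on $\mathbb{R}^{d_{\theta}}$ with $\nabla_{\theta}\nu_{\theta}$ locally Lipschitz, $f(\cdot)$ inherits both properties. Parts~(iii) and~(iv) then follow identically from Lemma~\ref{lemma3.1}(iv) and~(v): $f=\nu_{\theta}^{T}\phi$ is $p$-times differentiable (resp.\ real-analytic) because $\nu_{\theta}$ is.

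The core is Part~(ii). Put $M_{\theta}=\sum_{n=0}^{\infty}\tilde{r}_{\theta}^{n}$ in matrix form; since the matrix of $\tilde{r}_{\theta}^{n}$ equals $R_{\theta}^{n}-e\nu_{\theta}^{T}$, which is $\tilde{R}_{\theta}^{n}$ for $n\geq 1$, one has $M_{\theta}=(I-\tilde{R}_{\theta})^{-1}-e\nu_{\theta}^{T}$, well defined because $\|\tilde{R}_{\theta}^{n}\|\leq C_{1,Q}\varepsilon_{Q}^{n}$ on compact $Q$ (Lemma~\ref{lemma3.1}(i)). Using $\tilde{R}_{\theta}e=0$, $\nu_{\theta}^{T}\tilde{R}_{\theta}=0$, $\nu_{\theta}^{T}e=1$ one checks $M_{\theta}e=0$ and $M_{\theta}(I-R_{\theta})=I-e\nu_{\theta}^{T}$; hence, invoking the score identity, $\nu_{\theta}^{T}S_{\theta,j}M_{\theta}$ solves $x^{T}(I-R_{\theta})=\nu_{\theta}^{T}S_{\theta,j}$ and satisfies $(\nu_{\theta}^{T}S_{\theta,j}M_{\theta})e=0$. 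Since the left null space of $I-R_{\theta}$ is $\mathrm{span}\,\nu_{\theta}^{T}$ (eigenvalue $1$ simple, by ergodicity) and the value on $e$ pins down the representative in the solution line, $(\nabla_{\theta_{j}}\nu_{\theta})^{T}=\nu_{\theta}^{T}S_{\theta,j}M_{\theta}$, which yields the policy-gradient identity $\nabla_{\theta_{j}}f(\theta)=\nu_{\theta}^{T}S_{\theta,j}M_{\theta}\phi$ (equivalently $\nabla f(\theta)=\lim_{n}E_{\theta}(\phi(V_{n}^{\theta})\sum_{i=0}^{n-1}s_{\theta}(V_{n-i}^{\theta}))$). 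Comparing with the definition of $\eta(\theta)$, whose $j$-th component is $\nu_{\theta}^{T}S_{\theta,j}\big(\sum_{n\geq 0}\lambda^{n}\tilde{r}_{\theta}^{n}\big)\phi-\nabla_{\theta_{j}}f(\theta)$, the $n=0$ terms cancel and
\[
	\eta_{j}(\theta)=-\,\nu_{\theta}^{T}S_{\theta,j}\sum_{n=1}^{\infty}(1-\lambda^{n})\,\tilde{R}_{\theta}^{n}\,\phi .
\]
Then $1-\lambda^{n}\leq n(1-\lambda)$, $\|\tilde{R}_{\theta}^{n}\|\leq C_{1,Q}\varepsilon_{Q}^{n}$, $\|\nu_{\theta}^{T}S_{\theta,j}\|\leq\max_{v}|s_{\theta,j}(v)|$, and boundedness of $\|\phi\|$ and of $\max_{\theta\in Q,v,j}|s_{\theta,j}(v)|$ (Assumption~\ref{a3.2} makes $s_{\theta}(v)$ continuous, $Q$ is compact, ${\cal X}\times{\cal Y}$ finite) give $\|\eta(\theta)\|\leq C_{3,Q}(1-\lambda)$ on $Q$ with $C_{3,Q}=\sqrt{d_{\theta}}\,C_{1,Q}\,\|\phi\|\big(\sup_{\theta\in Q,v,j}|s_{\theta,j}(v)|\big)\big(\sum_{n\geq 1}n\varepsilon_{Q}^{n}\big)$, finite and manifestly independent of $\lambda$ (enlarge to $\max\{1,C_{3,Q}\}$ if needed).

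The step I expect to be the main obstacle is the identification $(\nabla_{\theta_{j}}\nu_{\theta})^{T}=\nu_{\theta}^{T}S_{\theta,j}M_{\theta}$: inverting the singular operator $I-R_{\theta}$ through the deviation/fundamental matrix $M_{\theta}$ and verifying that the normalization $(\,\cdot\,)e=0$ selects exactly $\nabla_{\theta_{j}}\nu_{\theta}$, together with checking that the resulting constant $C_{3,Q}$ is genuinely uniform in $\lambda$. Everything else is routine bookkeeping on top of Lemma~\ref{lemma3.1}.
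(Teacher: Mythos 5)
Your proposal is correct, and Parts (i), (iii), (iv) coincide with the paper's argument ($f(\theta)=\phi^{T}\nu_{\theta}$ plus Lemma \ref{lemma3.1}). For Part (ii), however, you take a genuinely different route to the identity $\nabla f(\theta)=\sum_{n\geq 0}\nu_{\theta}^{T}S_{\theta,j}\tilde{R}_{\theta}^{n}\phi$, which is the crux. The paper differentiates the finite-horizon cost $f_{n}(\theta,v)=e^{T}(v)R_{\theta}^{n}\phi$ by the likelihood-ratio (score-function) computation on path space, obtaining $\partial_{\theta}^{j}f_{n}(\theta,v)=\sum_{i=0}^{n-1}e^{T}(v)R_{\theta}^{n-i}S_{\theta,j}R_{\theta}^{i}\phi$, and then shows these gradients converge uniformly to $h_{j}(\theta)$ so that the limit and the derivative can be interchanged; differentiability of $f$ is a by-product rather than an input. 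You instead differentiate the stationarity equation $\nu_{\theta}^{T}R_{\theta}=\nu_{\theta}^{T}$ and invert $I-R_{\theta}$ through the fundamental matrix $M_{\theta}=(I-\tilde{R}_{\theta})^{-1}-e\nu_{\theta}^{T}$, using simplicity of the eigenvalue $1$ (which follows from $R_{\theta}^{n}\rightarrow e\nu_{\theta}^{T}$) and the normalizations $(\nabla_{\theta_{j}}\nu_{\theta})^{T}e=0$, $\nu_{\theta}^{T}S_{\theta,j}M_{\theta}e=0$ to pin down $(\nabla_{\theta_{j}}\nu_{\theta})^{T}=\nu_{\theta}^{T}S_{\theta,j}M_{\theta}$; this does consume the differentiability of $\nu_{\theta}$ from Lemma \ref{lemma3.1}(iii), which the paper's Part (ii) does not need. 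I verified your algebra: $\nabla_{\theta_{j}}R_{\theta}=R_{\theta}S_{\theta,j}$, the score identity $\nu_{\theta}^{T}S_{\theta,j}e=0$, $M_{\theta}(I-R_{\theta})=I-e\nu_{\theta}^{T}$ and $M_{\theta}e=0$ all hold, the $n=0$ term in $\eta_{j}$ indeed cancels, and the final bound via $1-\lambda^{n}\leq n(1-\lambda)$ and $\|\tilde{R}_{\theta}^{n}\|\leq C_{1,Q}\varepsilon_{Q}^{n}$ is the same estimate the paper obtains from $\sum_{n}(1-\lambda^{n})\varepsilon_{Q}^{n}\leq(1-\lambda)(1-\varepsilon_{Q})^{-2}$, with a constant manifestly independent of $\lambda$. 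What your approach buys is a cleaner, purely algebraic derivation of the policy-gradient formula (the perturbation/fundamental-matrix viewpoint) that avoids the uniform-convergence-of-derivatives argument; what the paper's approach buys is that it produces the gradient formula without presupposing smoothness of $\nu_{\theta}$ and reuses the intermediate identity (\ref{l3.2.1'''}) directly in the proof of Lemma \ref{lemma3.3}.
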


\begin{proof}
(i), (iii), (iv) Owing to Lemma \ref{lemma3.1}, we have 
\begin{align}\label{l3.2.901}
	f(\theta )
	=
	\lim_{n\rightarrow\infty } E_{\theta }(\phi(V_{n}^{\theta } ) )
	=
	\sum_{v\in{\cal X}\times {\cal Y} }
	\phi(v) \nu_{\theta }(v)
	=
	\phi^{T}\nu_{\theta} 
\end{align}
for all $\theta\in\mathbb{R}^{d_{\theta } }$. 
Then, these parts of the lemma directly follow from Lemma \ref{lemma3.1}. 

(ii) 
For each $1\leq j\leq d_{\theta}$, let  
$\tilde{C}_{Q}\in[1,\infty )$ be an upper bound of $\|S_{\theta,j} \|$
on $Q$. For $\theta\in \mathbb{R}^{d_{\theta } }$, $v\in {\cal X} \times {\cal Y}$, $n\geq 0$, 
let also define
\begin{align}\label{l3.2.1}
	f_{n}(\theta, v )
	=
	\sum_{v'\in{\cal X}\times{\cal Y} } \phi(v') r_{\theta}^{n}(v'|v), 
	\;\;\;\;\; 
	h(\theta )
	=
	\sum_{n=0}^{\infty } \;
	\sum_{v,v' \in {\cal X} \times {\cal Y} }
	\phi(v') \tilde{r}_{\theta}^{n}(v'|v) s_{\theta}(v) \nu_{\theta }(v). 
\end{align}

Owing to Lemma \ref{lemma3.1}, 
$f_{n}(\theta, v)$ converges to $f(\theta )$ as $n\rightarrow\infty$
uniformly in $(\theta, v )$ on $Q \times ({\cal X} \times {\cal Y} )$. 
Due to the same lemma, 
$h(\cdot )$ is well-defined on $Q$ 
(notice that when $\theta\in Q$, each term in the sum in (\ref{l3.2.1}) tends to zero 
at the rate $\varepsilon_{Q}^{n}$). 
Moreover, it is straightforward to show
\begin{align}\label{l3.2.903}
	\nabla_{\theta } f_{n}(\theta, v_{0} )
	=&
	\nabla_{\theta }\left(
	\sum_{v_{1},\dots,v_{n}\in {\cal X}\times{\cal Y} } 
	\phi(v_{n} ) 
	\left(\prod_{i=1}^{n} r_{\theta}(v_{i}|v_{i-1} ) \right)
	\right)
	\nonumber\\
	=&
	\sum_{v_{1},\dots,v_{n}\in {\cal X}\times{\cal Y} } 
	\phi(v_{n} ) 
	\left(\sum_{i=1}^{n} \frac{\nabla_{\theta} r_{\theta}(v_{i}|v_{i-1} ) }{r_{\theta}(v_{i}|v_{i-1} ) } \right)
	\left(\prod_{i=1}^{n} r_{\theta}(v_{i}|v_{i-1} ) \right)
	\nonumber\\
	=&
	\sum_{i=1}^{n} \; 
	\sum_{v',v'' \in {\cal X} \times {\cal Y} } 
	\phi(v'') 
	r_{\theta}^{n-i}(v''|v') 
	s_{\theta }(v') 
	r_{\theta}^{i}(v'|v_{0} )
\end{align}
for all $\theta\in \mathbb{R}^{d_{\theta } }$, $v_{0}\in {\cal X} \times {\cal Y}$, $n\geq 1$. 
Therefore, 
\begin{align}\label{l3.2.1'''}
	\partial_{\theta}^{j} f_{n}(\theta, v )
	=
	\sum_{i=1}^{n} e^{T}(v) R_{\theta}^{i} S_{\theta,j} R_{\theta}^{n-i} \phi
	=
	\sum_{i=0}^{n-1} e^{T}(v) R_{\theta}^{n-i} S_{\theta,j} R_{\theta}^{i} \phi
\end{align}
for $\theta\in \mathbb{R}^{d_{\theta } }$, $v\in {\cal X} \times {\cal Y}$, 
$1\leq j\leq d_{\theta}$, $n\geq 1$, 
where $\partial_{\theta}^{j} f_{n}(\theta, v )$ is the $j$-th component of 
$\nabla_{\theta} f_{n}(\theta,v)$. 
We also have 
\begin{align}\label{l3.2.151}
	\sum_{i=0}^{n-1} e^{T}(v) R_{\theta}^{n-i} S_{\theta,j} R_{\theta}^{i} e = 0
\end{align}
for $\theta\in \mathbb{R}^{d_{\theta } }$, $v\in {\cal X} \times {\cal Y}$, 
$1\leq j\leq d_{\theta}$, $n\geq 1$.\footnote{
If $\phi=e$, then $f_{n}(\theta,v)$ is identically one, 
while $\nabla_{\theta} f_{n}(\theta,v)$ is identically zero. 
Hence, (\ref{l3.2.1'''}) reduces to (\ref{l3.2.151}) when $\phi=e$. } 
Hence, 
\begin{align*}
	\sum_{i=0}^{n-1} 
	e^{T}(v) R_{\theta}^{n-i} S_{\theta,j} e \nu_{\theta}^{T} \phi
	=
	\nu_{\theta}^{T} \phi
	\sum_{i=0}^{n-1} e^{T}(v) R_{\theta}^{n-i} S_{\theta,j} R_{\theta}^{i} e
	=
	0
\end{align*}
for $\theta\in \mathbb{R}^{d_{\theta } }$, $v\in {\cal X} \times {\cal Y}$, 
$1\leq j\leq d_{\theta}$, $n\geq 1$
(notice that $R_{\theta}^{i}e = e$).
Therefore, 
\begin{align*}
	\partial_{\theta}^{j} f_{n}(\theta,v) 
	=
	\sum_{i=0}^{n-1} e^{T}(v) R_{\theta}^{n-i} S_{\theta,j} \tilde{R}_{\theta}^{i} \phi
\end{align*}
for $\theta\in \mathbb{R}^{d_{\theta } }$, $v\in {\cal X} \times {\cal Y}$, 
$1\leq j\leq d_{\theta}$, $n\geq 1$. 
Additionally, we have 
\begin{align*}
	h_{j}(\theta)
	=
	\sum_{n=0}^{\infty } 
	\nu_{\theta}^{T} S_{\theta,j} \tilde{R}_{\theta}^{n} \phi
	=
	\sum_{i=0}^{n-1} 
	e^{T}(v) e
	\nu_{\theta}^{T} S_{\theta,j} \tilde{R}_{\theta}^{i} \phi
	+
	\sum_{i=n}^{\infty } 
	\nu_{\theta}^{T} S_{\theta,j} \tilde{R}_{\theta}^{i} \phi
\end{align*}
for $\theta\in \mathbb{R}^{d_{\theta } }$, $v\in {\cal X} \times {\cal Y}$, 
$1\leq j\leq d_{\theta}$, $n\geq 1$
(notice that $e^{T}(v) e = 1$), 
where $h_{j}(\theta)$ is the $j$-th component of $h(\theta)$. 
Thus, 
\begin{align*}
	\partial_{\theta}^{j} f_{n}(\theta,v) - h_{j}(\theta )
	=
	\sum_{i=0}^{n-1} e^{T}(v) \tilde{R}_{\theta}^{n-i} S_{\theta,j} \tilde{R}_{\theta}^{i} \phi
	-
	\sum_{i=n}^{\infty } 
	\nu_{\theta}^{T} S_{\theta,j} \tilde{R}_{\theta}^{i} \phi
\end{align*}
for $\theta\in \mathbb{R}^{d_{\theta } }$, $v\in {\cal X} \times {\cal Y}$, 
$1\leq j\leq d_{\theta}$, $n\geq 1$. 
Then, Lemma \ref{lemma3.1} implies 
\begin{align*}
	|\partial_{\theta }^{j} f_{n}(\theta, v )
	-
	h_{j}(\theta ) |
	\leq &
	\|\phi\| \|e(v)\| \|S_{\theta,j}\| 
	\sum_{i=0}^{n-1}
	\|\tilde{R}_{\theta}^{i}\| \|\tilde{R}_{\theta}^{n-i}\| 
	+
	\|\phi\| \|\nu_{\theta}\| \|S_{\theta,j}\| 
	\sum_{i=n}^{\infty } \|\tilde{R}_{\theta}^{i}\| 
	\\
	\leq &
	\tilde{C}_{Q} C_{1,Q}^{2} \|\phi\| n\varepsilon_{Q}^{n} 
	+
	\frac{\tilde{C}_{Q} C_{1,Q} \|\phi\| \varepsilon_{Q}^{n} }{1-\varepsilon_{Q} }
\end{align*}
for all $\theta\in Q$, $v\in {\cal X} \times {\cal Y}$, 
$1\leq j\leq d_{\theta}$, $n\geq 1$. 
Hence, 
$\nabla_{\theta } f_{n}(\theta, v )$ converges to $h(\theta )$
as $n\rightarrow\infty$ uniformly in 
$(\theta, v )$ on $Q \times ({\cal X} \times {\cal Y} )$. 
Therefore, 
$\nabla f(\theta ) = h(\theta )$ for all $\theta\in\mathbb{R}^{d_{\theta} }$
(notice that $Q$ is any compact set). 
Consequently, 
\begin{align*}
	\eta_{j}(\theta )
	=&
	\sum_{n=0}^{\infty } 
	\lambda^{n} \nu_{\theta}^{T} S_{\theta,j} \tilde{R}_{\theta}^{n} \phi
	-
	h_{j}(\theta) 
	=
	-
	\sum_{n=0}^{\infty } 
	(1-\lambda^{n} ) \nu_{\theta}^{T} S_{\theta,j} \tilde{R}_{\theta}^{n} \phi
\end{align*}
for $\theta\in \mathbb{R}^{d_{\theta } }$, $1\leq j\leq d_{\theta}$, 
where $\eta_{j}(\theta)$ is the $j$-th component of $\eta(\theta)$. 
Combining this with Lemma \ref{lemma3.1}, we get 
\begin{align*}
	|\eta_{j}(\theta ) |
	\leq &
	\|\phi\| \|\nu_{\theta} \| \|S_{\theta,j}\| 
	\sum_{n=0}^{\infty } 
	(1 - \lambda^{n} ) \|\tilde{R}_{\theta}^{n} \|
	\leq 
	\tilde{C}_{Q} C_{1,Q} \|\phi\|  
	\sum_{n=0}^{\infty } 
	(1 - \lambda^{n} ) \varepsilon_{Q}^{n} 
	\leq 
	\frac{\tilde{C}_{Q} C_{1,Q} \|\phi\| (1 - \lambda )}{(1-\varepsilon_{Q} )^{2} } 
\end{align*}
for all $\theta\in Q$, $1\leq j\leq d_{\theta}$. 
Then, we conclude that there exists a real number $C_{3,Q}\in[1,\infty )$
with the properties specified in (ii). 
\end{proof}

\begin{lemma}\label{lemma3.3}  
Suppose that Assumptions \ref{a3.1} and \ref{a3.2} hold. 
Let $Q\subset\mathbb{R}^{d_{\theta } }$ be any compact set. 
Then, the following is true: 
\begin{compactenum}[(i)]
\item
There exist real numbers $\delta_{Q}\in (0,1)$, $C_{4,Q} \in [1,\infty )$
(possibly depending on $\lambda$) 
such that 
\begin{align*}
	&
	\|(\Pi^{n} F)(\theta,z) - \nabla f(\theta ) \|
	\leq 
	C_{4,Q} n \delta_{Q}^{n} (1 + \|w\| ), 
	\\
	&
	\|((\Pi^{n} F)(\theta',z) - \nabla f(\theta') )
	-
	((\Pi^{n} F)(\theta'',z) - \nabla f(\theta'') ) \|
	\leq
	C_{4,Q} n \delta_{Q}^{n} \|\theta' - \theta'' \| (1 + \|w\| ),
\end{align*}
for all $\theta, \theta', \theta'' \in Q$, 
$z=(x,y,w) \in {\cal X} \times {\cal Y} \times \mathbb{R}^{d_{\theta } }$, 
$n\geq 0$. 
\item
There exits a real number $C_{5,Q} \in [1,\infty )$
(possibly depending on $\lambda$) such that 
\begin{align*}
	\|W_{n+1} \| I_{ \{\tau_{Q} > n \} }
	\leq 
	C_{5,Q} (1 + \|W_{0} \| )
\end{align*}
for all $n\geq 0$
($\tau_{Q}$ is specified in Assumption \ref{a2.3}). 
\end{compactenum}
\end{lemma}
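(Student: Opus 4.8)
The plan is to obtain Part~(ii) by a direct unrolling of the recursion for $\{W_n\}$, and Part~(i) by writing the iterated kernel as a path expectation for the Markov chain $\{Z_n^\theta\}=\{(V_n^\theta,W_n^\theta)\}$ and combining the geometric ergodicity from Lemma~\ref{lemma3.1} with the score identities for $r_\theta(\cdot|\cdot)$. For Part~(ii): since $\mathcal{X}\times\mathcal{Y}$ is finite and, by Assumption~\ref{a3.2}, $s_\theta(v)$ is locally Lipschitz (hence locally bounded) in $\theta$, there is a constant $\tilde{C}_Q\in[1,\infty)$ with $\|s_\theta(v)\|\leq\tilde{C}_Q$ for all $\theta\in Q$, $v\in\mathcal{X}\times\mathcal{Y}$. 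Unrolling $W_{n+1}=\lambda W_n+s_{\theta_n}(X_{n+1},Y_{n+1})$ gives $W_{n+1}=\lambda^{n+1}W_0+\sum_{i=0}^{n}\lambda^{n-i}s_{\theta_i}(X_{i+1},Y_{i+1})$; on $\{\tau_Q>n\}$ every $\theta_0,\dots,\theta_n$ lies in $Q$, so $\|W_{n+1}\|I_{\{\tau_Q>n\}}\leq\|W_0\|+\tilde{C}_Q\sum_{i=0}^{n}\lambda^{n-i}\leq\|W_0\|+\tilde{C}_Q/(1-\lambda)$, and $C_{5,Q}=\max\{1,\tilde{C}_Q/(1-\lambda)\}$ works (it may depend on $\lambda$, which is permitted).

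For Part~(i) I would use the definitions of $\Pi_\theta(\cdot,\cdot)$ and $F(\cdot,\cdot)$ to write $(\Pi^n F)(\theta,z)=E_\theta(\phi(V_n^\theta)W_n^\theta\mid Z_0^\theta=z)-\eta(\theta)$ for $z=(v,w)$ (the chain started at $(v,w)$), then substitute the discounted representation of $W_n^\theta$ as a sum of past scores $s_\theta(V_i^\theta)$ and pass the expectation through; in the matrix notation of the section each resulting summand is $e^T(v)R_\theta^{i}S_{\theta,j}R_\theta^{n-i}\phi$. Inserting $R_\theta^{m}=e\nu_\theta^T+\tilde{R}_\theta^{m}$ everywhere and expanding gives four cross-terms per summand, and the crucial simplification is that differentiating the normalization $\sum_{v'}r_\theta(v'|v)=1$ (equivalently $\sum_{y}q_\theta(y|x)=1$) yields $\sum_{v'}r_\theta(v'|v)s_\theta(v')=0$, hence $R_\theta S_{\theta,j}e=0$ and, after multiplication by the invariant vector, $\nu_\theta^T S_{\theta,j}e=0$; these identities annihilate every term that does not already carry a factor $\tilde{R}_\theta^{m}$ with $m\geq1$. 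What remains is, aside from a boundary term and the contribution $\lambda^n w\,e^T(v)R_\theta^{n}\phi=O(\lambda^n\|w\|)$, a geometric series $\sum_l\lambda^l\nu_\theta^T S_{\theta,j}\tilde{R}_\theta^{l}\phi$ whose full ($l$-summed) value I would match, via the formula $\nabla f(\theta)=h(\theta)$ of Lemma~\ref{lemma3.2} and the definition of $\eta(\theta)$, to $\eta(\theta)+\nabla f(\theta)$, so that subtracting $\eta(\theta)$ leaves exactly $\nabla f(\theta)$ as the limit, plus a remainder $\sum_l\lambda^l e^T(v)\tilde{R}_\theta^{n-l}S_{\theta,j}\tilde{R}_\theta^{l}\phi$ whose $l$-th term is $O(\varepsilon_Q^{n})$ by Lemma~\ref{lemma3.1}(i). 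Collecting these estimates identifies $\lim_n(\Pi^nF)(\theta,z)=\nabla f(\theta)$ and yields $\|(\Pi^nF)(\theta,z)-\nabla f(\theta)\|=O(\delta_Q^n(1+\|w\|))$ for any $\delta_Q\in(0,1)$ with $\delta_Q\geq\max\{\lambda,\varepsilon_Q\}$ and a constant that may absorb $1/(1-\lambda)$, which a fortiori gives the asserted bound $C_{4,Q}n\delta_Q^n(1+\|w\|)$. For the Lipschitz-in-$\theta$ inequality I would difference the same finite expansion for $(\Pi^nF)(\theta,z)-\nabla f(\theta)$ and apply $\|\tilde{R}_{\theta'}^{m}-\tilde{R}_{\theta''}^{m}\|\leq C_{2,Q}\varepsilon_Q^{m}\|\theta'-\theta''\|$, $\|R_{\theta'}^{m}-R_{\theta''}^{m}\|\leq C_{2,Q}\|\theta'-\theta''\|$ and $\|\nu_{\theta'}-\nu_{\theta''}\|\leq C_{2,Q}\|\theta'-\theta''\|$ from Lemma~\ref{lemma3.1}(ii), together with the local Lipschitz continuity of $S_{\theta,j}$ from Assumption~\ref{a3.2} and of $\eta(\cdot)$, the latter following from the same series bounds.

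The step I expect to be the main obstacle is the bookkeeping in Part~(i): expanding the doubly-indexed sum $\sum_i\lambda^{n-i}e^T(v)R_\theta^{i}S_{\theta,j}R_\theta^{n-i}\phi$, tracking which of the four cross-terms produced by $R_\theta^{m}=e\nu_\theta^T+\tilde{R}_\theta^{m}$ are eliminated by the score identities, handling the boundary index where $\tilde{R}_\theta^{0}=I-e\nu_\theta^T$ is not small, and reconciling the limiting series with the precise definition of $\eta(\theta)$ — which requires a reindexing and the stationarity of $\{V_n^\theta\}$. The parallel computation underlying the Lipschitz estimate is longer but entirely routine once Lemma~\ref{lemma3.1} is available.
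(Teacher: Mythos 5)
Your proposal is correct and follows essentially the same route as the paper: Part (ii) is the identical unrolling of the $W$-recursion, and Part (i) uses the same path-expectation representation of $(\Pi^nF)$, the same matrix expansion around $e\nu_\theta^T$ with the score identities $R_\theta S_{\theta,j}e=0$, $\nu_\theta^TS_{\theta,j}e=0$, the same geometric bounds from Lemma \ref{lemma3.1}, and the same choice $\delta_Q=\max\{\lambda,\varepsilon_Q\}$. The only difference is cosmetic bookkeeping (you expand both kernel powers, the paper expands one and compares directly with a rewritten $\partial^jf$), and your flagged boundary terms do work out exactly as you anticipate.
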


\begin{proof}
(i) 
For each $1\leq j\leq d_{\theta}$, 
let $\tilde{C}_{1,Q}\in[1,\infty )$ be an upper bound of $\|S_{\theta,j}\|$ on $Q$
and a Lipschitz constant of $S_{\theta,j}$ on the same set. 
Moreover, let $\tilde{C}_{2,Q}=3\tilde{C}_{1,Q}C_{1,Q}C_{2,Q}N_{v}$, 
$\tilde{C}_{3,Q} = 2\tilde{C}_{2,Q}(1-\varepsilon_{Q} )^{-1}$, 
while $\delta_{Q} = \max\{\lambda,\varepsilon_{Q} \}$.

Owing to Lemma \ref{lemma3.1}, we have
\begin{align}\label{l3.3.101}
	&
	\|\tilde{R}_{\theta}^{k} S_{\theta,j} R_{\theta}^{l} \|
	\leq 
	\|\tilde{R}_{\theta}^{k} \| \|S_{\theta,j} \| \|R_{\theta}^{l} \|
	\leq 
	\tilde{C}_{2,Q} \varepsilon_{Q}^{k}, 
	\\
	&\label{l3.3.103}
	\|\nu_{\theta}^{T} S_{\theta,j} R_{\theta}^{l} \|
	\leq 
	\|\nu_{\theta}^{T} \| \|S_{\theta,j} \| \|R_{\theta}^{l} \|
	\leq 
	\tilde{C}_{2,Q} 
\end{align}
for all $\theta\in Q$, $1\leq j\leq d_{\theta}$, $k,l\geq 1$. 
Due to the same lemma, we also have 
\begin{align}\label{l3.3.1}
	\|\tilde{R}_{\theta'}^{k} S_{\theta',j} R_{\theta'}^{l} 
	- 
	\tilde{R}_{\theta''}^{k} S_{\theta'',j} R_{\theta''}^{l} \|
	\leq &
	\|\tilde{R}_{\theta'}^{k} - \tilde{R}_{\theta''}^{k} \|
	\|S_{\theta',j} \| \|R_{\theta'}^{l} \| 
	+
	\|\tilde{R}_{\theta''}^{k} \|
	\|S_{\theta',j} - S_{\theta'',j} \| \|R_{\theta'}^{l} \| 
	\nonumber\\
	&+
	\|\tilde{R}_{\theta''}^{k} \|
	\|S_{\theta'',j} \|
	\|R_{\theta'}^{l} - R_{\theta''}^{l} \|
	\nonumber\\
	\leq &
	\tilde{C}_{2,Q} \varepsilon_{Q}^{k} \|\theta' - \theta'' \|
\end{align}
for all $\theta',\theta'' \in Q$, $1\leq j\leq d_{\theta}$, 
$k,l\geq 1$. 
In addition to this, Lemma \ref{lemma3.1} implies 
\begin{align}\label{l3.3.3}
	\|\nu_{\theta'}^{T} S_{\theta',j} R_{\theta'}^{l} - \nu_{\theta''}^{T} S_{\theta'',j} R_{\theta''}^{l} \|
	\leq &
	\|\nu_{\theta'}^{T} - \nu_{\theta''}^{T} \| 
	\|S_{\theta',j} \| \|R_{\theta'}^{l} \| 
	+
	\|\nu_{\theta''}^{T} \| 
	\|S_{\theta',j} - S_{\theta'',j} \| \|R_{\theta'}^{l} \| 
	\nonumber\\
	&+
	\|\nu_{\theta''}^{T} \| 
	\|S_{\theta'',j} \| \|R_{\theta'}^{l} - R_{\theta''}^{l} \| 
	\nonumber\\
	\leq &
	\tilde{C}_{2,Q} \|\theta'-\theta''\|
\end{align}
for each $\theta',\theta'' \in Q$, $1\leq j\leq d_{\theta}$, 
$l\geq 1$. 
Moreover, 
it is straightforward to show
\begin{align*}
	(\Pi^{n} F)(\theta, z ) 
	= &
	-\eta(\theta )
	+
	E_{\theta }\left(
	\phi(V_{n}^{\theta } ) W_{n}^{\theta }
	|V_{0}^{\theta } = v, W_{0}^{\theta } = w
	\right)
	\\
	= &
	-\eta(\theta )
	+
	E_{\theta }\left(\left.
	\phi(V_{n}^{\theta } ) 
	\left(
	\lambda^{n} w 
	+
	\sum_{i=0}^{n-1} \lambda^{i} s_{\theta }(V_{n-i}^{\theta } )
	\right)
	\right|V_{0}^{\theta } = v
	\right)
	\\
	= &
	-\eta(\theta )
	+
	\sum_{i=0}^{n-1} \;
	\sum_{v',v'' \in {\cal X} \times {\cal Y} }
	\lambda^{i}
	\phi(v'') r_{\theta }^{i}(v''|v') s_{\theta }(v') r_{\theta }^{n-i}(v'|v)  
	+
	\lambda^{n} w 
	\sum_{v' \in {\cal X} \times {\cal Y} }
	\phi(v') r_{\theta }^{n}(v'|v) 
\end{align*}
for $\theta\in \mathbb{R}^{d_{\theta } }$, 
$z = (v,w) \in ({\cal X} \times {\cal Y} ) \times 
\mathbb{R}^{d_{\theta } }$, 
$n\geq 1$. 
Therefore, 
\begin{align*}
	(\Pi^{n} F_{j} )(\theta, z ) 
	=
	-\eta_{j}(\theta )
	+
	\sum_{i=0}^{n-1} \lambda^{i} 
	e^{T}(v) R_{\theta}^{n-i} S_{\theta,j} R_{\theta}^{i} \phi 
	+
	\lambda^{n} e_{j}^{T} w \: e^{T}(v) R_{\theta}^{n} \phi
\end{align*}
for $\theta\in \mathbb{R}^{d_{\theta } }$, 
$z = (v,w) \in ({\cal X} \times {\cal Y} ) \times 
\mathbb{R}^{d_{\theta } }$, 
$1\leq j\leq d_{\theta}$, $n\geq 1$. 
Here, $F_{j}(\theta,z)$, $\eta_{j}(\theta)$ are the $j$-th components of 
$F(\theta,z)$, $\eta(\theta)$, 
while $e_{j}$ is the $j$-th standard unit vector in $\mathbb{R}^{d_{\theta} }$. 
Moreover, we have 
\begin{align*}
	\partial^{j} f(\theta )
	=
	-\eta_{j}(\theta) 
	+ 
	\sum_{n=0}^{\infty } 
	\lambda^{n} \nu_{\theta}^{T} S_{\theta,j} \tilde{R}_{\theta}^{n} \phi
\end{align*}
for $\theta\in\mathbb{R}^{d_{\theta} }$, $1\leq j\leq d_{\theta}$, 
where $\partial^{j} f(\theta )$ is the $j$-th component of $\nabla f(\theta)$. 
Since $e^{T}(v)e=1$, $\tilde{R}_{\theta}^{n}=R_{\theta}^{n}-e\nu_{\theta}^{T}$
and 
\begin{align*}
	\nu_{\theta}^{T} S_{\theta,j} e
	=
	\sum_{v\in{\cal X}\times{\cal Y} } \nu_{\theta}(v) s_{\theta,j}(v)
	=
	\sum_{x\in{\cal X} }
	\left(\sum_{y\in{\cal Y} } \partial_{\theta}^{j} q_{\theta}(y|x) \right) 
	\mu_{\theta}(x)
	=
	0
\end{align*}
for $\theta\in\mathbb{R}^{d_{\theta} }$, $v\in{\cal X}\times{\cal Y}$, 
$1\leq j\leq d_{\theta}$, $n\geq 0$,\footnote{
Notice that 
$\sum_{y\in{\cal Y} } \partial_{\theta}^{j} q_{\theta}(y|x) =
\partial_{\theta}^{j} \left(\sum_{y\in{\cal Y} } q_{\theta}(y|x) \right) = 0$. 
Notice also that 
$\nu_{\theta}(v)=q_{\theta}(y|x)\mu_{\theta}(x)$ for $v=(x,y)\in{\cal X}\times{\cal Y}$, 
where $\mu_{\theta}(x)$ is the invariant probability of $\{X_{n}^{\theta} \}_{n\geq 0}$
(see the proof of Part (i) of Lemma \ref{lemma3.1}). 
} 
we get
\begin{align*}
	\partial^{j} f(\theta)
	=&
	-
	\eta_{j}(\theta)
	+
	\sum_{i=0}^{n-1} \lambda^{i}\nu_{\theta}^{T} S_{\theta,j} R_{\theta}^{i} \phi
	+
	\sum_{i=0}^{n-1} \lambda^{i}\nu_{\theta}^{T} S_{\theta,j} e \nu_{\theta}^{T} \phi
	+
	\sum_{i=n}^{\infty} \lambda^{i} \nu_{\theta}^{T} S_{\theta,j} \tilde{R}_{\theta}^{i} \phi
	\\
	=&
	-
	\eta_{j}(\theta)
	+
	\sum_{i=0}^{n-1} \lambda^{i} e^{T}(v)e \nu_{\theta}^{T} S_{\theta,j} R_{\theta}^{i} \phi
	+
	\sum_{i=n}^{\infty} \lambda^{i} \nu_{\theta}^{T} S_{\theta,j} \tilde{R}_{\theta}^{i} \phi
\end{align*}
for the same $\theta$, $v$, $j$, $n$. 
Consequently, 
\begin{align*}
	(\Pi^{n} F_{j} )(\theta, z ) - \partial^{j} f(\theta ) 
	=
	\sum_{i=0}^{n-1} 
	\lambda^{i} e^{T}(v) \tilde{R}^{n-i} S_{\theta,j} R_{\theta}^{i} \phi
	-
	\sum_{i=n}^{\infty } 
	\lambda^{i} \nu_{\theta}^{T} S_{\theta,j} \tilde{R}_{\theta}^{i} \phi
	+
	\lambda^{n} e_{j}^{T}w \: e^{T}(v) R_{\theta}^{n} \phi
\end{align*}
for $\theta\in\mathbb{R}^{d_{\theta} }$, 
$z=(v,w)\in({\cal X}\times{\cal Y} )\times\mathbb{R}^{d_{\theta} }$, 
$1\leq j\leq d_{\theta}$, $n\geq 1$. 
Then, (\ref{l3.3.101}), (\ref{l3.3.103}) imply 
\begin{align}\label{l3.3.201}
	|(\Pi^{n} F_{j} )(\theta, z ) - \partial^{j} f(\theta ) |
	\leq &
	\|\phi\| \|e(v)\| 
	\sum_{i=0}^{n-1} 
	\lambda^{i}
	\|\tilde{R}_{\theta}^{n-i} S_{\theta,j} R_{\theta}^{i} \|
	+
	\|\phi\| 
	\sum_{i=n}^{\infty } 
	\lambda^{i}
	\|\nu_{\theta}^{T} S_{\theta,j} \tilde{R}_{\theta}^{i} \| 
	+
	\lambda^{n} \|\phi\| \|e(v)\| \|R_{\theta}^{n}\| \|w \|
	\nonumber\\
	\leq &
	\tilde{C}_{2,Q}
	\left(
	\sum_{i=1}^{n} \lambda^{i} \varepsilon_{Q}^{n-i} 
	+
	\sum_{i=n}^{\infty } \lambda^{i} \varepsilon_{Q}^{i} 
	+ 
	\lambda^{n} \|w\|
	\right) 
	\nonumber\\
	\leq & 
	C_{3,Q} n\delta_{Q}^{n} (1 + \|w \| )
\end{align}
for all $\theta\in Q$, $z = (v,w) \in ({\cal X} \times {\cal Y} ) \times 
\mathbb{R}^{d_{\theta } }$, 
$1\leq j\leq d_{\theta}$, $n\geq 1$. 
Similarly, (\ref{l3.3.1}), (\ref{l3.3.3}) yield  
\begin{align}\label{l3.3.203}
	&
	|((\Pi^{n} F_{j} )(\theta', z ) - \partial^{j} f(\theta' ) ) 
	-
	((\Pi^{n} F_{j} )(\theta'', z ) - \partial^{j} f(\theta'' ) ) |
	\nonumber\\
	&
	\begin{aligned}[t]
	\leq &
	\|\phi\| \|e(v)\| \sum_{i=0}^{n-1} \;
	\lambda^{i}
	\|\tilde{R}_{\theta'}^{n-i} S_{\theta',j} R_{\theta'}^{i} 
	-
	\tilde{R}_{\theta''}^{n-i} S_{\theta'',j} R_{\theta''}^{i} \|
	+
	\|\phi\| \sum_{i=n}^{\infty } \;
	\lambda^{i}
	\|\nu_{\theta'}^{T} S_{\theta',j} \tilde{R}_{\theta'}^{i} 
	-
	\nu_{\theta''}^{T} S_{\theta'',j} \tilde{R}_{\theta''}^{i} \|
	\nonumber\\
	&
	+
	\lambda^{n} \|\phi\| \|e(v) \| \|w \| 
	\|R_{\theta'}^{n} - R_{\theta''}^{n} \|
	\end{aligned}
	\nonumber\\
	&
	\leq
	\tilde{C}_{2,Q} \|\theta'-\theta'' \| 
	\left(
	\sum_{i=1}^{n} \lambda^{i} \varepsilon_{Q}^{n-i} 
	+
	\sum_{i=n}^{\infty } \lambda^{i} \varepsilon_{Q}^{i} 
	+ 
	\lambda^{n} \|w\|
	\right) 
	\nonumber\\
	&
	\leq 
	C_{3,Q} n \delta_{Q}^{n} \|\theta' - \theta'' \| (1 + \|w \| )
\end{align}
for all $\theta',\theta''\in Q$, 
$z = (v,w) \in ({\cal X} \times {\cal Y} ) \times \mathbb{R}^{d_{\theta } }$, 
$1\leq j\leq d_{\theta}$, $n\geq 1$. 
Using (\ref{l3.3.201}), (\ref{l3.3.203}), we conclude that there exist 
real numbers $\delta_{Q}$, $C_{4,Q}$ with properties specified in (i). 

(ii) 
Let 
$C_{5,Q} = \tilde{C}_{1,Q} (1-\lambda )^{-1}$ 
($\tilde{C}_{1,Q}$ is specified in the proof of (i)). 
Then, due to Assumption \ref{a3.2}, we have 
\begin{align*}
	\|W_{n+1} \|
	I_{ \{\tau_{Q} > n \} }
	= &
	\left\|
	\lambda^{n+1} W_{0} 
	+
	\sum_{i=0}^{n} 
	\lambda^{n-i} s_{\theta_{i} }(X_{i+1}, Y_{i+1} )
	\right\|
	I_{ \{\tau_{Q} > n \} }
	\\
	\leq &
	\lambda^{n+1} \|W_{0} \| 
	+
	\tilde{C}_{1,Q} \sum_{i=0}^{n} \lambda^{n-i} 
	\\
	\leq &
	C_{5,Q} (1 + \|W_{0} \| )
\end{align*}
for $n\geq 0$. 
\end{proof}

\begin{vproof}{Theorem \ref{theorem3.1}}
{For $\theta\in \mathbb{R}^{d_{\theta } }$, 
$z=(v,w) \in ({\cal X} \times {\cal Y} ) \times \mathbb{R}^{d_{\theta } }$,   
let 
\begin{align*}
	\tilde{F}(\theta,z)
	=
	\sum_{n=0}^{\infty } ((\Pi^{n} F)(\theta,z) - \nabla f(\theta ) ), 
	\;\;\;\;\;  
	\varphi(z)
	=
	1 + \|w\|.
\end{align*}
Then, using Lemma \ref{lemma3.3}, 
we conclude that for each $\theta\in \mathbb{R}^{d_{\theta } }$, 
$z\in {\cal X} \times {\cal Y} \times \mathbb{R}^{d_{\theta } }$, 
$\tilde{F}(\theta,z)$ is well-defined and satisfies 
$(\Pi\tilde{F})(\theta,z) = \sum_{n=1}^{\infty } ((\Pi^{n} F)(\theta,z) - \nabla f(\theta ) )$. 
Thus, Assumption \ref{a2.2} holds. 
Relying on Lemma \ref{lemma3.3}, we also deduce that 
for any compact set $Q\subset \mathbb{R}^{d_{\theta } }$, 
there exists a real number $\tilde{C}_{Q} \in [1,\infty )$
(possibly depending on $\lambda$) such that 
\begin{align*}
	&
	\max\{\|F(\theta,z) \|, \|\tilde{F}(\theta,z) \|, \|(\Pi \tilde{F})(\theta,z) \| \} 
	\leq 
	\tilde{C}_{Q} \varphi(z), 
	\\
	&
	\|(\Pi\tilde{F} )(\theta',z ) - (\Pi\tilde{F} )(\theta'',z ) \|
	\leq 
	\tilde{C}_{Q} \varphi(z) \|\theta' - \theta'' \|, 
	\\
	&
	E\left(\varphi^{2}(Z_{n+1} )I_{ \{\tau_{Q}>n\} }|\theta_{0}=\theta, Z_{0}=z \right)
	\leq 
	\tilde{C}_{Q} \varphi^{2}(z)
\end{align*}
for all $\theta,\theta',\theta''\in Q$, 
$z\in {\cal X} \times {\cal Y} \times \mathbb{R}^{d_{\theta } }$. 
Hence, Assumptions \ref{a2.3} is satisfied, too. 
Moreover, Lemma \ref{lemma3.2} yields 
\begin{align*}
	\eta
	=
	\limsup_{n\rightarrow \infty } 
	\|\eta_{n} \| 
	\leq 
	C_{3,Q} (1-\lambda )
\end{align*}
on $\Lambda_{Q}$
(notice that $C_{3,Q}$ does not depend on $\lambda$).
Then, the theorem's assertion directly follows from 
Theorem \ref{theorem2.1} and Parts (i), (iii), (iv) of Lemma \ref{lemma3.2}. 
}
\end{vproof}

\section{Proof of Theorem \ref{theorem5.1} } \label{section5*}

In this section, we rely on the following notation. 
$\{Z_{n}^{\theta } \}_{n\geq 0}$ is the ${\cal X}^{2N}$-valued Markov chain
defined by $Z_{n+1}^{\theta } = (X_{n}^{\theta }, X_{n+1}^{\theta } )$ for 
$\theta\in\Theta$, $n\geq 0$, 
while $\Pi_{\theta }(\cdot,\cdot )$ and $\pi_{\theta }(\cdot )$ are the transition kernel 
and the invariant probability of $\{Z_{n}^{\theta } \}_{n\geq 0}$. 
$G(\cdot,\cdot )$, $g(\cdot )$ are the functions defined by 
\begin{align*}
	G(\theta,z)
	=
	-\frac{1}{N} \sum_{i=1}^{N} s_{\theta}(x_{i},x_{i}'), 
	\;\;\;\;\; 
	g(\theta )
	=
	\int_{{\cal X}^{2N} } G(\theta,z') \pi_{\theta }(dz')
\end{align*}
for $\theta\in\Theta$, $z=(x_{1},\dots,x_{N},x'_{1},\dots,x'_{N} )\in{\cal X}^{2N}$. 
$\eta(\cdot )$, $F(\cdot,\cdot )$ are the functions defined as 
\begin{align*}
	\eta(\theta )=g(\theta ) - \nabla f(\theta ), 
	\;\;\;\;\; 
	F(\theta,z)=G(\theta,z)-\eta(\theta)
\end{align*}
for $\theta\in\Theta$, $z\in{\cal X}^{2N}$. 
On the other side, $\{Z_{n} \}_{n\geq 0}$, $\{\eta_{n} \}_{n\geq 0}$ are the stochastic processes defined by 
\begin{align*}
	Z_{n+1} = (X_{n}, X_{n+1} ), 
	\;\;\;\;\; 
	\eta_{n} = \eta(\theta_{n} )
\end{align*}
for $n\geq 0$. 
Then, it is straightforward to show that the 
algorithm (\ref{5.1}) is of the same form 
as the recursion studied in Section \ref{section2}
(i.e., $\{\theta_{n} \}_{n\geq 0}$, $\{\eta_{n} \}_{n\geq 0}$, 
$F(\cdot,\cdot )$, $\Pi_{\theta }(\cdot,\cdot )$
defined in Section \ref{section3} and here 
admit (\ref{2.1}), (\ref{2.3})). 

In this section, we also use the following notation. 
For $\theta\in\Theta$, $n\geq 0$, $\Pi_{\theta}^{n}(\cdot,\cdot )$ is the $n$-th transition distribution of 
$\{Z_{n}^{\theta } \}_{n\geq 0}$, 
while 
\begin{align*}
	\tilde{\Pi}_{\theta}^{n}(z,B)
	=
	\Pi_{\theta}^{n}(z,B)
	-
	\pi_{\theta}(B), 
	\;\;\;\;\; 
	(\tilde{\Pi}^{n} F)(\theta,z)
	=
	\int_{{\cal X}^{2N} } F(\theta,z') \tilde{\Pi}_{\theta }^{n}(z,dz')
\end{align*}
for a Borel-set $B\subseteq{\cal X}^{2N}$ and $\theta\in\Theta$, $z\in{\cal X}^{2N}$, $n\geq 0$.

\begin{lemma}\label{lemma5.1}  
Suppose that Assumptions \ref{a5.1} -- \ref{a5.3} hold. 
Let $Q\subset\Theta$ be any compact set. 
Then, the following is true: 
\begin{compactenum}[(i)]
\item
$\{Z_{n}^{\theta } \}_{n\geq 0}$ is geometrically ergodic for each $\theta\in\Theta$. 
\item
There exist real numbers $\delta_{Q}\in (0,1)$, $C_{1,Q} \in [1,\infty )$
(possibly depending on $N$) 
such that 
\begin{align}
	&
	|\tilde{\Pi}_{\theta }^{n}(z,B) |
	\leq 
	C_{1,Q} \delta_{Q}^{n}, 
	\nonumber\\
	&
	|\tilde{\Pi}_{\theta'}^{n}(z,B) - \tilde{\Pi}_{\theta''}^{n}(z,B) |
	\leq
	C_{1,Q} n \delta_{Q}^{n} \|\theta' - \theta'' \|, 
	\nonumber\\
	&\label{l5.1.5*}
	|\pi_{\theta'}(B) - \pi_{\theta''}(B) | 
	\leq 
	C_{1,Q} \|\theta'-\theta''\|
\end{align}
for any Borel-set $B\subseteq{\cal X}^{2N}$ and all $\theta, \theta', \theta'' \in Q$, 
$z\in {\cal X}^{2N}$, 
$n\geq 0$. 
\end{compactenum}
\end{lemma}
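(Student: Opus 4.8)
The plan is to reduce the whole lemma to a Doeblin-type minorization for the sampling--importance--resampling (SIR) transition and then feed it into the standard perturbation-of-Markov-kernels machinery. First I would fix a compact $Q\subset\Theta$ and record, using Assumptions \ref{a5.1}--\ref{a5.3}, that on $Q\times{\cal X}\times{\cal X}$ the functions $p_{\theta}(x'|x)$ and $q(x)$ are bounded above and bounded away from zero (continuity on a compact set); consequently the importance weights $w_{\theta}(x,\tilde x)=q(\tilde x)/p_{\theta}(\tilde x|x)$ lie in a fixed compact subinterval of $(0,\infty)$, so each normalized resampling weight is at least $c_{Q}/N$ for a constant $c_{Q}\in(0,1]$ not depending on the current particle system. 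The key point is then: in one SIR step from a state $x_{1:N}$, the event that every resampling index equals its own label has conditional probability at least $(c_{Q}/N)^{N}$, and on that event the new system coincides with the proposals, hence dominates $\varepsilon_{Q}\mu_{0}^{\otimes N}$, where $\mu_{0}$ is normalized Lebesgue measure on ${\cal X}$ and $\varepsilon_{Q}\in(0,1)$ depends only on $Q,N$. Since $\{Z_{n}^{\theta}\}_{n\ge0}$ is the lag-one pair chain of the SIR chain, applying this to two consecutive SIR steps yields a uniform minorization $\Pi_{\theta}^{2}(z,\cdot)\ge\varepsilon_{Q}^{2}\mu_{0}^{\otimes 2N}(\cdot)$ on ${\cal X}^{2N}$.

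Part (i) follows at once by taking $Q=\{\theta\}$: a Doeblin condition gives a unique invariant probability $\pi_{\theta}$ and uniform (hence ordinary) geometric ergodicity of $\{Z_{n}^{\theta}\}_{n\ge0}$. With $Q$ arbitrary, the same minorization bounds the two-step Dobrushin coefficient of $\Pi_{\theta}$, so for a suitable $\delta_{Q}\in(0,1)$ one gets $|\tilde\Pi_{\theta}^{n}(z,B)|\le\|\Pi_{\theta}^{n}(z,\cdot)-\pi_{\theta}\|_{\mathrm{tv}}\le C_{1,Q}\delta_{Q}^{n}$, the first bound of part (ii). Next I would prove the one-step $\theta$-Lipschitz estimate $\sup_{z\in{\cal X}^{2N}}\|\Pi_{\theta'}(z,\cdot)-\Pi_{\theta''}(z,\cdot)\|_{\mathrm{tv}}\le L_{Q}\|\theta'-\theta''\|$ for $\theta',\theta''\in Q$ by coupling two SIR steps through common uniform randomness: the proposals can be coupled to disagree with probability at most $\sum_{i}\|p_{\theta'}(\cdot|x_{i})-p_{\theta''}(\cdot|x_{i})\|_{\mathrm{tv}}=O(\|\theta'-\theta''\|)$, since $p_{\theta}(x'|x)$ is $\theta$-Lipschitz uniformly on $Q\times{\cal X}\times{\cal X}$ by Assumption \ref{a5.3} and ${\cal X}$ has finite Lebesgue measure; the resampling indices can be coupled to disagree with the same order because the normalized weight vectors depend $O(\|\theta'-\theta''\|)$-Lipschitzly on $\theta$; a union bound over the $N$ particles closes it. Then \eqref{l5.1.5*} follows from the resolvent identity $\pi_{\theta'}-\pi_{\theta''}=\sum_{k\ge0}\pi_{\theta'}(\Pi_{\theta'}-\Pi_{\theta''})\tilde\Pi_{\theta''}^{k}$ (legitimate since $\pi_{\theta'}-\pi_{\theta''}$ has zero total mass and $\Pi_{\theta''}$ contracts zero-mass measures at rate $\delta_{Q}$), which is dominated in total variation by $L_{Q}\|\theta'-\theta''\|\sum_{k\ge0}\delta_{Q}^{k}$.

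The remaining and most delicate estimate is $|\tilde\Pi_{\theta'}^{n}(z,B)-\tilde\Pi_{\theta''}^{n}(z,B)|\le C_{1,Q}n\delta_{Q}^{n}\|\theta'-\theta''\|$. I would telescope $\Pi_{\theta'}^{n}-\Pi_{\theta''}^{n}=\sum_{k=1}^{n}\Pi_{\theta'}^{k-1}(\Pi_{\theta'}-\Pi_{\theta''})\Pi_{\theta''}^{n-k}$ and, in the $k$-th summand, split the left factor $\Pi_{\theta'}^{k-1}(z,\cdot)$ as $\tilde\Pi_{\theta'}^{k-1}(z,\cdot)+\pi_{\theta'}$. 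The part carrying $\tilde\Pi_{\theta'}^{k-1}(z,\cdot)$ is a zero-mass measure of total variation $O(\delta_{Q}^{k-1})$ which, after composition with the zero-mass factor $\Pi_{\theta'}-\Pi_{\theta''}$ of size $O(\|\theta'-\theta''\|)$ and then with $\Pi_{\theta''}^{n-k}$, contributes $O(\delta_{Q}^{n-1}\|\theta'-\theta''\|)$, so summed over $k$ it gives $O(n\delta_{Q}^{n-1}\|\theta'-\theta''\|)$. The part carrying $\pi_{\theta'}$, namely $\sum_{k=1}^{n}\pi_{\theta'}(\Pi_{\theta'}-\Pi_{\theta''})\Pi_{\theta''}^{n-k}$, does not decay on its own, but equals the resolvent series for $\pi_{\theta'}-\pi_{\theta''}$ with its tail $\sum_{j\ge n}\pi_{\theta'}(\Pi_{\theta'}-\Pi_{\theta''})\tilde\Pi_{\theta''}^{j}$ removed, the tail being of size $O(\delta_{Q}^{n}\|\theta'-\theta''\|)$; hence, when one passes from $\Pi^{n}$ to $\tilde\Pi^{n}$ by subtracting $\pi_{\theta'}-\pi_{\theta''}$, exactly this non-decaying term cancels, leaving only $O(n\delta_{Q}^{n-1}\|\theta'-\theta''\|)$. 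Enlarging constants (e.g. replacing $\delta_{Q}$ by a slightly larger number in $(0,1)$ to absorb the polynomial factor) gives the claimed bound, and the statement for $\{Z_{n}^{\theta}\}_{n\ge0}$ transfers since it is the lag-one pair chain of the SIR chain. The hard part is precisely this cancellation: a direct telescoping yields only an $O(\|\theta'-\theta''\|)$ bound with no geometric decay, and one has to recognize the obstructing term as (the truncation of) $\pi_{\theta'}-\pi_{\theta''}$ itself, which is exactly what is subtracted off in passing to $\tilde\Pi^{n}$.
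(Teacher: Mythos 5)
Your proposal is correct and follows the same overall skeleton as the paper's proof: a uniform two-step Doeblin minorization for the SIR chain on $Q$, giving geometric ergodicity and the bound $|\tilde{\Pi}_{\theta}^{n}(z,B)|\leq C_{1,Q}\delta_{Q}^{n}$; a one-step total-variation Lipschitz estimate in $\theta$; and a perturbation expansion for the powers. The differences are in execution. For the one-step estimate the paper computes directly with the explicit density representation of the SIR kernel (bounding $|w_{\theta',i}-w_{\theta'',i}|$ and $|v_{\theta'}-v_{\theta''}|$ and integrating), whereas you couple through common randomness; both yield the same $O(\|\theta'-\theta''\|)$ bound, and your route is shorter at the cost of being less explicit about constants. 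For the key estimate $|\tilde{\Pi}_{\theta'}^{n}(z,B)-\tilde{\Pi}_{\theta''}^{n}(z,B)|\leq C_{1,Q}\,n\,\delta_{Q}^{n}\|\theta'-\theta''\|$, the paper writes the perturbation identity directly in centered form,
\begin{align*}
	\tilde{\Pi}_{\theta'}^{n+1}(z,B) - \tilde{\Pi}_{\theta''}^{n+1}(z,B)
	=
	\sum_{j=0}^{n}
	\int\!\!\int
	\tilde{\Pi}_{\theta'}^{j}(z'',B)\,
	(\Pi_{\theta'}-\Pi_{\theta''})(z',dz'')\,
	\tilde{\Pi}_{\theta''}^{n-j}(z,dz'),
\end{align*}
so that every summand carries the product $\delta_{Q}^{j}\delta_{Q}^{n-j}$ and no cancellation is needed; you instead telescope the uncentered powers, isolate the non-decaying part as the truncated resolvent series for $\pi_{\theta'}-\pi_{\theta''}$, and observe that it is exactly removed when passing to $\tilde{\Pi}^{n}$. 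Your argument is valid (and arguably more illuminating about why centering is what produces the decay), but the paper's centered identity eliminates the delicate step you flag as the hard part. Finally, you obtain \eqref{l5.1.5*} from the resolvent identity for $\pi_{\theta'}-\pi_{\theta''}$, while the paper gets it by letting $n\rightarrow\infty$ in $|\pi_{\theta'}(B)-\pi_{\theta''}(B)|\leq|\Pi_{\theta'}^{n}(z,B)-\Pi_{\theta''}^{n}(z,B)|+|\tilde{\Pi}_{\theta'}^{n}(z,B)|+|\tilde{\Pi}_{\theta''}^{n}(z,B)|$; these are equivalent. One minor point of care: your phrase about the identity-index event having ``conditional probability at least $(c_{Q}/N)^{N}$'' should really be the restriction-to-one-term bound $E\bigl(I_{B}(\tilde{X}_{1},\dots,\tilde{X}_{N})\prod_{j}\bar{w}_{j}\bigr)\geq(c_{Q}/N)^{N}P(\tilde{X}_{1:N}\in B)$, since conditioning on that event tilts the proposal law; as written the inequality still holds, but the justification goes through the product bound on the weights, which is what the paper does (summed over all index tuples rather than one).
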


\begin{proof}
For $\theta\in\Theta$, let $P_{\theta }(\cdot,\cdot )$ be the transition kernel of 
$\{X_{n}^{\theta } \}_{n\geq 0}$, 
while $\lambda(\cdot )$ is the measure on ${\cal X}^{N}$ defined by 
\begin{align*}
	\lambda(B)
	=
	\sum_{1\leq i_{1},\dots,i_{N}\leq N}
	\int_{\cal X}\cdots\int_{\cal X}
	I_{B}(x_{i_{1} }, \dots, x_{i_{N} } ) 
	dx_{1} \cdots dx_{N}
\end{align*}
for a Borel-set $B\subseteq{\cal X}^{N}$. 
On the other side, let 
\begin{align*}
	v_{\theta }(x'_{1:N}|x_{1:N} )
	=
	\prod_{j=1}^{N} p_{\theta }(x'_{j}|x_{j} ), 
	\;\;\;\;\; 
	w_{\theta, i }(x_{1:N}, x'_{1:N} )
	=
	\frac{w_{\theta }(x_{i}, x'_{i} ) }{\sum_{j=1}^{N} w_{\theta }(x_{j},x'_{j} ) }
\end{align*}
for $\theta\in\Theta$, $x_{1:N}=(x_{1},\dots,x_{N} )\in{\cal X}^{N}$, 
$x'_{1:N}=(x'_{1},\dots,x'_{N} )\in{\cal X}^{N}$, 
$1\leq i\leq N$. 
Moreover, let 
\begin{align*}
	u_{\theta }(B| x_{1:N}, x'_{1:N} )
	=
	\sum_{1\leq i_{1},\dots,i_{N}\leq N} 
	I_{B}(x_{i_{1} }, \cdots, x_{i_{N} } )
	\frac{\prod_{j=1}^{N} w_{\theta}(x_{i_{j} },x'_{i_{j} } ) }
	{\left(\sum_{j=1}^{N} w_{\theta }(x_{j},x'_{j} ) \right)^{N} }
\end{align*}
for a Borel-set $B\subseteq{\cal X}^{N}$ 
and $\theta\in\Theta$, $x_{1:N}=(x_{1},\dots,x_{N} )\in{\cal X}^{N}$, 
$x'_{1:N}=(x'_{1},\dots,x'_{N} )\in{\cal X}^{N}$. 
Then, it is straightforward to show
\begin{align*}
	P_{\theta }(x_{1:N},B)
	=
	\int_{{\cal X}^{2N} } u_{\theta }(B|x_{1:N},x'_{1:N} ) v_{\theta}(x'_{1:N}|x_{1:N} ) dx'_{1:N}
\end{align*}
for any Borel-set $B\subseteq{\cal X}^{N}$ 
and all $\theta\in\Theta$, $x_{1:N}=(x_{1},\dots,x_{N} )\in{\cal X}^{N}$. 
It is also easy to demonstrate 
\begin{align*}
	u_{\theta }(B| x_{1:N}, x'_{1:N} )
	=
	\sum_{1\leq i_{1},\dots,i_{N}\leq N} 
	I_{B}(x_{i_{1} }, \cdots, x_{i_{N} } )
	\prod_{j=1}^{N} w_{\theta,i_{j} }(x_{1:N},x'_{1:N} ) 
\end{align*}
for any Borel-set $B\subseteq{\cal X}^{N}$ 
and all $\theta\in\Theta$, $x_{1:N}=(x_{1},\dots,x_{N} )\in{\cal X}^{N}$, 
$x'_{1:N}=(x'_{1},\dots,x'_{N} )\in{\cal X}^{N}$. 

Due to Assumptions \ref{a5.1} -- \ref{a5.3}, 
there exists a real number $\varepsilon_{1,Q}\in(0,1)$ such that 
$p_{\theta}(x'|x)\geq\varepsilon_{1,Q}$, 
$q(x)\geq\varepsilon_{1,Q}$ for all $\theta\in Q$, $x,x'\in{\cal X}$. 
Let $\tilde{C}\in[1,\infty )$ be an upper bound of $q(\cdot )$ on ${\cal X}$, 
while $\tilde{C}_{1,Q}\in[\tilde{C},\infty )$ is an upper bound in $(\theta,x,x')$
for $p_{\theta }(x'|x)$ on $Q\times{\cal X}\times{\cal X}$. 
Moreover, let $\tilde{C}_{2,Q}\in[1,\infty )$ be a Lipschitz constant 
in $(\theta,x,x')$ for $p_{\theta }(x'|x)$ on $Q\times{\cal X}\times{\cal X}$,  
while $\varepsilon_{2,Q}=\varepsilon_{1,Q}^{2}\tilde{C}_{1,Q}^{-2}N^{-1}$, 
$\tilde{C}_{3,Q}=2\varepsilon_{1,Q}^{-3}\tilde{C}_{1,Q}^{2}\tilde{C}_{2,Q}$. 
Then, we have 
$\varepsilon_{2,Q} \leq w_{\theta,i}(x_{1:N},x'_{1:N} ) \leq 1$
and 
\begin{align*}
	|w_{\theta',i}(x_{1:N},x'_{1:N} ) - w_{\theta'',i}(x_{1:N},x'_{1:N} ) |
	\leq &
	\frac{w_{\theta',i}(x_{1:N},x'_{1:N} ) 
	\sum_{j=1}^{N} |w_{\theta''}(x_{j},x'_{j} ) - w_{\theta'}(x_{j},x'_{j} ) | }
	{\sum_{j=1}^{N} w_{\theta''}(x_{j},x'_{j} ) }
	\\
	&
	+
	\frac{|w_{\theta'}(x_{i},x'_{i} ) - w_{\theta''}(x_{i},x'_{i} ) | }
	{\sum_{j=1}^{N} w_{\theta''}(x_{j},x'_{j} ) }
	\\
	\leq &
	\tilde{C}_{3,Q} \|\theta' - \theta'' \|
\end{align*}
for all $\theta,\theta',\theta''\in Q$, $x_{1:N}=(x_{1},\dots,x_{N} )\in {\cal X}^{N}$, 
$x'_{1:N}=(x'_{1},\dots,x'_{N} )\in {\cal X}^{N}$, $1\leq i\leq N$.\footnote{
Notice that 
$\varepsilon_{Q}\tilde{C}_{1,Q}^{-1}\leq w_{\theta}(x,x')\leq\varepsilon_{Q}^{-1}\tilde{C}_{1,Q}$
and 
$|w_{\theta'}(x,x')-w_{\theta''}(x,x')|\leq 
\varepsilon_{Q}^{-2}\tilde{C}_{1,Q}\tilde{C}_{2,Q}\|\theta'-\theta''\|$
when $\theta,\theta',\theta''\in Q$. 
} 
Consequently, 
\begin{align*}
	&
	\begin{aligned}[b]
	\left|
	\prod_{j=1}^{N} w_{\theta',i_{j} }(x_{1:N},x'_{1:N} ) 
	- 
	\prod_{j=1}^{N} w_{\theta'',i_{j} }(x_{1:N},x'_{1:N} ) 
	\right|
	\leq &
	\begin{aligned}[t]
	\sum_{j=1}^{N}
	&
	\left(\prod_{k=1}^{j-1} w_{\theta',i_{k} }(x_{1:N},x'_{1:N} ) \right) 
	\left(\prod_{k=j+1}^{N} w_{\theta',i_{k} }(x_{1:N},x'_{1:N} ) \right)
	\\
	&
	\cdot 
	\left|
	w_{\theta',i_{j} }(x_{1:N},x'_{1:N} ) 
	-
	w_{\theta'',i_{j} }(x_{1:N},x'_{1:N} )
	\right|
	\end{aligned}
	\\
	\leq &
	\tilde{C}_{3,Q}N \|\theta'-\theta''\| 
	\end{aligned}
	\end{align*}
for each $\theta',\theta''\in Q$, $x_{1:N}=(x_{1},\dots,x_{N} )\in {\cal X}^{N}$, 
$x'_{1:N}=(x'_{1},\dots,x'_{N} )\in {\cal X}^{N}$, $1\leq i_{1},\dots,i_{N}\leq N$. 
Similarly, we have 
	\begin{align*}
	&
	\begin{aligned}[b]
	\left|
	v_{\theta'}(x'_{1:N}|x_{1:N} ) 
	- 
	v_{\theta''}(x'_{1:N}|x_{1:N} ) 
	\right|
	\leq &
	\sum_{j=1}^{N}
	\left(\prod_{k=1}^{j-1} p_{\theta' }(x'_{k}|x_{k} ) \right) 
	\left(\prod_{k=j+1}^{N} p_{\theta' }(x'_{k}|x_{k} ) \right)
	\left|
	p_{\theta' }(x'_{j}|x_{j} ) 
	-
	p_{\theta'' }(x'_{j}|x_{j} )
	\right|
	\\
	\leq &
	\tilde{C}_{3,Q}^{N} \|\theta'-\theta''\| 
	\end{aligned}
\end{align*}
for all $\theta',\theta''\in Q$, $x_{1:N}=(x_{1},\dots,x_{N} )\in {\cal X}^{N}$, 
$x'_{1:N}=(x'_{1},\dots,x'_{N} )\in {\cal X}^{N}$. 
Hence, 
\begin{align*}
	\left|
	\left(\prod_{j=1}^{N} w_{\theta',i_{j} }(x_{1:N},x'_{1:N} ) \right)
	v_{\theta'}(x'_{1:N}|x_{1:N} )
	- 
	\left(\prod_{j=1}^{N} w_{\theta'',i_{j} }(x_{1:N},x'_{1:N} ) \right)
	v_{\theta''}(x'_{1:N}|x_{1:N} )
	\right|
	\leq 
	2\tilde{C}_{3,Q}^{2N} \|\theta'-\theta''\|
\end{align*}
for each $\theta',\theta''\in Q$, $x_{1:N}=(x_{1},\dots,x_{N} )\in {\cal X}^{N}$, 
$x'_{1:N}=(x'_{1},\dots,x'_{N} )\in {\cal X}^{N}$, $1\leq i_{1},\dots,i_{N}\leq N$
(notice that $v_{\theta}(x'_{1:N}|x_{1:N} )\leq \tilde{C}_{1,Q}^{N}\leq\tilde{C}_{3,Q}^{N}N^{-1}$
when $\theta\in Q$). 

Let $\varepsilon_{3,Q}=\varepsilon_{2,Q}^{4N}\min^{2}\{1,\lambda({\cal X}^{N} ) \}$, 
$\delta_{Q}=(1-\varepsilon_{3,Q} )^{1/2}$. 
Then, we have 
\begin{align*}
	P_{\theta}(x_{1:N}, B )
	\geq 
	\varepsilon_{2,Q}^{2N} 
	\sum_{1\leq i_{1},\dots,i_{N}\leq N} 
	\int_{\cal X}\cdots\int_{\cal X} I_{B}(x'_{i_{1} },\dots,x'_{i_{N} } )
	dx'_{1}\cdots dx'_{N}
	=
	\frac{\varepsilon_{3,Q}^{1/2} \lambda(B) }{\lambda({\cal X}^{N} ) }
\end{align*}
for any Borel-set $B\subseteq{\cal X}^{N}$ 
and all $\theta\in Q$, $x_{1:N}=(x_{1},\dots,x_{N} )\in {\cal X}^{N}$.\footnote{
Notice that $v_{\theta}(x'_{1:N}|x_{1:N})\geq \varepsilon_{1,Q}^{N}\geq\varepsilon_{2,Q}^{N}$
and 
$u_{\theta}(B|x_{1:N},x'_{1:N} ) \geq \varepsilon_{Q}^{N} 
\sum_{1\leq i_{1},\dots,i_{N}\leq N} I_{B}(x'_{i_{1} },\dots,x'_{i_{N} } )$
when $\theta\in Q$.
} 
Consequently, 
\begin{align*}
	\Pi_{\theta }^{2}(z,B)
	=&
	\int_{{\cal X}^{2N} }\int_{{\cal X}^{2N} } I_{B}(x''_{1:N}, x'''_{1:N} )
	P_{\theta}(x''_{1:N}, dx'''_{1:N} ) P_{\theta}(x'_{1:N}, dx''_{1:N} )
	\\
	\geq &
	\frac{\varepsilon_{3,Q} }{\lambda^{2}({\cal X}^{N} ) }
	\int_{{\cal X}^{2N} }\int_{{\cal X}^{2N} } I_{B}(x''_{1:N}, x'''_{1:N} )
	\lambda(dx'''_{1:N} ) \lambda(dx''_{1:N} )
\end{align*}
for any Borel-set $B\subseteq{\cal X}^{2N}$ 
and all $\theta\in Q$, $z=(x_{1},\dots,x_{N}, x'_{1},\dots, x'_{N} )\in {\cal X}^{2N}$. 
Combining this with well-known results from the Markov chain theory 
(see e.g., \cite[Theorem 16.02]{meyn&tweedie}), 
we conclude that 
$\{Z_{n}^{\theta } \}_{n\geq 0}$ is geometrically ergodic for each $\theta\in\Theta$
(notice that $Q$ is any compact set). 
We also deduce 
\begin{align*}  
	|\tilde{\Pi}_{\theta }^{n}(z,B) |\leq (1-\varepsilon_{3,Q} )^{n/2} = \delta_{Q}^{n}
\end{align*}
for any Borel-set $B\subseteq{\cal X}^{2N}$ and all $\theta\in Q$, $z\in{\cal X}^{2N}$. 

Let $\tilde{C}_{4,Q}=2\tilde{C}_{3,Q}^{2N}\max\{1,\lambda({\cal X}^{N} ) \}$, 
$C_{1,Q}=\delta_{Q}^{-1}\tilde{C}_{4,Q}$. 
Then, we have 
\begin{align*}
	|P_{\theta'}(x_{1:N},B) - P_{\theta''}(x_{1:N},B) |
	\leq &
	2\tilde{C}_{3,Q}^{2N} \|\theta' - \theta'' \|
	\sum_{1\leq i_{1},\dots,i_{N}\leq N} 
	\int_{\cal X}\cdots\int_{\cal X} I_{B}(x'_{i_{1} },\dots,x'_{i_{N} } )
	dx'_{1}\cdots dx'_{N}
	\\
	\leq &
	\tilde{C}_{4,Q} \|\theta'-\theta'' \|
\end{align*}
for any Borel-set $B\subseteq{\cal X}^{N}$ 
and all $\theta',\theta''\in Q$, $x_{1:N}=(x_{1},\dots,x_{N} )\in{\cal X}^{N}$. 
Therefore,
\begin{align*}
	|\Pi_{\theta'}(z,B) - \Pi_{\theta''}(z,B)|
	=
	\left|
	\int_{{\cal X}^{2N} } I_{B}(x'_{1:N},x''_{1:N} ) 
	\: (P_{\theta'} - P_{\theta''} )(x'_{1:N},dx''_{1:N} ) 
	\right|
	\leq 
	\tilde{C}_{4,Q} \|\theta'-\theta'' \|
\end{align*}
for any Borel-set $B\subseteq{\cal X}^{2N}$ 
and all $\theta',\theta''\in Q$, $z=(x_{1},\dots,x_{N},x'_{1},\dots,x'_{N} )\in{\cal X}^{2N}$. 
Consequently, we conclude 
\begin{align*}
	|\Pi_{\theta'}^{n+1}(z,B) - \Pi_{\theta''}^{n+1}(z,B) |
	=&
	\left|
	\sum_{j=0}^{n} 
	\int_{{\cal X}^{2N} }\int_{{\cal X}^{2N} } 
	\tilde{\Pi}_{\theta'}^{j}(z'',B)\:
	(\Pi_{\theta'} - \Pi_{\theta''} )(z',dz'')\: 
	\Pi_{\theta''}^{n-j}(z,dz') 
	\right|
	\\
	\leq &
	\sum_{j=0}^{n} 
	\int_{{\cal X}^{2N} }\int_{{\cal X}^{2N} } 
	|\tilde{\Pi}_{\theta'}^{j}(z'',B)| \:
	|\Pi_{\theta'} - \Pi_{\theta''} |(z',dz'')\: 
	\Pi_{\theta''}^{n-j}(z,dz') 
	\\
	\leq &
	\tilde{C}_{4,Q} (1-\delta_{Q} )^{-1} \|\theta'-\theta''\| 
	\\
	\leq &
	C_{1,Q} \|\theta'-\theta''\|
\end{align*}
for any Borel-set $B\subseteq{\cal X}^{2N}$ 
and all $\theta',\theta''\in Q$, $z\in{\cal X}^{2N}$, $n\geq 0$. 
Similarly, we deduce
\begin{align*}
	|\tilde{\Pi}_{\theta'}^{n+1}(z,B) - \tilde{\Pi}_{\theta''}^{n+1}(z,B) |
	=&
	\left|
	\sum_{j=0}^{n} 
	\int_{{\cal X}^{2N} }\int_{{\cal X}^{2N} } 
	\tilde{\Pi}_{\theta'}^{j}(z'',B)\:
	(\Pi_{\theta'} - \Pi_{\theta''} )(z',dz'')\: 
	\tilde{\Pi}_{\theta''}^{n-j}(z,dz') 
	\right|
	\\
	\leq &
	\sum_{j=0}^{n} 
	\int_{{\cal X}^{2N} }\int_{{\cal X}^{2N} } 
	|\tilde{\Pi}_{\theta'}^{j}(z'',B)| \:
	|\Pi_{\theta'} - \Pi_{\theta''} |(z',dz'')\: 
	|\tilde{\Pi}_{\theta''}^{n-j} |(z,dz') 
	\\
	\leq &
	\tilde{C}_{4,Q} \delta_{Q}^{n} (n+1) \|\theta'-\theta''\|
	\\
	= &
	C_{1,Q}\delta_{Q}^{n+1} (n+1) \|\theta'-\theta''\|
\end{align*}
for any Borel-set $B\subseteq{\cal X}^{2N}$ 
and all $\theta',\theta''\in Q$, $z\in{\cal X}^{2N}$, $n\geq 0$. 
Hence, we have  
\begin{align*}
	|\pi_{\theta'}(B) - \pi_{\theta''}(B) |
	\leq 
	|\Pi_{\theta'}^{n}(z,B) - \Pi_{\theta''}^{n}(z,B) |
	+
	|\tilde{\Pi}_{\theta'}^{n}(z,B) | 
	+
	|\tilde{\Pi}_{\theta''}^{n}(z,B) | 
	\leq 
	C_{1,Q}\|\theta'-\theta''\| + 2C_{1,Q}\delta_{Q}^{n}
\end{align*}
for any Borel-set $B\subseteq{\cal X}^{2N}$ and all $\theta',\theta''\in Q$, 
$z\in{\cal X}^{2N}$, $n\geq 1$. 
Then, letting $n\rightarrow\infty$, 
we conclude that (\ref{l5.1.5*}) holds for any Borel-set $B\subseteq{\cal X}^{2N}$ and all $\theta',\theta''\in Q$. 
\end{proof}

\begin{lemma}\label{lemma5.2}
Suppose that Assumptions \ref{a5.1} -- \ref{a5.3} hold. 
Let $Q\subset\Theta$ be any compact set. 
Then, the following is true: 
\begin{compactenum}[(i)]
\item
$f(\cdot )$ is differentiable and 
$\nabla f(\cdot )$, $g(\cdot )$ are locally Lipschitz continuous. 
\item
There exists a real number 
$C_{2,Q} \in [1,\infty )$ (independent of $N$) such that 
$\|\eta(\theta ) \|\leq C_{2,Q}/N$
for all $\theta \in Q$. 
\item
If Assumption \ref{a5.4.a} is satisfied,  
$f(\cdot )$ is $p$ times differentiable. 
\item
If Assumption \ref{a5.4.b} is satisfied, 
$f(\cdot )$ is real-analytic. 
\end{compactenum}
\end{lemma}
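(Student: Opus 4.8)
The plan is to dispose of the smoothness/analyticity statements (i), (iii), (iv) for $f(\cdot)$ first, since these are routine, and then concentrate on the $O(1/N)$ bias bound (ii), which is the heart of the lemma. Fix a compact $Q\subset\Theta$. By Assumptions \ref{a5.1}--\ref{a5.3} and compactness, $p_\theta(x'|x)$ is jointly continuous, strictly positive, and hence bounded above and below away from zero on $Q\times{\cal X}\times{\cal X}$; so $\log p_\theta(x'|x)$ and $s_\theta(x,x')=\nabla_\theta\log p_\theta(x'|x)$ are bounded there, $s_\theta(\cdot\,,x,x')$ is Lipschitz in $\theta$ uniformly over $(x,x')$, and the bounds on $q(\cdot)$ and $p_\theta(\cdot|\cdot)$ already used in the proof of Lemma \ref{lemma5.1} yield a uniform two-sided bound $c_Q^{-1}\le w_\theta(x,\tilde x)\le c_Q$ on $Q\times{\cal X}\times{\cal X}$ for some $c_Q\in[1,\infty)$. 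Since $p(\cdot)$ is a fixed density on the compact set ${\cal X}$, the integrand $-\log p_\theta(x'|x)$ in (\ref{5.701}) and its $\theta$-derivatives (of first order; of order up to $p$ under Assumption \ref{a5.4.a}; complex-analytic under Assumption \ref{a5.4.b}, once $\delta_\theta$ is shrunk so that the continuation $\hat p_\eta(x'|x)$ avoids $0$) are uniformly dominated near $Q$; differentiation under the integral sign gives $\nabla f(\theta)=-\int_{\cal X}\int_{\cal X}s_\theta(x,x')p(x')p(x)\,dx'dx$ (locally Lipschitz, $s_\theta(\cdot\,,x,x')$ being uniformly so), $p$-fold differentiability of $f$ under Assumption \ref{a5.4.a}, and (via Morera's theorem applied to $\eta\mapsto-\int_{\cal X}\int_{\cal X}\log\hat p_\eta(x'|x)p(x')p(x)\,dx'dx$) real-analyticity of $f$ under Assumption \ref{a5.4.b}. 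Local Lipschitz continuity of $g(\cdot)$ follows by splitting $g(\theta')-g(\theta'')$ into $\int[G(\theta',z)-G(\theta'',z)]\pi_{\theta'}(dz)+\int G(\theta'',z)(\pi_{\theta'}-\pi_{\theta''})(dz)$ and using, respectively, the uniform Lipschitz bound on $s_\theta$ in $\theta$ and the total-variation estimate (\ref{l5.1.5*}).

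For (ii), let $\mu_\theta$ and $P_\theta$ be the invariant law and transition kernel of the ${\cal X}^N$-valued population chain $\{X_n^\theta\}_{n\ge0}$; these are well-defined, unique, and (since the sampling/resampling maps (\ref{5.101})--(\ref{5.103}) are symmetric in the $N$ particles) exchangeable, cf.\ Lemma \ref{lemma5.1}, so $\pi_\theta(dx_{1:N},dx'_{1:N})=\mu_\theta(dx_{1:N})P_\theta(x_{1:N},dx'_{1:N})$. Using exchangeability to reduce to the first coordinate and writing out the resampling step, $g(\theta)=-\int\mu_\theta(dx_{1:N})\,E[\,a'/b\mid x_{1:N}\,]$, where $\tilde X_1,\dots,\tilde X_N$ are conditionally independent with $\tilde X_j\sim p_\theta(\cdot|x_j)$, $b=\frac1N\sum_j w_\theta(x_j,\tilde X_j)$ and $a'=\frac1N\sum_j w_\theta(x_j,\tilde X_j)s_\theta(x_1,\tilde X_j)$. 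The basic importance-sampling identity $E[w_\theta(x_j,\tilde X_j)h(\tilde X_j)\mid x_{1:N}]=\int_{\cal X}q(x')h(x')\,dx'$ (valid for every $j$ and every bounded $h$, since $w_\theta(x,\tilde x)p_\theta(\tilde x|x)=q(\tilde x)$) gives $E[b\mid x_{1:N}]=Z_q:=\int_{\cal X}q$ and $E[a'\mid x_{1:N}]=Z_q\sigma_\theta(x_1)$, where $\sigma_\theta(x):=\int_{\cal X}s_\theta(x,x')p(x')\,dx'$ (so that $\nabla f(\theta)=-\int_{\cal X}\sigma_\theta(x)p(x)\,dx$). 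A first-order ratio expansion, $\frac{a'}{b}-\frac{E[a'\mid x_{1:N}]}{E[b\mid x_{1:N}]}=\frac{a'-E[a'\mid x_{1:N}]}{b}-\frac{E[a'\mid x_{1:N}](b-E[b\mid x_{1:N}])}{E[b\mid x_{1:N}]\,b}$, combined with $b\ge c_Q^{-1}>0$ and the fact that $a',b$ are averages of $N$ conditionally independent uniformly bounded terms (so their conditional variances are $O(1/N)$), yields $E[\,a'/b\mid x_{1:N}\,]=\sigma_\theta(x_1)+O(1/N)$ with the remainder bounded by a constant depending only on $Q$; integrating, $g(\theta)=-\int_{\cal X}\sigma_\theta(x)\mu_{\theta,1}(dx)+O(1/N)$, where $\mu_{\theta,1}$ is the first marginal of $\mu_\theta$.

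Running the same ratio expansion with $s_\theta(x_1,\cdot)$ replaced by a generic bounded $h$ gives, uniformly in $\theta\in Q$ and in $h$, $E^{\mathrm{stat}}[h(X_{n+1}^\theta(1))]=\int_{\cal X}hp\,dx+O(\|h\|_\infty/N)$; since in stationarity $X_{n+1}^\theta(1)$ and $X_n^\theta(1)$ have the same law $\mu_{\theta,1}$, this reads $\int_{\cal X}h\,d(\mu_{\theta,1}-p)=O(\|h\|_\infty/N)$ for all bounded $h$, i.e.\ $\|\mu_{\theta,1}-p\|_{\mathrm{TV}}\le C(Q)/N$. Combining with the formula for $g(\theta)$ and with $\nabla f(\theta)=-\int_{\cal X}\sigma_\theta p\,dx$ gives $\eta(\theta)=g(\theta)-\nabla f(\theta)=-\int_{\cal X}\sigma_\theta\,d(\mu_{\theta,1}-p)+O(1/N)$, so $\|\eta(\theta)\|\le\|\sigma_\theta\|_\infty\|\mu_{\theta,1}-p\|_{\mathrm{TV}}+O(1/N)=O(1/N)$ uniformly on $Q$, with a constant $C_{2,Q}$ independent of $N$. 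The main obstacle is precisely this last step: the resampling turns the gradient estimator into a ratio of empirical averages, and one must carry the uniform bounds of Lemma \ref{lemma5.1} (two-sided control of $w_\theta$, boundedness of $s_\theta$, the lower bound on $b$) explicitly through the second-moment estimates and run the self-consistency argument for $\mu_{\theta,1}$ in such a way that no $N$-dependent factor — in particular no mixing-time factor — enters $C_{2,Q}$.
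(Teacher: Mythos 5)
Your proposal is correct. Parts (i), (iii) and (iv) follow the paper essentially verbatim (uniform domination on $Q\times{\cal X}\times{\cal X}$, differentiation under the integral sign, the splitting of $g(\theta')-g(\theta'')$ against $\pi_{\theta'}$ and $\pi_{\theta'}-\pi_{\theta''}$, and the complex continuation for analyticity; the paper uses Cauchy's inequality plus dominated convergence where you invoke Morera, which is interchangeable). For part (ii) the core computation is the same as the paper's --- the first-order expansion of the self-normalized ratio $a'/b$, with conditional independence of the proposed particles giving $O(1/N)$ conditional variances and the uniform two-sided bound on $w_\theta$ keeping the denominator away from zero --- but your handling of the invariant law is genuinely different. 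The paper never estimates the invariant marginal: it computes the \emph{two-step} conditional expectation $(\Pi^{2}G)(\theta,z)$, exploiting the exact identity $E_\theta\bigl[\tfrac{1}{N}\sum_{j}t_\theta(\tilde X_{n+1}^{\theta}(j))\bar W_{n,j}^{\theta}\mid X_n^{\theta}\bigr]=\int t_\theta\,p$, which holds for \emph{every} configuration $X_n^{\theta}$; this makes $(\Pi^{2}G)(\theta,z)-\nabla f(\theta)=O(1/N)$ uniformly in $z$, and the bound on $\eta(\theta)$ then follows from $\int(\Pi^{2}G)(\theta,z')\,\pi_\theta(dz')=g(\theta)$ with no reference to $\mu_{\theta,1}$. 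You instead stop after one step, which leaves the unknown stationary first marginal $\mu_{\theta,1}$ in the formula for $g(\theta)$, and you close the gap with a self-consistency argument: running the same ratio expansion for an arbitrary bounded $h$ and invoking stationarity yields $\|\mu_{\theta,1}-p\|_{\mathrm{TV}}=O(1/N)$ with constants depending only on $Q$. Both routes keep $C_{2,Q}$ free of $N$ and of any mixing-time factor; the paper's is slightly more economical, while yours delivers the total-variation bound on the stationary particle marginal as a by-product of independent interest.
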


\begin{proof}
(i) 
Owing to Assumptions \ref{a5.2}, \ref{a5.3}, 
there exists a real number $\varepsilon_{Q}\in(0,1)$ 
such that $p_{\theta}(x'|x)\geq\varepsilon_{Q}$ for all $\theta\in Q$, $x,x'\in{\cal X}$. 
Let $\tilde{C}_{1,Q}\in[1,\infty )$ be an upper bound in $(\theta,x,x')$ 
for $p_{\theta }(x'|x)$, $\|\nabla_{\theta } p_{\theta }(x'|x) \|$
on $Q\times{\cal X}\times{\cal X}$, 
while $\tilde{C}_{2,Q}\in[1,\infty )$ is a Lipschitz constant in $(\theta,x,x')$ 
for $p_{\theta }(x'|x)$, $\nabla_{\theta } p_{\theta }(x'|x)$
on $Q\times{\cal X}\times{\cal X}$. 
Moreover, let $\tilde{C}_{3,Q}=2\varepsilon_{Q}^{-2}\tilde{C}_{1,Q}\tilde{C}_{2,Q}$. 
Then, we have $\|s_{\theta }(x,x') \|\leq \tilde{C}_{3,Q}$
and 
\begin{align}\label{l5.2.101}
	\|s_{\theta'}(x,x') - s_{\theta''}(x,x') \|
	\leq &
	\frac{\|\nabla_{\theta } p_{\theta' }(x'|x) - \nabla_{\theta } p_{\theta''}(x'|x) \| 
	+
	\|s_{\theta''}(x,x') \|\: |p_{\theta'}(x'|x) - p_{\theta''}(x'|x) | }
	{p_{\theta'}(x'|x) }
	\nonumber\\
	\leq &
	\tilde{C}_{3,Q}\|\theta' - \theta'' \|
\end{align}
for all $\theta,\theta',\theta''\in Q$, $x,x'\in{\cal X}$. 
Hence, 
\begin{align}\label{l5.2.103'}
	\|G(\theta,z)\|\leq\tilde{C}_{3,Q}, 
	\;\;\;\;\; 
	\|G(\theta',z) - G(\theta'',z) \| \leq \tilde{C}_{3,Q} \|\theta'-\theta''\|
\end{align}
for each $\theta,\theta',\theta''\in Q$, $z\in{\cal X}^{2N}$. 
As $s_{\theta}(x,x')=\nabla_{\theta }\log p_{\theta}(x'|x)$ 
for each $\theta\in\Theta$, $x,x'\in{\cal X}$, 
the dominated convergence theorem implies that $f(\cdot )$ is differentiable on $Q$
and satisfies
\begin{align}\label{l5.2.103}
	\nabla f(\theta )
	=
	-\int_{\cal X}\int_{\cal X} s_{\theta }(x,x') p(x) p(x') dx dx'
\end{align}
for any $\theta\in Q$. 
As $Q$ is any compact set, we conclude that $f(\cdot )$ is differentiable on $\Theta$. 
Combining (\ref{l5.2.101}) -- (\ref{l5.2.103}), we deduce 
\begin{align*}
	\|\nabla f(\theta') - \nabla f(\theta'') \| 
	\leq 
	\int_{\cal X}\int_{\cal X} \|s_{\theta'}(x,x') - s_{\theta''}(x,x') \| p(x)p(x') dx dx'
	\leq 
	\tilde{C}_{3,Q}\|\theta'-\theta''\|
\end{align*}
for all $\theta',\theta''\in Q$. 
Similarly, using Lemma \ref{lemma5.1} and (\ref{l5.2.103'}), 
we conclude 
\begin{align*}
	\|g(\theta') - g(\theta'') \|
	\leq &
	\int_{{\cal X}^{2N} } \|G(\theta',z) - G(\theta'',z) \| \: \pi_{\theta'}(dz) 
	+
	\int_{{\cal X}^{2N} } \|G(\theta'',z) \| \: |\pi_{\theta'}-\pi_{\theta''} |(dz) 
	\\
	\leq &
	2C_{1,Q}\tilde{C}_{3,Q} \|\theta'-\theta'' \|
\end{align*}
for each $\theta',\theta''\in Q$. 
Since $Q$ is any compact set, 
we deduce that $\nabla f(\cdot )$, $g(\cdot )$ are Lipschitz continuous on $\Theta$. 

(ii) Due to Assumptions \ref{a5.2}, \ref{a5.3}, 
there exists a real number $\varepsilon_{1,Q}\in(0,1)$ such that 
$p(x)\geq\varepsilon_{1,Q}$, $p_{\theta }(x'|x)\geq\varepsilon_{1,Q}$
for all $\theta\in Q$, $x,x'\in{\cal X}$. 
Let $\tilde{C}\in[1,\infty )$ be an upper bound of $p(\cdot )$ on ${\cal X}$, 
while $\tilde{C}_{1,Q}\in[\tilde{C},\infty )$ is an upper bound in $(\theta,x,x')$
of $p_{\theta}(x'|x)$, $\|\nabla_{\theta } p_{\theta }(x,x') \|$ 
on $\Theta\times{\cal X}\times{\cal X}$. 
Moreover, let $\varepsilon_{2,Q}=\varepsilon_{1,Q}\tilde{C}_{1,Q}^{-1}$, 
$\tilde{C}_{2,Q}=\varepsilon_{1,Q}^{-1} \tilde{C}_{1,Q}$, 
$\tilde{C}_{3,Q}=\varepsilon_{2,Q}^{-1}\tilde{C}_{2,Q}^{4}$. 
On the other side, let 
\begin{align*}
	t_{\theta }(x)
	=
	\int s_{\theta }(x,x'') p(x'') dx'', 
	\;\;\;\;\; 
	\bar{w}_{\theta }(x,x')
	=
	\frac{p(x') }{p_{\theta }(x'|x) }, 
\end{align*}
for $\theta\in\Theta$, $x,x'\in{\cal X}$. 
Then, we have 
$\varepsilon_{2,Q}\leq \bar{w}_{\theta }(x,x')\leq \tilde{C}_{2,Q}$, 
$\|s_{\theta }(x,x') \|\leq\tilde{C}_{2,Q}$ for all $\theta\in Q$, $x,x'\in{\cal X}$. 
We also have $\|t_{\theta }(x) \| \leq\tilde{C}_{2,Q}$ for each $\theta\in Q$, $x\in{\cal X}$. 

For $\theta\in\Theta$, $x,x'\in{\cal X}$, $1\leq i,j\leq N$, let 
$S_{n,i,j}^{\theta } = s_{\theta }(X_{n}^{\theta }(i), \tilde{X}_{n+1}^{\theta }(j) )$, 
$\bar{W}_{n,j}^{\theta} = \bar{w}_{\theta }(X_{n}^{\theta }(j), \tilde{X}_{n+1}^{\theta }(j) )$, 
while $E_{\theta }(\cdot )$ denotes expectation in the probability space 
$(\Omega,{\cal F}, P_{\theta } )$
($X_{n}^{\theta }(i)$, $\tilde{X}_{n+1}^{\theta }(i)$ are specified in Section \ref{section5}). 
Then, it is straightforward to verify 
\begin{align*}
	E_{\theta }\left(\left. 
	s_{\theta }(X_{n}^{\theta }(i), X_{n+1}^{\theta }(i) ) 
	\right|X_{n}^{\theta } \right)
	=&
	E_{\theta }\left(\left. 
	s_{\theta }(X_{n}^{\theta }(i), \tilde{X}_{n+1}^{\theta }(I_{n+1}^{\theta }(i) ) ) 
	\right|X_{n}^{\theta } \right)
	\\
	=&
	E_{\theta }\left(\left. 
	\frac{\sum_{j=1}^{N} s_{\theta }(X_{n}^{\theta }(i), \tilde{X}_{n+1}^{\theta }(j) ) 
	w_{\theta }(X_{n}^{\theta }(j), \tilde{X}_{n+1}^{\theta }(j) ) }
	{\sum_{j=1}^{N} w_{\theta }(X_{n}^{\theta }(j), \tilde{X}_{n+1}^{\theta }(j) ) }
	\right|X_{n}^{\theta } \right)
	\\
	=&
	E_{\theta }\left(\left. 
	\frac{\frac{1}{N} \sum_{j=1}^{N} S_{n,i,j}^{\theta } \bar{W}_{n,j}^{\theta } }
	{1 + \frac{1}{N} \sum_{j=1}^{N} (\bar{W}_{n,j}^{\theta } - 1 ) }
	\right|X_{n}^{\theta } \right)
\end{align*}
for $\theta\in\Theta$, $n\geq 0$, $1\leq i\leq N$.\footnote
{Notice that 
$w_{\theta }(X_{n}^{\theta }(j), \tilde{X}_{n+1}^{\theta }(j) ) = c \bar{W}_{n,j}^{\theta }$, 
where $c=\int_{\cal X} q(x)dx$. 
$I_{n+1}^{\theta}(i)$ is specified in Section \ref{section5}
(see the footnote after (\ref{5.103})).}
Consequently, 
\begin{align}\label{l5.2.21}
	E_{\theta }\left(\left. 
	s_{\theta }(X_{n}^{\theta }(i), X_{n+1}^{\theta }(i) ) 
	\right|X_{n}^{\theta } \right)
	=&
	E_{\theta }\left(\left. 
	\frac{1}{N} \sum_{j=1}^{N} S_{n,i,j}^{\theta } \bar{W}_{n,j}^{\theta } 
	\right|X_{n}^{\theta } \right)
	\nonumber\\
	&
	-
	E_{\theta }\left(\left. 
	\left(\frac{1}{N} \sum_{j=1}^{N} S_{n,i,j}^{\theta } \bar{W}_{n,j}^{\theta } \right) 
	\left(\frac{1}{N} \sum_{j=1}^{N} (\bar{W}_{n,j}^{\theta } - 1 ) \right)
	\right|X_{n}^{\theta } \right)
	\nonumber\\
	&
	+
	E_{\theta }\left(\left. 
	\left(\frac{1}{N} \sum_{j=1}^{N} S_{n,i,j}^{\theta } \bar{W}_{n,j}^{\theta } \right) 
	\phi\left(\frac{1}{N} \sum_{j=1}^{N} (\bar{W}_{n,j}^{\theta } - 1 ) \right)
	\right|X_{n}^{\theta } \right)
\end{align}
for $\theta\in\Theta$, $n\geq 0$, $1\leq i\leq N$, 
where $\phi(t)=t^{2}/(1+t)$ for $t\in(-1,\infty )$.\footnote
{Notice that $1/(1+t)=1-t+\phi(t)$. Notice also that 
$\frac{1}{N} \sum_{j=1}^{N} (\bar{W}_{n,j}^{\theta } - 1 )>-1$.}
On the other side, we have 
\begin{align*}
	E_{\theta }\left(\left. 
	\frac{1}{N} \sum_{j=1}^{N} S_{n,i,j}^{\theta } \bar{W}_{n,j}^{\theta } 
	\right|X_{n}^{\theta } \right)
	=&
	\frac{1}{N} \sum_{j=1}^{N} 
	E_{\theta }\left(\left. 
	s_{\theta }(X_{n}^{\theta }(i), \tilde{X}_{n+1}^{\theta }(j) )
	\bar{w}_{\theta }(X_{n}^{\theta }(j), \tilde{X}_{n+1}^{\theta }(j) )
	\right|X_{n}^{\theta } \right)
	\\
	=&
	\frac{1}{N} \sum_{j=1}^{N} 
	\int_{\cal X} s_{\theta }(X_{n}^{\theta }(i), x_{j} )
	\bar{w}_{\theta }(X_{n}^{\theta }(j), x_{j} )
	p_{\theta }(x_{j}|X_{n}^{\theta }(j) ) dx_{j} 
	\\
	=&
	t_{\theta }(X_{n}^{\theta }(i) )
\end{align*}
for $\theta\in\Theta$, $n\geq 0$, $1\leq i\leq N$. 
We also have
\begin{align}\label{l5.2.1}
	E_{\theta }\left(\left. 
	\bar{W}_{n,j}^{\theta } - 1  
	\right|X_{n}^{\theta } \right)
	=
	\int_{\cal X} \bar{w}_{\theta }(X_{n}^{\theta }(j), x_{j} )
	p_{\theta }(x_{j}|X_{n}^{\theta }(j) ) dx_{j} 
	-
	1
	=
	0
\end{align}
for $\theta\in\Theta$, $n\geq 0$, $1\leq j\leq N$. 
Therefore, 
\begin{align}\label{l5.2.23}
	E_{\theta }\left(\left. 
	\left(\frac{1}{N} \sum_{j=1}^{N} S_{n,i,j}^{\theta } \bar{W}_{n,j}^{\theta } \right) 
	\left(\frac{1}{N} \sum_{j=1}^{N} (\bar{W}_{n,j}^{\theta } - 1 ) \right)
	\right|X_{n}^{\theta } \right)
	=&
	E_{\theta }\left(\left. 
	\frac{1}{N^{2} } \sum_{j=1}^{N} 
	S_{n,i,j}^{\theta } \bar{W}_{n,j}^{\theta } (\bar{W}_{n,j}^{\theta } - 1 ) 
	\right|X_{n}^{\theta } \right)
	\nonumber\\
	&
	+
	\frac{1}{N^{2} }
	\sum_{\stackrel{\scriptstyle 1\leq j,k \leq N }{j\neq k } } 
	E_{\theta }\left(\left. 
	S_{n,i,j}^{\theta } \bar{W}_{n,j}^{\theta } (\bar{W}_{n,k}^{\theta } - 1 ) 
	\right|X_{n}^{\theta } \right)
	\nonumber\\
	=&
	E_{\theta }\left(\left. 
	\frac{1}{N^{2} } \sum_{j=1}^{N} 
	S_{n,i,j}^{\theta } \bar{W}_{n,j}^{\theta } (\bar{W}_{n,j}^{\theta } - 1 ) 
	\right|X_{n}^{\theta } \right)
\end{align}
for $\theta\in\Theta$, $n\geq 0$, $1\leq i\leq N$.\footnote
{Notice that $S_{n,i,j}^{\theta } \bar{W}_{n,j}^{\theta }$ and $\bar{W}_{n,k}^{\theta }$ 
are independent conditionally on $X_{n}^{\theta }$ whenever $j\neq k$. } 
Hence, 
\begin{align}\label{l5.2.3}
	E_{\theta }\left(\left. 
	s_{\theta }(X_{n}^{\theta }(i), X_{n+1}^{\theta }(i) ) 
	\right|X_{n}^{\theta } \right)
	-
	t_{\theta }(X_{n}^{\theta }(i) )
	=&
	-
	E_{\theta }\left(\left. 
	\frac{1}{N^{2} } \sum_{j=1}^{N} 
	S_{n,i,j}^{\theta } \bar{W}_{n,j}^{\theta } (\bar{W}_{n,j}^{\theta } - 1 ) 
	\right|X_{n}^{\theta } \right)
	\nonumber\\
	&
	+
	E_{\theta }\left(\left. 
	\left(\frac{1}{N} \sum_{j=1}^{N} S_{n,i,j}^{\theta } \bar{W}_{n,j}^{\theta } \right) 
	\phi\left(\frac{1}{N} \sum_{j=1}^{N} (\bar{W}_{n,j}^{\theta } - 1 ) \right)
	\right|X_{n}^{\theta } \right)
\end{align}
for $\theta\in\Theta$, $n\geq 0$, $1\leq i\leq N$. 
Since $\|S_{n,i,j}^{\theta } \|\leq\tilde{C}_{2,Q}$, 
$\varepsilon_{2,Q}\leq \bar{W}_{n,j}^{\theta }\leq\tilde{C}_{2,Q}$
for each $\theta\in Q$, $n\geq 0$, $1\leq i,j\leq N$, 
we conclude 
\begin{align*}
	\left\|
	\left(\frac{1}{N} \sum_{j=1}^{N} S_{n,i,j}^{\theta } \bar{W}_{n,j}^{\theta } \right) 
	\phi\left(\frac{1}{N} \sum_{j=1}^{N} (\bar{W}_{n,j}^{\theta } - 1 ) \right)
	\right\|
	\leq &
	\tilde{C}_{2,Q}^{2} 
	\phi\left(\frac{1}{N} \sum_{j=1}^{N} (\bar{W}_{n,j}^{\theta } - 1 ) \right)
	\\
	\leq &
	\tilde{C}_{2,Q}^{2} \varepsilon_{2,Q}^{-1} 
	\left|\frac{1}{N} \sum_{j=1}^{N} (\bar{W}_{n,j}^{\theta } - 1 ) \right|^{2}
\end{align*}
for the same $\theta$, $n$, $i$.\footnote 
{Notice that $1+\frac{1}{N} \sum_{j=1}^{N} (\bar{W}_{n,j}^{\theta } - 1 ) \geq\varepsilon_{2,Q}$. 
Notice also that $\phi(t)\leq\varepsilon_{2,Q}^{-1}t^{2}$ when $t+1\geq\varepsilon_{2,Q}$. }
Similarly, we get
\begin{align*}
	\left\|
	\frac{1}{N^{2} } \sum_{j=1}^{N} 
	S_{n,i,j}^{\theta } \bar{W}_{n,j}^{\theta } (\bar{W}_{n,j}^{\theta } - 1 ) 
	\right\|
	\leq
	\frac{\tilde{C}_{2,Q}^{3} }{N}
\end{align*}
for all $\theta\in Q$, $n\geq 0$, $1\leq i\leq N$. 
On the other side, (\ref{l5.2.1}) yields 
\begin{align}\label{l5.2.27}
	E_{\theta }\left(\left. 
	\left|\frac{1}{N} \sum_{j=1}^{N} (\bar{W}_{n,j}^{\theta } - 1 ) \right|^{2} 
	\right|X_{n}^{\theta } \right)
	=&
	E_{\theta }\left(\left. 
	\frac{1}{N^{2} } \sum_{j=1}^{N} (\bar{W}_{n,j}^{\theta } - 1 )^{2}  
	\right|X_{n}^{\theta } \right)
	\nonumber\\
	&
	+
	\frac{1}{N^{2} }
	\sum_{\stackrel{\scriptstyle 1\leq j,k \leq N }{j\neq k } } 
	E_{\theta }\left(\left. 
	(\bar{W}_{n,j}^{\theta } - 1 ) (\bar{W}_{n,k}^{\theta } - 1 ) 
	\right|X_{n}^{\theta } \right)
	\nonumber\\
	=&
	E_{\theta }\left(\left. 
	\frac{1}{N^{2} } \sum_{j=1}^{N} (\bar{W}_{n,j}^{\theta } - 1 )^{2}  
	\right|X_{n}^{\theta } \right)
	\nonumber\\
	\leq &
	\frac{\tilde{C}_{2,Q}^{2} }{N}
\end{align}
for each $\theta\in Q$, $n\geq 0$.\footnote
{Notice that $\bar{W}_{n,j}^{\theta }$ and $\bar{W}_{n,k}^{\theta }$ 
are independent conditionally on $X_{n}^{\theta }$ whenever $j\neq k$. } 
Thus, 
\begin{align*}
	&
	\left\|
	E_{\theta }\left(\left. 
	\frac{1}{N^{2} } \sum_{j=1}^{N} 
	S_{n,i,j}^{\theta } \bar{W}_{n,j}^{\theta } (\bar{W}_{n,j}^{\theta } - 1 ) 
	\right|X_{n}^{\theta } \right)
	\right\|
	\leq
	\frac{\tilde{C}_{3,Q} }{N}, 
	\\
	&
	\left\|
	E_{\theta }\left(\left. 
	\left(\frac{1}{N} \sum_{j=1}^{N} S_{n,i,j}^{\theta } \bar{W}_{n,j}^{\theta } \right) 
	\phi\left(\frac{1}{N} \sum_{j=1}^{N} (\bar{W}_{n,j}^{\theta } - 1 ) \right)
	\right|X_{n}^{\theta } \right)
	\right\|
	\leq 
	\frac{\tilde{C}_{3,Q} }{N}
\end{align*}
for all $\theta\in Q$, $n\geq 0$, $1\leq i\leq N$. 
Combining this with (\ref{l5.2.3}), we deduce  
\begin{align}\label{l5.2.5}
	\left\|
	E_{\theta }\left(\left. 
	s_{\theta }(X_{n}^{\theta }(i), X_{n+1}^{\theta }(i) ) 
	\right|X_{n}^{\theta } \right)
	-
	t_{\theta }(X_{n}^{\theta }(i) ) 
	\right\|
	\leq 
	\frac{2\tilde{C}_{3,Q} }{N}
\end{align}
for each $\theta\in Q$, $n\geq 0$, $1\leq i\leq N$. 

For $\theta\in\Theta$, $n\geq 0$, $1\leq j\leq N$, 
let $T_{n,j}^{\theta } = t_{\theta }(\tilde{X}_{n+1}^{\theta }(j) )$. 
Then, similarly as (\ref{l5.2.21}), we conclude 
\begin{align}\label{l5.2.31}
	E_{\theta }\left(\left. 
	t_{\theta }(X_{n+1}^{\theta }(i) ) 
	\right|X_{n}^{\theta } \right)
	=&
	E_{\theta }\left(\left. 
	\frac{1}{N} \sum_{j=1}^{N} T_{n,j}^{\theta } \bar{W}_{n,j}^{\theta } 
	\right|X_{n}^{\theta } \right)
	-
	E_{\theta }\left(\left. 
	\left(\frac{1}{N} \sum_{j=1}^{N} T_{n,j}^{\theta } \bar{W}_{n,j}^{\theta } \right) 
	\left(\frac{1}{N} \sum_{j=1}^{N} (\bar{W}_{n,j}^{\theta } - 1 ) \right)
	\right|X_{n}^{\theta } \right)
	\nonumber\\
	&
	+
	E_{\theta }\left(\left. 
	\left(\frac{1}{N} \sum_{j=1}^{N} T_{n,j}^{\theta } \bar{W}_{n,j}^{\theta } \right) 
	\phi\left(\frac{1}{N} \sum_{j=1}^{N} (\bar{W}_{n,j}^{\theta } - 1 ) \right)
	\right|X_{n}^{\theta } \right)
\end{align}
for $\theta\in\Theta$, $n\geq 0$, $1\leq i\leq N$. 
Moreover, similarly as (\ref{l5.2.23}), we deduce
\begin{align}\label{l5.2.33}
	E_{\theta }\left(\left. 
	\left(\frac{1}{N} \sum_{j=1}^{N} T_{n,j}^{\theta } \bar{W}_{n,j}^{\theta } \right) 
	\left(\frac{1}{N} \sum_{j=1}^{N} (\bar{W}_{n,j}^{\theta } - 1 ) \right)
	\right|X_{n}^{\theta } \right)
	=&
	E_{\theta }\left(\left. 
	\frac{1}{N^{2} } \sum_{j=1}^{N} 
	T_{n,j}^{\theta } \bar{W}_{n,j}^{\theta } (\bar{W}_{n,j}^{\theta } - 1 ) 
	\right|X_{n}^{\theta } \right)
	\nonumber\\
	&
	+
	\frac{1}{N^{2} }
	\sum_{\stackrel{\scriptstyle 1\leq j,k \leq N }{j\neq k } } 
	E_{\theta }\left(\left. 
	T_{n,j}^{\theta } \bar{W}_{n,j}^{\theta } (\bar{W}_{n,k}^{\theta } - 1 ) 
	\right|X_{n}^{\theta } \right)
	\nonumber\\
	=&
	E_{\theta }\left(\left. 
	\frac{1}{N^{2} } \sum_{j=1}^{N} 
	T_{n,j}^{\theta } \bar{W}_{n,j}^{\theta } (\bar{W}_{n,j}^{\theta } - 1 ) 
	\right|X_{n}^{\theta } \right)
\end{align}
for $\theta\in\Theta$, $n\geq 0$.
On the other side, we have 
\begin{align}\label{l5.2.35}
	E_{\theta }\left(\left. 
	\frac{1}{N} \sum_{j=1}^{N} T_{n,j}^{\theta } \bar{W}_{n,j}^{\theta } 
	\right|X_{n}^{\theta } \right)
	=&
	\frac{1}{N} \sum_{j=1}^{N} 
	E_{\theta }\left(\left. 
	t_{\theta }(\tilde{X}_{n+1}^{\theta }(j) )
	\bar{w}_{\theta }(X_{n}^{\theta }(j), \tilde{X}_{n+1}^{\theta }(j) )
	\right|X_{n}^{\theta } \right)
	\nonumber\\
	=&
	\frac{1}{N} \sum_{j=1}^{N} 
	\int_{\cal X} t_{\theta }(x_{j} )
	\bar{w}_{\theta }(X_{n}^{\theta }(j), x_{j} )
	p_{\theta }(x_{j}|X_{n}^{\theta }(j) ) dx_{j} 
	\nonumber\\
	=&
	-\nabla f(\theta )
\end{align}
for $\theta\in\Theta$, $n\geq 0$. 
As a result of (\ref{l5.2.31}) -- (\ref{l5.2.35}), we get 
\begin{align}\label{l5.2.37}
	E_{\theta }\left(\left. 
	t_{\theta }(X_{n+1}^{\theta }(i) ) 
	\right|X_{n}^{\theta } \right)
	+
	\nabla f(\theta )
	=&
	-
	E_{\theta }\left(\left. 
	\frac{1}{N^{2} } \sum_{j=1}^{N} 
	T_{n,j}^{\theta } \bar{W}_{n,j}^{\theta } (\bar{W}_{n,j}^{\theta } - 1 ) 
	\right|X_{n}^{\theta } \right)
	\nonumber\\
	&
	+
	E_{\theta }\left(\left. 
	\left(\frac{1}{N} \sum_{j=1}^{N} T_{n,j}^{\theta } \bar{W}_{n,j}^{\theta } \right) 
	\phi\left(\frac{1}{N} \sum_{j=1}^{N} (\bar{W}_{n,j}^{\theta } - 1 ) \right)
	\right|X_{n}^{\theta } \right)
\end{align}
for $\theta\in\Theta$, $n\geq 0$, $1\leq i\leq N$. 
Since $\|T_{n,j}^{\theta } \|\leq\tilde{C}_{2,Q}$, 
$\varepsilon_{2,Q}\leq \bar{W}_{n,j}^{\theta }\leq\tilde{C}_{2,Q}$
for each $\theta\in Q$, $n\geq 0$, $1\leq j\leq N$, 
we conclude 
\begin{align*}
	&
	\left\|
	\left(\frac{1}{N} \sum_{j=1}^{N} T_{n,j}^{\theta } \bar{W}_{n,j}^{\theta } \right) 
	\phi\left(\frac{1}{N} \sum_{j=1}^{N} (\bar{W}_{n,j}^{\theta } - 1 ) \right)
	\right\|
	\leq 
	\tilde{C}_{2,Q}^{2} \varepsilon_{2,Q}^{-1} 
	\left|\frac{1}{N} \sum_{j=1}^{N} (\bar{W}_{n,j}^{\theta } - 1 ) \right|^{2}, 
\end{align*}
for the same $\theta$, $n$.
We also deduce 
\begin{align*}
	\left\|
	\frac{1}{N^{2} } \sum_{j=1}^{N} 
	T_{n,j}^{\theta } \bar{W}_{n,j}^{\theta } (\bar{W}_{n,j}^{\theta } - 1 ) 
	\right\|
	\leq
	\frac{\tilde{C}_{2,Q}^{3} }{N}
\end{align*}
for all $\theta\in Q$, $n\geq 0$. 
Combining this with (\ref{l5.2.27}), we get 
\begin{align*}
	&
	\left\|
	E_{\theta }\left(\left. 
	\frac{1}{N^{2} } \sum_{j=1}^{N} 
	T_{n,j}^{\theta } \bar{W}_{n,j}^{\theta } (\bar{W}_{n,j}^{\theta } - 1 ) 
	\right|X_{n}^{\theta } \right)
	\right\|
	\leq
	\frac{\tilde{C}_{3,Q} }{N}, 
	\\
	&
	\left\|
	E_{\theta }\left(\left. 
	\left(\frac{1}{N} \sum_{j=1}^{N} T_{n,j}^{\theta } \bar{W}_{n,j}^{\theta } \right) 
	\phi\left(\frac{1}{N} \sum_{j=1}^{N} (\bar{W}_{n,j}^{\theta } - 1 ) \right)
	\right|X_{n}^{\theta } \right)
	\right\|
	\leq 
	\frac{\tilde{C}_{3,Q} }{N}
\end{align*}
for all $\theta\in Q$, $n\geq 0$. 
Hence, 
\begin{align}\label{l5.2.7}
	\left\|
	E_{\theta }\left(\left. 
	t_{\theta }(X_{n+1}^{\theta }(i) ) 
	\right|X_{n}^{\theta } \right)
	+
	\nabla f(\theta ) 
	\right\|
	\leq 
	2\tilde{C}_{3,Q}/N
\end{align}
for each $\theta\in Q$, $n\geq 0$. 

Let $C_{2,Q}=4\tilde{C}_{3,Q}$. 
It is straightforward to demonstrate
\begin{align*}
	E_{\theta }\left(\left.G(\theta, Z_{n+2}^{\theta } ) \right|Z_{n}^{\theta } \right)
	-
	\nabla f(\theta )
	=&
	-E_{\theta }\left(\left.
	\frac{1}{N} \sum_{i=1}^{N} s_{\theta }(X_{n+1}^{\theta }(i), X_{n+2}^{\theta }(i) )
	\right|X_{n}^{\theta } \right)
	-
	\nabla f(\theta )
	\\
	=&
	-E_{\theta }\left(\left.
	\frac{1}{N} \sum_{i=1}^{N} 
	\left(
	E_{\theta }\left(\left. 
	s_{\theta }(X_{n+1}^{\theta }(i), X_{n+2}^{\theta }(i) )\right|X_{n+1}^{\theta } 
	\right)
	-
	t_{\theta }(X_{n+1}^{\theta }(i) ) 
	\right)
	\right|X_{n}^{\theta } \right)
	\\
	&
	-
	\frac{1}{N} \sum_{i=1}^{N} 
	\left(
	E_{\theta }\left(\left. 
	t_{\theta }(X_{n+1}^{\theta }(i) )\right|X_{n}^{\theta } 
	\right)
	+
	\nabla f(\theta ) 
	\right)
\end{align*}
for $\theta\in\Theta$, $n\geq 0$. 
Then, (\ref{l5.2.5}), (\ref{l5.2.7}) imply 
\begin{align*}
	\left\|
	E_{\theta }\left(\left.G(\theta, Z_{n+2}^{\theta } ) \right|Z_{n}^{\theta } \right)
	-
	\nabla f(\theta )
	\right\|
	\leq &
	E_{\theta }\left(\left.
	\frac{1}{N} \sum_{i=1}^{N} 
	\left\|
	E_{\theta }\left(\left. 
	s_{\theta }(X_{n+1}^{\theta }(i), X_{n+2}^{\theta }(i) )\right|X_{n+1}^{\theta } 
	\right)
	-
	t_{\theta }(X_{n+1}^{\theta }(i) ) 
	\right\|
	\right|X_{n}^{\theta } \right)
	\\
	&
	+
	\frac{1}{N} \sum_{i=1}^{N} 
	\left\|
	E_{\theta }\left(\left. 
	t_{\theta }(X_{n+1}^{\theta }(i) )\right|X_{n}^{\theta } 
	\right)
	+
	\nabla f(\theta ) 
	\right\|
	\\
	\leq &
	\frac{C_{2,Q} }{N}
\end{align*}
for each $\theta\in Q$, $n\geq 0$. 
Consequently, 
\begin{align*}
	\|(\Pi^{2} G )(\theta, z) - \nabla f(\theta ) \|
	=
	\|E_{\theta }\left(\left.G(\theta, Z_{n+2}^{\theta } ) \right|Z_{n}^{\theta }=z \right)
	-
	\nabla f(\theta ) \|
	\leq 
	\frac{C_{2,Q} }{N}
\end{align*}
for all $\theta\in Q$, $z\in {\cal X}^{2N}$, $n\geq 0$. 
Therefore, 
\begin{align*}
	\|\eta(\theta ) \|
	=
	\|g(\theta ) - \nabla f(\theta ) \|
	=
	\left\|
	\int_{{\cal X}^{2N} } \left( (\Pi^{2} G )(\theta, z' ) - \nabla f(\theta ) \right) \pi_{\theta }(dz') 
	\right\|
	\leq 
	\frac{C_{2,Q} }{N}
\end{align*}
for each $\theta\in Q$.

(iii) Owing to Assumption \ref{a5.4.a}, 
there exists a real number $\tilde{C}_{Q}\in[1,\infty )$ such that 
\begin{align*}
	\left|
	\frac{\partial^{k} \log p_{\theta }(x'|x) }{\partial\vartheta_{i_{1} } \cdots \partial\vartheta_{i_{k} } }
	\right|
	\leq 
	\tilde{C}_{Q}
\end{align*}
for all $\theta\in Q$, $x,x'\in{\cal X}$, $1\leq k\leq p$, 
$1\leq i_{1},\dots,i_{k} \leq d_{\theta }$, 
where $\vartheta_{i}$ is the $i$-th component of $\theta$. 
Then, using the dominated convergence theorem, we conclude that 
$f(\cdot )$ is differentiable $p$ times. 

(iv) 
Let $\theta\in\Theta$ be an arbitrary vector, while 
\begin{align*}	
	\hat{H}_{\eta}(x,x') = \log\hat{p}_{\eta}(x'|x),
	\;\;\;\;\; 
	\hat{f}(\eta ) = \int_{\cal X}\int_{\cal X} \hat{H}_{\eta}(x,x') p(x) p(x') dx dx'
\end{align*} 
for $\eta\in\mathbb{C}^{d_{\theta } }$, $x,x'\in {\cal X}$.  
To prove this part of the lemma, it sufficient to show that $\hat{f}(\cdot )$ is analytic 
in an open vicinity of $\theta$. 

Let $V_{\delta_{\theta } }(\theta ) = \{\eta\in\mathbb{C}^{d_{\theta } }: 
\|\eta-\theta\|\leq \delta_{\theta } \}$
($\delta_{\theta }$ is specified in Assumption \ref{a5.4.b}). 
Since $V_{\delta_{\theta } }(\theta ) \times {\cal X} \times {\cal X}$ is a compact set, 
Assumptions \ref{a5.3}, \ref{a5.4.b} imply that there exist 
real numbers $\varepsilon_{\theta} \in (0,\delta_{\theta } )$, 
$C_{1,\theta } \in [1,\infty )$ such that 
$C_{1,\theta}^{-1} \leq |\hat{p}_{\eta}(x'|x) | \leq C_{1,\theta}$
for all $\eta\in V_{\varepsilon_{\theta } }(\theta )$, $x,x'\in {\cal X}$. 
Therefore, $\hat{H}_{\eta}(x,x')$ is analytic in $\eta$ for all 
$\eta\in V_{\varepsilon_{\theta } }(\theta )$, $x,x'\in {\cal X}$. 
Moreover, $|\hat{H}_{\eta}(x,x') | \leq \log C_{1,\theta }$ for all 
$\eta\in V_{\varepsilon_{\theta } }(\theta )$, $x,x'\in {\cal X}$. 
Then, using Cauchy inequality for complex analytic functions, 
we deduce that there exists a real number $C_{2,\theta } \in [1,\infty )$
such that $\|\nabla_{\eta } \hat{H}_{\eta}(x,x') \| \leq C_{2,\theta }$ for all 
$\eta\in V_{\varepsilon_{\theta } }(\theta )$, $x,x'\in {\cal X}$. 
Consequently, the dominated convergence theorem implies that $\hat{f}(\eta )$
is differentiable for all $\eta\in V_{\varepsilon_{\theta} }(\theta )$. 
Hence, $\hat{f}(\cdot )$ is analytic on $V_{\varepsilon_{\theta} }(\theta )$. 
\end{proof}

\begin{vproof}{Theorem \ref{theorem5.1}}
{For $\theta\in\Theta$, $z\in{\cal X}^{2N}$, $n\geq 0$, 
let 
\begin{align*}
	(\tilde{\Pi}^{n} F)(\theta,z)
	=
	\int_{{\cal X}^{2N} } F(\theta, z' ) \tilde{\Pi}_{\theta }^{n}(z,dz'), 
\end{align*}
while $\tilde{F}(\theta, z ) = \sum_{n=0}^{\infty } (\tilde{\Pi}^{n} F)(\theta,z)$. 
On the other side, let $\tilde{C}_{1,Q}\in[1,\infty )$ 
be an upper bound of $\|F(\theta, z)\|$ on $Q\times{\cal X}^{2N}$. 
Moreover, let $\tilde{C}_{2,Q}\in[1,\infty )$ be
a Lipschitz constant in $\theta$ for $F(\theta,z)$ on $Q\times{\cal X}^{2N}$.\footnote
{Owing to Assumption \ref{a5.3}, $G(\theta,z)$ is locally Lipschitz continuous 
in $\theta$ (see (\ref{l5.2.103'})). 
As $\nabla f(\cdot )$, $g(\cdot )$ are locally Lipschitz continuous
(due to Lemma \ref{lemma5.2}), $F(\theta,z)$ is locally Lipschitz continuous in $\theta$. } 
Then, Lemma \ref{lemma5.1} implies 
\begin{align}
	&\label{t5.1.1}
	\|(\tilde{\Pi}^{n} F)(\theta,z) \|
	\leq
	\int_{{\cal X}^{2N} } \|F(\theta,z') \|\: |\tilde{\Pi}_{\theta }^{n} |(z,dz')
	\leq 
	C_{1,Q}\tilde{C}_{1,Q} \delta_{Q}^{n} 
\end{align}
for all $\theta\in Q$, $z\in{\cal X}^{2N}$, $n\geq 0$. 
The same lemma also yields 
\begin{align}
	\label{t5.1.3}
	\|(\tilde{\Pi}^{n} F)(\theta',z) - (\tilde{\Pi}^{n} F)(\theta'',z) \|
	\leq &
	\int_{{\cal X}^{2N} } \|F(\theta',z') - F(\theta'', z') \| \: |\tilde{\Pi}_{\theta'}^{n} |(z,dz') 
	\\
	&
	+
	\int_{{\cal X}^{2N} } \|F(\theta'',z') \| \: |\tilde{\Pi}_{\theta'}^{n} - \tilde{\Pi}_{\theta''}^{n} |(z,dz') 
	\nonumber\\
	\leq &
	2C_{1,Q}\tilde{C}_{1,Q}\tilde{C}_{2,Q} \delta_{Q}^{n} n \|\theta' - \theta'' \|
\end{align}
for all $\theta',\theta''\in Q$, $z\in{\cal X}^{2N}$, $n\geq 0$. 
Therefore, $\sum_{n=0}^{\infty } \|(\tilde{\Pi}^{n} F)(\theta,z) \| < \infty$
for any $\theta\in Q$, $z\in{\cal X}^{2N}$. 
As $Q$ is any compact set, we conclude that 
for each $\theta\in \Theta$, $z\in{\cal X}^{2N}$, 
$\tilde{F}(\theta,z)$ is well-defined and satisfies
$(\Pi\tilde{F} )(\theta, z ) = \sum_{n=1}^{\infty } (\tilde{\Pi}^{n} F)(\theta,z)$. 
Consequently, 
\begin{align*}
	\tilde{F}(\theta,z)
	-
	(\Pi\tilde{F} )(\theta, z )
	=
	(\tilde{\Pi}^{0} F)(\theta, z )
	=
	F(\theta,z)
	-
	\int_{{\cal X}^{2N} }
	F(\theta,z') \pi_{\theta}(dz') 
	=
	F(\theta,z) - \nabla f(\theta )
\end{align*}
for all $\theta\in\Theta$, $z\in{\cal X}^{2N}$.\footnote
{Notice that 
$\int_{{\cal X}^{2N} } F(\theta,z') \pi_{\theta}(dz') = g(\theta ) - \eta(\theta )$.} 
Thus, Assumption \ref{a2.2} holds. 

Due to (\ref{t5.1.1}), we have 
\begin{align*}
	&
	\max\{\|\tilde{F}(\theta,z) \|, \|(\Pi\tilde{F} )(\theta,z ) \| \} 
	\leq 
	\sum_{n=0}^{\infty } \|(\tilde{\Pi}^{n} F(\theta,z) \|
	\leq
	C_{1,Q}\tilde{C}_{1,Q}(1-\delta_{Q} )^{-1}, 
\end{align*}
for each $\theta\in Q$, $z\in{\cal X}^{2N}$. 
On the other side, (\ref{t5.1.3}) yields 
\begin{align*}
	&
	\|(\Pi\tilde{F} )(\theta',z) - (\Pi\tilde{F} )(\theta'',z) \|
	\leq 
	\sum_{n=1}^{\infty } 
	\|(\tilde{\Pi} F)(\theta',z) - (\tilde{\Pi} F)(\theta'',z) \|
	\leq
	2C_{1,Q}\tilde{C}_{1,Q}\tilde{C}_{2,Q}(1-\delta_{Q} )^{-2} \|\theta' - \theta'' \|
\end{align*}
for all $\theta',\theta''\in Q$, $z\in{\cal X}^{2N}$. 
Hence, Assumption \ref{a2.3} holds, too. 
On the other side, Lemma \ref{lemma5.1} yields 
\begin{align*}
	\eta
	=
	\limsup_{n\rightarrow \infty } 
	\|\eta_{n} \| 
	\leq 
	C_{2,Q}/N
\end{align*}
on $\Lambda_{Q}$
(notice that $C_{2,Q}$ does not depend on $N$). 
Then, the theorem's assertion directly follows from 
Theorem \ref{theorem2.1} and Parts (i), (iii), (iv) of Lemma \ref{lemma5.1}. 
}
\end{vproof}

\section{Proof Theorem \ref{theorem4.1}} \label{section4*}

In this section, we rely on the following notation. 
Functions $F(\cdot,\cdot )$, $\eta(\cdot )$ are defined by
\begin{align*}
	\eta(\theta )
	=
	\nabla f_{N}(\theta ) - \nabla f(\theta ), 
	\;\;\;\;\; 
	F(\theta,z)
	=
	\psi_{N,\theta }(y_{1:N} )
	-
	\eta(\theta )
\end{align*}
for $\theta\in\Theta$, 
$z = (x_{1:N}, y_{1:N} ) \in {\cal X}^{N} \times {\cal Y}^{N}$. 
Stochastic processes 
$\{Z_{n} \}_{n\geq 0}$ and $\{\eta_{n} \}_{n\geq 0}$ are defined as 
\begin{align*}
	Z_{n+1} 
	=
	(X_{nN+1:(n+1)N}, Y_{nN+1:(n+1)N} ), 
	\;\;\;\;\; 
	\eta_{n} 
	=
	\eta(\theta_{n} )
\end{align*}
for $n\geq 0$.  
$\Pi(\cdot,\cdot )$ is the transition kernel of 
$\{Z_{n} \}_{n\geq 0}$
(notice that $\{Z_{n} \}_{n\geq 0}$ does not depend on $\{\theta_{n} \}_{n\geq 0}$, 
and consequently, $\Pi(\cdot,\cdot )$ does not depend on $\theta$). 
Then, it is straightforward to show that 
the algorithm (\ref{4.1}) is of the same form as the recursion 
studied in Section \ref{section2}
(i.e., $\{\theta_{n} \}_{n\geq 0}$, $\{\eta_{n} \}_{n\geq 0}$, 
$F(\cdot,\cdot )$, $\Pi(\cdot,\cdot )$
defined in Section \ref{section3} and here 
admit (\ref{2.1}), (\ref{2.3})).

\begin{lemma}\label{lemma4.1}
Suppose that Assumptions \ref{a4.1} -- \ref{a4.3} hold. 
Let $Q\subset\Theta$ be any compact set. 
Then, the following is true: 
\begin{compactenum}[(i)]
\item
$f(\cdot )$ is differentiable and $\nabla f(\cdot )$ is locally Lipschitz continuous. 
\item
There exists a real number $C_{Q} \in [1,\infty )$
(independent of $N$) such that 
$\|\eta(\theta ) \|\leq C_{Q}/N$ 
for all $\theta\in Q$. 
\item
If Assumption \ref{a4.4.a} is satisfied,  
$f(\cdot )$ is $p$ times differentiable. 
\item
If Assumption \ref{a4.4.b} is satisfied, 
$f(\cdot )$ is real-analytic. 
\end{compactenum}
\end{lemma}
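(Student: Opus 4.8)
The plan is to reduce all four statements to the geometric stability of the optimal filter for the candidate model $\{(X_n^\theta,Y_n^\theta)\}_{n\ge0}$, in the spirit of the analysis of recursive maximum likelihood estimation in \cite{tadic5} and of Lemmas~\ref{lemma3.1}, \ref{lemma5.1}. Writing $p_\theta(y_{1:N})$ for the likelihood of the block $y_{1:N}$ under the model (so $\phi_{N,\theta}(y_{1:N})=-\frac1N\log p_\theta(y_{1:N})$), the chain rule gives the additive decomposition $\log p_\theta(y_{1:N})=\sum_{n=1}^N\log p_\theta(y_n\mid y_{1:n-1})$, in which each predictive density $p_\theta(y_n\mid y_{1:n-1})$ is a smooth function of the filtering distribution of $X_{n-1}$ given $(y_1,\dots,y_{n-1})$, a quantity obeying the usual prediction--correction recursion; differentiating this recursion in $\theta$ (the Fisher/score identity) expresses $\nabla_\theta\log p_\theta(y_n\mid y_{1:n-1})$ through that filtering distribution and its $\theta$-derivative. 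The first step is to record that, since $\mathcal X,\mathcal Y$ are finite and Assumption~\ref{a4.2} forces $p_\theta(x'|x)$, $q_\theta(y|x)$ to be bounded away from $0$ and from above on every compact $Q\subset\Theta$, the filter recursion contracts in the Hilbert (Birkhoff) metric at a rate, and with constants, that are locally uniform in $\theta$; Assumption~\ref{a4.3} then yields the analogous geometric stability for the $\theta$-derivative of the filter. Consequently the per-step score $\nabla_\theta\log p_\theta(y_n\mid y_{1:n-1})$ converges, as $n\to\infty$, to a stationary quantity that no longer depends on $\pi_\theta$, the convergence being geometric and uniform over $y_{1:n}\in\mathcal Y^n$ and locally uniform in $\theta$; since $\{X_n\}$ is geometrically ergodic (Assumption~\ref{a4.1}), the expectations in (\ref{4.701}) exist and $\nabla_\theta$, $E(\cdot)$ and the two limits may be interchanged.

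For Parts~(i) and~(ii) I would then proceed as in the proof of Lemma~\ref{lemma5.2}. Geometric convergence of the per-step score gives that $\nabla f_N(\theta)=\lim_{n\to\infty}E(\nabla_\theta\phi_{N,\theta}(Y_{nN+1:(n+1)N}))$ and $\nabla f(\theta)=\lim_{N\to\infty}\nabla f_N(\theta)$ both exist, and that $\nabla f(\cdot)$ is locally Lipschitz, because the filter and its $\theta$-derivative are Lipschitz in $\theta$ with locally uniform constants (Assumption~\ref{a4.3} together with the contraction). For the bias, write $\nabla f_N(\theta)-\nabla f(\theta)$ as the Ces\`aro average, over the block of length $N$, of the differences between the finite-horizon per-step scores and their common stationary limit; since the $n$-th such difference is $O(\rho_Q^{n})$ for some $\rho_Q\in(0,1)$ (in particular the $\pi_\theta$-dependence is damped geometrically), summing a geometric series and dividing by $N$ gives $\|\eta(\theta)\|=\|\nabla f_N(\theta)-\nabla f(\theta)\|\le C_Q/N$ on $Q$, with $C_Q$ depending only on the contraction rate and on upper/lower bounds and Lipschitz constants of $p_\theta$, $q_\theta$, $\pi_\theta$ --- hence independent of $N$. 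This is Part~(ii), and Part~(i) is obtained along the way.

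For Part~(iii), under Assumption~\ref{a4.4.a} I would differentiate the filter recursion $k$ times for $1\le k\le p$: each higher derivative satisfies an inhomogeneous linear recursion driven by lower-order derivatives, so the contraction of the homogeneous part (already established) plus boundedness of the driving terms yields geometric stability of all derivatives up to order $p$, locally uniformly in $\theta$; dominated convergence (finiteness of $\mathcal Y$) then shows $f(\cdot)$ is $p$ times differentiable, i.e.\ Assumption~\ref{a1.3.b} holds. For Part~(iv), since $\mathcal X,\mathcal Y$ are finite, $p_\theta(y_{1:N})$ is a polynomial in the entries $p_\theta(x'|x)$, $q_\theta(y|x)$, $\pi_\theta(x)$, hence real-analytic in $\theta$ under Assumption~\ref{a4.4.b} and strictly positive by Assumption~\ref{a4.2}, so each $\phi_{N,\theta}(y_{1:N})$ is real-analytic; $f$ inherits analyticity once $f_N\to f$ uniformly on a complex neighbourhood of any $\theta\in\Theta$, which follows by repeating the contraction estimate for the complexified filter --- the normalizing constants stay bounded away from $0$ on a sufficiently small complex ball by continuity of the analytic continuations and their positivity on the real axis --- so Assumption~\ref{a1.3.c} holds. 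Throughout, the interchange of $\lim_N$, $\lim_n$, $E(\cdot)$ and $\nabla_\theta$ is licensed by the uniform geometric bounds.

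The principal obstacle is the first step: establishing geometric stability of the optimal filter and of its $\theta$-derivatives with constants that are locally uniform in $\theta$, and then converting this stability into the sharp $O(1/N)$ bias bound by isolating the geometrically decaying contribution of the initial segment of the block. The analytic case adds the further difficulty of controlling the complex continuation of the filter uniformly in $N$; the remaining computations are routine bookkeeping, parallel to Lemmas~\ref{lemma3.1}, \ref{lemma3.2}, \ref{lemma5.1}, \ref{lemma5.2}.
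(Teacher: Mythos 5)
Your proof of Part (ii) is essentially the paper's: the paper also writes $\psi_{N,\theta}(y_{1:N})$ as a Ces\`aro average of per-step scores $\Psi_{\theta}(y_{i+1},u_{i,\theta},V_{i,\theta})$ evaluated along the optimal filter and its $\theta$-derivative, invokes exponential forgetting of the extended chain $\{(X_{n+1},Y_{n+1},U_{n}^{\theta},V_{n}^{\theta})\}_{n\geq 0}$ (via \cite[Theorems 4.1, 4.2]{tadic&doucet}, which rest on exactly the Birkhoff-contraction estimates you describe) to get $\|E(\Psi_{\theta}(Y_{i+1},U_{i}^{\theta},V_{i}^{\theta}))-\nabla f(\theta)\|\leq\tilde{C}_{2,Q}\varepsilon_{Q}^{i}$ uniformly on $Q$ and independently of $N$, and then sums the geometric series and divides by $N$. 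Where you genuinely diverge is in Parts (i), (iii), (iv). You propose to differentiate the filter recursion $p$ times (inhomogeneous linear recursions with a contracting homogeneous part) for Part (iii) and to complexify the filter for Part (iv). The paper instead observes that in the natural parameterization $f$ is a real-analytic function $h(A,B)$ of the transition and emission matrices --- this is \cite[Theorem 1]{tadic5} combined with Assumption \ref{a4.1} --- and then obtains all of (i), (iii), (iv) for an arbitrary parameterization by composing $h$ with $\theta\mapsto(P_{\theta},Q_{\theta})$ and applying the chain rule under Assumptions \ref{a4.3}, \ref{a4.4.a}, \ref{a4.4.b}. Your route is viable (it is in essence how \cite{tadic5} proves the analyticity of $h$ in the first place, complexified filter and all), but it forces you to carry out the hardest part of that paper's analysis from scratch, including the uniform control of the complexified normalizing constants; the composition argument buys all the smoothness and analyticity statements in a few lines at the price of citing the earlier result. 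Your identification of the filter-stability step and the complex continuation as the principal obstacles is accurate, and nothing in your outline would fail, so this is a correct but substantially longer path for the smoothness claims.
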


\begin{proof}
(i), (iii), (iv) 
First, we consider the case when models 
$\{(X_{n}^{\theta }, Y_{n}^{\theta } ) \}_{n\geq 0}$ are naturally parameterized. 
To to so, we rely on the following notation. 
${\cal P}^{N_{x}\times N_{x} }$ is the set of $N_{x}\times N_{x}$ (row) stochastic matrices
whose all entries are strictly positive. 
Moreover, ${\cal P}^{N_{x}\times N_{y} }$ is the set of $N_{x}\times N_{y}$ (row) stochastic matrices
whose all entries are also strictly positive. 
On the other side, 
$h(\cdot,\cdot )$ is the function defined by 
$h(A,B)=f(\theta )$
for  
$A=[a_{i,j} ] \in {\cal P}^{N_{x}\times N_{x} }$, 
$B=[b_{i,k} ] \in {\cal P}^{N_{x}\times N_{y} }$, 
$\theta=[a_{1,1} \cdots a_{N_{x},N_{x} } \; b_{1,1} \cdots b_{N_{x},N_{y} } ]^{T}$. 
Then, \cite[Theorem 1]{tadic5} and Assumption \ref{a4.1} imply that $h(\cdot,\cdot )$ is real-analytic on 
${\cal P}^{N_{x}\times N_{x} }\times{\cal P}^{N_{x}\times N_{y} } $. 

Using function $h(\cdot,\cdot )$, we now consider any parameterization satisfying Assumptions 
\ref{a4.2}, \ref{a4.3}. 
For $\theta\in\Theta$, let $P_{\theta }$ be the $N_{x}\times N_{x}$ matrix whose $(i,j)$
entry is $p_{\theta }(j|i)$ ($1\leq i,j \leq N_{x}$), 
while $Q_{\theta }$ is the $N_{x}\times N_{y}$ matrix whose $(i,k)$
entry is $q_{\theta }(k|i)$ ($1\leq i\leq N_{x}$, $1\leq k\leq N_{y}$). 
Then, Assumption \ref{a4.2} implies 
$P_{\theta }\in {\cal P}^{N_{x}\times N_{x} }$, $Q_{\theta }\in {\cal P}^{N_{x}\times N_{y} }$
for any $\theta\in\Theta$. 
Consequently, $f(\theta ) = h(P_{\theta }, Q_{\theta } )$ for all $\theta\in\Theta$. 
On the other side, due to Assumption \ref{a4.3}, 
$P_{\theta }$, $Q_{\theta }$ are differentiable (in $\theta$), 
while their derivatives are locally Lipschitz continuous (in $\theta$). 
Therefore, 
$f(\cdot )$ is differentiable and $\nabla f(\cdot )$ is locally Lipschitz continuous
(notice that $h(\cdot,\cdot )$ is real-analytic). 
If Assumption \ref{a4.4.a} is satisfied, 
then $P_{\theta}$, $Q_{\theta}$ are $p$ times differentiable, 
and consequently, $f(\cdot )$ is $p$ times differentiable, too. 
Similarly, if Assumption \ref{a4.4.b} is satisfied, 
then $P_{\theta}$, $Q_{\theta}$ are real-analytic, 
and hence, $f(\cdot )$ is also real-analytic. 

(ii) 
Let ${\cal P}^{N_{x} }$ be the set of $N_{x}$-dimensional probability vectors, 
while $e$ is the $N_{x}$-dimensional vector whose all components are one. 
For $\theta\in\Theta$, $x,x'\in{\cal X}$, $y\in{\cal Y}$, let 
\begin{align*}
	r_{\theta }(y,x'|x)
	=
	q_{\theta }(y|x') p_{\theta }(x'|x), 
\end{align*}
while $R_{\theta }(y)$ is the $N_{x}\times N_{x}$ matrix 
whose $(i,j)$ entry is $r_{\theta }(y,i|j)$. 
For $\theta\in\Theta$, $y\in{\cal Y}$, $u\in{\cal P}^{N_{x} }$, 
$V\in\mathbb{R}^{N_{x}\times N_{x} }$, let 
\begin{align*}
	\Phi_{\theta }(y,u)
	=
	\log(e^{T} R_{\theta }(y) u ), 
	\;\;\;\;\; 
	\Psi_{\theta }(y,u,V)
	=
	\nabla_{\theta }\Phi_{\theta }(y,u) 
	+
	V\: \nabla_{u}\Phi_{\theta }(y,u). 
\end{align*}
Then, owing to Assumptions \ref{a4.2}, there exists a real number $\delta_{Q}\in(0,1)$
such that 
\begin{align}\label{l4.1.701}
	r_{\theta}(y,x'|x)\geq\delta_{Q}
\end{align}
for all $\theta\in Q$, $x,x'\in{\cal X}$, $y\in{\cal Y}$. 
Combining this with Assumption \ref{a4.3}, 
we conclude that 
there exists a real number $\tilde{C}_{1,Q}\in[1,\infty )$ such that 
\begin{align}
	&\label{l4.1.901}
	\|\Psi_{\theta}(y,u,V) \|
	\leq 
	\tilde{C}_{1,Q} (1 + \|V\| ), 
	\\
	&\label{l4.1.903}
	|\Phi_{\theta}(y,u' ) - \Phi_{\theta}(y,u'' ) |
	\leq 
	\tilde{C}_{1,Q} \|u' - u'' \|, 
	\\
	&\label{l4.1.905}
	\|\Psi_{\theta}(y,u',V') - \Psi_{\theta}(y,u'',V'') \|
	\leq 
	\tilde{C}_{1,Q} 
	(\|u'-u''\| + \|V'-V''\| ) (1 + \|V'\| + \|V''\| )
\end{align}
for each $\theta\in Q$, $y\in{\cal Y}$, 
$u, u', u'' \in{\cal P}^{N_{x} }$, $V, V', V'' \in\mathbb{R}^{N_{x}\times N_{x} }$.

For $\theta\in\Theta$, $y_{1:n}\in{\cal Y}^{n}$, $n\geq 1$, 
let $u_{0,\theta }$, $u_{n,\theta }(y_{1:n} )$ be the $N_{x}$-dimensional vectors 
whose $i$-th components are 
\begin{align*}
	u_{0,i,\theta }
	=
	\pi_{\theta }(i), 
	\;\;\;\;\; 
	u_{n,i,\theta }(y_{1:n} )
	=
	P_{\theta }(X_{n}^{\theta } = i|Y_{1:n}^{\theta } = y_{1:n} )
\end{align*}
(notice that $\{ u_{n,\theta }(y_{1:n} ) \}_{n\geq 1}$ is the optimal filter for the model 
$\{(X_{n}^{\theta }, Y_{n}^{\theta } ) \}_{n\geq 0})$. 
For the same $\theta$, $y_{1:n}$, $n$, let 
\begin{align*}
	V_{0,\theta } = \nabla_{\theta } u_{0,\theta }, 
	\;\;\;\;\; 
	V_{n,\theta }(y_{1:n} ) = \nabla_{\theta } u_{n,\theta }(y_{1:n} ). 
\end{align*}
Then, it is straightforward to verify 
\begin{align*}
	\log P_{\theta }(Y_{1}^{\theta } = y )
	=
	\Phi_{\theta}(y, u_{0,\theta } ), 
	\;\;\;\;\; 
	\log\left(
	\frac{P_{\theta }(Y_{1:n+1}^{\theta } = y_{1:n+1} ) }
	{P_{\theta }(Y_{1:n}^{\theta } = y_{1:n} ) }
	\right)
	=
	\Phi_{\theta}(y_{n+1}, u_{n,\theta }(y_{1:n} ) )
\end{align*}
for $\theta\in\Theta$, $y\in{\cal Y}$, $y_{1:n+1}=(y_{1},\dots,y_{n+1} )\in{\cal Y}^{n+1}$, 
$n\geq 0$. 
Since 
\begin{align*}
	\phi_{n,\theta }(y_{1:n} )
	=
	-
	\frac{1}{n}
	\left(
	\log P_{\theta }(Y_{1}^{\theta } = y_{1} )
	+
	\sum_{i=1}^{n-1} 
	\log\left(
	\frac{P_{\theta }(Y_{1:i+1}^{\theta } = y_{1:i+1} ) }
	{P_{\theta }(Y_{1:i}^{\theta } = y_{1:i} ) }
	\right)
	\right)
\end{align*}
for $\theta\in\Theta$, $y_{1:n}=(y_{1},\dots,y_{n} )\in{\cal Y}^{n}$, $n\geq 1$, 
we conclude 
\begin{align}\label{l4.1.3'}
	\phi_{n,\theta }(y_{1:n} )
	=
	-
	\frac{1}{n}
	\sum_{i=0}^{n-1} 
	\Phi_{\theta }(y_{i+1}, u_{i,\theta}(y_{1:i} ) ) 
\end{align}
for the same $\theta$, $y_{1:n}$, $n$. 
Differentiating (\ref{l4.1.3'}) (in $\theta$), we get
\begin{align}\label{l4.1.3''}
	\psi_{n,\theta }(y_{1:n} )
	=
	-
	\frac{1}{n}
	\sum_{i=0}^{n-1} 
	\Psi_{\theta }(y_{i+1}, u_{i,\theta}(y_{1:i} ), V_{i,\theta}(y_{1:i} ) ) 
\end{align}
for $\theta\in\Theta$, $y_{1:n} = (y_{1},\dots,y_{n} ) \in {\cal Y}^{n}$, 
$n\geq 0$. 

Let $U_{0}^{\theta } = u_{0,\theta }$, $U_{n}^{\theta } = u_{n,\theta }(Y_{1:n} )$ and  
$V_{0}^{\theta } = V_{0,\theta }$, 
$V_{n}^{\theta } = V_{n,\theta }(Y_{1:n} )$ for $\theta\in\Theta$, $n\geq 1$. 
Then, using \cite[Theorems 4.1, 4.2]{tadic&doucet} and (\ref{l4.1.701}) -- (\ref{l4.1.905}), 
we conclude that 
$\{(X_{n+1}, Y_{n+1}, U_{n}^{\theta }, V_{n}^{\theta } ) \}_{n\geq 0}$
is geometrically ergodic for each $\theta\in\Theta$. 
We also deduce that there exist functions 
$g:\Theta\rightarrow\mathbb{R}$, $h:\Theta\rightarrow\mathbb{R}^{d_{\theta} }$
and 
real numbers $\varepsilon_{Q}\in(0,1)$, 
$\tilde{C}_{2,Q}\in[1,\infty )$ (independent of $N$) 
such that 
\begin{align}\label{l4.1.5}
	\max\{
	|E(\Phi_{\theta }(Y_{n+1}, U_{n}^{\theta } ) ) - g(\theta ) |, 
	\|E(\Psi_{\theta }(Y_{n+1}, U_{n}^{\theta }, V_{n}^{\theta } ) ) - h(\theta ) \|
	\}
	\leq 
	\tilde{C}_{2,Q} \varepsilon_{Q}^{n}
\end{align}
for all $\theta\in Q$, $n\geq 0$. 
As a result of (\ref{l4.1.3'}) -- (\ref{l4.1.5}), we get 
\begin{align*}
	g(\theta ) 
	=
	\lim_{n\rightarrow\infty } 
	E(\phi_{n,\theta }(Y_{1:n} ) ), 
	\;\;\;\;\; 
	h(\theta )
	=
	\lim_{n\rightarrow\infty } 
	E(\psi_{n,\theta }(Y_{1:n} ) )
	=
	\lim_{n\rightarrow\infty } 
	\nabla_{\theta } 
	E(\phi_{n,\theta }(Y_{1:n} ) )
\end{align*}
for all $\theta\in\Theta$. 
Therefore, $g(\theta)=f(\theta)$, $h(\theta) = \nabla f(\theta)$
for all $\theta\in\Theta$
(notice that $E(\psi_{n,\theta}(Y_{1:n} ) )$ converges to $h(\theta )$
uniformly in $\theta$ on each compact subset of $\Theta$). 

In the rest of the proof, we assume that $\{X_{n} \}_{n\geq 0}$ is in steady-state
(i.e., $X_{0}$ is distributed according to the invariant distribution of $\{X_{n} \}_{n\geq 0}$). 
Then, we have 
\begin{align*}
	f_{N}(\theta ) 
	=
	E(\phi_{N,\theta }(Y_{1:N} ) ), 
	\;\;\;\;\; 
	\nabla f_{N}(\theta ) 
	=
	E(\psi_{N,\theta }(Y_{1:N} ) )
\end{align*}
for each $\theta\in\Theta$. 
Combining this with (\ref{l4.1.3''}), (\ref{l4.1.5}), we get 
\begin{align*}
	&
	\|\eta(\theta ) \|
	=
	\|\nabla f_{N}(\theta ) - \nabla f(\theta ) \|
	=
	\left\|
	\frac{1}{N} \sum_{i=0}^{N-1} 
	\left(
	E(\Psi_{\theta }(Y_{i+1}, U_{i}^{\theta }, V_{i}^{\theta } ) )
	-
	h(\theta )
	\right)
	\right\|
	\leq 
	\frac{\tilde{C}_{2,Q} }{N}
	\sum_{i=0}^{N-1} \varepsilon_{Q}^{i}
	\leq 
	\frac{\tilde{C}_{2,Q} }{(1 - \varepsilon_{Q} ) N }
\end{align*}
for all $\theta\in Q$. 
Then, it can easily be deduced that 
there exists $C_{Q}\in (0,\infty )$ (independent of $N$) such that 
$\|\eta(\theta ) \|\leq C_{Q}/N$ for all $\theta\in Q$. 
\end{proof}

\begin{vproof}{Theorem \ref{theorem4.1}}
{Due to Assumption \ref{a4.1}, $\{Z_{n} \}_{n\geq 0}$ is geometrically ergodic. 
Let $\nu(\cdot )$ be the invariant probability of $\{Z_{n} \}_{n\geq 0}$, 
while $\tilde{\Pi}^{n}(z,z')= \Pi^{n}(z,z') - \nu(z')$ 
for $z,z'\in {\cal X}^{N}\times {\cal Y}^{N}$, $n\geq 0$. 
Then, 
there exist real numbers $\rho\in(0,1)$, $\tilde{C}\in[1,\infty )$ such that 
$|\tilde{\Pi}^{n}(z,z')| \leq \tilde{C}\rho^{n}$ 
for each $z,z'\in {\cal X}^{N}\times {\cal Y}^{N}$, $n\geq 0$. 
On the other side, due to Assumption \ref{a4.3}, 
there exists a real number $\tilde{C}_{Q}\in[\tilde{C},\infty )$
such that 
\begin{align*}
	\|F(\theta,z) \|\leq \tilde{C}_{Q}, 
	\;\;\;\;\; 
	\|F(\theta',z) - F(\theta'',z) \| \leq \tilde{C}_{Q} \|\theta'-\theta'' \|
\end{align*}
for all $\theta,\theta',\theta''\in Q$, $z\in{\cal X}^{N}\times{\cal Y}^{N}$. 

For $\theta\in\Theta$, $z\in{\cal X}^{N}\times{\cal Y}^{N}$, $n\geq 0$, 
let 
\begin{align*}
	(\tilde{\Pi}^{n} F)(\theta,z)
	=
	\sum_{z'\in{\cal X}^{N}\times{\cal Y}^{N} }
	F(\theta, z' ) \tilde{\Pi}^{n}(z,z'), 
\end{align*}
while $\tilde{F}(\theta, z ) = \sum_{n=0}^{\infty } (\tilde{\Pi}^{n} F)(\theta,z)$. 
Then, we have 
\begin{align}
	&\label{t4.1.1}
	\|(\tilde{\Pi}^{n} F)(\theta,z) \|
	\leq 
	\tilde{C}_{Q}^{2}\rho^{n}, 
	\\
	&\label{t4.1.3}
	\|(\tilde{\Pi}^{n} F)(\theta',z) - (\tilde{\Pi}^{n} F)(\theta'',z) \|
	\leq 
	\tilde{C}_{Q}^{2}\rho^{n} \|\theta' - \theta'' \|
\end{align}
for all $\theta,\theta',\theta''\in Q$, $z\in{\cal X}^{N}\times{\cal Y}^{N}$, $n\geq 0$. 
Therefore, $\sum_{n=0}^{\infty } \|(\tilde{\Pi}^{n} F)(\theta,z) \| < \infty$
for any $\theta\in Q$, $z\in{\cal X}^{N}\times{\cal Y}^{N}$. 
As $Q$ is any compact set, we conclude that 
for each $\theta\in \Theta$, $z\in{\cal X}^{N}\times{\cal Y}^{N}$, 
$\tilde{F}(\theta,z)$ is well-defined and satisfies
$(\Pi\tilde{F} )(\theta, z ) = \sum_{n=1}^{\infty } (\tilde{\Pi}^{n} F)(\theta,z)$. 
Consequently, 
\begin{align*}
	\tilde{F}(\theta,z)
	-
	(\Pi\tilde{F} )(\theta, z )
	=
	(\tilde{\Pi}^{0} F)(\theta, z )
	=
	F(\theta,z)
	-
	\sum_{z'\in{\cal X}^{N}\times{\cal Y}^{N} }
	F(\theta,z') \nu(z') 
	=
	F(\theta,z) - \nabla f(\theta )
\end{align*}
for all $\theta\in\Theta$, $z\in{\cal X}^{N}\times{\cal Y}^{N}$.\footnote
{Notice that 
$\sum_{z'\in{\cal X}^{N}\times{\cal Y}^{N} } F(\theta,z') \nu(z') = \nabla f_{N}(\theta ) - \eta(\theta )$.} 
Thus, Assumption \ref{a2.2} holds. 

Owing to (\ref{t4.1.1}), we have 
\begin{align*}
	&
	\max\{\|\tilde{F}(\theta,z) \|, \|(\Pi\tilde{F} )(\theta,z ) \| \} 
	\leq 
	\sum_{n=0}^{\infty } \|(\tilde{\Pi}^{n} F(\theta,z) \|
	\leq
	\tilde{C}_{Q}^{2}(1-\rho )^{-1}, 
\end{align*}
for each $\theta\in Q$, $z\in{\cal X}^{N}\times{\cal Y}^{N}$. 
On the other side, (\ref{t4.1.3}) yields 
\begin{align*}
	&
	\|(\Pi\tilde{F} )(\theta',z) - (\Pi\tilde{F} )(\theta'',z) \|
	\leq 
	\sum_{n=1}^{\infty } 
	\|(\tilde{\Pi}^{n} F)(\theta',z) - (\tilde{\Pi}^{n} F)(\theta'',z) \|
	\leq
	\tilde{C}_{Q}^{2}(1-\rho )^{-1} \|\theta' - \theta'' \|
\end{align*}
for all $\theta',\theta''\in Q$, $z\in{\cal X}^{N}\times{\cal Y}^{N}$. 
Hence, Assumption \ref{a2.3} holds, too. 
On the other side, Lemma \ref{lemma4.1} yields 
\begin{align*}
	\eta
	=
	\limsup_{n\rightarrow \infty } 
	\|\eta_{n} \| 
	\leq 
	C_{Q}/N
\end{align*}
on $\Lambda_{Q}$
(notice that $C_{Q}$ does not depend on $N$). 
Then, the theorem's assertion directly follows from 
Theorem \ref{theorem2.1} and Parts (i), (iii), (iv) of Lemma \ref{lemma4.1}. 
}
\end{vproof}

\refstepcounter{appendixcounter}\label{appendix2}
\section*{Appendix \arabic{appendixcounter} }

In this section, a global version of Theorem \ref{theorem1.1} is presented. 
It is also demonstrated how Theorem \ref{theorem1.1} can be extended to
the randomly projected stochastic gradient search. 

First, the stability and the global asymptotic behavior of algorithm (\ref{1.1}) 
are considered. 
To analyze these properties, we introduce the following two assumptions. 

\begin{assumptionappendix}\label{aa2.1} 
$f(\cdot )$ is uniformly lower bounded
(i.e., $\inf_{\theta\in\mathbb{R}^{d_{\theta } } } f(\theta )>-\infty $), 
and
$\nabla f(\cdot )$ is (globally) Lipschitz continuous. 
Moreover, there exist real numbers $c\in(0,1)$, $\rho\in[1,\infty )$
such that $\|\nabla f(\theta ) \|\geq c$ for all $\theta\in\mathbb{R}^{d_{\theta } }$
satisfying $\|\theta \|\geq\rho$. 
\end{assumptionappendix}

\begin{assumptionappendix}\label{aa2.2} 
$\{\xi_{n} \}_{n\geq 0}$ admits the decomposition $\xi_{n}=\zeta_{n}+\eta_{n}$ for each $n\geq 0$, 
where  
$\{\zeta_{n} \}_{n\geq }$ and $\{\eta_{n} \}_{n\geq 0}$ are 
$\mathbb{R}^{d_{\theta } }$-valued stochastic processes 
satisfying 
\begin{align}\label{aa2.2.1}
	\lim_{n\rightarrow\infty } 
	g(\theta_{n} ) 
	\max_{n\leq j<a(n,t) } 
	\left\| 
	\sum_{i=n}^{j} \alpha_{i} \zeta_{i} 
	\right\|
	=
	0, 
	\;\;\;\;\; 
	\limsup_{n\rightarrow\infty } 
	g(\theta_{n} ) \|\eta_{n} \|
	<
	\infty
\end{align}
almost surely for any $t\in(0,\infty )$. 
In addition to this, there exists a real number $\delta\in(0,1)$ such that 
\begin{align}\label{aa2.2.3}
	\lim_{n\rightarrow\infty } 
	h(\theta_{n} ) \|\eta_{n} \|
	<
	\delta
\end{align}
almost surely. 
Here, $g, h: \mathbb{R}^{d_{\theta } }\rightarrow(0,\infty )$ are the (scaling) functions defined by 
\begin{align*}
	g(\theta ) 
	=
	(\|\nabla f(\theta ) \| + 1 )^{-1}, 
	\;\;\;\;\; 
	h(\theta )
	=
	\begin{cases}
	\|\nabla f(\theta ) \|^{-1}, &\text{ if } \|\theta \|\geq\rho
	\\
	0, &\text{ otherwise }
	\end{cases}
\end{align*}
for $\theta\in\mathbb{R}^{d_{\theta } }$
($\rho$ is specified in Assumption \ref{aa2.1}). 
\end{assumptionappendix}

Assumption \ref{aa2.1} is a stability condition. 
In this or a similar form, it is involved in the stability analysis of 
stochastic gradient search and stochastic approximation 
(see e.g., \cite{benveniste}, \cite{borkar}, \cite{chen3} and references cited therein). 
Assumption \ref{aa2.1} is restrictive, as it requires $\nabla^{2}f(\cdot )$ to be 
uniformly bounded. 
Assumption \ref{aa2.1} also requires $\nabla f(\cdot )$ to grow at most linearly as $\theta\rightarrow\infty$. 
Using the random projections (see (\ref{appendix2.1})), 
these restrictive conditions can considerably be relaxed. 

Assumption \ref{aa2.2} is a noise condition. 
It requires 
the effect of the gradient estimator's error $\{\xi_{n} \}_{n\geq 0}$
to be compensated by the gradient of the objective function $f(\cdot )$
(i.e., by the stability of the ODE $d\theta/dt=-\nabla f(\cdot )$). 
Assumption \ref{aa2.2} is true whenever (\ref{a1.2.1}) holds almost surely. 
It is also satisfied for stochastic gradient search with Markovian dynamics
(see Theorem \ref{theorema3.1}, Appendix \ref{appendix3}). 
Assumption \ref{aa2.2} and the results based on it (Theorem \ref{theorema2.1}, below) 
are motivated by the scaled ODE approach 
to the stability analysis of stochastic approximation \cite{borkar&meyn}.\footnote
{The main difference between \cite{borkar&meyn} and the results presented here 
is the choice of the scaling functions. 
The scaling adopted in \cite{borkar&meyn} is (asymptotically) proportional to 
$\|\theta\|$. 
In this paper, the scaling is (asymptotically) proportional to $\|\nabla f(\theta )\|$. } 

Our results on the stability and asymptotic bias of algorithm (\ref{1.1}) are provided in the next theorem. 

\begin{theoremappendix}\label{theorema2.1}
Suppose that Assumptions \ref{a1.1}, \ref{aa2.1} and \ref{aa2.2} hold. 
Then, the following is true: 
\begin{compactenum}[(i)]
\item
There exists a compact (deterministic) set $Q\subset\mathbb{R}^{d_{\theta } }$ 
such that $P(\Lambda_{Q} ) = 1$
($\Lambda_{Q}$ is specified in (\ref{1.103})). 
\item
There exists a (deterministic) non-decreasing function 
$\psi:[0,\infty )\rightarrow[0,\infty)$ 
(independent of  $\eta$ and depending only on $f(\cdot )$)
such that 
$\lim_{t\rightarrow 0} \psi(t) = \psi(0) = 0$ 
and 
\begin{align*}
	\limsup_{n\rightarrow\infty } d(\theta_{n}, {\cal R} )
	\leq 
	\psi(\eta )
\end{align*}
almost surely. 
\item
If $f(\cdot )$ satisfies Assumption \ref{a1.3.b}, 
there exists a real number $K\in (0,\infty )$
(independent of $\eta$ and depending only on $f(\cdot )$)
such that 
\begin{align*}
	\limsup_{n\rightarrow\infty } \|\nabla f(\theta_{n} ) \| 
	\leq 
	K \eta^{q/2}, 
	\;\;\;\;\; 
	\limsup_{n\rightarrow\infty } f(\theta_{n} ) 
	-
	\liminf_{n\rightarrow\infty } f(\theta_{n} ) 
	\leq 
	K \eta^{q}
\end{align*}
almost surely ($q$ is specified in the statement of Theorem \ref{theorem1.1}). 
\item
If $f(\cdot )$ satisfies Assumption \ref{a1.3.c}, 
there exist real numbers $r\in (0,1)$, $L\in (0,\infty )$
(independent of $\eta$ and depending only on $f(\cdot )$)
such that 
\begin{align*}
	\limsup_{n\rightarrow\infty } \|\nabla f(\theta_{n} ) \| 
	\leq 
	L \eta^{1/2}, 
	\;\;\;\;\; 
	\limsup_{n\rightarrow\infty } d(f(\theta_{n} ), f({\cal S} ) ) 
	\leq 
	L \eta, 
	\;\;\;\;\; 
	\limsup_{n\rightarrow\infty } d(\theta_{n}, {\cal S} )
	\leq 
	L \eta^{r}
\end{align*}
almost surely. 
\end{compactenum}
\end{theoremappendix}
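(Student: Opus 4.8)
The plan is to prove Part (i) — the stability — by a scaled Lyapunov-function argument in the spirit of the scaled ODE method of \cite{borkar&meyn}, and then to obtain Parts (ii)--(iv) almost immediately by localizing Theorem \ref{theorem1.1} on the almost-sure event $\Lambda_Q$ produced in Part (i).

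For Part (i), the Lyapunov function will be $f(\cdot)$ itself. Proceeding exactly as in the derivation of (\ref{1.1*}) in Section \ref{section1.bc*}, for $t\in(0,\infty)$ and $n\geq 0$ one has
\begin{align*}
	f(\theta_{a(n,t)}) - f(\theta_n)
	\leq
	-\|\nabla f(\theta_n)\|^{2}\sum_{i=n}^{a(n,t)-1}\alpha_i
	-(\nabla f(\theta_n))^{T}\sum_{i=n}^{a(n,t)-1}\alpha_i\xi_i
	+|\phi_n(t)|,
\end{align*}
where $\phi_n(t)$ is controlled, via the global Lipschitz continuity of $\nabla f(\cdot)$ in Assumption \ref{aa2.1}, by a multiple of $t^{2}$ times the squared size of the increments over the window. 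The scaling functions $g(\cdot)$ and $h(\cdot)$ in Assumption \ref{aa2.2} are tailored exactly so that, upon multiplying this inequality by $g(\theta_n)$, every term on the right remains uniformly controlled whatever the (still unknown) magnitude of $\theta_n$: the $\zeta$-contribution vanishes by (\ref{aa2.2.1}), the $\phi_n(t)$-contribution is $O(t^{2})$ after scaling, and on $\{\|\theta_n\|\geq\rho\}$ the bias obeys $\|\eta_n\|<\delta\|\nabla f(\theta_n)\|$ eventually (by (\ref{aa2.2.3})), so that $-\|\nabla f(\theta_n)\|^{2}-(\nabla f(\theta_n))^{T}\eta_n\leq-(1-\delta)\|\nabla f(\theta_n)\|^{2}\leq-(1-\delta)c^{2}$ there. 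Hence, for a fixed small $t$ and a deterministic radius $\rho_1\geq\rho$, whenever $\theta_n$ lies outside $B(0,\rho_1)$ for $n$ large, $f$ strictly decreases over the next window by a fixed positive amount per unit time; since $f(\cdot)$ is bounded below, $\{\theta_n\}$ cannot remain outside $B(0,\rho_1)$, i.e. $\liminf_n\|\theta_n\|\leq\rho_1$ almost surely. A short additional argument then upgrades this to eventual confinement: near $B(0,\rho_1)$ the gradient is bounded, $g(\theta_n)$ is bounded below so $\alpha_n\|\zeta_n\|\to 0$, and $g(\theta_n)\|\eta_n\|$ bounded forces $\alpha_n\|\eta_n\|\to 0$ there; thus the per-step increments vanish in a neighbourhood of $B(0,\rho_1)$, the overshoot is bounded by a deterministic constant, and setting $Q=\overline{B}(0,\rho_2)$ for a slightly larger deterministic $\rho_2$ yields $P(\Lambda_Q)=1$.

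Granting Part (i), Parts (ii)--(iv) follow by checking that on $\Lambda_Q$ Assumption \ref{a1.2} holds and then invoking Theorem \ref{theorem1.1} for this particular $Q$. Indeed, on $\Lambda_Q$ one has $\theta_n\in Q$ for all large $n$, so $g(\theta_n)=(\|\nabla f(\theta_n)\|+1)^{-1}$ is bounded below by some $c_Q>0$; dividing the two relations in (\ref{aa2.2.1}) by $g(\theta_n)\geq c_Q$ gives $\lim_n\max_{n\leq k<a(n,t)}\|\sum_{i=n}^{k}\alpha_i\zeta_i\|=0$ and $\limsup_n\|\eta_n\|<\infty$ on $\Lambda_Q$, which is precisely (\ref{a1.2.1}). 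Since Assumption \ref{aa2.1} contains Assumption \ref{a1.3.a}, Theorem \ref{theorem1.1}(i) applies and yields Part (ii) with $\psi:=\psi_Q$; under Assumption \ref{a1.3.b} (resp. \ref{a1.3.c}), Theorem \ref{theorem1.1}(ii) (resp. (iii)) gives Part (iii) (resp. (iv)) with $K:=K_Q$, $L:=L_Q$, $r:=r_Q$, and the statements are mutually consistent because $\mathcal{S}=\mathcal{R}$ under Assumption \ref{a1.3.b} or \ref{a1.3.c} (Remark after Theorem \ref{theorem1.1}). All ``almost surely on $\Lambda_Q$'' conclusions become ``almost surely'' since $P(\Lambda_Q)=1$.

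The main obstacle is the stability step, Part (i): one must make the window/Lyapunov estimate uniform in the magnitude of $\theta_n$, verifying that each error term in the $g$-scaled version of (\ref{1.1*}) stays controlled even though $\{\theta_n\}$ is not yet known to be bounded — this is exactly where the $g$- and $h$-scalings of Assumption \ref{aa2.2} and the linear-growth/coercivity clauses of Assumption \ref{aa2.1} are used. Once the scaled inequality is in place, the contradiction with $\inf f>-\infty$ and the control of the overshoot are routine.
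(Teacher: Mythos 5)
Your overall architecture is the same as the paper's: Part (i) is a scaled descent argument with $f$ as the Lyapunov function, using the $g$- and $h$-scalings of Assumption \ref{aa2.2} to control the noise before boundedness is known, and Parts (ii)--(iv) then follow by observing that on $\Lambda_{Q}$ the scaling $g(\theta_{n})$ is bounded away from zero, so Assumption \ref{a1.2} holds and Theorem \ref{theorem1.1} applies verbatim. That reduction is exactly the paper's (it is disposed of in a footnote), and your window descent estimate outside $B(0,\rho)$ is also the right first half of the stability proof, modulo one point you gloss over: the bias term over a window involves $\|\eta_{i}\|\leq\delta\|\nabla f(\theta_{i})\|$ for all $i$ in the window, not just $i=n$, so you need a Bellman--Gronwall step to show $\|\nabla f(\theta_{j})\|\leq\|\nabla f(\theta_{n})\|+O(\tau)g^{-1}(\theta_{n})$ along the window before the per-window decrease $-\varepsilon\tau c^{2}$ comes out; the paper does this explicitly.

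The genuine gap is in your upgrade from ``$\|\theta_{n}\|<\rho$ infinitely often'' to eventual confinement in a deterministic ball $\bar{B}(0,\rho_{2})$. You argue that near $B(0,\rho_{1})$ the per-step increments vanish, so ``the overshoot is bounded by a deterministic constant.'' That reasoning only controls a single step (it shows $\theta_{n_{0}}$ does not jump far past $\rho$ when re-entering); it does not rule out a \emph{multi-step} excursion from $B(0,\rho)$ to outside $B(0,\rho_{2})$, which is the event you must exclude, and small individual increments can accumulate to an arbitrarily large displacement. The paper closes this by a two-sided time argument: on one side, a Gronwall/comparison estimate (the auxiliary ODE $dz/dt=2\phi(z)$ with $\phi(z)=\sup\{\|\nabla f(\theta)\|:\|\theta\|\leq z\}$) shows that any excursion of the interpolated process from $\|\theta\|\leq\rho+1$ to $\|\theta\|>\rho_{1}$ must occupy ODE-time at least $T$; on the other side, the per-window descent shows that $f$ decreases by at least $\varepsilon c^{2}$ per unit ODE-time throughout the excursion (since $\|\theta_{n}\|\geq\rho$ there), so an excursion of duration $T=2\tilde{C}_{1}\varepsilon^{-1}c^{-2}$ would drive $f$ below its global lower bound $-\tilde{C}_{1}$ --- a contradiction. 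Without this comparison-principle step (or an equivalent quantitative bound on how far $\{\theta_{n}\}$ can travel per unit of $f$-decrease), Part (i) is not established, and with it the almost-sure versions of Parts (ii)--(iv) remain conditional.
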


\begin{sproof}
Owing to Assumption \ref{aa2.1}, there exists a real number 
$\tilde{C}_{1} \in [1,\infty )$ 
such that the following is true: 
(i) $f(\theta )> -\tilde{C}_{1}$ for all $\theta\in\mathbb{R}^{d_{\theta } }$, 
and 
(ii) $f(\theta )\leq \tilde{C}_{1}$ 
for any $\theta\in\mathbb{R}^{d_{\theta } }$ satisfying $\|\theta \|\leq \rho+1$. 
Moreover, due to Assumption \ref{aa2.2}, 
there also exists an event $N_{0}\in {\cal F}$ with the following properties: 
(i) $P(N_{0} ) = 0$,
and 
(ii) (\ref{aa2.2.1}), (\ref{aa2.2.3}) hold on $N_{0}^{c}$ for all $t\in(0,\infty )$. 

Let $\varepsilon = (1-\delta )/6$, 
$T=2\tilde{C}_{1}\varepsilon^{-1}c^{-2}$ and let $\phi:[0,\infty )\rightarrow[0,\infty )$ be the function defined by 
\begin{align*}
	\phi(z)
	=
	\sup\{\|\nabla f(\theta ) \|: \theta\in\mathbb{R}^{d_{\theta } }, \|\theta\|\leq z \}
\end{align*}
for $z\in [0,\infty )$. 
As $\nabla f(\cdot )$ is locally Lipschitz continuous, 
$\phi(\cdot )$ is locally Lipschitz continuous, too. 
$\phi(\cdot )$ is also non-negative and satisfies $\|\nabla f(\theta ) \|\leq \phi(\|\theta \| )$
for all $\theta\in\mathbb{R}^{d_{\theta } }$. 

For $z\in [0,\infty )$, let $\lambda(\cdot\:; z )$ be the solution to 
the ODE $dz/dt=2\phi(z)$ satisfying $\lambda(0;z)=z$. 
As $2\phi(\cdot )$ is non-negative and locally Lipschitz continuous, 
$\lambda(\cdot\; ;\cdot )$ is well-defined and locally Lipschitz continuous 
(in both arguments) on $[0,\infty )\times [0,\infty )$. 
We also have 
\begin{align}\label{ta2.1.701}
	\lambda(t;z)
	=
	z
	+
	2\int_{0}^{t} \phi(\lambda(s;z) ) ds
\end{align}
for all $t,z\in[0,\infty )$. 
Then, there exists $\rho_{1}\in [1,\infty )$ such that 
$\rho_{1}\geq\rho+1$ and such that 
$|\lambda(t;z) |\leq\rho_{1}$ for all $t\in [0,T]$, $z\in [0,\rho+1]$. 

Let $\rho_{2}=\rho_{1}+1$, $Q=\{\theta\in\mathbb{R}^{d_{\theta } }: \|\theta \|\leq \rho_{2} \}$, 
while $\Lambda$ is the event defined by 
\begin{align*}
	\Lambda
	=
	\limsup_{n\rightarrow\infty } \{\|\theta_{n} \|<\rho \} 
	=
	\bigcap_{m=0}^{\infty } \bigcup_{n=m}^{\infty } \{\|\theta_{n} \|<\rho \}. 
\end{align*}
Let also $\tilde{C}_{2}\in [1,\infty )$ stand for a (global) Lipschitz constant of $\nabla f(\cdot )$
and for an upper bound of $\|\nabla f(\cdot )\|$ on $Q$. Finally, let  
$\tilde{C}_{3} = 2\tilde{C}_{2} \exp(2\tilde{C}_{2} )$, 
$\tilde{C}_{4} = 12\tilde{C}_{1}\tilde{C}_{2}\tilde{C}_{3}$, 
while $\tau=4^{-1}\tilde{C}_{4}^{-1} \varepsilon c^{2}$.  

In order to prove the theorem's assertion, it is sufficient to show $N_{0}^{c}\subseteq\Lambda$
(i.e., to establish that on $N_{0}^{c}$, $\|\theta_{n}\|\leq \rho_{2}$ for all, 
but finitely many $n$).\footnote
{Assumption \ref{a1.2} is a consequence of Assumption \ref{aa2.2},  
and therefore, 
Parts (ii) -- (iv) directly follow from Part (i) 
and Theorem \ref{theorem1.1}. } 
To prove this, we use contradiction. 
We assume that 
$\|\theta_{n} \|>\rho_{2}$ for infinitely many $n$ and some $\omega\in N_{0}^{c}$. 
Notice that all formulas which follow in the proof correspond to $\omega$.

Owing to (\ref{aa2.2.1}), (\ref{aa2.2.3}), there exists an integer $k_{1}\geq 0$ (depending on $\omega$)
such that 
\begin{align}\label{ta2.1.1}
	g(\theta_{n} ) 
	\max_{n\leq j<a(n,T) } 
	\left\|
	\sum_{i=n}^{j} \alpha_{i} \zeta_{i} 
	\right\| 
	\leq 
	\tau^{2}, 
	\;\;\;\;\; 
	h(\theta_{n} ) \|\eta_{n} \|
	\leq
	\delta
\end{align}
for $n\geq k_{1}$. 
Due to Assumption \ref{a1.1} and (\ref{aa2.2.1}), we also have 
\begin{align}\label{ta2.1.3051}
	\lim_{n\rightarrow\infty } g(\theta_{n} ) \|\alpha_{n}\zeta_{n} \|
	=
	\lim_{n\rightarrow\infty } g(\theta_{n} ) \|\alpha_{n}\eta_{n} \|
	=
	0. 
\end{align}
Since 
\begin{align*}
	&
	g(\theta_{n} ) \|\theta_{n+1} - \theta_{n} \|
	\leq
	\alpha_{n}
	+
	g(\theta_{n} ) \|\alpha_{n}\zeta_{n} \|
	+
	g(\theta_{n} ) \|\alpha_{n}\eta_{n} \|
\end{align*}
for $n\geq 0$, Assumption \ref{a1.1} and (\ref{ta2.1.3051}) imply 
$\lim_{n\rightarrow\infty } g(\theta_{n} ) \|\theta_{n+1} - \theta_{n} \| = 0$. 
Then, (\ref{1.1501}) implies that there exists an integer $k_{2}\geq 0$ (depending on $\omega$)
such that 
\begin{align}\label{ta2.1.3001}
	\sum_{i=n}^{a(n,\tau)-1} \alpha_{i}
	\geq
	(1-\varepsilon ) \tau,
	\;\;\;\;\; 
	g(\theta_{n} ) \|\theta_{n+1} - \theta_{n} \|
	\leq
	\tau
\end{align}
for $n\geq k_{2}$. 

Let $k_{0}=\max\{k_{1}, k_{2} \}$. 
Moreover, let $l_{0}, m_{0}, n_{0}$ be the integers defined as follows. 
If $\omega\in\Lambda$
(i.e., if $\|\theta_{n} \|<\rho$ for infinitely many $n$), let 
\begin{align}\label{ta2.1.3005}
	l_{0}
	=
	\min\{n>k_{0}: \|\theta_{n-1} \|<\rho \}, 
	\;\;\;\: 
	m_{0}
	=
	\min\{n>l_{0}: 
	\|\theta_{n} \|>\rho_{2} \}, 
	\;\;\;\;\: 
	n_{0}
	=
	\max\{n\leq m_{0}: \|\theta_{n-1} \|<\rho \}. 
\end{align}
Otherwise, if $\omega\in\Lambda^{c}$
(i.e., if $\|\theta_{n} \|<\rho$ for finitely many $n$), let 
\begin{align*}
	l_{0}
	=
	\max\{n>0: \|\theta_{n-1} \|<\rho \}, 
	\;\;\;\;\; 
	m_{0}
	=
	\infty,
	\;\;\;\;\; 
	n_{0}
	=
	\max\{k_{0}, l_{0} \}. 
\end{align*}
Then, we have $k_{0}<n_{0}\leq m_{0}$
and $\|\theta_{n} \|\geq\rho$ for $n_{0}\leq n<m_{0}$. 

Let $\phi_{n}(\tau),\phi_{1,n}(\tau),\phi_{2,n}(\tau)$ 
have the same meaning as in Section \ref{section1.bc*}. 
Now, the asymptotic properties of $\phi_{n}(\tau )$ are analyzed. 
As $\|\theta_{n} \|\geq\rho$ for $n_{0}\leq n<m_{0}$, (\ref{ta2.1.1}) implies 
\begin{align}\label{ta2.1.51}
	\left\|
	\sum_{i=n}^{j} \alpha_{i} \xi_{i}
	\right\|
	\leq 
	\left\|
	\sum_{i=n}^{j} \alpha_{i} \zeta_{i}
	\right\|
	+
	\sum_{i=n}^{j} \alpha_{i} \|\eta_{i} \|
	\leq 
	\tau^{2} g^{-1}(\theta_{n} ) 
	+
	\delta
	\sum_{i=n}^{j} 
	\alpha_{i} \|\nabla f(\theta_{i} ) \|
\end{align}
for $n_{0}\leq n\leq j< \min\{m_{0}, a(n,T)\}$
(notice that $\|\eta_{i} \|\leq\delta \|\nabla f(\theta_{i} ) \|$
when $\|\theta_{i} \|\geq\rho$). 
Therefore, 
\begin{align*}
	\|\nabla f(\theta_{j} ) \|
	\leq &
	\|\nabla f(\theta_{n} ) \|
	+
	\|\nabla f(\theta_{j} ) - \nabla f(\theta_{n} ) \|
	\nonumber\\
	\leq &
	\|\nabla f(\theta_{n} ) \| 
	+
	\tilde{C}_{2} \|\theta_{j} - \theta_{n} \| 
	\nonumber\\
	\leq &
	\|\nabla f(\theta_{n} ) \|
	+
	\tilde{C}_{2} \sum_{i=n}^{j-1} \alpha_{i} \|\nabla f(\theta_{i} ) \| 
	+
	\tilde{C}_{2} 
	\left\|
	\sum_{i=n}^{j-1} \alpha_{i} \xi_{i} 
	\right\| 
	\nonumber\\
	\leq &
	\|\nabla f(\theta_{n} ) \| 
	+ 
	\tilde{C}_{2} \tau^{2} g^{-1}(\theta_{n} ) 
	+
	2\tilde{C}_{2} \sum_{i=n}^{j-1} \alpha_{i} \|\nabla f(\theta_{i} ) \| 
\end{align*}
for $n_{0}\leq n< j \leq \min\{m_{0}-1, a(n,\tau) \}$.\footnote
{Notice that $\tau$, $T$ are defined as $\tau=4^{-1}\tilde{C}_{4}^{-1} \varepsilon c^{2}$, 
$T=2\tilde{C}_{1}\varepsilon^{-1}c^{-2}$. 
Notice also $\tau<1<T$ 
since $\tilde{C}_{1}, \tilde{C}_{4} \in [1,\infty)$, $\varepsilon, c \in (0,1)$. }
Combining this with Bellman-Gronwall inequality
(see e.g. \cite[Appendix B]{borkar}), we deduce
\begin{align*}
	\|\nabla f(\theta_{j} ) \|
	\leq &
	\left(\|\nabla f(\theta_{n} ) \| + \tilde{C}_{2}\tau^{2} g^{-1}(\theta_{n} ) \right) 
	\exp\left(
	2\tilde{C}_{2} \sum_{i=n}^{j-1} \alpha_{i}  
	\right) 
	\nonumber\\
	\leq &
	\left(\|\nabla f(\theta_{n} ) \| + \tilde{C}_{2}\tau^{2} g^{-1}(\theta_{n} ) \right) 
	(1 + \tilde{C}_{3}\tau )
	\nonumber \\
	\leq &
	\|\nabla f(\theta_{n} ) \| 
	+
	(\tilde{C}_{3}\tau + \tilde{C}_{2}\tau^{2} + \tilde{C}_{2}\tilde{C}_{3}\tau^{3} )
	g^{-1}(\theta_{n} ) 
	\nonumber\\
	\leq &
	\|\nabla f(\theta_{n} ) \| 
	+
	\tilde{C}_{4}\tau g^{-1}(\theta_{n} ) 
\end{align*}
for $n_{0} \leq n\leq j \leq \min\{m_{0}-1, a(n,\tau) \}$.\footnote
{Notice that $\sum_{i=n}^{j-1} \alpha_{i} \leq \tau<1$
when $n \leq j \leq a(n,\tau)$. 
Notice also  
$g^{-1}(\theta_{n} ) > \|\nabla f(\theta_{n} ) \|$ 
and
$\exp(2\tilde{C}_{2}\tau )\leq 2\tilde{C}_{2}\tau\exp(2\tilde{C}_{2} )= \tilde{C}_{3}\tau$.} 
Then, (\ref{ta2.1.51}) implies 
\begin{align}\label{ta2.1.3007}
	\left\|
	\sum_{i=n}^{j} \alpha_{i} \xi_{i} 
	\right\|
	\leq 
	\tau^{2} g^{-1}(\theta_{n} ) 
	+
	\delta
	\left(
	\|\nabla f(\theta_{n} ) \|
	+
	\tilde{C}_{4}\tau g^{-1}(\theta_{n} ) 
	\right)
	\sum_{i=n}^{j} \alpha_{i} 
	\leq 
	\delta\tau \|\nabla f(\theta_{n} ) \|
	+
	2\tilde{C}_{4}\tau^{2} g^{-1}(\theta_{n} ) 
\end{align}
for $n_{0}\leq n\leq j< \min\{m_{0}, a(n,\tau ) \}$. 
Consequently, 
\begin{align}\label{ta2.1.5}
	\|\theta_{j} - \theta_{n} \|
	\leq &
	\sum_{i=n}^{j-1} \alpha_{i} \|\nabla f(\theta_{i} ) \| 
	+ 
	\left\|
	\sum_{i=n}^{j-1} \alpha_{i} \xi_{i} 
	\right\| 
	\nonumber	\\
	\leq &
	\left(
	\|\nabla f(\theta_{n} ) \|
	+
	\tilde{C}_{4}\tau g^{-1}(\theta_{n} ) 
	\right)
	\left(
	\sum_{i=n}^{j-1} \alpha_{i} 
	+
	\delta\tau
	\right)
	+
	2\tilde{C}_{4}\tau^{2} g^{-1}(\theta_{n} )
	\nonumber	\\
	\leq &
	3 \tau g^{-1}(\theta_{n} )
\end{align}
for $n_{0} \leq n\leq j \leq \min\{m_{0}-1, a(n,\tau) \}$
(notice that $\delta<1$, $\tilde{C}_{4}\tau\leq 1/4$). 
Therefore, 
\begin{align*}
	|\phi_{1,n}(\tau ) |
	\leq &
	\tilde{C}_{2} 
	\|\nabla f(\theta_{n} ) \|
	\sum_{i=n}^{a(n,\tau)-1} \alpha_{i} \|\theta_{i} - \theta_{n} \|
	\leq 
	3\tilde{C}_{2} \tau 
	g^{-1}(\theta_{n} )
	\|\nabla f(\theta_{n} ) \|
	\sum_{i=n}^{a(n,\tau)-1} \alpha_{i} 
	\leq 
	3\tilde{C}_{2} \tau^{2} 
	g^{-2}(\theta_{n} )
\end{align*}
for $n\geq n_{0}$ satisfying $a(n,\tau )<m_{0}$. 
We also have 
\begin{align*}
	|\phi_{2,n}(\tau ) |
	\leq &
	\tilde{C}_{2} \|\theta_{a(n,\tau)} - \theta_{n} \|^{2}
	\leq 
	9\tilde{C}_{2} \tau^{2} 
	g^{-2}(\theta_{n} )
\end{align*}
for $n\geq n_{0}$ satisfying $a(n,\tau )<m_{0}$.  
Thus, 
\begin{align}\label{ta2.1.307}
	|\phi_{n}(\tau ) |
	\leq &
	\tilde{C}_{4} \tau^{2} 
	g^{-2}(\theta_{n} )
\end{align}
when $n\geq n_{0}$, $a(n,\tau )<m_{0}$. 
Additionally, as a result of (\ref{ta2.1.3001}), (\ref{ta2.1.3007}), 
we get
\begin{align*}
	\|\nabla f(\theta_{n} ) \|
	\sum_{i=n}^{a(n,\tau)-1} \alpha_{i} 
	-
	\left\|
	\sum_{i=n}^{a(n,\tau)-1} \alpha_{i}\xi_{i}
	\right\|
	\geq &
	(1-\delta-\varepsilon)\tau 
	\|\nabla f(\theta_{n} ) \| 
	-
	2\tilde{C}_{4}\tau^{2} g^{-1}(\theta_{n} )
	\nonumber\\
	= &
	5\varepsilon\tau \|\nabla f(\theta_{n} ) \| 
	-
	2\tilde{C}_{4}\tau^{2} g^{-1}(\theta_{n} )
	\nonumber\\
	\geq &
	3\varepsilon\tau \|\nabla f(\theta_{n} ) \|
\end{align*}
when $n\geq n_{0}$,  $a(n,\tau )<m_{0}$.\footnote
{Notice that $1-\delta=6\varepsilon$, 
$\varepsilon\geq\varepsilon c\geq 2\tilde{C}_{4}\tau$. 
Notice also that 
$2\varepsilon\tau \|\nabla f(\theta_{n} ) \| \geq 
\varepsilon\tau \|\nabla f(\theta_{n} ) \| + \varepsilon\tau c \geq 
2\tilde{C}_{4}\tau^{2} g^{-1}(\theta_{n} )$ 
for $n_{0}\leq n < m_{0}$. }
Then, (\ref{1.1*}), (\ref{ta2.1.307}) imply 
\begin{align}\label{ta2.1.7}
	f(\theta_{a(n,\tau) } ) - f(\theta_{n} ) 
	\leq&
	-
	3\varepsilon\tau\|\nabla f(\theta_{n} ) \|^{2}
	+
	\tilde{C}_{4}\tau^{2} g^{-2}(\theta_{n} ) 
	\leq 
	-\varepsilon\tau \|\nabla f(\theta_{n} ) \|^{2} 
	\leq 
	-
	\varepsilon\tau c^{2}
\end{align}
for $n\geq n_{0}$ satisfying $a(n,\tau )<m_{0}$.\footnote{
Notice that 
$2\varepsilon\|\nabla f(\theta_{n} ) \|^{2} \geq
\varepsilon \|\nabla f(\theta_{n} ) \|^{2} + \varepsilon c^{2} \geq 
\tilde{C}_{4}\tau g^{-2}(\theta_{n} )$ for $n_{0}\leq n<m_{0}$. }

Let $\{n_{k} \}_{k\geq 0}$ be the sequence recursively defined by 
$n_{k+1}=a(n_{k},\tau )$ for $k\geq 0$. 
Now, we show by contradiction $\omega\in\Lambda$ 
(i.e., $\|\theta_{n} \|<\rho$ for infinitely many $n$). 
We assume the opposite. 
Then, $m_{0}=\infty$ and $\|\theta_{n} \|\geq\rho$ for $n\geq n_{0}$, 
while (\ref{ta2.1.7}) implies 
$f(\theta_{n_{k+1} } ) - f(\theta_{n_{k} } ) \leq -\varepsilon\tau c^{2}$ 
for $k\geq 0$. 
Hence, $\lim_{k\rightarrow\infty } f(\theta_{n_{k} } ) = -\infty$.  
However, this is impossible due to Assumption \ref{aa2.1}. 
Thus, $\omega\in\Lambda$ (i.e., $\|\theta_{n} \|<\rho$ for infinitely many $n$). 
Therefore, $m_{0}, n_{0}$ are defined through (\ref{ta2.1.3005}), 
and thus, $\|\theta_{n_{0}-1} \|<\rho$, $\|\theta_{m_{0} } \|>\rho_{2}$. 
Combining this with (\ref{ta2.1.3001}), we conclude
\begin{align*}
	&
	\|\theta_{n_{0} } - \theta_{n_{0}-1} \| 
	\leq 
	\tau g^{-1}(\theta_{n_{0}-1} ) 
	\leq 
	\tau(\tilde{C}_{2} + 1 ) 
	\leq 
	1/2
\end{align*}
(notice that $\|\nabla f(\theta_{n_{0}-1} ) \|\leq \tilde{C}_{2}$, 
$\tilde{C}_{2}\tau\leq 1/4$). 
Consequently, 
\begin{align}\label{ta2.1.3009}
	&
	\|\theta_{n_{0} } \|
	\leq 
	\|\theta_{n_{0}-1} \|
	+
	\|\theta_{n_{0} } - \theta_{n_{0}-1} \|
	\leq \rho+1/2
	<\rho_{2}. 
\end{align}
Hence, $n_{0}<m_{0}$, 
$f(\theta_{n_{0} } ) \leq \tilde{C}_{1}$. 

Let $i_{0}, j_{0}$ be the integers defined by 
$j_{0} = \max\{j\geq 0: n_{j}< m_{0} \}$, 
$i_{0} = n_{j_{0} }$. 
Then, we have $n_{0}\leq i_{0}=n_{j_{0} }< n_{j_{0}+1} = m_{0}\leq a(i_{0},\tau )$. 
As a result of this and (\ref{ta2.1.5}), we get
\begin{align*}
	&
	\|\theta_{m_{0} } - \theta_{i_{0} } \| 
	\leq 
	3 \tau g^{-1}(\theta_{i_{0} } ) 
	\leq 
	3 \tau (\tilde{C}_{2} + 1 )
	\leq 
	1/2  
\end{align*}
(notice that $\|\nabla f(\theta_{i_{0} } ) \|\leq \tilde{C}_{2}$, 
$\tilde{C}_{2}\tau\leq 1/12$). 
Consequently, 
\begin{align}
	&\label{ta2.1.703}
	\|\theta_{i_{0} } \|
	\geq 
	\|\theta_{m_{0} } \| 
	-
	\|\theta_{m_{0} } - \theta_{i_{0} } \|
	\geq 
	\rho_{2} - 1/2
	>
	\rho_{1}. 
\end{align}

Let $\{\gamma_{n} \}_{n\geq 0}$, $\theta_{0}(\cdot )$ 
have the same meaning as in Section \ref{section1.a*}. 
Now, we show by contradiction that $\gamma_{i_{0} } - \gamma_{n_{0} }\geq T$. 
We assume the opposite. 
Then, (\ref{ta2.1.51}), (\ref{ta2.1.3009}) imply 
\begin{align}\label{ta2.1.705}
	\|\theta_{0}(t) \|
	=
	\|\theta_{j} \|
	\leq&
	\|\theta_{n_{0} } \| 
	+
	\sum_{i=n_{0} }^{j-1} \alpha_{i} \|\nabla f(\theta_{i} ) \| 
	+
	\left\|
	\sum_{i=n_{0} }^{j-1} \alpha_{i} \xi_{i} 
	\right\|
	\nonumber\\
	\leq&
	\|\theta_{n_{0} } \| 
	+
	\tau^{2} g^{-1}(\theta_{n_{0} } )
	+
	2\sum_{i=n_{0} }^{j-1} \alpha_{i} \|\nabla f(\theta_{i} ) \|   
	\nonumber\\
	\leq&
	\rho+1
	+
	2\sum_{i=n_{0} }^{j-1} \alpha_{i} \phi(\|\theta_{i} \| )  
	\nonumber\\
	\leq&
	\rho+1
	+
	2\int_{\gamma_{n_{0} } }^{t} \phi(\|\theta_{0}(s) \| ) ds
\end{align}
for $t\in [\gamma_{j}, \gamma_{j+1} )$, $n_{0}\leq j\leq i_{0}$.\footnote
{As $j\leq i_{0} < m_{0}$, 
we have  
$\gamma_{j}-\gamma_{n_{0} }\leq \gamma_{i_{0} }-\gamma_{n_{0} } \leq T$
and $j\leq\min\{m_{0}-1,a(n_{0},T)\}$. 
We also have $\tau^{2} g^{-1}(\theta_{n_{0} } ) \leq \tau^{2} (\tilde{C}_{2} + 1 ) \leq 1/2$.}
Applying the comparison principle (see \cite[Section 3.4]{khalil}) to
(\ref{ta2.1.701}), (\ref{ta2.1.705}), we conclude 
$\|\theta_{0}(t) \| \leq 
\lambda(t-\gamma_{n_{0} };\rho+1) \leq \rho_{1}$
for all $t\in[\gamma_{{n}_{0} }, \gamma_{i_{0} } ]$. 
Thus, $\|\theta_{i_{0} } \|=\|\theta_{0}(\gamma_{i_{0} } ) \|\leq \rho_{1}$. 
However, this is impossible, due to (\ref{ta2.1.703}). 
Hence, $\gamma_{i_{0} }-\gamma_{n_{0} }\geq T$. 
Consequently, 
\begin{align}\label{ta2.1.707}
	T
	\leq 
	\gamma_{i_{0} } - \gamma_{n_{0} }
	=
	\sum_{j=0}^{j_{0}-1} (\gamma_{n_{j+1} } - \gamma_{n_{j} } )
	\leq 
	j_{0}\tau 
\end{align}
(notice that $n_{j_{0} } = i_{0}$, 
$\gamma_{n_{j+1} }-\gamma_{n_{j} } = \sum_{i=n_{j} }^{n_{j+1}-1} \alpha_{i} \leq \tau$). 

Owing to (\ref{ta2.1.7}), we have 
$f(\theta_{n_{j+1} } ) - f(\theta_{n_{j} } ) \leq -\varepsilon\tau c^{2}$
for $0\leq j \leq j_{0}$. 
Combining this with (\ref{ta2.1.707}), we get 
\begin{align*}
	f(\theta_{i_{0} } )
	=
	f(\theta_{n_{j_{0} } } ) 
	\leq 
	f(\theta_{n_{0} } ) - j_{0}\varepsilon\tau c^{2}
	\leq 
	\tilde{C}_{1} - \varepsilon c^{2} T
	\leq
	-\tilde{C}_{1}. 
\end{align*}
However, this is impossible, since $f(\theta ) > -\tilde{C}_{1}$ for all 
$\theta\in\mathbb{R}^{d_{\theta } }$. 
Hence, $\|\theta_{n} \|>\rho_{2}$ for finitely many $n$. 
\end{sproof}

In the rest of the section, Theorem \ref{theorem1.1} is extended to 
randomly projected stochastic gradient algorithms. 
These algorithms are defined by the following difference equations: 
\begin{align}\label{appendix2.1}
	&
	\vartheta_{n}
	=
	\theta_{n}
	-
	\alpha_{n} (\nabla f(\theta_{n} ) + \xi_{n} ), 
	\nonumber\\
	&
	\theta_{n+1}
	=
	\vartheta_{n} 
	I_{ \{\|\vartheta_{n} \| \leq \beta_{\sigma_{n} } \} }
	+
	\theta_{0} 
	I_{ \{\|\vartheta_{n} \| > \beta_{\sigma_{n} } \} }, 
	\nonumber\\
	&
	\sigma_{n+1}
	=
	\sigma_{n}
	+
	I_{ \{\|\vartheta_{n} \| > \beta_{\sigma_{n} } \} }, 
	\;\;\;\;\; 
	n\geq 0. 
\end{align}
Here, $\nabla f(\cdot )$, $\{\alpha_{n} \}_{n\geq 0}$, $\{\xi_{n} \}_{n\geq 0}$ 
have the same meaning as in Section \ref{section1}, 
while $\{\beta_{n} \}_{n\geq 0}$ is an increasing sequence of positive real numbers satisfying
$\lim_{n\rightarrow\infty } \beta_{n} = \infty$. 
$\theta_{0}\in\mathbb{R}^{d}$ is a (deterministic) vector satisfying $\|\theta_{0} \|\leq \beta_{0}$, 
while $\sigma_{0}=0$. 
For further details on randomly projected stochastic gradient search and 
stochastic approximation, see \cite{chen3}, \cite{tadic1} and references cited therein. 

To study the asymptotic behavior of (\ref{appendix2.1}), we introduce the following two assumption. 

\begin{assumptionappendix}\label{aa2.3} 
$f(\cdot )$ is uniformly lower bounded 
(i.e., $\inf_{\theta\in \mathbb{R}^{d} } f(\theta ) > -\infty$)
and $\nabla f(\cdot )$ is locally Lipschitz continuous. 
Moreover, there exist real numbers $c\in(0,1)$, $\rho\in[1,\infty )$
such that $\|\nabla f(\theta ) \|\geq c$ for all $\theta\in\mathbb{R}^{d_{\theta } }$
satisfying $\|\theta \|\geq\rho$. 
\end{assumptionappendix}

\begin{assumptionappendix}\label{aa2.4} 
$\{\xi_{n} \}_{n\geq 0}$ admits the decomposition $\xi_{n}=\zeta_{n}+\eta_{n}$ for each $n\geq 0$, 
where  
$\{\zeta_{n} \}_{n\geq }$ and $\{\eta_{n} \}_{n\geq 0}$ are 
$\mathbb{R}^{d_{\theta } }$-valued stochastic processes 
satisfying 
\begin{align}\label{aa2.4.1}
	\lim_{n\rightarrow\infty } 
	\max_{n\leq j<a(n,t) } 
	\left\| 
	\sum_{i=n}^{j} \alpha_{i} \zeta_{i} 
	\right\|
	I_{ \{\tau_{Q,n}>j \} }
	=
	0, 
	\;\;\;\;\; 
	\limsup_{n\rightarrow\infty } 
	\|\eta_{n} \| I_{ \{\theta_{n}\in Q \} }
	<
	\infty
\end{align}
almost surely for all $t\in(0,\infty )$ and any compact set $Q\subset\mathbb{R}^{d_{\theta } }$. 
In addition to this, there exists a real number $\delta_{Q}\in(0,1)$ such that 
\begin{align}\label{aa2.4.3}
	\lim_{n\rightarrow\infty } 
	h(\theta_{n} ) \|\eta_{n} \| I_{ \{\theta_{n}\in Q \} }
	<
	\delta_{Q}
\end{align}
almost surely for any compact set $Q\subset\mathbb{R}^{d_{\theta} }$. 
Here, $\tau_{Q,n}$ and $h(\cdot )$ are (respectively) the stopping time and the (scaling) function defined by 
\begin{align*}
	\tau_{Q,n}
	=
	\inf\left(\{j\geq n: \theta_{j}\neq\vartheta_{j-1} \text{ or } \theta_{j}\not\in Q \} \cup \{\infty \} \right), 
	\;\;\;\;\; 
	h(\theta )
	=
	\begin{cases}
	\|\nabla f(\theta ) \|^{-1}, &\text{ if } \|\theta \|\geq\rho, 
	\\
	0, &\text{ otherwise }
	\end{cases}
\end{align*}
for $n\geq 0$, $\theta\in\mathbb{R}^{d_{\theta } }$
($\rho$ is specified in Assumption \ref{aa2.3}). 
\end{assumptionappendix}

Assumption \ref{aa2.3} is a stability condition. 
It is one of the weakest conditions under which the stability of the ODE
$d\theta/dt=-\nabla f(\theta )$ can be demonstrated.  
On the other side, Assumption \ref{aa2.4} is a noise condition. 
It can be considered as a version of the noise conditions adopted in \cite{tadic1}. 

Our results on the asymptotic behavior of algorithm (\ref{appendix2.1}) are provided in the next theorem. 

\begin{theoremappendix}\label{theorema2.2}
Let $\{\theta_{n} \}_{n\geq 0}$ be generated by recursion (\ref{appendix2.1}). 
Suppose that Assumptions \ref{a1.1}, \ref{aa2.3} and \ref{aa2.4} hold. 
Then, all conclusions of Theorem \ref{theorema2.1} are true. 
\end{theoremappendix}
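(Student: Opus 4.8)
The plan is to reduce Theorem \ref{theorema2.2} to Theorem \ref{theorem1.1} by showing that, almost surely, the random projections in (\ref{appendix2.1}) are triggered only finitely often and, in fact, that $\{\theta_{n}\}_{n\ge 0}$ eventually stays inside a \emph{deterministic} compact set $Q$. Once this is done, the argument closes quickly: on the (full-probability) event $\Lambda_{Q}$ one has $\theta_{n}\in Q$ and $\theta_{n+1}=\vartheta_{n}$ for all large $n$, so for such $n$ the stopping time $\tau_{Q,n}$ of Assumption \ref{aa2.4} equals $+\infty$; hence the first relation in (\ref{aa2.4.1}) becomes the unrestricted Kushner--Clark-type bound, while the second relations in (\ref{aa2.4.1}) and (\ref{aa2.4.3}) give $\limsup_{n\to\infty}\|\eta_{n}\|<\infty$ on $\Lambda_{Q}$. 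Thus Assumption \ref{a1.2} holds on $\Lambda_{Q}$, the recursion (\ref{appendix2.1}) reduces on its tail to (\ref{1.1}), and Parts (i)--(iii) of Theorem \ref{theorem1.1} applied with this particular $Q$ yield Parts (ii)--(iv) of Theorem \ref{theorema2.1}, whereas Part (i) of Theorem \ref{theorema2.1} is exactly the statement $P(\Lambda_{Q})=1$.

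The core of the proof is therefore the stability claim, and here I would run the scaled-ODE / Bellman--Gronwall argument from the proof of Theorem \ref{theorema2.1} on each inter-projection epoch. First I fix, exactly as there, constants $\varepsilon$, $T$, $\tau$ and radii $\rho_{1}$, $\rho_{2}$, with $\rho_{1}$ now also chosen to dominate the comparison flow $\lambda(t;z)$ for $t\in[0,T]$ and $z$ up to $\max\{\rho+1,\beta_{0}\}$ (so that a freshly reset iterate $\theta_{0}$, $\|\theta_{0}\|\le\beta_{0}$, is handled on the same footing as a point that last dipped below $\|\theta\|=\rho$); set $Q=\{\theta:\|\theta\|\le\rho_{2}\}$ and a slightly larger working ball $Q'=\{\theta:\|\theta\|\le\rho_{2}+1\}$. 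On the full-measure event where (\ref{aa2.4.1}) and (\ref{aa2.4.3}) hold for $Q'$ (and all $t$) I argue by contradiction on a sample where projections occur infinitely often. Then $\sigma_{n}\to\infty$, hence $\beta_{\sigma_{n}}\to\infty$; pick a projection time $m$ so large that the relevant asymptotic noise estimates hold past $m$ and $\beta_{\sigma_{m}}>\rho_{2}+1$, and let $m'$ be the next projection time, so that on $(m,m')$ the recursion evolves by (\ref{1.1}) with $\theta_{m+1}=\theta_{0}$. Using (\ref{aa2.4.1}) with the stopping time $\tau_{Q',\,\cdot}$ --- tailored precisely so that the noise increments are controlled as long as the iterate has neither been projected nor left $Q'$ --- one first shows $\|\vartheta_{j}-\theta_{j}\|\to 0$ while $\theta_{j}\in Q'$, so the projection at $m'$ cannot be caused by a single large noise step; hence $\{\theta_{n}\}$ genuinely exits $Q$ inside the epoch and $m'':=\min\{n\in(m,m'):\|\theta_{n}\|>\rho_{2}\}$ is well defined with $\theta_{n}\in Q'$ for $m<n\le m''$. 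One is then in the exact setting of the proof of Theorem \ref{theorema2.1}, confined to the fixed compact $Q'$: reproducing its estimates (the analogues of the displayed bounds there on $\|\theta_{j}-\theta_{n}\|$, on $|\phi_{n}(\tau)|$, and on $f(\theta_{a(n,\tau)})-f(\theta_{n})$) shows that the ascent of $\{\theta_{n}\}$ from its last visit below $\|\theta\|=\rho$, or from the reset at $m$, up to $\|\theta\|>\rho_{2}$ occupies ODE-time at least $T$ and forces $f$ to drop by at least $\varepsilon c^{2}T$ starting from a level at most $\tilde{C}_{1}$; choosing $T$ so large that $\tilde{C}_{1}-\varepsilon c^{2}T<-\tilde{C}_{1}$ contradicts the uniform lower bound in Assumption \ref{aa2.3}. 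Hence projections occur only finitely often almost surely.

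Finally, granted that the last projection occurs at some random finite time $m_{\infty}$, the recursion (\ref{appendix2.1}) coincides with (\ref{1.1}) for $n>m_{\infty}$; if $\|\theta_{n}\|>\rho_{2}$ for infinitely many such $n$, then on this tail $\tau_{Q,n}$ is just the first exit of $\{\theta_{n}\}$ from $Q$, so Assumption \ref{aa2.4} supplies exactly the noise control needed to rerun the same contradiction argument --- now with no resets at all, i.e. literally the proof of Theorem \ref{theorema2.1} --- and derive $f(\theta_{n})\to-\infty$ along a subsequence, which is impossible. Thus $\|\theta_{n}\|\le\rho_{2}$ for all large $n$ almost surely, so $P(\Lambda_{Q})=1$ with $Q=\{\theta:\|\theta\|\le\rho_{2}\}$, and the reduction in the first paragraph finishes the proof. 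As in Theorem \ref{theorema2.1}, the only real obstacle is the stability step: transplanting the delicate chain of Bellman--Gronwall and comparison-principle estimates to the inter-projection epochs, and checking that the $\tau_{Q,n}$-indexed condition (\ref{aa2.4.1}) is strong enough both to rule out a projection being triggered by noise alone and to provide all the increment bounds used inside an epoch; everything after that is bookkeeping and a direct appeal to Theorem \ref{theorem1.1}.
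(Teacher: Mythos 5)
Your overall strategy is sound and rests on the same analytical core as the paper's proof: the Bellman--Gronwall and comparison-principle estimates, the decrease of $f$ by a fixed amount per $\tau$-interval while $\|\theta_{n}\|\geq\rho$, and the lower bound $T$ on the travel time of an excursion from $\|\theta\|\leq\rho+1$ up to $\|\theta\|>\rho_{1}$. The organizational difference is that you first eliminate infinitely many projections and then eliminate unboundedness, whereas the paper runs a single contradiction argument against ``$\|\vartheta_{n}\|>\rho_{2}$ infinitely often'' using a \emph{random} compact set $\tilde{Q}=\{\theta:\|\theta\|\leq\tilde{\rho}\}$ with $\tilde{\rho}=\rho_{2}I_{\Lambda}+\beta(\sigma)I_{\Lambda^{c}}$, after observing that each projection resets the iterate below $\rho$, so infinitely many projections would force $\omega\in\Lambda$; hence on $\Lambda^{c}$ one has $\sigma<\infty$ and all iterates are automatically confined to the ball of radius $\beta(\sigma)$. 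Your epoch-by-epoch treatment of the first stage is a workable substitute, modulo routine bookkeeping (the $\tau_{Q',n}$-indexed sums must start one step after a reset, since $\theta_{n}\neq\vartheta_{n-1}$ at the reset time itself, and $\tilde{C}_{1}$ must be enlarged so that $f(\theta_{0})\leq\tilde{C}_{1}$ when $\beta_{0}>\rho+1$, or one assumes $\|\theta_{0}\|<\rho$ without loss of generality as the paper does).

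The one genuine gap is in your final step, in the sub-case where, after the last projection, the iterate never returns below $\|\theta\|=\rho$. You claim this is ``literally the proof of Theorem \ref{theorema2.1}'', but that proof uses the \emph{global} Lipschitz continuity of $\nabla f(\cdot)$ from Assumption \ref{aa2.1} and the globally scaled noise condition of Assumption \ref{aa2.2}; here you only have local Lipschitz continuity (Assumption \ref{aa2.3}) and the compactly localized condition of Assumption \ref{aa2.4}, and the indicator $I_{\{\tau_{Q,n}>j\}}$ with the deterministic $Q=\{\theta:\|\theta\|\leq\rho_{2}\}$ vanishes precisely when the iterate leaves $Q$ --- which, under your contradiction hypothesis, it does infinitely often. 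So the portion of the argument that establishes ``the iterate dips below $\rho$ infinitely often'' cannot be run on $Q$ alone. The repair is exactly the paper's $\tilde{Q}$ device: once projections have stopped, $\|\theta_{n+1}\|=\|\vartheta_{n}\|\leq\beta(\sigma)$ for all subsequent $n$, so the tail of the trajectory is confined to the random ball of radius $\beta(\sigma)$, and Assumption \ref{aa2.4} (which holds simultaneously for all compact sets off a single null set) can be applied with that ball in place of $Q$. With this observation inserted, your proof goes through and is otherwise faithful to the paper's argument.
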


\begin{sproof}
Due to Assumption \ref{aa2.3}, 
there exists a real number 
$\tilde{C}_{1} \in [1,\infty )$ 
such that the following is true: 
(i) $f(\theta )> -\tilde{C}_{1}$ for all $\theta\in\mathbb{R}^{d_{\theta } }$, 
and 
(ii) $f(\theta )\leq \tilde{C}_{1}$
for any $\theta\in\mathbb{R}^{d_{\theta } }$ satisfying $\|\theta \|\leq \rho+1$. 
Without loss of generality, it can also be assumed $\|\theta_{0} \|<\rho$. 
On the other side, owing to Assumption \ref{aa2.4}, 
there exists an event $N_{0}\in {\cal F}$ with the following properties: 
(i) $P(N_{0} ) = 0$,
and 
(ii) (\ref{aa2.4.1}), (\ref{aa2.4.3}) hold on $N_{0}^{c}$ for all $t\in(0,\infty )$
and any compact set $Q\subset\mathbb{R}^{d_{\theta } }$. 

Let $\varepsilon = (1-\delta )/5$, 
$T=2\tilde{C}_{1}\varepsilon^{-1}c^{-2}$, 
while $\phi(\cdot )$, $\lambda(\cdot\:;\cdot )$ 
have the same meaning as in the proof of Theorem \ref{theorema2.1}. 
Then, there exists $\rho_{1}\in [1,\infty )$ such that 
$\rho_{1}\geq\rho+1$ and such that 
$|\lambda(t;z) |\leq\rho_{1}$ for all $t\in [0,T]$, $z\in [0,\rho+1]$. 
Moreover, (\ref{ta2.1.701}) holds for all $t,z\in [0,\infty )$. 

Let $\rho_{2}=\rho_{1}+1$, $Q=\{\theta\in\mathbb{R}^{d_{\theta } }: \|\theta \|\leq \rho_{2} \}$. 
Moreover, let
\begin{align*}
	\sigma
	=
	\lim_{n\rightarrow\infty } \sigma_{n}, 
	\;\;\;\;\; 
	\Lambda
	=
	\limsup_{n\rightarrow\infty } \{\|\theta_{n} \|<\rho \} 
	=
	\bigcap_{m=0}^{\infty } \bigcup_{n=m}^{\infty } \{\|\theta_{n} \|<\rho \},  
\end{align*}
while 
$\tilde{\rho}=\rho_{2} I_{\Lambda } + \beta(\sigma) I_{\Lambda^{c} }$, 
$\tilde{Q}=\{\theta\in\mathbb{R}^{d_{\theta } }: \|\theta \|\leq \tilde{\rho} \}$. 
As $\sigma<\infty$ on $\Lambda^{c}$, 
we have $\theta_{n}, \vartheta_{n}\in \tilde{Q}$ for $n\geq 0$ on the same event. 
We also have $\tilde{\rho}<\infty$ everywhere. 
Consequently, $\delta_{\tilde{Q} }<1$ everywhere, while 
\begin{align}\label{ta2.2.5001}
	\lim_{n\rightarrow\infty } 
	\max_{n\leq j<a(n,t) }
	\left\|
	\sum_{i=n}^{j} \alpha_{i}\zeta_{i}
	\right\|
	I_{ \{\tau_{\tilde{Q},n}>j \} }
	=0, 
	\;\;\;\;\; 
	\limsup_{n\rightarrow\infty} 
	\|\eta_{n} \| I_{ \{\theta_{n} \in \tilde{Q} \} }
	<\infty, 
	\;\;\;\;\; 
	\limsup_{n\rightarrow\infty } 
	h(\theta_{n} ) \|\eta_{n} \| I_{ \{\theta_{n}\in \tilde{Q} \} }
	< 
	\delta_{\tilde{Q} }
\end{align}
for all $t\in(0,\infty )$ on $N_{0}^{c}$. 

Let $\tilde{C}_{2}\in [1,\infty )$ stand for a local Lipschitz constant of $\nabla f(\cdot )$
on $\tilde{Q}$ and for an upper bound of $\|\nabla f(\cdot )\|$ on the same set. 
In addition to this, let  
$\tilde{C}_{3} = 2\tilde{C}_{2} \exp(2\tilde{C}_{2} )$, 
$\tilde{C}_{4} = 20\tilde{C}_{1}\tilde{C}_{3}^{3}$, 
while $\tau=2^{-1}\tilde{C}_{4}^{-1} \varepsilon c^{2}$.  

In order to prove the theorem's assertion, it is sufficient to show $N_{0}^{c}\subseteq\Lambda_{Q}$
(i.e., to demonstrate that on $N_{0}^{c}$, $\|\vartheta_{n} \|\leq\rho_{2}$ for all, 
but finitely many $n$).\footnote
{On $\Lambda_{Q}$, the following holds: 
$\sigma<\infty$ and $\theta_{n}=\vartheta_{n}$, $\tau_{Q,n}=\infty$
for $n>\sigma$. 
Hence, algorithm (\ref{appendix2.1}) asymptotically reduces to (\ref{1.1}) on $\Lambda_{Q}$, 
while (\ref{a1.2.1}) holds almost surely on the same event. 
Therefore, Parts (ii) -- (iv) of the theorem directly follow from Part (i) 
and Theorem \ref{theorem1.1}. } 
We use contradiction to demonstrate this: 
We assume that 
$\|\vartheta_{n} \|>\rho_{2}$ for infinitely many $n$ and some $\omega\in N_{0}^{c}$. 
Notice that all formulas which follow in the proof correspond to $\omega$. 

Let $\delta=\delta_{\tilde{Q} }$. 
As $\{\beta(\sigma_{n} ) \}_{n\geq 0}$ is non-decreasing, 
we have $\beta(\sigma_{n} ) > \rho_{2}$ for all, but finitely many $n$.\footnote
{If $\sigma<\infty$, 
then $\rho_{2}<\|\theta_{n} \| = \|\vartheta_{n-1} \| \leq \beta(\sigma_{n-1} )$ 
for all, but finitely many $n$. 
On the other side, if $\sigma=\infty$, 
then $\lim_{n\rightarrow\infty } \beta(\sigma_{n} ) = \infty$. } 
Hence, there exists an integer $k_{1}$ (depending on $\omega$) such that 
$\beta(\sigma_{n} )> \rho_{2}$ for $n\geq k_{1}$. 
On the other side, 
due to (\ref{ta2.2.5001}), there exists an integer $k_{2}\geq 0$ (depending on $\omega$)
such that 
\begin{align}\label{ta2.2.1}
	\max_{n\leq j<a(n,T) } 
	\left\|
	\sum_{i=n}^{j} \alpha_{i} \zeta_{i} 
	\right\| 
	I_{ \{\tau_{\tilde{Q},n}>j\} }
	\leq 
	\tau^{2}, 
	\;\;\;\;\; 
	h(\theta_{n} ) \|\eta_{n} \| I_{ \{\theta_{n}\in\tilde{Q} \} } 
	\leq
	\delta
\end{align}
for $n\geq k_{2}$. 
Owing to Assumption \ref{a1.1} and (\ref{ta2.2.5001}), we also have 
\begin{align}\label{ta2.2.3051}
	\lim_{n\rightarrow\infty } \|\alpha_{n}\zeta_{n} \| I_{ \{\theta_{n}\in\tilde{Q} \} } 
	=
	\lim_{n\rightarrow\infty } \|\alpha_{n}\eta_{n} \| I_{ \{\theta_{n}\in\tilde{Q} \} } 
	=
	0. 
\end{align}
Since 
\begin{align*}
	&
	\|\vartheta_{n} - \theta_{n} \| I_{ \{\theta_{n}\in\tilde{Q} \} } 
	\leq
	\left(
	\tilde{C}_{2} \alpha_{n}
	+
	\|\alpha_{n}\zeta_{n} \|
	+
	\|\alpha_{n}\eta_{n} \|
	\right)
	I_{ \{\theta_{n}\in\tilde{Q} \} } 
\end{align*}
for $n\geq 0$, Assumption \ref{a1.1} and (\ref{ta2.2.3051}) imply 
$\lim_{n\rightarrow\infty } \|\vartheta_{n} - \theta_{n} \| I_{ \{\theta_{n}\in\tilde{Q} \} } = 0$. 
Then, (\ref{1.1501}) implies that there exists an integer $k_{3}\geq 0$ (depending on $\omega$)
such that 
\begin{align}\label{ta2.2.3001}
	\sum_{i=n}^{a(n,\tau)-1} \alpha_{i}
	\geq
	(1-\varepsilon ) \tau,
	\;\;\;\;\; 
	\|\vartheta_{n} - \theta_{n} \| I_{ \{\theta_{n}\in\tilde{Q} \} } 
	\leq
	\tau
\end{align}
for $n\geq k_{3}$. 

Let $k_{0}=\max\{k_{1}, k_{2}, k_{3} \}$. 
Moreover, let $l_{0}, m_{0}, n_{0}$ be the integers defined as follows. 
If $\omega\in\Lambda$
(i.e., if $\|\theta_{n} \|<\rho$ for infinitely many $n$), let 
\begin{align}\label{ta2.2.3005}
	l_{0}
	=
	\min\{n>k_{0}: \|\theta_{n-1} \|<\rho \}, 
	\;\;\;\: 
	m_{0}
	=
	\min\{n>l_{0}: 
	\|\vartheta_{n-1} \|>\rho_{2} \}, 
	\;\;\;\;\: 
	n_{0}
	=
	\max\{n\leq m_{0}: \|\theta_{n-1} \|<\rho \}. 
\end{align}
Otherwise, if $\omega\in\Lambda^{c}$
(i.e., if $\|\theta_{n} \|<\rho$ for finitely many $n$), let 
\begin{align*}
	l_{0}
	=
	\max\{n>0: \|\theta_{n-1} \|<\rho \}, 
	\;\;\;\;\; 
	m_{0}
	=
	\infty,
	\;\;\;\;\; 
	n_{0}
	=
	\max\{k_{0}, l_{0} \}. 
\end{align*}
Then, we have $k_{0}<n_{0}\leq m_{0}$
and $\theta_{n}=\vartheta_{n-1}$, $\rho\leq\|\theta_{n} \|\leq\tilde{\rho}$ 
for $n_{0}\leq n<m_{0}$.\footnote
{If $\theta_{n}\neq\vartheta_{n-1}$, we have $\|\theta_{n} \|=\|\theta_{0} \|<\rho$. 
On the other side, if $\omega\in\Lambda$, 
then $\|\theta_{n} \|=\|\vartheta_{n-1} \|\leq\rho_{2}=\tilde{\rho}$ 
for $n_{0}\leq n<m_{0}$. 
Moreover, if $\omega\in\Lambda^{c}$, 
then $\|\theta_{n} \|\leq\beta(\sigma_{n-1} ) \leq \beta(\sigma )=\tilde{\rho}$
for $n>0$. }
Therefore, 
\begin{align}\label{ta2.2.3091}
	c\leq \|\nabla f(\theta_{n} ) \|\leq \tilde{C}_{2}, 
	\;\;\;\;\; 
	\theta_{n}\in\tilde{Q}, 
	\;\;\;\;\; 
	\tau_{n,\tilde{Q} }\geq m_{0}
\end{align}
for $n_{0}\leq n<m_{0}$, while 
\begin{align}\label{ta2.2.3071}
	\theta_{j}
	=
	\theta_{n}
	-
	\sum_{i=n}^{j-1} \alpha_{i} \nabla f(\theta_{i} ) 
	-
	\sum_{i=n}^{j-1} \alpha_{i} \xi_{i}
\end{align}
for $n_{0}\leq n<j<m_{0}$. 

Let $\phi_{n}(\tau),\phi_{1,n}(\tau),\phi_{2,n}(\tau)$ 
have the same meaning as in Section \ref{section1.bc*}. 
Now, the asymptotic properties of $\phi_{n}(\tau )$ are analyzed relying on similar arguments
as in the proof of Theorem \ref{theorema2.1}. 
Due to (\ref{ta2.2.1}), (\ref{ta2.2.3091}), we have
\begin{align}\label{ta2.2.51}
	\left\|
	\sum_{i=n}^{j} \alpha_{i} \xi_{i}
	\right\|
	\leq 
	\left\|
	\sum_{i=n}^{j} \alpha_{i} \zeta_{i}
	\right\|
	+
	\sum_{i=n}^{j} \alpha_{i} \|\eta_{i} \|
	\leq 
	\tau^{2} 
	+
	\delta
	\sum_{i=n}^{j} 
	\alpha_{i} \|\nabla f(\theta_{i} ) \|
\end{align}
for $n_{0}\leq n\leq j< \min\{m_{0}, a(n,T) \}$.
Using (\ref{ta2.2.3071}), (\ref{ta2.2.51}), we deduce 
\begin{align*}
	\|\nabla f(\theta_{j} ) \|
	\leq &
	\|\nabla f(\theta_{n} ) \|
	+
	\|\nabla f(\theta_{j} ) - \nabla f(\theta_{n} ) \|
	\nonumber\\
	\leq &
	\|\nabla f(\theta_{n} ) \| 
	+
	\tilde{C}_{2} \|\theta_{j} - \theta_{n} \| 
	\nonumber\\
	\leq &
	\|\nabla f(\theta_{n} ) \|
	+
	\tilde{C}_{2} \sum_{i=n}^{j-1} \alpha_{i} \|\nabla f(\theta_{i} ) \| 
	+
	\tilde{C}_{2} 
	\left\|
	\sum_{i=n}^{j-1} \alpha_{i} \xi_{i} 
	\right\| 
	\nonumber\\
	\leq &
	\|\nabla f(\theta_{n} ) \| 
	+ 
	\tilde{C}_{2} \tau^{2} 
	+
	2\tilde{C}_{2} \sum_{i=n}^{j-1} \alpha_{i} \|\nabla f(\theta_{i} ) \| 
\end{align*}
for $n_{0}\leq n< j \leq \min\{m_{0}-1, a(n,\tau) \}$
(notice that $\tau<T$ and $\theta_{n},\theta_{j}\in\tilde{Q}$ for $n_{0}\leq n< j <m_{0}$). 
Then, Bellman-Gronwall inequality (see e.g., \cite[Appendix B]{borkar}) and (\ref{ta2.2.3091}) imply
\begin{align*}
	\|\nabla f(\theta_{j} ) \|
	\leq &
	\left(\|\nabla f(\theta_{n} ) \| + \tilde{C}_{2}\tau^{2} \right) 
	\exp\left(
	2\tilde{C}_{2} \sum_{i=n}^{j-1} \alpha_{i}  
	\right) 
	\nonumber\\
	\leq &
	\left(\|\nabla f(\theta_{n} ) \| + \tilde{C}_{2} \tau^{2} \right) 
	(1 + \tilde{C}_{3}\tau )
	\nonumber \\
	\leq &
	\|\nabla f(\theta_{n} ) \| 
	+
	\tilde{C}_{2}\tilde{C}_{3}\tau + \tilde{C}_{2}\tau^{2} + \tilde{C}_{2}\tilde{C}_{3}\tau^{3} 
	\nonumber\\
	\leq &
	\|\nabla f(\theta_{n} ) \| 
	+
	\tilde{C}_{4}\tau 
\end{align*}
for $n_{0} \leq n\leq j \leq \min\{m_{0}-1, a(n,\tau) \}$.\footnote
{Notice that $\sum_{i=n}^{j-1} \alpha_{i} \leq \tau<1$ when $n \leq j \leq a(n,\tau)$. 
Notice also that 
$\exp(2\tilde{C}_{2}\tau )\leq 2\tilde{C}_{2}\tau\exp(2\tilde{C}_{2}\tau )\leq \tilde{C}_{3}\tau$.} 
As a result of this and (\ref{ta2.2.51}), we get
\begin{align}\label{ta2.2.3007}
	\left\|
	\sum_{i=n}^{j} \alpha_{i} \xi_{i} 
	\right\|
	\leq 
	\tau^{2} 
	+
	\delta
	\left(
	\|\nabla f(\theta_{n} ) \|
	+
	\tilde{C}_{4}\tau 
	\right)
	\sum_{i=n}^{j} \alpha_{i} 
	\leq 
	\delta\tau \|\nabla f(\theta_{n} ) \|
	+
	2\tilde{C}_{4}\tau^{2}  
\end{align}
for $n_{0}\leq n\leq j< \min\{m_{0}, a(n,\tau ) \}$. 
Owing to  (\ref{ta2.2.3091}), (\ref{ta2.2.3071}), (\ref{ta2.2.3007}), we have 
\begin{align}\label{ta2.2.5}
	\|\theta_{j} - \theta_{n} \|
	\leq &
	\sum_{i=n}^{j-1} \alpha_{i} \|\nabla f(\theta_{i} ) \| 
	+ 
	\left\|
	\sum_{i=n}^{j-1} \alpha_{i} \xi_{i} 
	\right\| 
	\leq 
	\left(
	\|\nabla f(\theta_{n} ) \|
	+
	2\tilde{C}_{4}\tau  
	\right)
	\left(
	\sum_{i=n}^{j-1} \alpha_{i} 
	+
	\tau
	\right)
	\leq 
	4\tilde{C}_{2} \tau 
\end{align}
for $n_{0} \leq n\leq j \leq \min\{m_{0}-1, a(n,\tau) \}$
(notice that $\tilde{C}_{4}\tau\leq 1/2$). 
Consequently, 
\begin{align*}
	|\phi_{1,n}(\tau ) |
	\leq &
	\tilde{C}_{2} 
	\|\nabla f(\theta_{n} ) \|
	\sum_{i=n}^{a(n,\tau)-1} \alpha_{i} \|\theta_{i} - \theta_{n} \|
	\leq 
	4\tilde{C}_{2}^{3} \tau 
	\|\nabla f(\theta_{n} ) \|
	\sum_{i=n}^{a(n,\tau)-1} \alpha_{i} 
	\leq 
	4\tilde{C}_{2}^{3} \tau^{2} 
\end{align*}
for $n\geq n_{0}$ satisfying $a(n,\tau )<m_{0}$
(notice that $\|\nabla f(\theta_{n} ) \|\leq\tilde{C}_{2}$ 
and $\theta_{n}, \theta_{i} \in \tilde{Q}$
for $n_{0}\leq n\leq i < m_{0}$). 
We also have 
\begin{align*}
	|\phi_{2,n}(\tau ) |
	\leq &
	\tilde{C}_{2} \|\theta_{a(n,\tau)} - \theta_{n} \|^{2}
	\leq 
	16\tilde{C}_{2}^{3} \tau^{2} 
\end{align*}
for $n\geq n_{0}$ satisfying $a(n,\tau )<m_{0}$
(notice that 
and $\theta_{n}, \theta_{a(n,\tau ) } \in \tilde{Q}$
when $n\geq n_{0}$, $a(n,\tau)<m_{0}$). 
Hence, 
\begin{align}\label{ta2.2.307}
	|\phi_{n}(\tau ) |
	\leq &
	\tilde{C}_{4} \tau^{2} 
\end{align}
when $n\geq n_{0}$, $a(n,\tau )<m_{0}$. 
On the other side, (\ref{ta2.2.3001}), (\ref{ta2.2.3091}), (\ref{ta2.2.3007}) 
yield 
\begin{align*}
	\|\nabla f(\theta_{n} ) \|
	\sum_{i=n}^{a(n,\tau)-1} \alpha_{i} 
	-
	\left\|
	\sum_{i=n}^{a(n,\tau)-1} \alpha_{i}\xi_{i}
	\right\|
	\geq &
	(1-\delta-\varepsilon)\tau 
	\|\nabla f(\theta_{n} ) \| 
	-
	2\tilde{C}_{4}\tau^{2} 
	\nonumber\\
	= &
	4\varepsilon\tau \|\nabla f(\theta_{n} ) \| 
	-
	2\tilde{C}_{4}\tau^{2} 
	\nonumber\\
	\geq &
	2\varepsilon\tau \|\nabla f(\theta_{n} ) \|
\end{align*}
for $n\geq n_{0}$ satisfying $a(n,\tau )<m_{0}$.\footnote
{Notice that $1-\delta=5\varepsilon$, $\varepsilon c \geq \tilde{C}_{4}\tau$. 
Notice also that 
$\varepsilon\tau \|\nabla f(\theta_{n} ) \|\geq 
\varepsilon\tau c \geq 
\tilde{C}_{4}\tau^{2}$ for $n_{0}\leq n < m_{0}$. }
Then, (\ref{1.1*}), (\ref{ta2.2.3091}), (\ref{ta2.2.307}) imply 
\begin{align}\label{ta2.2.7}
	f(\theta_{a(n,\tau) } ) - f(\theta_{n} ) 
	\leq&
	-
	2\varepsilon\tau\|\nabla f(\theta_{n} ) \|^{2}
	+
	\tilde{C}_{4}\tau^{2}  
	\leq 
	-\varepsilon\tau \|\nabla f(\theta_{n} ) \|^{2} 
	\leq 
	-
	\varepsilon\tau c^{2}
\end{align}
for $n\geq n_{0}$ satisfying $a(n,\tau )<m_{0}$
(notice that 
$\varepsilon\|\nabla f(\theta_{n} ) \|^{2}\geq 
\varepsilon c^{2} \geq \tilde{C}_{4}\tau$ when $n_{0}\leq n<m_{0}$). 

Let $\{n_{k} \}_{k\geq 0}$ be the sequence recursively defined by 
$n_{k+1}=a(n_{k},\tau )$ for $k\geq 0$. 
As in the proof of Theorem \ref{theorema2.1}, 
we now show by contradiction $\omega\in\Lambda$ 
(i.e., $\|\theta_{n} \|<\rho$ for infinitely many $n$). 
We assume the opposite. 
Then, $m_{0}=\infty$ and $\theta_{n}=\vartheta_{n}$, while (\ref{ta2.2.7}) yields  
$
	f(\theta_{n_{k+1} } ) - f(\theta_{n_{k} } )
	\leq 
	-\varepsilon\tau c^{2}
$
for $k\geq 0$. 
Hence, $\lim_{k\rightarrow\infty } f(\theta_{n_{k} } ) = -\infty$.  
However, this is impossible due to Assumption \ref{aa2.3}. 
Thus, $\omega\in\Lambda$ (i.e., $\|\theta_{n} \|<\rho$ for infinitely many $n$). 
Therefore, $m_{0}, n_{0}$ are defined through (\ref{ta2.2.3005}), 
while $\|\theta_{n_{0}-1} \|<\rho$, $\|\vartheta_{m_{0}-1} \|>\rho_{2}$. 
Combining this with (\ref{ta2.2.3001}), we conclude
$	
	\|\vartheta_{n_{0}-1} - \theta_{n_{0}-1} \|
	\leq 
	\tau 
	\leq 
	1/2
$.
Consequently, 
\begin{align}\label{ta2.2.3009}
	&
	\|\vartheta_{n_{0}-1} \|
	\leq 
	\|\theta_{n_{0}-1} \|
	+
	\|\vartheta_{n_{0}-1} - \theta_{n_{0}-1} \|
	\leq \rho+1/2
	<\rho_{2}. 
\end{align}
Hence, $n_{0}<m_{0}$, 
$f(\theta_{n_{0} } ) \leq \tilde{C}_{1}$
(notice that $\|\theta_{n_{0} } \| = \|\vartheta_{n_{0}-1} \|\leq\rho+1$). 

Let $i_{0}, j_{0}$ be the integers defined by 
$
	j_{0}
	=
	\max\{j\geq 0: n_{j}< m_{0} \}
$, 
$ 
	i_{0} 
	=
	n_{j_{0} }
$. 
Then, we have $n_{0}\leq i_{0}=n_{j_{0} }< m_{0}\leq a(i_{0},\tau ) = n_{j_{0}+1}$. 
Combining this with (\ref{ta2.2.3001}), (\ref{ta2.2.5}), we get
\begin{align*}
	&
	\|\vartheta_{m_{0}-1} - \theta_{m_{0}-1} \|
	\leq
	\tau
	\leq
	1/2, 
	\;\;\;\;\; 
	\|\theta_{i_{0} }-  \theta_{m_{0}-1} \| 
	\leq 
	4\tilde{C}_{2} \tau 
	\leq 
	1/2.  
\end{align*}
Therefore, 
\begin{align}
	&\label{ta2.2.703}
	\|\theta_{i_{0} } \|
	\geq 
	\|\vartheta_{m_{0}-1} \| 
	-
	\|\vartheta_{m_{0} } - \theta_{m_{0}-1} \|
	-
	\|\theta_{i_{0} }-  \theta_{m_{0}-1} \| 
	>
	\rho_{2} - 1
	=
	\rho_{1}. 
\end{align}

Let $\{\gamma_{n} \}_{n\geq 0}$, $\theta_{0}(\cdot )$ 
have the same meaning as in Section \ref{section1.a*}. 
As in the proof of Theorem \ref{theorema2.1}, 
we now show by contradiction that $\gamma_{i_{0} } - \gamma_{n_{0} }\geq T$. 
We assume the opposite. 
Then, (\ref{ta2.2.3071}), (\ref{ta2.2.51}), (\ref{ta2.2.3009}) yield  
\begin{align}\label{ta2.2.705}
	\|\theta_{0}(t) \|
	=
	\|\theta_{j} \|
	\leq&
	\|\theta_{n_{0} } \| 
	+
	\sum_{i=n_{0} }^{j-1} \alpha_{i} \|\nabla f(\theta_{i} ) \| 
	+
	\left\|
	\sum_{i=n_{0} }^{j-1} \alpha_{i} \xi_{i} 
	\right\|
	\nonumber\\
	\leq&
	\|\theta_{n_{0} } \| 
	+
	\tau^{2} 
	+
	2\sum_{i=n_{0} }^{j-1} \alpha_{i} \|\nabla f(\theta_{i} ) \|   
	\nonumber\\
	\leq&
	\rho+1
	+
	2\sum_{i=n_{0} }^{j-1} \alpha_{i} \phi(\|\theta_{i} \| )  
	\nonumber\\
	\leq&
	\rho+1
	+
	2\int_{\gamma_{n_{0} } }^{t} \phi(\|\theta_{0}(s) \| ) ds
\end{align}
for $t\in [\gamma_{j}, \gamma_{j+1} )$, $n_{0}\leq j\leq i_{0}$.\footnote
{Since $j\leq i_{0}<m_{0}$, we have 
$\gamma_{j}-\gamma_{n_{0} }\leq \gamma_{i_{0} }-\gamma_{n_{0} } \leq T$
and $j\leq\min\{m_{0}-1,a(n_{0},T) \}$. 
We also have $\tau^{2}\leq 1/2$. }
Owing to the comparison principle (see \cite[Section 3.4]{khalil}) and 
(\ref{ta2.1.701}), (\ref{ta2.2.705}), we have  
$\|\theta_{0}(t) \| \leq 
\lambda(t-\gamma_{n_{0} };\rho+1) \leq \rho_{1}$
for all $t\in[\gamma_{{n}_{0} }, \gamma_{i_{0} } ]$. 
Thus, $\|\theta_{i_{0} } \|=\|\theta_{0}(\gamma_{i_{0} } ) \|\leq \rho_{1}$. 
However, this is impossible, due to (\ref{ta2.2.703}). 
Hence, $\gamma_{i_{0} }-\gamma_{n_{0} }\geq T$. 
Consequently, 
\begin{align}\label{ta2.2.707}
	T
	\leq 
	\gamma_{i_{0} } - \gamma_{n_{0} }
	=
	\sum_{j=0}^{j_{0}-1} (\gamma_{n_{j+1} } - \gamma_{n_{j} } )
	\leq 
	j_{0}\tau 
\end{align}
(notice that $n_{j_{0} } = i_{0}$, 
$\gamma_{n_{j+1} }-\gamma_{n_{j} } = \sum_{i=n_{j} }^{n_{j+1}-1} \alpha_{i} \leq \tau$). 

Due to (\ref{ta2.2.7}), 
we have 
$
	f(\theta_{n_{j+1} } ) - f(\theta_{n_{j} } ) 
	\leq 
	-\varepsilon\tau c^{2}
$
for $0\leq j \leq j_{0}$. 
Then, (\ref{ta2.2.707}) implies 
\begin{align*}
	f(\theta_{i_{0} } )
	=
	f(\theta_{n_{j_{0} } } ) 
	\leq 
	f(\theta_{n_{0} } ) - j_{0}\varepsilon\tau c^{2}
	\leq 
	\tilde{C}_{1} - \varepsilon c^{2} T
	\leq 
	-\tilde{C}_{1}. 
\end{align*}
However, this is impossible, since $f(\theta ) > -\tilde{C}_{1}$ for all 
$\theta\in\mathbb{R}^{d_{\theta } }$. 
Hence, $\|\theta_{n} \|>\rho_{2}$ for finitely many $n$. 
\end{sproof}

\refstepcounter{appendixcounter}\label{appendix3}
\section*{Appendix \arabic{appendixcounter} }

In this section, a global version of Theorem \ref{theorem2.1} is presented.
It is also shown how Theorem \ref{theorem2.1} can be extended 
to the randomly projected stochastic gradient search with Markovian dynamics. 
The results provided in this section can be considered as 
a combination of Theorems \ref{theorema2.1}, \ref{theorema2.2}
(Appendix \ref{appendix2}) with Theorem \ref{theorem2.1} (Section \ref{section2}). 

First, the stability and the global asymptotic behavior of algorithm (\ref{2.1}) are studied.
To analyze these properties, we use the following two assumptions. 

\begin{assumptionappendix}\label{aa3.1} 
There exists a Borel-measurable function $\varphi:\mathbb{R}^{d_{z} }\rightarrow[1,\infty )$
such that 
\begin{align*}
	&
	\max\{\|F(\theta,z) \|, \|\tilde{F}(\theta,z) \|, \|(\Pi\tilde{F} )(\theta,z) \| \}
	\leq
	\varphi(z) (\|\nabla f(\theta ) \| + 1 ), 
	\\
	&
	\|(\Pi\tilde{F} )(\theta',z) - (\Pi\tilde{F} )(\theta'',z) \|
	\leq 
	\varphi(z) \|\theta' - \theta'' \|
\end{align*}
for all $\theta,\theta',\theta''\in\mathbb{R}^{d_{\theta } }$, 
$z\in\mathbb{R}^{d_{z} }$. 
In addition to this, 
\begin{align*}
	\sup_{n\geq 0} 
	E(\varphi^{2}(Z_{n} )|\theta_{0}=\theta,Z_{0}=z)
	<\infty
\end{align*}
for all $\theta\in\mathbb{R}^{d_{\theta } }$, 
$z\in\mathbb{R}^{d_{z} }$. 
\end{assumptionappendix}

\begin{assumptionappendix}\label{aa3.2} 
$\eta_{n}=\eta(\theta_{n} )$ for $n\geq 0$, 
where $\eta:\mathbb{R}^{d_{\theta } }\rightarrow\mathbb{R}^{d_{\theta } }$ is a continuous function. 
Moreover, there exists a real number $\delta\in(0,1)$ such that 
$\|\eta(\theta ) \|\leq\delta\|\nabla f(\theta ) \|$ for all $\theta\in\mathbb{R}^{d_{\theta } }$
satisfying $\|\theta \|\geq\rho$ 
($\rho$ is specified in Assumption \ref{aa2.1}). 
\end{assumptionappendix}

Assumption \ref{aa3.1} is a global version of Assumption \ref{a2.3}.
In a similar form, it is involved in the stability analysis of 
stochastic approximation carried out in \cite[Section II.1.9]{benveniste}. 
On the other side, Assumption \ref{aa3.2} is related to the bias of the gradient estimator. 
It requires the bias $\{\eta_{n} \}_{n\geq 0}$ to be a deterministic function of 
the algorithm iterates $\{\theta_{n} \}_{n\geq 0}$. 
As demonstrated in Sections \ref{section3} -- \ref{section4}, 
this is often satisfied in practice. 
Assumption \ref{aa3.2} can be considered as one of the weakest conditions 
under which the stability of the perturbed ODE $d\theta/dt=-(\nabla f(\theta ) + \eta(\theta ) )$ can be shown. 

Our results on the stability and asymptotic bias of algorithm (\ref{2.1}) are provided in the next theorem. 

\begin{theoremappendix}\label{theorema3.1}
Suppose that Assumptions \ref{a2.1}, \ref{a2.2}, \ref{aa2.1}, \ref{aa3.1} and \ref{aa3.2} hold. 
Then, the following is true: 
\begin{compactenum}[(i)]
\item
If $f(\cdot )$ (specified in Assumption \ref{a2.2}) satisfies Assumption \ref{a1.3.a}, 
Part (i) of Theorem \ref{theorema2.1} holds. 
\item
If $f(\cdot )$ (specified in Assumption \ref{a2.2}) satisfies Assumption \ref{a1.3.b}, 
Part (ii) of Theorem \ref{theorema2.1} holds. 
\item
If $f(\cdot )$ (specified in Assumption \ref{a2.2}) satisfies Assumption \ref{a1.3.c}, 
Part (iii) of Theorem \ref{theorema2.1} holds. 
\end{compactenum}
\end{theoremappendix}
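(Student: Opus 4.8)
The plan is to reduce Theorem~\ref{theorema3.1} to Theorem~\ref{theorema2.1} by showing that, under Assumptions~\ref{a2.1}, \ref{a2.2}, \ref{aa2.1}, \ref{aa3.1} and \ref{aa3.2}, the noise $\{\xi_n\}_{n\ge 0}$ with $\xi_n = \zeta_n + \eta_n$, where $\zeta_n = F(\theta_n,Z_{n+1}) - \nabla f(\theta_n)$ and $\eta_n = \eta(\theta_n)$, satisfies the scaled Kushner--Clark condition (Assumption~\ref{aa2.2}). The second half of Assumption~\ref{aa2.2} is essentially immediate: from Assumption~\ref{aa3.2} we get $h(\theta_n)\|\eta_n\| = h(\theta_n)\|\eta(\theta_n)\| \le \delta$ whenever $\|\theta_n\|\ge\rho$ and $h(\theta_n)\|\eta_n\|=0$ otherwise, so \eqref{aa2.2.3} holds; and $g(\theta_n)\|\eta_n\| = \|\eta(\theta_n)\|/(\|\nabla f(\theta_n)\|+1)$ is bounded because $\eta(\cdot)$ is continuous, $\nabla f(\cdot)$ is continuous, and the growth condition in Assumption~\ref{aa2.1} together with Assumption~\ref{aa3.2} control $\eta$ for large $\|\theta\|$. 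So the heart of the matter is the first condition in \eqref{aa2.2.1}, namely
\begin{align*}
	\lim_{n\rightarrow\infty} g(\theta_n)\max_{n\le j<a(n,t)}\left\|\sum_{i=n}^{j}\alpha_i\zeta_i\right\| = 0
	\quad\text{a.s., for every }t\in(0,\infty).
\end{align*}

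First I would set up the Poisson-equation decomposition exactly as in Section~\ref{section2*}: write $\zeta_i = \zeta_{1,i} + \zeta_{2,i} + \zeta_{3,i}$ with $\zeta_{1,i} = \tilde F(\theta_i,Z_{i+1}) - (\Pi\tilde F)(\theta_i,Z_i)$ a martingale-difference term, $\zeta_{2,i} = (\Pi\tilde F)(\theta_i,Z_i) - (\Pi\tilde F)(\theta_{i-1},Z_i)$ a Lipschitz-in-$\theta$ increment term, and $\zeta_{3,i} = -(\Pi\tilde F)(\theta_i,Z_{i+1})$ a telescoping term, so that the identity \eqref{2.3*} holds. Then I would estimate $g(\theta_n)\sum_{i=n}^{j}\alpha_i\zeta_{k,i}$ for $k=1,2,3$ separately, using the \emph{scaled} bounds from Assumption~\ref{aa3.1}: since $\|\tilde F(\theta,z)\|,\|(\Pi\tilde F)(\theta,z)\|\le\varphi(z)(\|\nabla f(\theta)\|+1) = \varphi(z)g^{-1}(\theta)$, the quantities $g(\theta_i)\zeta_{k,i}$ are controlled by $\varphi(Z_{\cdot})$ times ratios $g(\theta_i)/g(\theta_n)$ that are near $1$ on the block $n\le i<a(n,t)$ (this is where the telescoping ratio argument and the near-constancy of $\|\nabla f(\theta_i)\|$ over a short time window — a consequence of Lipschitz continuity of $\nabla f$ and $\lim\alpha_n=0$ — enter). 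For the martingale part I would invoke a Doob/maximal-inequality argument as in Section~\ref{section2*}, using $\sum_n\alpha_n^2 E(\varphi^2(Z_n)\cdots)<\infty$ from Assumptions~\ref{a2.1} and~\ref{aa3.1}; for the $\zeta_2$ part, the Lipschitz bound $\|(\Pi\tilde F)(\theta_i,Z_i)-(\Pi\tilde F)(\theta_{i-1},Z_i)\|\le\varphi(Z_i)\|\theta_i-\theta_{i-1}\|\le\varphi(Z_i)\alpha_{i-1}(\|F(\theta_{i-1},Z_i)\|+\|\eta_{i-1}\|)$ plus $\sum\alpha_n^2<\infty$ and the bias bound give summability after scaling; for $\zeta_3$, the telescoping plus $\limsup|\alpha_{n+1}^{-1}-\alpha_n^{-1}|<\infty$ handles the Abel-summation remainder. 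Once \eqref{aa2.2.1} and \eqref{aa2.2.3} are established, Assumption~\ref{aa2.2} holds, and Theorem~\ref{theorema2.1} applies verbatim, together with Assumption~\ref{a2.2} (which supplies the differentiable $f$ with locally Lipschitz $\nabla f$), giving Parts~(i)--(iii).

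The main obstacle I anticipate is making the scaling ratios $g(\theta_i)/g(\theta_n)$ (equivalently $(\|\nabla f(\theta_n)\|+1)/(\|\nabla f(\theta_i)\|+1)$) genuinely harmless over the whole block $n\le i<a(n,t)$ \emph{before} knowing that $\{\theta_n\}$ is bounded --- unlike in Section~\ref{section2*}, here we cannot restrict to a compact set $Q$ and a stopping time $\tau_Q$, since stability is precisely what Theorem~\ref{theorema2.1} is supposed to deliver. The resolution is that the argument must be carried out so that the noise estimates are \emph{purely local along trajectories}: one controls $\|\theta_i-\theta_n\|$ over the block by roughly $t(\|\nabla f(\theta_n)\|g^{-1}(\theta_n)\text{-scale} + \text{noise})$ and feeds this back, exactly in the spirit of the Bellman--Gronwall bootstrapping used inside the proof of Theorem~\ref{theorema2.1} itself. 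Concretely, I would either (a) verify \eqref{aa2.2.1}--\eqref{aa2.2.3} on each event $\{\sup_n\|\theta_n\|<\infty\}\cap\{\text{trajectory eventually in }Q\}$ for each compact $Q$ and patch, noting Theorem~\ref{theorema2.1}'s proof only uses the noise conditions along the realized trajectory, or (b) observe that Assumption~\ref{aa3.1}'s bound $E(\varphi^2(Z_n)|\theta_0,Z_0)$ being finite is \emph{uniform in $n$ without any compactness restriction}, so the martingale and summability estimates go through globally, and the ratio issue only affects $\zeta_2$, where the extra factor is absorbed by $\sum\alpha_n^2<\infty$ anyway. I expect (b), combined with the local Gronwall control of $\|\nabla f(\theta_i)\|$ over one time-window that is already embedded in the proof of Theorem~\ref{theorema2.1}, to be the clean route.
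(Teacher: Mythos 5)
Your overall plan --- verify the scaled noise condition (Assumption \ref{aa2.2}) for $\zeta_{n}=F(\theta_{n},Z_{n+1})-\nabla f(\theta_{n})$ and $\eta_{n}=\eta(\theta_{n})$ via the Poisson-equation decomposition $\zeta_{n}=\zeta_{1,n}+\zeta_{2,n}+\zeta_{3,n}$, then invoke Theorem \ref{theorema2.1} --- is exactly the paper's reduction, and you correctly isolate the real obstacle: the ratio $g(\theta_{n})g^{-1}(\theta_{j})$ over a block cannot be tamed by restricting to a compact set, since stability is the conclusion. But both of your proposed resolutions fail as stated. Option (a) is circular: Assumption \ref{aa2.2} demands (\ref{aa2.2.1}) almost surely, unconditionally, and the proof of Theorem \ref{theorema2.1} uses it precisely to rule out escape to infinity, so verifying it only on stability events gives nothing. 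Option (b) rests on a false claim: under Assumption \ref{aa3.1} the bounds on $F$, $\tilde{F}$, $\Pi\tilde{F}$ all carry the factor $\|\nabla f(\theta)\|+1=g^{-1}(\theta)$, so the ratio $g(\theta_{n})g^{-1}(\theta_{j})$ contaminates \emph{all three} terms, including the martingale part $\zeta_{1,j}$ --- its conditional second moment is bounded by $\varphi^{2}$ times the \emph{square} of that ratio, and without an a priori bound on the ratio neither the Doob inequality nor the telescoping estimate for $\zeta_{3}$ closes. It is not absorbed by $\sum_{n}\alpha_{n}^{2}<\infty$.

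The missing device is a stopping-time localization. The paper sets $\tau_{n}=\min(\{j\geq n: g(\theta_{n})g^{-1}(\theta_{j})>3\}\cup\{\infty\})$, carries the indicator $I_{\{\tau_{n}>j\}}$ through every estimate (so the ratio is $\leq 3$ by fiat wherever it appears), and applies Doob's inequality over blocks $[m_{k},m_{k+1})$ with $m_{k+1}=a(m_{k},\tau)$ for a \emph{fixed} small $\tau=1/(18C^{2})$, where $C$ is the global Lipschitz constant of $\nabla f$ from Assumption \ref{aa2.1}. A Bellman--Gronwall bootstrap (your instinct here was right, but it must be run \emph{inside} the noise verification, not borrowed from Theorem \ref{theorema2.1}) then shows $g^{-1}(\theta_{j+1})I_{\{\tau_{m_{k}}>j\}}\leq 2g^{-1}(\theta_{m_{k}})(1+\cdots)$, whence $\tau_{m_{k}}>m_{k+1}$ for all large $k$ and the indicators are eventually identically one. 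Finally, a window of arbitrary length $t$ spans at most $2t/\tau$ such blocks, and chaining the per-block estimates costs a factor $C(t)=(2t/\tau+3)\,3^{2t/\tau+3}$ from the accumulated ratio bounds $g(\theta_{n})\leq 3^{k-k(n)+1}g(\theta_{m_{k}})$; this chaining step is entirely absent from your sketch. Without the stopping time and the block-patching, the proof does not go through.
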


\begin{sproof}
Let $g(\cdot )$, $h(\cdot )$ be the functions defined in Assumption \ref{aa2.2}. 
Then, due to Assumption \ref{aa3.2}, 
$g(\theta )\eta(\theta )$ is uniformly bounded in $\theta\in\mathbb{R}^{d_{\theta } }$, 
while $h(\theta)\eta(\theta )\leq\delta$ for all $\theta\in\mathbb{R}^{d_{\theta } }$
satisfying $\|\theta \|\geq\rho$. 
Let $C\in[1,\infty )$ stand for a (global) Lipschitz constant of $\nabla f(\cdot )$
and for an (global) upper bound of $g(\cdot )\eta(\cdot )$. 
Define $\tau=1/(18C^{2} )$ and let $\{\zeta_{n} \}_{n\geq 0}$, 
$\{\zeta_{1,n} \}_{n\geq 0}$, $\{\zeta_{2,n} \}_{n\geq 0}$, $\{\zeta_{3,n} \}_{n\geq 0}$
have the same meaning as in the proof of Theorem \ref{theorem2.1}, 
while $\tau_{n}$ is the stopping time defined by 
\begin{align*}
	\tau_{n}
	=
	\min\left(
	\left\{
	j\geq n: g(\theta_{n} ) g^{-1}(\theta_{j} ) > 3 
	\right\}
	\cup
	\{\infty \}
	\right)
\end{align*}
for $n\geq 0$. 
Finally, for $\theta\in\mathbb{R}^{d_{\theta } }$, $z\in\mathbb{R}^{d_{z} }$, 
let $E_{\theta,z}(\cdot )$ denote the conditional mean given $\theta_{0}=\theta$, $Z_{0}=z$. 

As a direct consequence of Assumptions \ref{a2.1}, \ref{aa3.1}, we get
\begin{align*}
	E_{\theta,z}\left(\sum_{n=0}^{\infty } \alpha_{n}^{2} \varphi^{2}(Z_{n+1} ) \right)
	<\infty
\end{align*}
for all $\theta\in\mathbb{R}^{d_{\theta } }$, $z\in\mathbb{R}^{d_{z} }$. 
We also have 
\begin{align*}
	&
	g(\theta_{n} ) \|\zeta_{n} \| 
	\leq
	\varphi(Z_{n+1} ) + 1 
	\leq 
	2\varphi(Z_{n+1} ) 
\end{align*}
for $n\geq 0$. 
Consequently, 
\begin{align}\label{ta3.1.1}
	\lim_{n\rightarrow\infty } 
	\alpha_{n}\varphi(Z_{n+1} )
	=
	\lim_{n\rightarrow\infty } 
	\alpha_{n} g(\theta_{n} ) \|\zeta_{n} \|
	=0
\end{align}
almost surely. 

Let $\{m_{k} \}_{k\geq 0}$ be the sequence recursively defined by 
$m_{0}=0$ and $m_{k+1}=a(m_{k},\tau)$ for $k\geq 0$.  
Moreover, let ${\cal F}_{n}=\sigma\{\theta_{0},Z_{0},\dots,\theta_{n},Z_{n} \}$ for $n\geq 0$. 
Due to Assumption \ref{a2.2}, we have
\begin{align*}
	E_{\theta,z}
	\left(
	g(\theta_{n} ) \zeta_{1,j} 
	I_{ \{\tau_{n} > j \} }
	|
	{\cal F}_{j} 
	\right)
	=
	g(\theta_{n} ) 
	\left(
	E_{\theta,z}
	(
	\tilde{F}(\theta_{j}, Z_{j+1} )
	|
	{\cal F}_{j} 
	)
	-
	(\Pi\tilde{F} )(\theta_{j},Z_{j} )
	\right) 
	I_{ \{\tau_{n} > j \} }
	=
	0
\end{align*}
almost surely for each $\theta\in\mathbb{R}^{d_{\theta } }$, $z\in\mathbb{R}^{d_{z} }$, $0\leq n\leq j$
(notice that $\{\tau_{n}>j\}$ is measurable with respect to ${\cal F}_{j}$). 
Moreover, Assumption \ref{aa3.1} implies 
\begin{align*}
	g(\theta_{n} ) \|\zeta_{1,j} \| I_{ \{\tau_{n}>j \} }
	\leq 
	g(\theta_{n} ) g^{-1}(\theta_{j} ) (\varphi(Z_{j} ) + \varphi(Z_{j+1} ) ) I_{ \{\tau_{n}>j \} }
	\leq 
	3 (\varphi(Z_{j} ) + \varphi(Z_{j+1} ) ) 
\end{align*}
for $0\leq n\leq j$. 
Then, as a result of Doob inequality, we get 
\begin{align*}
	E_{\theta,z}\left(
	\max_{n<j<a(n,\tau ) }
	\left\|\sum_{i=n+1}^{j}\alpha_{i}g(\theta_{n} ) \zeta_{1,i} \right\|^{2} I_{ \{\tau_{n}>j\} } 
	\right)
	\leq&
	E_{\theta,z}\left(
	\max_{n<j<a(n,\tau ) }
	\left\|\sum_{i=n+1}^{j}\alpha_{i}g(\theta_{n} ) \zeta_{1,i} I_{ \{\tau_{n}>i\} } \right\|^{2}
	\right)
	\nonumber\\
	\leq&
	4E_{\theta,z}\left(
	\sum_{i=n+1}^{a(n,\tau )-1} \alpha_{i}^{2} 
	g^{2}(\theta_{n} ) \|\zeta_{1,i} \|^{2} I_{ \{\tau_{n}>i\} } 
	\right)
	\nonumber\\
	\leq& 
	72E_{\theta,z}\left(\sum_{i=n+1}^{a(n,\tau) } \alpha_{i}^{2} 
	\left(\varphi^{2}(Z_{i} ) + \varphi^{2}(Z_{i+1} ) \right)
	\right)
\end{align*}
for all $\theta\in\mathbb{R}^{d_{\theta } }$, $z\in\mathbb{R}^{d_{z} }$, $n\geq 0$. 
Combining this with Assumptions \ref{a2.1}, \ref{aa3.1}, we deduce 
\begin{align*}
	E_{\theta,z}\left(
	\sum_{k=0}^{\infty } 
	g^{2}(\theta_{m_{k} } ) 
	\max_{m_{k}<j<m_{k+1} }
	\left\|\sum_{i=m_{k} }^{j}\alpha_{i}\zeta_{1,i} \right\|^{2} I_{ \{\tau_{m_{k} }>j\} } 
	\right)
	\leq &
	72 E_{\theta,z}\left(\sum_{n=0}^{\infty } 
	(\alpha_{i}^{2} + \alpha_{i+1}^{2} ) \varphi^{2}(Z_{i+1} ) \right)
	<
	\infty
\end{align*}
for each $\theta\in\mathbb{R}^{d_{\theta } }$, $z\in\mathbb{R}^{d_{z} }$, $n\geq 0$. 
Therefore, 
\begin{align}\label{ta3.1.5}
	\lim_{k\rightarrow\infty } 
	g(\theta_{m_{k} } ) 
	\max_{m_{k}<j<m_{k+1} }
	\left\|\sum_{i=m_{k} }^{j}\alpha_{i}\zeta_{1,i} \right\| I_{ \{\tau_{m_{k} }>j\} } 
	=
	0
\end{align}
almost surely. 

Since $\alpha_{n}\alpha_{n+1} = O(\alpha_{n}^{2} )$, 
$\alpha_{n}-\alpha_{n+1} = O(\alpha_{n}^{2} )$ for $n\rightarrow\infty$
(see the proof of Theorem \ref{theorem2.1}), 
Assumptions \ref{a2.1}, \ref{aa3.1} yield
\begin{align*}
	E_{\theta,z}
	\left(
	\sum_{n=0}^{\infty } 
	\alpha_{n}\alpha_{n+1} \varphi^{2}(Z_{n+1} )
	\right)
	<\infty, 
	\;\;\;\;\;
	E_{\theta,z}
	\left(
	\sum_{n=0}^{\infty } |\alpha_{n} - \alpha_{n+1} | \varphi^{2}(Z_{n+1} ) 
	\right)
	<\infty
\end{align*}
for all $\theta\in\mathbb{R}^{d_{\theta } }$, $z\in\mathbb{R}^{d_{z} }$. 
Additionally, 
due to Assumptions \ref{aa3.1}, \ref{aa3.2}, we have 
\begin{align*}
	g(\theta_{n} ) \|\zeta_{2,j} \| I_{ \{\tau_{n}>j \} }
	\leq &
	g(\theta_{n} ) \varphi(Z_{j} ) \|\theta_{j} - \theta_{j-1} \| I_{ \{\tau_{n}>j-1 \} } 
	\\
	\leq &
	\alpha_{j-1} g(\theta_{n} ) \varphi(Z_{j} ) 
	(\|F(\theta_{j-1},Z_{j} ) \| + \|\eta_{j-1} \| ) I_{ \{\tau_{n}>j \} } 
	\\
	\leq &
	\alpha_{j-1} g(\theta_{n} ) g^{-1}(\theta_{j-1} ) \varphi(Z_{j} ) 
	(\varphi(Z_{j} ) + C ) I_{ \{\tau_{n}>j \} } 
	\\
	\leq &
	6C\alpha_{j-1} \varphi^{2}(Z_{j} ) 
\end{align*}
for $0\leq n<j$ (notice that $\varphi(z)\geq 1$ for any $z\in\mathbb{R}^{d_{z} }$).
We also have 
\begin{align*}
	&
	g(\theta_{n} ) \|\zeta_{3,j} \| I_{ \{\tau_{n}>j \} } 
	\leq 
	g(\theta_{n} ) g^{-1}(\theta_{j} ) \varphi(Z_{j+1} ) I_{ \{\tau_{n}>j \} } 
	\leq 
	3 \varphi(Z_{j+1} ) 
	\leq 
	3 \varphi^{2}(Z_{j+1} )
\end{align*}
for $0\leq n\leq j$. 
Hence, 
\begin{align*}
	&
	g(\theta_{n} ) 
	\left\|
	\sum_{i=n+1}^{j} \alpha_{i} \zeta_{2,i} 
	\right\|
	I_{ \{\tau_{n}>j \} }
	\leq 
	\sum_{i=n+1}^{j} \alpha_{i} g(\theta_{n} ) \|\zeta_{2,i} \| I_{ \{\tau_{n}>i \} }
	\leq 
	6C \sum_{i=n}^{j} \alpha_{i} \alpha_{i+1} \varphi^{2}(Z_{i+1} ), 
	\\
	&
	g(\theta_{n} ) 
	\left\|
	\sum_{i=n+1}^{j} (\alpha_{i} - \alpha_{i+1} ) \zeta_{3,i} 
	\right\|
	I_{ \{\tau_{n}>j \} }
	\leq 
	\sum_{i=n+1}^{j} |\alpha_{i} - \alpha_{i+1} | g(\theta_{n} ) \|\zeta_{3,i} \| I_{ \{\tau_{n}>i \} }
	\leq 
	3 \sum_{i=n+1}^{j} |\alpha_{i} - \alpha_{i+1} | \varphi^{2}(Z_{i+1} )
\end{align*}
for $0\leq n<j$. 
Consequently, 
\begin{align}\label{ta3.1.21}
	&
	\lim_{n\rightarrow\infty }
	g(\theta_{n} ) 
	\max_{j>n} 
	\left\|
	\sum_{i=n+1}^{j} \alpha_{i} \zeta_{2,i} 
	\right\|
	I_{ \{\tau_{n}>j \} }
	=
	\lim_{n\rightarrow\infty } 
	g(\theta_{n} ) 
	\max_{j>n} 
	\left\|
	\sum_{i=n+1}^{j} (\alpha_{i} - \alpha_{i+1} ) \zeta_{3,i} 
	\right\|
	I_{ \{\tau_{n}>j \} }
	=
	0
\end{align}
almost surely
(notice that $\alpha_{j+1}/\alpha_{j}=O(1)$ for $j\rightarrow\infty$).  
Moreover, (\ref{ta3.1.1}) yields 
\begin{align}\label{ta3.1.23}
	\lim_{n\rightarrow\infty } 
	g(\theta_{n} ) 
	\max_{j\geq n} \alpha_{j+1} \|\zeta_{3,j} \| I_{ \{ \tau_{n}>j \} }
	=
	0
\end{align}
almost surely. 
Combining (\ref{ta3.1.1}) -- (\ref{ta3.1.23}) with (\ref{2.3*}), 
we deduce 
\begin{align}\label{ta3.1.25}
	\lim_{k\rightarrow\infty } 
	g(\theta_{n_{k} } ) 
	\max_{m_{k}\leq j < m_{k+1} }
	\left\|
	\sum_{i=m_{k} }^{j} \alpha_{i} \zeta_{i}
	\right\|
	I_{ \{\tau_{m_{k} }>j \} }
	=
	0
\end{align}
almost surely. 

Owing to Assumptions \ref{aa2.1}, \ref{aa3.2}, we have 
\begin{align*}
	g^{-1}(\theta_{j+1} ) I_{ \{\tau_{n}>j \} }
	\leq &
	g^{-1}(\theta_{n} ) 
	+
	\|\nabla f(\theta_{j+1} ) - \nabla f(\theta_{n} ) \| I_{ \{\tau_{n}>j \} }
	\nonumber \\
	\leq &
	g^{-1}(\theta_{n} ) 
	+
	C\|\theta_{j+1} - \theta_{n} \| I_{ \{\tau_{n}>j \} }
	\nonumber\\
	\leq &
	g^{-1}(\theta_{n} ) 
	+
	C\sum_{i=n}^{j} \alpha_{i} \|\nabla f(\theta_{i} ) \| I_{ \{\tau_{n}>j \} }
	+
	C\left\|\sum_{i=n}^{j}\alpha_{i}\zeta_{i} \right\| I_{ \{\tau_{n}>j \} }
	+
	C\sum_{i=n}^{j} \alpha_{i} \|\eta_{i} \| I_{ \{\tau_{n}>j \} }
	\nonumber\\
	\leq &
	g^{-1}(\theta_{n} ) 
	+
	C\left\|\sum_{i=n}^{j}\alpha_{i}\zeta_{i} \right\| I_{ \{\tau_{n}>j \} }
	+
	2C^{2} \sum_{i=n}^{j} \alpha_{i} g^{-1}(\theta_{i} ) I_{ \{\tau_{n}>j \} }
\end{align*}
for $0\leq n\leq j$
(notice that $\|\eta(\theta ) \|\leq Cg^{-1}(\theta )$ for each $\theta\in\mathbb{R}^{d_{\theta } }$). 
Combining this with Bellman-Gronwall inequality (see e.g., \cite[Appendix B]{borkar}), we conclude 
\begin{align*}
	g^{-1}(\theta_{j+1} ) I_{ \{\tau_{n}>j \} }
	\leq &
	\left(
	g^{-1}(\theta_{n} ) 
	+
	C \max_{n\leq j<a(n,\tau ) } 
	\left\|\sum_{i=n}^{j} \alpha_{i}\zeta_{i} \right\| I_{ \{\tau_{n}>j \} }
	\right)
	\exp\left(2C^{2} \sum_{i=n}^{j-1} \alpha_{i} \right)
	\\
	\leq &
	2 g^{-1}(\theta_{n} ) 
	\left(
	1
	+
	C g(\theta_{n} ) 
	\max_{n\leq j<a(n,\tau ) } 
	\left\|\sum_{i=n}^{j} \alpha_{i}\zeta_{i} \right\| I_{ \{\tau_{n}>j \} }
	\right)
\end{align*}
for $0\leq n\leq j\leq a(n,\tau )$.\footnote
{Notice that $\sum_{i=n}^{j-1}\alpha_{i}\leq\tau$ for $n\leq j\leq a(n,\tau)$. 
Notice also that $\exp(2C^{2}\tau )\leq\exp(1/2)\leq 2$. }
Then, (\ref{ta3.1.25}) yields
\begin{align}\label{ta3.1.27}
	\limsup_{k\rightarrow\infty } 
	g(\theta_{m_{k} } )
	\max_{m_{k}\leq j<m_{k+1} } g^{-1}(\theta_{j+1} ) I_{ \{\tau_{m_{k} }>j \} }
	\leq 
	2
\end{align}
almost surely. 

Let $N_{0}$ be the event where (\ref{ta3.1.25}) or (\ref{ta3.1.27}) does not hold. 
Then, in order to prove the theorem's assertion, it is sufficient to show that 
(\ref{aa2.2.1}), (\ref{aa2.2.3}) are satisfied on $N_{0}^{c}$ for any $t\in(0,\infty )$. 
Let $\omega$ be any sample in $N_{0}^{c}$, while $t\in(0,\infty )$ is any real number.  
Notice that all formula which follow in the proof correspond to $\omega$. 

Due to Assumption \ref{aa3.2}, we have 
\begin{align*}
	\limsup_{n\rightarrow\infty } g(\theta_{n} ) \|\eta_{n} \|
	\leq
	C
	< \infty, 
	\;\;\;\;\; 
	\limsup_{n\rightarrow\infty } h(\theta_{n} ) \|\eta_{n} \|
	\leq
	\delta
	< 1. 
\end{align*}
Moreover, Assumption \ref{a2.1} and 
(\ref{1.1501}), (\ref{ta3.1.27}) imply that there exists 
an integer $k_{0}\geq 0$ (depending on $\omega$)
such that 
\begin{align}\label{ta3.1.71}
	\sum_{i=m_{k} }^{m_{k+1}-1} \alpha_{i}
	\geq
	\tau/2, 
	\;\;\;\;\;
	g(\theta_{m_{k} } ) 
	\left\|\sum_{i=m_{k} }^{j} \alpha_{i} \zeta_{i} \right\| I_{ \{\tau_{m_{k} }>j \} }
	\leq
	\tau, 
	\;\;\;\;\; 
	g(\theta_{m_{k} } ) g^{-1}(\theta_{j+1} ) I_{ \{\tau_{m_{k} }>j \} }
	\leq 3
\end{align}
for $k\geq k_{0}$, $m_{k}\leq j<m_{k+1}$. 
As $\tau_{n}>n$ for $n\geq 0$, we conclude 
$\tau_{m_{k} }>m_{k+1}$ for $k\geq k_{0}$.\footnote
{If $\tau_{m_{k} }\leq m_{k+1}$, 
then $\tau_{m_{k} }=j$ and 
$g(\theta_{m_{k} } ) g^{-1}(\theta_{j} ) I_{ \{\tau_{m_{k} }>j-1 \} } =
g(\theta_{m_{k} } ) g^{-1}(\theta_{j} ) > 3$ for some $j$
satisfying $m_{k}<j\leq m_{k+1}$. } 
Consequently, 
$I_{ \{\tau_{m_{k} }>j \} } = 1$ for $k\geq k_{0}$, $m_{k}\leq j\leq m_{k+1}$. 
Combining this with (\ref{ta3.1.71}), we get
$g(\theta_{m_{k} } ) \leq 3 g(\theta_{j+1} )$ and 
\begin{align}\label{ta3.1.43}
	g^{-1}(\theta_{j+1} ) 
	\geq &
	g^{-1}(\theta_{m_{k} } ) 
	-
	\|\nabla f(\theta_{j+1} ) - \nabla f(\theta_{n} ) \|
	\nonumber\\
	\geq &
	g^{-1}(\theta_{m_{k} } ) 
	-
	C\|\theta_{j+1} - \theta_{n} \|
	\nonumber\\
	\geq &
	g^{-1}(\theta_{m_{k} } ) 
	-
	C\sum_{i=m_{k} }^{j} \alpha_{i} \|\nabla f(\theta_{i} ) \|
	-
	C\left\|\sum_{i=m_{k} }^{j} \alpha_{i}\zeta_{i} \right\|
	-
	C\sum_{i=m_{k} }^{j} \alpha_{i} \|\eta_{i} \|
	\nonumber\\
	\geq &
	g^{-1}(\theta_{m_{k} } ) 
	-
	2C^{2} \sum_{i=m_{k} }^{j} \alpha_{i} g^{-1}(\theta_{i} ) 
	-
	C\left\|\sum_{i=m_{k} }^{j} \alpha_{i}\zeta_{i} \right\|
	\nonumber\\
	\geq &
	g^{-1}(\theta_{m_{k} } ) 
	(1-6C^{2}\tau-C\tau )
	\nonumber\\
	\geq &
	3^{-1} g^{-1}(\theta_{m_{k} } ) 
\end{align}
for $k\geq k_{0}$, $m_{k}\leq j<m_{k+1}$.\footnote
{Notice that $g^{-1}(\theta_{i} )\leq 3g^{-1}(\theta_{m_{k} } )$, 
$\sum_{m_{k} }^{m_{k+1}\!-\!1}\!\! \alpha_{i}\leq\tau$ when $k\geq k_{0}$, $m_{k}\leq i<m_{k+1}$.
Notice also that $6C^{2}\tau=1/3$, $C\tau\leq 1/3$. } 
Hence, 
$3^{-1} g(\theta_{m_{k} } ) \leq g(\theta_{j} ) \leq 3 g(\theta_{m_{k} } )$
for $k\geq k_{0}$, $m_{k}\leq j\leq m_{k+1}$. 

Let $n_{0}=m_{k_{0} }$, while $k(n)=\max\{k\geq 0: m_{k}\leq n \}$, 
$m(n)=m_{k(n) }$ for $n\geq 0$. 
Then, (\ref{ta3.1.43}) implies 
$g(\theta_{n} )\leq 3g(\theta_{m(n) } )$, 
$g(\theta_{m_{k} } )\leq 3g(\theta_{m_{k+1} } )$
for $n\geq n_{0}$, $k\geq k_{0}$
(notice that $k(n)\geq k_{0}$, $m_{k(n) }\leq n<m_{k(n)+1}$ when $n\geq n_{0}$). 
Hence, $g(\theta_{n} )\leq C_{n,k} \:g(\theta_{m_{k} } )$ for $n\geq n_{0}$, $k\geq m(n)$, 
where $C_{n,k}=3^{k-k(n)+1}$.\footnote{
Notice that 
$g(\theta_{n} ) g^{-1}(\theta_{m(n) } ) \leq 3$, 
$g(\theta_{m(n) } ) g^{-1}(\theta_{m_{k} } ) \leq 3^{k-k(n)}$
when $n\geq n_{0}$, $k\geq m(n)$. 
Notice also  
$g(\theta_{n} ) = 
\left( g(\theta_{n} ) g^{-1}(\theta_{m(n) } ) \right)
\left( g(\theta_{m(n) } ) g^{-1}(\theta_{m_{k} } ) \right) g(\theta_{m_{k} } )$. 
} 
Since 
\begin{align*}
	2^{-1} (k(j)-k(n) ) \tau
	\leq 
	\sum_{k=k(n)+1}^{k(j) } \sum_{i=m_{k} }^{m_{k+1}-1} \alpha_{i}
	\leq 
	\sum_{i=n}^{j} \alpha_{i}
	\leq 
	t 
\end{align*}
for $n_{0}\leq n\leq j\leq a(n,\tau )$, 
we conclude $k(j)-k(n)\leq 2t/\tau$ for the same $n,j$. 
Consequently, 
\begin{align*}
	g(\theta_{n} ) 
	\left\|
	\sum_{i=n}^{j} \alpha_{i} \zeta_{i}
	\right\|
	=&
	g(\theta_{n} ) 
	\left\|
	\sum_{k=k(n)}^{k(j) }
	\sum_{i=m_{k} }^{m_{k+1}-1} \alpha_{i}\zeta_{i}
	-
	\sum_{i=m(n)}^{n-1} \alpha_{i}\zeta_{i} 
	+
	\sum_{i=m(j)}^{j} \alpha_{i}\zeta_{i} 
	\right\|
	\\
	\leq&
	\sum_{k=k(n)}^{k(j)-1}
	C_{n,k} \: g(\theta_{m_{k} } ) 
	\left\|
	\sum_{i=m_{k} }^{m_{k+1}-1} \alpha_{i}\zeta_{i}
	\right\|
	+
	C_{n,k(n) } \: g(\theta_{m(n) } ) 
	\left\|
	\sum_{i=m(n)}^{n-1} \alpha_{i}\zeta_{i} 
	\right\|
	\\
	&+
	C_{n,k(j) } \: g(\theta_{m(j) } ) 
	\left\|
	\sum_{i=m(j)}^{j} \alpha_{i}\zeta_{i} 
	\right\|
	\\
	\leq &
	C(t) 
	\max_{\stackrel{\scriptstyle m_{k}\leq l<m_{k+1} }{\scriptstyle k(n)\leq k } } 
	g(\theta_{m_{k} } ) 
	\left\|\sum_{i=m_{k} }^{l} \alpha_{i}\zeta_{i} \right\|
\end{align*}
for $n_{0}\leq n\leq j\leq a(n,t)$,\footnote
{Here, the following convention is used: If the lower limit of a sum is (strictly) greater than 
the upper limit, then the sum is zero. } 
where $C(t)=(2t/\tau+3) 3^{2t/\tau+3}$.
Since $\tau_{m_{k} }>m_{k+1}$ for $k\geq k_{0}$
(i.e., $I_{ \{\tau_{m_{k} }>j\} } = 1$ for $k\geq k_{0}$, $m_{k}\leq j\leq m_{k+1}$), 
(\ref{ta3.1.25}) implies 
\begin{align*}
	\lim_{n\rightarrow\infty } 
	g(\theta_{n} ) 
	\max_{n\leq j<a(n,t) } 
	\left\|\sum_{i=n}^{j} \alpha_{i}\zeta_{i} \right\|
	=0
\end{align*}
(notice that $\lim_{n\rightarrow\infty } k(n) =\infty$). 
Hence, (\ref{aa2.2.1}), (\ref{aa2.2.3})  hold. 
\end{sproof}

In the rest of the section, Theorem \ref{theorem2.1}
is extended to randomly projected stochastic gradient algorithms with Markovian dynamics.
These algorithms are defined by the following difference equations: 
\begin{align}\label{appendix3.1}
	&
	\vartheta_{n}
	=
	\theta_{n}
	-
	\alpha_{n} (F(\theta_{n}, Z_{n+1} ) + \eta_{n} ), 
	\nonumber\\
	&
	\theta_{n+1}
	=
	\vartheta_{n} 
	I_{ \{\|\vartheta_{n} \| \leq \beta_{\sigma_{n} } \} }
	+
	\theta_{0} 
	I_{ \{\|\vartheta_{n} \| > \beta_{\sigma_{n} } \} }, 
	\nonumber\\
	&
	\sigma_{n+1}
	=
	\sigma_{n}
	+
	I_{ \{\|\vartheta_{n} \| > \beta_{\sigma_{n} } \} }, 
	\;\;\;\;\; 
	n\geq 0. 
\end{align}
Here, $H(\cdot,\cdot)$, $\{\alpha_{n} \}_{n\geq 0}$, $\{Z_{n} \}_{n\geq 0}$, $\{\eta_{n} \}_{n\geq 0}$
have the same meaning as in Section \ref{section2}, 
while $\theta_{0}$, $\{\beta_{n} \}_{n\geq 0}$ have the same meaning 
as in the case of recursion (\ref{appendix2.1}). 

To analyze the asymptotic behavior of (\ref{appendix3.1}), we use the following two assumptions. 

\begin{assumptionappendix}\label{aa3.3} 
For any compact set $Q\subset \mathbb{R}^{d_{\theta } }$, 
there exists a Borel-measurable function 
$\varphi_{Q}: \mathbb{R}^{d_{z} } \rightarrow [1,\infty )$ such that 
\begin{align*}
	&
	\max\{
	\|F(\theta,z ) \|, \|\tilde{F}(\theta,z ) \|, \|(\Pi \tilde{F} )(\theta,z ) \|
	\}
	\leq 
	\varphi_{Q}(z ), 
	\\
	&
	\|(\Pi \tilde{F} )(\theta',z ) - (\Pi \tilde{F} )(\theta'',z ) \|
	\leq 
	\varphi_{Q}(z) \|\theta' - \theta'' \| 
\end{align*}
for all 
$\theta, \theta', \theta'' \in Q$, $z \in \mathbb{R}^{d_{z} }$.  
In addition to this, 
\begin{align*} 
	\sup_{n\geq 0}
	E\left(
	\varphi_{Q}^{2}(Z_{n} ) 
	|\theta_{0}=\theta, Z_{0}=z 
	\right)
	< 
	\infty
\end{align*}
for all $\theta \in \mathbb{R}^{d_{\theta } }$, $z \in \mathbb{R}^{d_{z} }$. 
\end{assumptionappendix}

\begin{assumptionappendix}\label{aa3.4} 
$\eta_{n}=\eta(\theta_{n} )$ for $n\geq 0$, 
where $\eta:\mathbb{R}^{d_{\theta } }\rightarrow\mathbb{R}^{d_{\theta } }$ is a continuous function. 
Moreover, 
$\|\eta(\theta ) \|<\|\nabla f(\theta ) \|$ for all $\theta\in\mathbb{R}^{d_{\theta } }$
satisfying $\|\theta \|\geq\rho$ 
($\rho$ is specified in Assumption \ref{aa2.3}). 
\end{assumptionappendix}

In a similar form, Assumptions \ref{aa3.3} and \ref{aa3.4} are involved in the analysis
of randomly projected stochastic approximation carried out in \cite{tadic1}. 

Our result on the asymptotic behavior of algorithm (\ref{appendix3.1}) are provided in the next theorem. 

\begin{theoremappendix}\label{theorema3.2}
Let $\{\theta_{n} \}_{n\geq 0}$ be generated by recursion (\ref{appendix3.1}). 
Suppose that Assumptions \ref{a2.1}, \ref{a2.2}, \ref{aa2.3}, \ref{aa3.3} and \ref{aa3.4} hold. 
Then, all conclusions of Theorem \ref{theorema2.2} are true. 
\end{theoremappendix}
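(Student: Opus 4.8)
The plan is to deduce Theorem \ref{theorema3.2} from Theorem \ref{theorema2.2}, whose hypotheses are Assumptions \ref{a1.1}, \ref{aa2.3} and \ref{aa2.4}. Assumption \ref{a1.1} is contained in Assumption \ref{a2.1} and Assumption \ref{aa2.3} is assumed outright, so the whole task is to verify that the additive noise $\xi_{n}=\zeta_{n}+\eta_{n}$, with $\zeta_{n}=F(\theta_{n},Z_{n+1})-\nabla f(\theta_{n})$, satisfies Assumption \ref{aa2.4}; recursion (\ref{appendix3.1}) then has exactly the form (\ref{appendix2.1}) with this choice of $\{\xi_{n} \}_{n\geq 0}$, and Theorem \ref{theorema2.2} applies to give all the stated conclusions. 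This is the projected, Markovian analogue of Theorem \ref{theorema3.1}: the passage from Markovian to additive noise follows the lines of the proof of Theorem \ref{theorem2.1}, but now localized to the projection stopping times $\tau_{Q,n}$ appearing in Assumption \ref{aa2.4}, while the stability bookkeeping mirrors the proof of Theorem \ref{theorema2.2}.

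To establish (\ref{aa2.4.1}) I would fix a compact set $Q\subset\mathbb{R}^{d_{\theta}}$ and, using Assumption \ref{a2.2}, split $\sum_{i=n}^{k}\alpha_{i}\zeta_{i}$ via the summation-by-parts identity (\ref{2.3*}) into a martingale-difference sum built from $\zeta_{1,i}=\tilde{F}(\theta_{i},Z_{i+1})-(\Pi\tilde{F})(\theta_{i},Z_{i})$, a sum built from $\zeta_{2,i}=(\Pi\tilde{F})(\theta_{i},Z_{i})-(\Pi\tilde{F})(\theta_{i-1},Z_{i})$, and telescoping plus boundary terms built from $\zeta_{3,i}=-(\Pi\tilde{F})(\theta_{i},Z_{i+1})$. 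On the event $\{\tau_{Q,n}>j\}$ none of $\theta_{n},\dots,\theta_{j}$ is projected back or leaves $Q$, so the bounds of Assumption \ref{aa3.3} are available all along the stretch; restricting the $\zeta_{1}$-sum to such events turns it into a stopped piece of an $L^{2}$-bounded martingale, whose convergence — from Doob's inequality together with $\sup_{n}E(\varphi_{Q}^{2}(Z_{n}))<\infty$ (Assumption \ref{aa3.3}) and $\sum\alpha_{n}^{2}<\infty$ (Assumption \ref{a2.1}) — forces $\max_{n\le j<a(n,t)}\|\sum_{i=n}^{j}\alpha_{i}\zeta_{1,i}\| I_{\{\tau_{Q,n}>j\}}\to 0$ a.s. The $\zeta_{2}$-sum is absolutely summable on these events since $\|\theta_{i}-\theta_{i-1}\|\le\alpha_{i-1}(\varphi_{Q}(Z_{i})+\max_{\theta\in Q}\|\eta(\theta)\|)$, giving a contribution $O\!\big(\sum\alpha_{i}\alpha_{i+1}\varphi_{Q}^{2}(Z_{i+1})\big)\to 0$; the $\zeta_{3}$-terms vanish because $\sum|\alpha_{i}-\alpha_{i+1}|\varphi_{Q}^{2}(Z_{i+1})<\infty$ and $\alpha_{i}\varphi_{Q}(Z_{i+1})\to 0$ a.s. Combining these and using (\ref{1.1501}) yields the first relation in (\ref{aa2.4.1}); the second is immediate, as $\eta(\cdot)$ is continuous by Assumption \ref{aa3.4}, so $\|\eta_{n}\| I_{\{\theta_{n}\in Q\}}\le\max_{\theta\in Q}\|\eta(\theta)\|$.

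For (\ref{aa2.4.3}) I would use the remaining content of Assumption \ref{aa3.4}: on the compact set $Q\cap\{\theta:\|\theta\|\ge\rho\}$ the continuous function $h(\theta)\|\eta(\theta)\|=\|\eta(\theta)\|/\|\nabla f(\theta)\|$ is strictly below $1$, hence attains a maximum $\delta_{Q}\in(0,1)$ there, so $h(\theta_{n})\|\eta_{n}\| I_{\{\theta_{n}\in Q\}}\le\delta_{Q}$ for every $n$, which is stronger than the $\limsup$ bound demanded in (\ref{aa2.4.3}). This is precisely the compactness device by which, in the proof of Theorem \ref{theorema2.2}, the non-uniform inequality of an Assumption-\ref{aa2.3}-type hypothesis is upgraded to the uniform constant $\delta_{\tilde{Q}}<1$ attached to the random compact set confining the projected iterates. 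With Assumptions \ref{a1.1}, \ref{aa2.3} and \ref{aa2.4} all verified, Theorem \ref{theorema2.2} applies and completes the proof.

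The main obstacle I anticipate is the interaction between the averaging estimates and the projection stopping times. The martingale, Doob and summation-by-parts steps must be run on the events $\{\tau_{Q,n}>j\}$ so that the Poisson-equation bounds of Assumption \ref{aa3.3} are legitimately available; in particular the boundary terms of (\ref{2.3*}) involving $\theta_{n-1}$ — which need not lie in $Q$ even when $\tau_{Q,n}>n$ — must be absorbed by passing to a slightly enlarged compact set, and, if one adopts the block-wise route of the proofs of Theorems \ref{theorema3.1} and \ref{theorema2.2}, one must check that a window of $\{\alpha_{i}\}$-length $t$ is spanned by a bounded number of blocks and that the resulting chaining constants remain finite. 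None of this is conceptually new: it is a routine, if lengthy, combination of the arguments already carried out in the proofs of Theorems \ref{theorem2.1}, \ref{theorema3.1} and \ref{theorema2.2}.
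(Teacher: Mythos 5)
Your proposal is correct and follows essentially the same route as the paper: reduce to verifying Assumption \ref{aa2.4}, decompose $\sum\alpha_{i}\zeta_{i}$ via (\ref{2.3*}) with the martingale/Lipschitz/telescoping terms $\zeta_{1,i},\zeta_{2,i},\zeta_{3,i}$ localized to the events $\{\tau_{Q,n}>j\}$ (where Assumption \ref{aa3.3} applies), handle $\zeta_{1}$ by Doob together with $\sum\alpha_{n}^{2}<\infty$ and $\sup_{n}E(\varphi_{Q}^{2}(Z_{n}))<\infty$, handle $\zeta_{2},\zeta_{3}$ by absolute summability, and obtain (\ref{aa2.4.3}) from continuity of $\eta(\cdot)$ and compactness, before invoking Theorem \ref{theorema2.2}. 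The technical worries you flag (boundary terms, block-wise chaining) do not in fact arise: the paper's argument needs no blocks here, since Assumption \ref{aa3.3} already gives bounds uniform on $Q$.
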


\begin{sproof}
Let $Q\subset\mathbb{R}^{d_{\theta } }$ be any compact set, 
while $t\in(0,\infty )$ is any real number. 
Moreover, let $C_{Q}\in[1,\infty )$ be an upper bound of 
$\|\nabla f(\cdot )\|$, $\|\eta(\cdot ) \|$ on $Q$. 
In order to prove the theorem's assertion, it is sufficient to show that 
(\ref{aa2.4.1}), (\ref{aa2.4.1}) hold almost surely. 

Due to Assumption \ref{aa3.4}, we have 
\begin{align*}
	\limsup_{n\rightarrow\infty } \|\eta_{n} \| I_{ \{\theta_{n}\in Q \} }
	\leq
	C_{Q}
	< \infty, 
	\;\;\;\;\; 
	\limsup_{n\rightarrow\infty } h(\theta_{n} ) \|\eta_{n} \| I_{ \{\theta_{n} \in Q \} }
	\leq
	\delta_{Q}
	< 1
\end{align*}
almost surely ($h(\cdot )$ is specified in Assumption \ref{aa2.4}). 
On the other side, 
Assumptions \ref{a2.1}, \ref{aa3.3} imply 
\begin{align*}
	E_{\theta,z}\left(\sum_{n=0}^{\infty } \alpha_{n}^{2} \varphi_{Q}^{2}(Z_{n+1} ) \right)
	<\infty
\end{align*}
for all $\theta\in\mathbb{R}^{d_{\theta } }$, $z\in\mathbb{R}^{d_{z} }$. 
Assumption \ref{aa3.3} also yields 
\begin{align*}
	&
	\|\zeta_{n} \| I_{ \{\theta_{n}\in Q \} }
	\leq 
	(\|F(\theta_{n}, Z_{n+1} ) \| + \|\nabla f(\theta_{n} ) \| ) I_{ \{\theta_{n}\in Q \} }
	\leq 
	\varphi_{Q}(Z_{n+1} ) + C_{Q} 
	\leq 
	2C_{Q} \varphi_{Q}(Z_{n+1} ) 
\end{align*}
for $n\geq 0$. 
Consequently, 
\begin{align}\label{ta3.2.1}
	\lim_{n\rightarrow\infty } 
	\alpha_{n}\varphi_{Q}(Z_{n+1} )
	=
	\lim_{n\rightarrow\infty } 
	\alpha_{n} \|\zeta_{n} \| I_{ \{\theta_{n}\in Q \} }
	=0
\end{align}
almost surely. 

Let ${\cal F}_{n}=\sigma\{\theta_{0},Z_{0},\dots,\theta_{n},Z_{n} \}$ for $n\geq 0$. 
Owing to Assumption \ref{a2.2}, we have
\begin{align*}
	E_{\theta,z}
	\left(
	\zeta_{1,n} 
	I_{ \{\theta_{n}\in Q \} }
	|
	{\cal F}_{n} 
	\right)
	=
	\left(
	E_{\theta,z}
	(
	\tilde{F}(\theta_{n}, Z_{n+1} )
	|
	{\cal F}_{n} 
	)
	-
	(\Pi\tilde{F} )(\theta_{n},Z_{n} )
	\right) 
	I_{ \{\theta_{n}\in Q \} }
	=
	0
\end{align*}
almost surely for each $\theta\in\mathbb{R}^{d_{\theta } }$, $z\in\mathbb{R}^{d_{z} }$, $n\geq 0$. 
On the other side, Assumption \ref{aa3.3} implies 
\begin{align*}
	\|\zeta_{1,n} \| I_{ \{\theta_{n} \in Q \} }
	\leq 
	\varphi_{Q}(Z_{n} ) + \varphi_{Q}(Z_{n+1} )   
\end{align*}
for $n\geq 0$. 
Combining this with Assumptions \ref{a2.1}, \ref{aa3.3}, we get 
\begin{align*}
	E_{\theta,z}\left(
	\sum_{n=0}^{\infty } \alpha_{n}^{2} \|\zeta_{1,n} \|^{2} I_{ \{\theta_{n}\in Q \} }
	\right)
	\leq
	2E_{\theta,z}\left(
	\sum_{n=0}^{\infty } (\alpha_{n}^{2} + \alpha_{n+1}^{2} ) \varphi_{Q}^{2}(Z_{n+1} )
	\right)
	<\infty
\end{align*}
for all $\theta\in\mathbb{R}^{d_{\theta } }$, $z\in\mathbb{R}^{d_{z} }$. 
Then, using Doob theorem, we conclude that 
$\sum_{n=0}^{\infty } \alpha_{n} \zeta_{1,n} I_{ \{\theta_{n} \in Q \} }$
converges almost surely. 
Since
\begin{align*}
	\left\|\sum_{i=n}^{j}\alpha_{i}\zeta_{1,i} I_{ \{\theta_{i}\in Q \} } \right\| 
	\leq
	\left\|\sum_{i=n}^{j}\alpha_{i}\zeta_{1,i} \right\| I_{ \{\tau_{Q,n}>j \} } 
\end{align*}
for $0\leq n\leq j$ (notice that $\theta_{i}\in Q$ for $n\leq i<\tau_{Q,n}$), 
we deduce  
\begin{align}\label{ta3.2.5}
	\lim_{n\rightarrow\infty }  
	\max_{j\geq n}
	\left\|\sum_{i=n}^{j}\alpha_{i}\zeta_{1,i} \right\| I_{ \{\tau_{Q,n}>j \} } 
	=
	0
\end{align}
almost surely. 

As $\alpha_{n}\alpha_{n+1} = O(\alpha_{n}^{2} )$, 
$\alpha_{n}-\alpha_{n+1} = O(\alpha_{n}^{2} )$ for $n\rightarrow\infty$
(see the proof of Theorem \ref{theorem2.1}), 
Assumptions \ref{a2.1}, \ref{aa3.3} yield
\begin{align*}
	E_{\theta,z}
	\left(
	\sum_{n=0}^{\infty } 
	\alpha_{n}\alpha_{n+1} \varphi_{Q}^{2}(Z_{n+1} )
	\right)
	<\infty, 
	\;\;\;\;\;
	E_{\theta,z}
	\left(
	\sum_{n=0}^{\infty } |\alpha_{n} - \alpha_{n+1} | \varphi_{Q}^{2}(Z_{n+1} ) 
	\right)
	<\infty
\end{align*}
for all $\theta\in\mathbb{R}^{d_{\theta } }$, $z\in\mathbb{R}^{d_{z} }$. 
On the other side, 
owing to Assumptions \ref{aa3.3}, \ref{aa3.4}, we have 
\begin{align*}
	\|\zeta_{2,j} \| I_{ \{\tau_{Q,n}>j \} }
	\leq &
	\varphi_{Q}(Z_{j} ) \|\theta_{j} - \theta_{j-1} \| I_{ \{\tau_{Q,n}>j\} } 
	\\
	\leq &
	\alpha_{j-1} \varphi(Z_{j} ) 
	(\|F(\theta_{j-1},Z_{j} ) \| + \|\eta_{j-1} \| ) I_{ \{\theta_{j-1}\in Q \} } 
	\\
	\leq &
	\alpha_{j-1} \varphi_{Q}(Z_{j} ) 
	(\varphi_{Q}(Z_{j} ) + C_{Q} ) 
	\\
	\leq &
	2C_{Q}\alpha_{j-1} \varphi_{Q}^{2}(Z_{j} ) 
\end{align*}
for $0\leq n<j$ (notice that $\varphi_{Q}(z)\geq 1$ for any $z\in\mathbb{R}^{d_{z} }$). 
We also have 
\begin{align*}
	&
	\|\zeta_{3,n} \| I_{ \{\theta_{n}\in Q \} } 
	\leq 
	\varphi_{Q}(Z_{n+1} )  
	\leq 
	\varphi_{Q}^{2}(Z_{n+1} )
\end{align*}
for $n\geq 0$. 
Thus, 
\begin{align*}
	&
	\left\|
	\sum_{i=n+1}^{j} \alpha_{i} \zeta_{2,i} 
	\right\|
	I_{ \{\tau_{Q,n}>j \} }
	\leq 
	\sum_{i=n+1}^{j} \alpha_{i} \|\zeta_{2,i} \| I_{ \{\tau_{Q,n}>i \} }
	\leq 
	2C_{Q} \sum_{i=n}^{j} \alpha_{i} \alpha_{i+1} \varphi_{Q}^{2}(Z_{i+1} ), 
	\\
	&
	\left\|
	\sum_{i=n+1}^{j} (\alpha_{i} - \alpha_{i+1} ) \zeta_{3,i} 
	\right\|
	I_{ \{\tau_{Q,n}>j \} }
	\leq 
	\sum_{i=n+1}^{j} |\alpha_{i} - \alpha_{i+1} | \: \|\zeta_{3,i} \| I_{ \{\theta_{i}\in Q \} }
	\leq 
	\sum_{i=n+1}^{j} |\alpha_{i} - \alpha_{i+1} | \varphi_{Q}^{2}(Z_{i+1} )
\end{align*}
for $0\leq n<j$. 
Consequently, 
\begin{align}\label{ta3.2.21}
	&
	\lim_{n\rightarrow\infty }
	\max_{j>n} 
	\left\|
	\sum_{i=n+1}^{j} \alpha_{i} \zeta_{2,i} 
	\right\|
	I_{ \{\tau_{Q,n}>j \} }
	=
	\lim_{n\rightarrow\infty } 
	\max_{j>n} 
	\left\|
	\sum_{i=n+1}^{j} (\alpha_{i} - \alpha_{i+1} ) \zeta_{3,i} 
	\right\|
	I_{ \{\tau_{Q,n}>j \} }
	=
	0
\end{align}
almost surely. 
On the other side, (\ref{ta3.2.1}) yields 
\begin{align}\label{ta3.2.23}
	\lim_{n\rightarrow\infty } 
	\alpha_{n+1} \|\zeta_{3,n} \| I_{ \{ \theta_{n}\in Q \} }
	=
	0
\end{align}
almost surely. 

Since $\theta_{i}=\vartheta_{i-1}$ for $n\leq i<\tau_{Q,n}$, Assumption \ref{a2.2} and (\ref{2.3*}) yield 
\begin{align*}
	\left\|
	\sum_{i=n+1}^{j}\! \alpha_{i}\zeta_{i}
	\right\|
	I_{ \{\tau_{Q,n}>j \} }
	=&
	\left\|
	\sum_{i=n+1}^{j}\! 
	\alpha_{i} \zeta_{1,i} 
	+
	\sum_{i=n+1}^{j}\! 
	\alpha_{i} \zeta_{2,i} 
	-
	\sum_{i=n+1}^{j}\!
	(\alpha_{i} - \alpha_{i+1} ) \zeta_{3,i} 
	-
	\alpha_{j+1} \zeta_{3,j} 
	+
	\alpha_{n+1} \zeta_{3,n} 
	\right\|
	I_{ \{\tau_{Q,n}>j \} }
	\\
	\leq &
	\left\|
	\sum_{i=n+1}^{j}\! 
	\alpha_{i} \zeta_{1,i} 
	\right\|
	I_{ \{\tau_{Q,n}>j \} }
	+
	\left\|
	\sum_{i=n+1}^{j}\! 
	\alpha_{i} \zeta_{2,i} 
	\right\|
	I_{ \{\tau_{Q,n}>j \} }
	+
	\left\|
	\sum_{i=n+1}^{j}\! 
	(\alpha_{i} - \alpha_{i+1} ) \zeta_{3,i} 
	\right\|
	I_{ \{\tau_{Q,n}>j \} }
	\\
	&
	+
	\alpha_{j+1} \|\zeta_{3,j} \| I_{ \{\theta_{j}\in Q \} }
	+
	\alpha_{n+1} \|\zeta_{3,n} \| I_{ \{\theta_{n}\in Q \} }
\end{align*}
for $0\leq n<j$. 
Combining this with (\ref{ta3.2.1}) -- (\ref{ta3.2.23}), 
we deduce 
\begin{align*}
	\lim_{n\rightarrow\infty } 
	\max_{n\leq j<a(n,t) }
	\left\|
	\sum_{i=n}^{j} \alpha_{i} \zeta_{i}
	\right\|
	I_{ \{\tau_{Q,}>j \} }
	=
	0
\end{align*}
almost surely. 
Thus, (\ref{aa2.4.1}), (\ref{aa2.4.3}) hold almost surely. 
\end{sproof}

\end{document}